\documentclass[12pt]{amsart}
\usepackage{amscd,amssymb,amsthm,amsmath,amssymb,mathrsfs,enumerate,longtable,hyperref,url,mathrsfs,float}
\usepackage[all]{xy}
\usepackage[margin=2cm]{geometry}

\newtheorem{theorem}[equation]{Theorem}
\newtheorem*{theorem*}{Theorem}

\newtheorem{proposition}[equation]{Proposition}
\newtheorem*{proposition*}{Proposition}
\newtheorem{lemma}[equation]{Lemma}
\newtheorem{corollary}[equation]{Corollary}
\newtheorem*{corollary*}{Corollary}

\newtheorem*{problem*}{Problem}

\newtheorem*{question*}{Question}

\newtheorem*{construction*}{Construction}
\newtheorem*{techtheorem*}{Technical Theorem}
\newtheorem*{theoremAprime*}{Theorem A$^\prime$}
\newtheorem*{auxtheorem*}{Characterization Theorem}
\newtheorem*{maintheorem*}{Main Theorem}
\newtheorem*{conjecture*}{Conjecture}
\newtheorem*{fibrationprop*}{Fibration Theorem}
\newtheorem*{rattheorem*}{Rationality Theorem}

\theoremstyle{definition}

\newtheorem*{example*}{Example}

\newtheorem*{definition*}{Definition}
\newtheorem{remark}[equation]{Remark}
\newtheorem*{remark*}{Remark}

\makeatletter\@addtoreset{equation}{section} \makeatother

\author{Sebastián Fuentes}

\title{Almost simple subgroups of the space Cremona group}

\address{\emph{Sebastián Fuentes}\newline\textnormal{University of Edinburgh, Edinburgh, Scotland
\newline
\texttt{s.e.fuentes-olguin@sms.ed.ac.uk}}}

\newcommand{\mumu}{{\boldsymbol{\mu}}}
\newcommand\FF{\mathbf{F}}
\newcommand\CC{\mathbb{C}}
\newcommand\PP{\mathbb{P}}
\newcommand\QQ{\mathbb{Q}}

\newcommand{\Bir}{\operatorname{Bir}}

\newcommand{\OOO}{{\mathscr{O}}}

\newcommand{\Pic}{\operatorname{Pic}}

\newcommand\Aut{\operatorname{Aut}}
\newcommand\SL{\operatorname{SL}}
\newcommand\PSL{\operatorname{PSL}}
\newcommand\PGL{\operatorname{PGL}}

\newcommand\Cr{\operatorname{Cr}}
\newcommand\Sing{\operatorname{Sing}}

\newcommand\Bs{\operatorname{Bs}}
\newcommand\g{\operatorname{g}}

\newcommand{\Sym}{\operatorname{Sym}}

\usepackage[dvipsnames]{xcolor}
\usepackage{graphicx}

\usepackage[normalem]{ulem}

\begin{document}

\begin{abstract}
We prove that every finite almost simple subgroup that is not simple of the group of birational automorphisms of the three-dimensional complex projective space is isomorphic to one of the following: $\mathfrak{S}_5$, $\mathfrak{S}_6$, $\PGL_2(\FF_7)$ or $\PGL_2(\FF_9)$. We also prove that the group of birational automorphisms of the three-dimensional complex projective space contains unique (up to conjugation) subgroups isomorphic to $\mathrm{PGL}_2(\FF_7)$ and $\mathrm{PGL}_2(\FF_9)$.  
\end{abstract}

\maketitle
\setcounter{tocdepth}{1}

\tableofcontents

\section{Introduction}

Finite  simple non-abelian subgroups of the space Cremona group $\Cr_3(\CC)=\mathrm{Bir}(\mathbb{P}^3)$ have been classified by Yuri Prokhorov as follows.

\begin{theorem*}[\cite{P:JAG:simple}]
Let $G$ be a finite simple non-abelian subgroup of the group $\Cr_3(\CC)$. Then $G$ is isomorphic to one of the following groups:
\begin{center}
$\mathfrak{A}_5$, $\PSL_2(\FF_7)$, $\mathfrak{A}_6$, $\mathfrak{A}_7$, $\SL_2(\FF_8)$, $\mathrm{PSp}_{4}(\FF_3)$.
\end{center}
\end{theorem*}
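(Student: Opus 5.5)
The statement is Prokhorov's classification of finite simple non-abelian subgroups of $\Cr_3(\CC)$; here it is quoted from \cite{P:JAG:simple} as input. Still, the route to it would be the standard equivariant minimal model program approach, as follows. Let $G\subset\Cr_3(\CC)$ be finite simple non-abelian. Regularize the action: take a smooth projective rational threefold $X$ with a biregular $G$-action. Then run the $G$-equivariant MMP. Since $X$ is rationally connected, the output is a $G$-Mori fibre space $Y\to Z$ with $Y$ having terminal $G\mathbb{Q}$-factorial singularities, and three cases arise: $\dim Z=2$ ($G$-conic bundle), $\dim Z=1$ ($G$-del Pezzo fibration), and $\dim Z=0$ ($GQ$-Fano threefold with $\operatorname{rk}\Pic(Y)^G=1$).

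In the fibration cases I would use simplicity of $G$. Either $G$ acts faithfully on the base, or it acts trivially there and hence faithfully on the generic fibre. For $Z=\PP^1$ the base action is trivial, since there is no simple non-abelian subgroup of $\PGL_2(\CC)$ except $\mathfrak{A}_5$. So $G$ embeds into the automorphism group of a del Pezzo surface over a function field, and the classification of finite subgroups of $\Cr_2$ yields only $\mathfrak{A}_5,\mathfrak{A}_6,\PSL_2(\FF_7)$. For conic bundles, $G$ either embeds in $\PGL_2$ of the function field, giving $\mathfrak{A}_5$, or acts faithfully on the rational surface $Z$, again landing in Dolgachev--Iskovskikh's list.

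The Fano case is the main obstacle. If $Y$ is non-Gorenstein, I would use orbifold Riemann--Roch and the basket of singular points: $G$ permutes the non-Gorenstein points, and bounds such as $\sum(r_i-1/r_i)<24$ force small orbits and thus small permutation representations. This excludes most large groups and yields e.g. $\mathfrak{A}_7$. If $Y$ is Gorenstein, $G$ acts on $H^0(-K_Y)$ of dimension $g+2$. Then I would use the classification of Fano threefolds, Namikawa smoothings, and bounds on the genus, together with a case-by-case analysis of faithful low-dimensional representations of simple groups (via the classification of finite simple subgroups of $\PGL_n$ for small $n$). This produces $\SL_2(\FF_8)$ and $\mathrm{PSp}_4(\FF_3)$, acting on specific Fano threefolds such as the Burkhardt quartic, and excludes the others, e.g.\ by fixed-point and Lefschetz number arguments. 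I expect ruling out the remaining candidate groups on singular Fano threefolds to be the hardest part. Finally, each listed group must be realized: $\mathfrak{A}_5,\mathfrak{A}_6,\mathfrak{A}_7,\PSL_2(\FF_7)$ act linearly on $\PP^3$ or on rational quadrics, and $\SL_2(\FF_8)$ and $\mathrm{PSp}_4(\FF_3)$ act on rational Fano threefolds.
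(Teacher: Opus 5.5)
\textbf{There is a genuine gap in the Fano case.} The paper does not prove this statement; it only quotes it from \cite{P:JAG:simple}. Your outline of Prokhorov's strategy is the right frame: regularize, run the $G$-MMP, treat the fibration cases via the Dolgachev--Iskovskikh list, then handle $G\mathbb{Q}$-Fano threefolds. The fibration part is essentially fine. The problem is that nowhere in the Fano case do you use that the output $Y$ is \emph{rational}, as opposed to merely rationally connected. With only rational connectedness as input, no argument of the kinds you list can reach the stated list: orbifold Riemann--Roch, $\mathbb{Q}$-Gorenstein smoothings, genus bounds, representation theory, fixed points and Lefschetz numbers all fail. The reason is that $\PSL_2(\FF_{11})$ really does act on rationally connected $G\mathbb{Q}$-Fano threefolds, namely the Klein cubic in $\PP^4$ and a smooth prime Fano threefold of genus $8$. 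The paper itself recalls this in its introduction, citing \cite[Theorem~1.5]{P:JAG:simple}. A group that genuinely occurs cannot be excluded by fixed-point or representation-theoretic obstructions. Carried out as written, your programme would give the classification for rationally connected threefolds, which still contains $\PSL_2(\FF_{11})$.

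\textbf{The missing step.} For $G\simeq\PSL_2(\FF_{11})$, the fibration cases are excluded because $\PSL_2(\FF_{11})$ embeds neither in $\Cr_2(\CC)$ nor in $\PGL_2$ of a function field. You must then show that the only $G\mathbb{Q}$-Fano models are these two threefolds, and that both are irrational. The Klein cubic is irrational by Clemens--Griffiths. The genus-$8$ threefold is birational to a smooth cubic threefold, so its intermediate Jacobian is not a product of Jacobians. Since the MMP output is birational to the rational variety $X$, this gives the contradiction. This use of rationality is exactly what separates the $\Cr_3(\CC)$ list from Prokhorov's rationally connected list.

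\textbf{Minor points.} Your final realization paragraph is not needed, since the statement only asserts one direction. In the del Pezzo fibration case it is cleaner to restrict $G$ to a general fibre, which is a smooth (hence rational) del Pezzo surface over $\CC$, rather than argue over the function field.
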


Our aim is to extend this classification to finite almost simple subgroups of the group $\Cr_3(\CC)$. Recall that a finite group $G$ is said to be almost simple if
$$A\subseteq G\subseteq\Aut(A)$$
for a non-abelian simple group $A$. The group $A$ is called the socle of $G$. In this paper, we prove the following result.

\begin{maintheorem*}
Let $G$ be a finite almost simple subgroup of the group $\Cr_3(\CC)$ that is not simple. Then $G$ is isomorphic to one of the following groups:
\begin{center}
    $\mathfrak{S}_5,\quad$ $\mathfrak{S}_6,\quad$ $\PGL_2(\FF_7),\quad$ $\PGL_2(\FF_9)$.
\end{center}
Moreover, up to conjugation, the group $\Cr_3(\CC)$ contains unique subgroups isomorphic to $\PGL_2(\FF_7)$ and $\PGL_2(\FF_9)$.
\end{maintheorem*}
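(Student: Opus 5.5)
The plan is to combine Prokhorov's list of possible socles with the standard $G$-equivariant Minimal Model Program. Let $G\subset\Cr_3(\CC)$ be finite, almost simple and not simple, with socle $A$. Then $A$ is one of $\mathfrak{A}_5$, $\PSL_2(\FF_7)$, $\mathfrak{A}_6$, $\mathfrak{A}_7$, $\SL_2(\FF_8)$, $\mathrm{PSp}_4(\FF_3)$. Consequently $G$ is one of
\[
\mathfrak{S}_5,\ \PGL_2(\FF_7),\ \mathfrak{S}_6,\ \PGL_2(\FF_9),\ M_{10},\ \Aut(\mathfrak{A}_6),\ \mathfrak{S}_7,\ \SL_2(\FF_8)\rtimes\mumu_3,\ \mathrm{PSp}_4(\FF_3)\rtimes\mumu_2 .
\]
Regularize $G$ and run the $G$-MMP on a $G$-equivariant resolution. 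This produces a rationally connected $G$-Mori fibre space $X\to S$, where $X$ has terminal $G\mathbb{Q}$-factorial singularities. There are three cases: $S$ is a point (a $G$-Fano threefold with $\mathrm{rk}\,\Pic(X)^G=1$), $S$ is a curve (a del Pezzo fibration over $\PP^1$), or $S$ is a surface (a conic bundle over a rational surface).

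\textbf{Fibrations.} When $\dim S>0$, the group $G$ acts on $S$ with kernel $G_F$, and $G_F$ acts faithfully on the generic fibre, which is a curve or a surface. Since $G$ is almost simple, $G_F$ is either trivial or contains $A$.

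If $G_F\supseteq A$, the socle acts on a conic or on a del Pezzo surface over a non-closed field. The known classification of almost simple groups acting on rational curves and rational surfaces (Dolgachev–Iskovskikh) then forces $G\cong\mathfrak{S}_5$ or $\mathfrak{S}_6$, or rules the case out entirely. The case $\mathfrak{S}_6$ on a del Pezzo fibration uses the $\mathfrak{S}_6$-action on a del Pezzo surface of degree $1$ or $2$; I would check this against the surface classification. Note that $\PGL_2(\FF_7)$ does not act on $\PP^2$.

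If $G_F$ is trivial, $G$ embeds into $\Aut(\PP^1)$ or $\Cr_2(\CC)$, which leaves only $\mathfrak{S}_5$ and $\mathfrak{S}_6$. Since these two groups clearly do embed (for instance $\mathfrak{S}_5,\mathfrak{S}_6\subset\Cr_2\subset\Cr_3$), they appear in the final list.

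\textbf{Fano case.} This is the main obstacle. Here $X$ is a $G$-Fano threefold, possibly singular, with $\mathrm{rk}\,\Pic(X)^G=1$. I would follow Prokhorov's method in \cite{P:JAG:simple} and treat $X$ as a $A$-Fano threefold with $G$-structure.

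If $X$ is non-Gorenstein, the basket of singular points is $G$-invariant. Orbit lengths of $A$ and $G$ together with the Kawamata bound $\sum(r_P-1/r_P)<24$ leave a short list, and the twisted group actions $M_{10}$, $\SL_2(\FF_8)\rtimes\mumu_3$, etc. exceed the available orbits.

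If $X$ is Gorenstein, I would use the $G$-action on $H^{1,2}$ and on $H^0(-K_X)$ to identify $X$. For the Gorenstein rational candidates, the list is: $\PP^3$, quadrics, the Segre cubic, the Burkhardt quartic, $\PP^1\times\PP^1\times\PP^1$, $V_{22}$, and the double cover of $\PP^3$ branched in a $\mathfrak{S}_6$-invariant quartic. One then checks which of these admit the extended groups.

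Representation theory excludes $\mathfrak{S}_7$, $\Aut(\mathfrak{A}_6)$, $M_{10}$, $\SL_2(\FF_8)\rtimes\mumu_3$ and $\mathrm{PSp}_4(\FF_3)\rtimes\mumu_2$. For the last one, the outer automorphism swaps the two $4$-dimensional representations of $\mathrm{Sp}_4(\FF_3)$, so it does not preserve the Burkhardt quartic. For $\mathfrak{S}_7$ the obstruction comes from the small representations and orbits. This leaves $\PGL_2(\FF_7)$ acting on a rational Fano threefold such as $V_{22}$ (Mukai–Umemura-type, with $\PSL_2(\FF_7)$ extended), and $\PGL_2(\FF_9)$ acting on the Segre cubic or a related $\mathfrak{S}_6$-variety via the outer automorphism.

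\textbf{Uniqueness.} For $\PGL_2(\FF_7)$ and $\PGL_2(\FF_9)$, I would show $G$-birational rigidity of the unique $G$-Mori fibre space found, using the Noether–Fano inequalities. Multiplicity estimates at small $G$-orbits exclude maximal centres, since these groups have no small orbits. Rigidity then shows that any two embeddings are conjugate. I expect this rigidity analysis, together with the non-Gorenstein orbit bookkeeping, to be the hardest part.
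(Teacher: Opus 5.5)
Your proposal has genuine gaps in the fibration step, in the Fano step, and in the uniqueness step, and the models it ends with are wrong.

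\textbf{Fibrations.} Dolgachev--Iskovskikh only constrains the kernel $G_F$, and $\PSL_2(\FF_7)$ and $\mathfrak{A}_6$ do act on $\PP^2$ ($\PSL_2(\FF_7)$ also on the Klein double plane). So the case $G_F=A$ with $G/A\simeq\mumu_2$ acting non-trivially on a base $\PP^1$ is compatible with that classification. Nothing in your argument excludes $\PGL_2(\FF_7)$, $\PGL_2(\FF_9)$ or $M_{10}$ in this configuration, and you need that exclusion both to kill $M_{10}$ and for uniqueness.

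The paper's Fibration Theorem handles this case as follows:
\begin{enumerate}
\item take the fibre $S$ over a $G$-fixed point of $\PP^1$ and a minimal log canonical centre of $(Y,\mu S)$;
\item make this centre $G$-invariant by tie-breaking and Koll\'ar--Shokurov connectedness;
\item observe that it is not a point, since these groups have no fixed points on terminal threefolds;
\item apply Kawamata subadjunction: the centre is a rational curve or a klt log del Pezzo surface with faithful $G$-action, which contradicts $G\not\subset\Cr_2(\CC)$.
\end{enumerate}
Two of your claims here are also false. First, $\mathfrak{S}_6$ does not embed in $\Cr_2(\CC)$. Second, no del Pezzo surface of degree $1$ or $2$ carries an $\mathfrak{S}_6$-action; even $\mathfrak{A}_6$ acts only on $\PP^2$. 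Existence of $\mathfrak{S}_6$ in $\Cr_3(\CC)$ must come instead from, e.g., its action on $\PP^3$ or on the smooth quadric threefold.

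\textbf{Fano case and uniqueness.} Your list of ``Gorenstein rational candidates'' is not a quotable theorem; producing such a list is the content of Sections 3--5. It also omits the decisive model $W=\{x_1y_1+x_2y_2+x_3y_3=0\}\subset\PP^2\times\PP^2$. Here $\rho(W)=2$ but $\rho^G(W)=1$, because the outer involution swaps the factors.

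The missing idea is the following. The outer class giving $\PGL_2(\FF_9)$ (resp.\ $\PGL_2(\FF_7)$) acts on the $3$-dimensional representations of $3.\mathfrak{A}_6$ (resp.\ $\PSL_2(\FF_7)$) by duality, so it is realised on $W$ by $\PP(V)\leftrightarrow\PP(V^\vee)$. The $M_{10}$-class instead swaps two representations with the same central character, so it does not act on $W$.

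The models you propose are impossible:
\begin{itemize}
\item the Segre cubic has automorphism group $\mathfrak{S}_6\not\simeq\PGL_2(\FF_9)$;
\item a smooth $V_{22}$ has $\Aut(V_{22})\subset\PGL_3(\CC)$ via its Hilbert scheme of conics, while $\PGL_2(\FF_7)\not\subset\PGL_3(\CC)$.
\end{itemize}

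Representation theory and orbit counts do not dispose of $M_{10}$. It has faithful representations of dimensions $9,10,16$, so genera $7$ and $9$ survive. A single orbit of twelve $\frac12(1,1,1)$-points with stabiliser $\mathfrak{A}_5$ satisfies Kawamata's bound. The paper needs:
\begin{itemize}
\item the faithful action on $\operatorname{Cl}(X)$, giving $\mathrm{r}(X)\geqslant10$;
\item absence of quadric and cubic surfaces;
\item the MMP bound against $(-K_{\widetilde{X}_n})^3\leqslant64$;
\item in the non-Gorenstein case, smallness of the anticanonical model, which yields at least $36$ singular points against Namikawa's bound.
\end{itemize}

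Similarly, $\mathfrak{S}_7$ genuinely acts on the rationally connected sextic $X_6$. Excluding it requires Beauville's irrationality of $X_6$ and Prokhorov's reduction to $\PP^3$, together with $\mathfrak{S}_7\not\subset\PGL_4(\CC)$.

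Finally, uniqueness does not go through $G$-birational rigidity, which the paper neither proves nor needs. One shows that every rational $G\QQ$-Fano threefold is $G$-birational to $W$. One then shows that all copies of $G$ in $\Aut(W)$ are conjugate, using $N_{\Aut(W)}(\mathfrak{A}_6)\simeq\PGL_2(\FF_9)$ and the transitivity of $\operatorname{Out}(\mathfrak{A}_6)$ on the four $3$-dimensional representations, with the analogue for $\PSL_2(\FF_7)$.
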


The key example for this theorem is the following.

\begin{example*}
Let $W$ be the smooth threefold 
\begin{equation}
\label{equation:W}
\big\{x_1y_1+x_2y_2+x_3y_3=0\big\}\subset\mathbb{P}^2_{x_1,x_2,x_3}\times\mathbb{P}^2_{y_1,y_2,y_3}.\tag{$\bigstar$}
\end{equation}
Then $\mathrm{Aut}(W)\simeq\mathrm{PGL}_3(\mathbb{C})\rtimes\mumu_2$ and $W$ is rational. Up to conjugation, the group $\mathrm{Aut}(W)$ contains unique subgroups isomorphic to $\PGL_2(\FF_7)$ and $\PGL_2(\FF_9)$. These actions give us subgroups in $\Cr_3(\CC)$ isomorphic to $\PGL_2(\FF_7)$ and $\PGL_2(\FF_9)$ (see Proposition \ref{proposition:W-actions} and Remark \ref{remark:PGL27-flag}).
\end{example*}

The next example shows that $\Cr_3(\CC)$ has subgroups isomorphic to $\mathfrak{S}_6$, so, in particular, it also contains subgroups isomorphic to $\mathfrak{S}_5$.

\begin{example*}
Let $X_1=\mathbb{P}^3$, let $X_2$ be the smooth quadric threefold in $\mathbb{P}^4$, let $X_3$ be the Segre cubic 
$$
\Big\{\sum_{i=1}^{6}x_i=\sum_{i=1}^{6}x^3_i=0\Big\}\subset\mathbb{P}^5,
$$
and let $X_{4}^\lambda$ be the quartic threefold given by 
$$
\Big\{\sum_{i=1}^{6}x_i=\sum_{i=1}^{6}x^4_i-\lambda\Big(\sum_{i=1}^{6}x^2_i\Big)^2=0\Big\}\subset\mathbb{P}^5,
$$
where $\lambda\in\{\frac{1}{2},\frac{1}{6},\frac{1}{7}\big\}$. Then each of these threefolds admits a faithful action of the group $\mathfrak{S}_6$, and all of them are rational \cite{CS2016,CKS}. The $\mathfrak{S}_6$-actions on these threefolds give $6$ subgroups in $\Cr_3(\CC)$ that are isomorphic to $\mathfrak{S}_6$. These subgroups are pairwise not conjugate \cite{cheltsov2014five,CheltsovShramovSelecta,BuBa,CKS}.
\end{example*}

Above, we described $6$ pairwise non-conjugate subgroups in $\mathrm{Cr}_3(\mathbb{C})$ that are isomorphic to $\mathfrak{S}_6$. We believe that these actions represent all the conjugacy classes of subgroups of $\Cr_3(\CC)$ isomorphic to $\mathfrak{S}_6$.

\begin{conjecture*}
$\mathrm{Cr}_3(\mathbb{C})$ contains exactly $6$ conjugacy classes of subgroups isomorphic to $\mathfrak{S}_6$.
\end{conjecture*}

In the following we outline the proof of the Main Theorem and the reductions we will use. Let $X$ be a rational threefold such that $\Aut(X)$ contains a finite almost simple group $G$ that is not simple. Then $G$ contains a non-abelian simple subgroup $A$ such that $G$ is isomorphic to a subgroup of $\mathrm{Aut}(A)$. From \cite{P:JAG:simple}, we know that $A$ is isomorphic to one of the following groups:
\begin{center}
$\mathfrak{A}_5$, $\PSL_2(\FF_7)$, $\mathfrak{A}_6$, $\mathfrak{A}_7$, $\SL_2(\FF_8)$, $\mathrm{PSp}_{4}(\FF_3)$.
\end{center}

Since $\mathrm{Aut}(\mathfrak{A}_5)\simeq\mathfrak{S}_5$, we may assume that $A\not\simeq\mathfrak{A}_5$. Then, taking a $G$-equivariant resolution of singularities of $X$ and applying the $G$-equivariant Minimal Model Program, we may assume that $X$ has terminal singularities, every $G$-invariant Weil divisor on $X$ is $\mathbb{Q}$-Cartier, and there exists a $G$-equivariant dominant morphism $\pi\colon X\to Z$ such that $Z$ is normal, $\pi$ has connected fibers, and one of the following three possibilities holds:
\begin{enumerate}
\item $Z$ is a point and $X$ is a $G\mathbb{Q}$-Fano threefold, i.e. $\mathrm{rk}\,\mathrm{Cl}(X)^G=1$;
\item $Z=\mathbb{P}^1$ and the general fiber of $\pi$ is a smooth del Pezzo surface;
\item $Z$ is a rational surface, and general fiber of $\pi$ is isomorphic to $\mathbb{P}^1$.
\end{enumerate}
If $A$ is isomorphic to one of the groups $\mathfrak{A}_7$, $\SL_2(\FF_8)$, $\mathrm{PSp}_{4}(\FF_3)$, it follows from \cite{P:JAG:simple,Bea11,CheltsovPinardinProkhorov} that $Z$ is a point, and one of the following cases holds:
\begin{itemize}
\item $A\simeq\mathfrak{A}_7$ and $X\simeq\mathbb{P}^3$;
\item $A\simeq\mathrm{PSp}_{4}(\FF_3)$ and $X\simeq\mathbb{P}^3$;
\item $A\simeq\mathrm{PSp}_{4}(\FF_3)$ and $X$ is isomorphic to the Burkhardt quartic in $\mathbb{P}^4$;
\item $A\simeq \SL_2(\FF_8)$ and $X$ is isomorphic to the unique smooth Fano threefold of Picard rank $1$ and genus $7$ that is described in \cite[Example 2.11]{P:JAG:simple}.
\end{itemize}
In each of these cases, the group $A$ is a maximal finite subgroup of $\mathrm{Aut}(X)$, so $G=A$, which contradicts our assumption that $G$ is not simple. Hence, either $A\simeq\PSL_2(\FF_7)$ or $A\simeq\mathfrak{A}_6$. But
\begin{align*}
\mathrm{Aut}(\PSL_2(\FF_7))&\simeq\PGL_2(\FF_7),\\
\mathrm{Aut}(\mathfrak{A}_6)&\simeq\mathrm{P\Gamma L}_2(\FF_9).
\end{align*}

Therefore, if $A\simeq\PSL_2(\FF_7)$, then $G\simeq\PGL_2(\FF_7)$. On the other hand, if $A\simeq\mathfrak{A}_6$, then one of the following $4$ cases holds:
\begin{enumerate}
\item $G\simeq\mathfrak{S}_6\simeq\mathfrak{A}_6\rtimes\mumu_2$;
\item $G\simeq\mathfrak{A}_6\rtimes\mumu_2\simeq\PGL_2(\FF_9)$;
\item $G\simeq\mathfrak{A}_6.\mumu_2\simeq M_{10}$; 
\item $G\simeq\mathrm{P\Gamma L}_2(\FF_9)$.
\end{enumerate}

After the previous discussion, it is clear that Main Theorem will follow directly from the two theorems stated below.  

\begin{techtheorem*}
    \label{techtheorem}
    The group $\Cr_3(\CC)$ contains no subgroup isomorphic to $M_{10}$.
\end{techtheorem*}

\begin{auxtheorem*}
    \label{auxtheorem}
    Up to conjugation, the group $\Cr_3(\CC)$ contains unique subgroups isomorphic to $\PGL_2(\FF_7)$ and $\PGL_2(\FF_9)$.
\end{auxtheorem*}

Now we restate and simplify these theorems in geometric language. In the following, we assume that $G\simeq\PGL_2(\FF_7)$, $G\simeq\PGL_2(\FF_9)$ or $G\simeq M_{10}$. First, we prove

\begin{fibrationprop*}
    \phantomsection
    \label{theorem:fibration}
    Let $X$ be a rationally connected threefold with terminal singularities that admits a faithful action a group $G$ isomorphic to one of the groups $\PGL_2(\FF_7),\PGL_2(\FF_9),M_{10}$. Then $X$ is $G$-birational  neither to a $G$-conic bundle nor to a $G$-del Pezzo fibration.
\end{fibrationprop*}

In view of the \hyperref[theorem:fibration]{Fibration Theorem} we can assume that $X$ is a $G\mathbb{Q}$-Fano threefold (see \cite{P:GFano1}). Finally, using the technique developed in \cite{Prokhorov-p-groups,P:JAG:simple,Prokhorov-2-groups,YuraCostya-p-groups,Kuznetsova,Loginov,YuraS6,CheltsovPinardinProkhorov}, we prove the following statements.

\begin{techtheorem*}[Geometric version]
    \label{tech}
    Let $X$ be a rational $G\QQ$-Fano threefold. Then $G~\not\simeq~M_{10}$. 
\end{techtheorem*}

\begin{auxtheorem*}[Geometric version]
    \label{char}
    Let $X$ be a rational $G\QQ$-Fano threefold such that $G\simeq\PGL_2(\FF_7)$ or $G\simeq\PGL_2(\FF_9)$. Then $X$ is $G$-birational to the threefold~\eqref{equation:W}.
\end{auxtheorem*}

In fact, our arguments give a stronger result for rationally connected threefolds. In this setting, additional irrational models occur, beginning with the following example.

\begin{example*}[Symmetric sextic threefold]
    Let $X_6$ be the symmetric sextic Fano threefold
    \begin{equation}
        \label{equation:sextic}
        X_6=\left\{\sum_{i=0}^6x_i=\sum_{i=0}^6x_i^2=\sum_{i=0}^6x_i^3=0\right\}\subset\PP^6
    \end{equation}
    It is proved in \cite{BeauvilleA7} that $X_6$ is irrational. Moreover, in \cite{YuraS6} the author proves that $X_6$ is the unique birational model for an $\mathfrak{S}_7$-action, up to conjugation.
\end{example*}

Two further models arise for $G\simeq\PGL_2(\FF_7)$. Using \textsc{MAGMA}, we found exactly two $\PGL_2(\FF_7)$-invariant complete intersections of a quadric and a cubic hypersurface in $\PP^5$ and verified their smoothness. We denote them by
$$X_{2,3}^{(1)},\qquad X_{2,3}^{(2)}\subset\PP^5.$$

\begin{rattheorem*}
The threefolds $X_{2,3}^{(1)}$ and $X_{2,3}^{(2)}$ are irrational.
\end{rattheorem*}

This corresponds to Theorem \ref{theorem:X-2-3-irrational}, and the proof uses the Clemens--Griffiths rationality criterion and the induced action of $\PGL_2(\FF_7)$ on the intermediate Jacobian. Since $X_{2,3}^{(1)}$ and $X_{2,3}^{(2)}$ are smooth Fano threefolds, they are rationally connected. Thus, although they do not give subgroups of the space Cremona group, they must be retained when one studies birational actions on rationally connected threefolds.  

\begin{remark*}
    Another irrational threefold shows up in the proof of Lemma~\ref{lemma:PGL27-gorenstein}, but it does not contribute an additional model to the Cremona classification. Let $A=\PSL_2(\FF_7)$. It is shown in \cite{GP11} that there exists a unique $A$-invariant divisor $X$ of bidegree $(2,2)$ in $\PP_{x_1,x_2,x_3}^2\times\PP_{y_1,y_2,y_3}^2$, given by the equation
$$x_1x_2y_1^2+x_1^2y_1y_2+x_2x_3y_2^2+x_3^2y_1y_3+x_2^2y_2y_3+x_1x_3y_3^2=0,$$
and $X$ is irrational. One sees that the discriminant of the first coordinate projection is a sextic curve in $\PP^2$ that is $A$-invariant, so $[\Aut(X):A]\leqslant2$. On the other hand, the symmetry of the equation gives that the involution of $\PP^2\times\PP^2$ swapping both factors induces an involution $\tau\in\Aut(X)$, and then $[\Aut(X):A]=2$. However, since $A$ acts diagonally on $\PP^2\times\PP^2$, we get that $\Aut(X)\simeq A\times\langle\tau\rangle$. In particular, $X$ does not admit an action of $\PGL_2(\FF_7)$.
\end{remark*}

For rationally connected threefolds, \cite[Theorem~1.5]{P:JAG:simple} also allows the additional socle $A\simeq\PSL_2(\FF_{11})$. Its two models are the Klein cubic and a smooth prime Fano threefold of genus~$8$, both with full automorphism group~$A$. For the Klein cubic this is stated in \cite[Example~2.6]{P:JAG:simple}. In the case of the smooth prime Fano threefold of genus~$8$, this follows from \cite[Lemma~4.3.4 and Proposition~B.6.1]{KPS:Hilb} with \cite[Theorem~2.1]{Roulleau:Klein}. By \cite[Theorem~4.3]{BCDP:quasisimple}, every birational transformation normalizing an $A$-action is biregular. Hence neither action extends to a larger almost simple group. 

Combining our arguments with these observations and \cite[Theorem~1.5]{P:JAG:simple}, we obtain the following classification.

\begin{corollary*}[Rationally connected threefolds]
Let $X$ be a rationally connected threefold, and let $G\subset\Bir(X)$ be a finite almost simple subgroup that is not simple. Then $G$ is isomorphic to one of the following groups:
$$
\mathfrak{S}_5,\qquad
\mathfrak{S}_6,\qquad
\mathfrak{S}_7,\qquad
\PGL_2(\FF_7),\qquad
\PGL_2(\FF_9).
$$
All the possibilities occur. Furthermore, if $G$ is not isomorphic to $\mathfrak{S}_5$ or $\mathfrak{S}_6$, then $(X,G)$ is $G$-birational to one of the following models:
\begin{enumerate}
\item $\mathfrak{S}_7$ acting on the symmetric sextic Fano threefold \eqref{equation:sextic}.

\item $\PGL_2(\FF_7)$ acting on the threefold \eqref{equation:W};

\item $\PGL_2(\FF_7)$ acting on one of the two smooth complete intersections
$$
X_{2,3}^{(1)},\quad X_{2,3}^{(2)}\subset\PP^5;
$$
\item $\PGL_2(\FF_9)$ acting on the threefold \eqref{equation:W}.
\end{enumerate}
\end{corollary*}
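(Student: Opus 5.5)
The plan is to reduce the Corollary to the Main Theorem machinery (the Fibration Theorem, the geometric versions of the Technical and Characterization Theorems), now applied to rationally connected rather than rational threefolds, together with Prokhorov's classification \cite[Theorem~1.5]{P:JAG:simple}. The latter adds the socles $\mathfrak{A}_7$ with the sextic $X_6$, and $\PSL_2(\FF_{11})$.

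\textbf{Step 1: list the socles and their automorphism groups.} Let $A$ be the socle of $G$. Regularize and run the $G$-equivariant MMP as in the introduction. By \cite[Theorem~1.5]{P:JAG:simple}, $A$ lies in the list for $\Cr_3(\CC)$ or is $\PSL_2(\FF_{11})$; the rationally connected list for $\mathfrak{A}_7$ also contains $X_6$.

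\emph{$A\simeq\PSL_2(\FF_{11})$:} the only models are the Klein cubic and the genus~$8$ threefold, each with automorphism group $A$. By \cite[Theorem~4.3]{BCDP:quasisimple} the normalizer of $A$ in $\Bir(X)$ acts biregularly, so $G=A$. This case is excluded.

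\emph{$A\simeq\SL_2(\FF_8)$ or $\mathrm{PSp}_4(\FF_3)$:} as in the introduction, $A$ is maximal finite in $\Aut(X)$, so these are excluded as well.

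\emph{$A\simeq\mathfrak{A}_7$:} here $G\simeq\mathfrak{S}_7$, since $\Aut(\mathfrak{A}_7)\simeq\mathfrak{S}_7$. On $\PP^3$, the $\mathfrak{A}_7$-action comes from $\SL_4(\CC)\supset 2.\mathfrak{A}_7$. An extension to $\mathfrak{S}_7$ would need a $4$-dimensional projective representation of $2.\mathfrak{S}_7$ restricting to the given one, and none exists because the outer automorphism swaps the two $4$-dimensional representations. So the only $\mathfrak{S}_7$-model is $X_6$, by \cite{YuraS6}; this gives item (1), and its existence shows that $\mathfrak{S}_7$ occurs.

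\emph{Remaining socles:} $A\simeq\mathfrak{A}_5$, $\mathfrak{A}_6$ or $\PSL_2(\FF_7)$. These give $G\simeq\mathfrak{S}_5$, or one of $\mathfrak{S}_6$, $\PGL_2(\FF_9)$, $M_{10}$, $\mathrm{P\Gamma L}_2(\FF_9)$, or $\PGL_2(\FF_7)$. Since $\mathrm{P\Gamma L}_2(\FF_9)\supset M_{10}$, it suffices to exclude $M_{10}$.

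\textbf{Step 2: reduce to $G\QQ$-Fano threefolds.} The Fibration Theorem is already stated for rationally connected threefolds. So for $G\in\{\PGL_2(\FF_7),\PGL_2(\FF_9),M_{10}\}$ we may assume $X$ is a $G\QQ$-Fano threefold.

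\textbf{Step 3: rerun the analysis of $G\QQ$-Fano threefolds without assuming rationality.} Follow the case division of \cite{P:JAG:simple} into non-Gorenstein, Gorenstein non-Fano-index-one, and prime cases.

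1. \emph{Non-Gorenstein case.} Terminal points are handled by orbit-counting bounds, giving contradictions exactly as in the rational case.

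2. \emph{Gorenstein case.} Smooth Fano threefolds with a faithful $\PSL_2(\FF_7)$-action arise; Lemma~\ref{lemma:PGL27-gorenstein} identifies them. The ones appearing are:
   \begin{itemize}
   \item the threefold \eqref{equation:W};
   \item the two complete intersections $X_{2,3}^{(i)}$ found by \textsc{MAGMA};
   \item the $(2,2)$ divisor of the Remark. This one is discarded because it has no $\PGL_2(\FF_7)$-action, since $\Aut\simeq A\times\mumu_2$.
   \end{itemize}
   In the rational setting, the Rationality Theorem (Theorem~\ref{theorem:X-2-3-irrational}) was what eliminated $X_{2,3}^{(i)}$. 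Here that elimination step is skipped, and the $X_{2,3}^{(i)}$ are recorded as models in item (3).

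3. \emph{$M_{10}$.} Its exclusion in the geometric Technical Theorem must be checked to use only terminality and the $G\QQ$-Fano structure, never rationality. I expect this to hold, since the arguments are representation-theoretic: $M_{10}$ has no faithful low-dimensional projective representations compatible with the needed embeddings.

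4. \emph{$\PGL_2(\FF_9)$.} Likewise, the Characterization Theorem should give \eqref{equation:W}.

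\textbf{Step 4: existence and birational distinctness.} The examples given earlier show that every listed group occurs. Distinctness of the models is not strictly required by the statement.

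\textbf{Main obstacle.} The hardest part is Step~3: auditing that every exclusion in the $G\QQ$-Fano analysis used no rationality input, except where the irrational models $X_{2,3}^{(i)}$ and $X_6$ enter. I would first check the Gorenstein index-one, genus-by-genus analysis, since irrational prime Fano threefolds are most likely to sneak in there. In that analysis, each candidate family with a $\PSL_2(\FF_7)$- or $\mathfrak{A}_6$-action must be shown either not to extend to $G$ or to be one of the listed models.
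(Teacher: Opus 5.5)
Your route matches the paper's. You use \cite[Theorem~1.5]{P:JAG:simple} for the socles, \cite[Theorem~4.3]{BCDP:quasisimple} to kill $\PSL_2(\FF_{11})$, and \cite{YuraS6} for $\mathfrak{S}_7$. You then apply the Fibration Theorem, which is already stated for rationally connected threefolds, and rerun the $G\QQ$-Fano analysis, keeping $X_{2,3}^{(i)}$ instead of discarding them via Theorem~\ref{theorem:X-2-3-irrational} and discarding the $(2,2)$-divisor via the Remark.

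\textbf{The gap.} The step you defer is the only real content of the Corollary beyond the Main Theorem, and your prediction about it is wrong. You expect the exclusion of $M_{10}$, and the $\PGL_2(\FF_9)$ characterization obtained by rerunning Section~\ref{section:A6-ext-gorenstein}, to be rationality-free, but they are not. In genus $7$, Lemma~\ref{lemma:A6-ext-g-7-faithful} treats the subcase where $A$ acts trivially on $\operatorname{Cl}(X)$. It ends with a smooth $A$-equivariant standard conic bundle $h\colon\widetilde{X}\to\PP^2$, with $\rho(\widetilde{X})=2$ and discriminant the smooth $A$-invariant sextic $\Delta$. The only contradiction offered there is that $\widetilde{X}$ is irrational by \cite[Corollary~4.1.3]{RationalityII}.

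The Fibration Theorem does not rescue this case, because $h$ is only $A$-equivariant: $\tau\in G\setminus A$ exchanges $h$ with $h'$ in \eqref{equation:conic-bundles}. So for rationally connected $X$ your Step~3 breaks at this point. The failure also propagates, since Lemma~\ref{lemma:not-iso} in the non-Gorenstein case and the $\PGL_2(\FF_9)$ argument both invoke the Gorenstein case. The paper's justification of the Corollary is only ``combining our arguments'', so there is nothing to cite here; you must supply a rationality-free argument yourself.

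\textbf{A repair.} Let $V$ be the smoothing, so $\chi(V)=4-2h^{1,2}(V)=-10$. Going from $V$ to $X$ replaces each Milnor fibre (Euler characteristic $1-\mu_P$) by a point. Going from $X$ to $\widetilde{X}$ replaces each point by a tree of $n_P$ rational curves. Hence $\chi(\widetilde{X})=\chi(V)+\sum_P(\mu_P+n_P)\geqslant-10$. On the other hand, $\Delta$ is smooth of genus $10$, so every degenerate fibre is a pair of lines. This gives $\chi(\widetilde{X})=2\chi(\PP^2\setminus\Delta)+3\chi(\Delta)=6+\chi(\Delta)=-12$, a contradiction.

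\textbf{The $\PGL_2(\FF_7)$ side.} Lemma~\ref{lemma:PGL27-gorenstein} assumes rationality and outputs $g\in\{6,10\}$, not a list of models, so it does not ``identify'' anything. You need to rerun its proof keeping $g=4$. You then need the \textsc{MAGMA} statement that the two $X_{2,3}^{(i)}$ are the only $G$-invariant models of genus $4$. Finally, you must recheck that the $g=6$ and $g=10$ exclusions borrowed from \cite{CheltsovPinardinProkhorov} do not use rationality.
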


In Section~\ref{section:m10} we collect a series of lemmas about geometric actions of the groups $\mathfrak{A}_6$ and $M_{10}$ on curves, surfaces and threefolds, and we prove the~\hyperref[theorem:fibration]{Fibration Theorem}. We also describe the actions of the groups $\PGL_2(\FF_7)$ and $\PGL_2(\FF_9)$ on the threefold \eqref{equation:W}. Sections~\ref{section:A6-ext-gorenstein} and \ref{section:A6-ext-non-Gorenstein} are devoted to the proof of the \hyperref[techtheorem]{Technical Theorem}, under the assumption that $X$ has Gorenstein and non-Gorenstein singularities, respectively. In Section~\ref{section:chartheorem} we prove the \hyperref[auxtheorem]{Characterization Theorem}. The proof of this theorem is presented in a simplified way to avoid repetitions of arguments. For the group $G=\PGL_2(\FF_9)$ the proof is essentially the same as for the \hyperref[techtheorem]{Technical Theorem}, taking into account a small difference between the representation theory of these groups. In the case of the group $G=\PGL_2(\FF_7)$, the proof will follow the same structure as the previous cases, but there will be some major differences that are discussed in Subsections~\ref{subsection:PGL27-gorenstein} and \ref{subsection:PGL27-non-gorenstein}.

\medskip
\noindent
\textbf{Notations.}
Throughout this paper, we employ the~following notations and assumptions:
\begin{itemize}
\item $\mumu_n$ denotes the~cyclic group of order $n$;
\item $\mathfrak{D}_n$ denotes the~dihedral group of order $2n$;
\item $\mathfrak{S}_n$ and $\mathfrak{A}_n$ are the~symmetric and alternating groups, respectively;
\item all varieties are assumed to be projective, normal, irreducible and defined over $\mathbb{C}$ unless mentioned otherwise;
\item if $X$ is a variety, we denote by $\rho(X)$ the~rank of the~group $\mathrm{Pic}(X)$, and we denote by $\mathrm{r}(X)$ the rank of the group $\mathrm{Cl}(X)$;
\item if $X$ is a variety and $G$ is a~finite subgroup of $\mathrm{Aut}(X)$, we denote by $\rho^G(X)$ the~rank of the~group $\mathrm{Pic}(X)^G$, and we denote by $\mathrm{r}^G(X)$ the rank of the group $\mathrm{Cl}(X)^G$;
\item a~variety $X$ is said to be a~$G\mathbb{Q}$-Fano variety for a~finite subgroup $G\subset\mathrm{Aut}(X)$ if the~following $3$ conditions are satisfied: $X$ has terminal singularities, $-K_X$ is ample, and $\mathrm{r}^G(X)=1$;
\item if $X$ is a Fano threefold with canonical Gorenstein singularities, then the Fano index $\iota(X)$ is the largest integer $r$ such that $-K_X\sim rH$ for some $H\in\mathrm{Pic}(X)$, and the genus of $X$ is the positive integer 
$$\g(X)=\mathrm{dim}\big(|-K_X|\big)-1=\frac{(-K_X)^3+2}{2};$$
\item a projective threefold $X$ is said to be almost Fano if: $X$ has at most terminal Gorenstein singularities, $-K_X$ is big and nef, and the natural morphism
$$\phi:X\to \operatorname{Proj}\left(\bigoplus_{m\geqslant0}H^0(X,\mathscr{O}_X(-mK_X))\right)$$
does not contract divisors.
\item if $C$ is an irreducible curve, its arithmetic genus is denoted by $\mathrm{p}_a(C)$;
\item if $C$ is a smooth irreducible curve, its genus is denoted by $g(C)$.
\end{itemize}

\medskip
\noindent

\textbf{Acknowledgements.} I am very grateful to Ivan Cheltsov for introducing me to the topic, his attentive guidance and constant advices during my study of the subject and the creation of this document. I also thank Zhijia Zhang for informing me of the existence of two smooth $\PGL_2(\FF_7)$-invariant complete intersections of a quadric and a cubic in $\PP^5$, which he found using MAGMA.

\textbf{AI disclosure.} The AI model Chat-GPT 6 Astra by OpenAI has been used for proofreading. All the arguments and ideas presented here come from the author and the works cited throughout this document.

\section{Geometry of the Mathieu group $M_{10}$}
\label{section:m10}

Throughout this section, $G$ will denote a group isomorphic to $M_{10}$, while $A$ will be its unique subgroup isomorphic to $\mathfrak{A}_6$.

\subsection{Actions of the group $\mathfrak{A}_6$}
\label{subsection:A6}

\begin{lemma}
    \label{lemma:A6-genus}
    Let $C$ be a~smooth irreducible curve that admits a faithful action of the group $A$. Then the length of an $A$-orbit in $C$ can be $72,90,120,180,360$, $C$ is not hyperelliptic, and if $g=g(C)\leqslant30$ then $g\in\{10,16,19,25\}$.
\end{lemma}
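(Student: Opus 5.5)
Let $g=g(C)$. We pass to the quotient map $C\to C/A$ and use the Riemann--Hurwitz formula together with the subgroup structure of $\mathfrak{A}_6$.

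\emph{Step 1: orbit lengths.} The stabilizer of a point of a smooth curve under a faithful finite action is cyclic. So each $A$-orbit has length $360/n$, where $n$ is the order of an element of $\mathfrak{A}_6$. The element orders in $\mathfrak{A}_6$ are $1,2,3,4,5$. This gives exactly the orbit lengths $360,180,120,90,72$.

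\emph{Step 2: non-hyperellipticity.} Suppose $C$ is hyperelliptic (so $g\geqslant2$). The hyperelliptic involution is central in $\Aut(C)$. Since $\mathfrak{A}_6$ has trivial center, $A$ embeds into $\Aut(\PP^1)\simeq\PGL_2(\CC)$. But the finite subgroups of $\PGL_2(\CC)$ are cyclic, dihedral, $\mathfrak{A}_4$, $\mathfrak{S}_4$ and $\mathfrak{A}_5$, and $\mathfrak{A}_6$ is none of these. For $g\leqslant1$ there is no faithful action either: $\Aut(\PP^1)$ contains no $\mathfrak{A}_6$ by the same list, and for $g=1$ the automorphism group of an elliptic curve is a translation part extended by a cyclic group of order at most $6$, hence solvable. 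So $g\geqslant2$, and hyperellipticity is excluded.

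\emph{Step 3: genus bound.} Let $h=g(C/A)$ and let the branch points have stabilizer orders $n_1,\dots,n_r\in\{2,3,4,5\}$. Riemann--Hurwitz gives
$$2g-2=360\Big(2h-2+\sum_{i=1}^r\big(1-\tfrac1{n_i}\big)\Big).$$
If $h\geqslant1$ and $g\geqslant2$, the bracket is at least $1/2$, so $g\geqslant91$. Hence for $g\leqslant30$ we have $h=0$, and the bracket must lie in $(0,31/180]$. The smallest positive values occur for a few signatures:
\begin{itemize}
\item $(2,4,5)$ gives $1/20$, hence $g=10$;
\item $(2,5,5)$ gives $1/10$, hence $g=19$;
\item $(3,3,4)$ gives $1/12$, hence $g=16$;
\item $(3,3,5)$ gives $2/15$, hence $g=25$.
\end{itemize}
Every other signature must be checked as well. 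The signatures $(2,3,n)$ need $n\geqslant7$, and $(2,4,4)$ and $(3,3,3)$ give a bracket equal to $0$, so all of these are excluded. The remaining triples, such as $(2,3,\ast)$, $(2,4,4)$, $(3,4,4)$, $(2,2,n)$, $(3,3,3)$, and the four-point signatures $(2,2,2,3)$ and $(2,2,2,2)$, either give a non-positive bracket, give $g>30$, or correspond to dihedral or abelian quotients.

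\emph{Step 4: generation.} The main point is to rule out those small signatures, such as $(2,2,n)$ with $n\leqslant5$ and $(2,2,2,2)$, which give $g\leqslant1$ or are forbidden by generation. The genus $g$ is realized only if $\mathfrak{A}_6$ is generated by elements $a_1,\dots,a_r$ of orders $n_i$ with $a_1\cdots a_r=1$. I would check directly that no such generating tuple exists for $(2,2,n)$, since two involutions generate a dihedral group. Signatures with a bracket value between $2/15$ and $31/180$ would be excluded either by character-theoretic counting or by showing that their $g$ exceeds $30$. Finally, the four signatures listed above are realized, for instance by the standard generating triples of $\mathfrak{A}_6$, such as those of type $(2,4,5)$ and $(3,3,4)$.

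The step I expect to be the main obstacle is the exhaustive check that no other signature gives a bracket value in $(0,31/180]$. In particular, one must verify that four-point signatures such as $(2,2,2,3)$ give a bracket of $1/6$, and then either show that this is impossible or admit $g=31$. Since $g=31>30$, it falls outside the range of the lemma and causes no problem.
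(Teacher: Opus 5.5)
Your proposal is correct and follows the paper's route: cyclic stabilizers give the orbit lengths, and the non-embeddability of $\mathfrak{A}_6$ into $\PGL_2(\CC)$ excludes hyperellipticity and $g\leqslant1$. A Riemann--Hurwitz case analysis, which forces $h=0$, then leaves exactly the signatures $(2,4,5),(3,3,4),(2,5,5),(3,3,5)$. Two minor points: for $g\leqslant30$ the bracket is at most $29/180$, not $31/180$, and Step~4 is unnecessary, since the lemma only asserts a necessary condition and every other signature already has non-positive bracket (so $g\leqslant1$) or gives $g\geqslant31$.
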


\begin{proof}
    The stabilizer of $P\in C$ has to be a cyclic subgroup of $A$, so the length of an $A$-orbit can be just $72,90,120,180,360$ (see \cite{lmfdb}). Besides, $C$ cannot be hyperelliptic as $A$ does not embed into $\PGL_2(\CC)$. It is also known that $G$ does not act faithfully on an elliptic curve, so we can assume that $2\leqslant g\leqslant30$. Since $C\rightarrow C/A$ is a finite cover of degree $\vert A\vert$, if $m_1,\ldots,m_r$ denote the ramification indices of it and $h=g(C/A)$, by the Riemann-Hurwitz formula we have that 
    $$2g-2=\vert A\vert \left(2h-2+\sum_{i=1}^r\left(1-\frac{1}{m_i}\right)\right).$$
    Then if $a_m$ denotes the number of branch points of index $m$ we get that
    $$g-1=360(h-1)+90 a_2+120a_3+135a_4+144a_5.$$
    Analyzing the possible solutions for the restriction $2\leqslant g\leqslant30$ we get the claimed list of possible values for $g$.
\end{proof}

\begin{lemma}
\label{lemma:A6-singular-curves}
Let $C$ be an irreducible~singular curve with at most locally planar singularities that admits a faithful action of the~group $A$. Then \mbox{$\mathrm{p}_a(C)\geqslant 46$}. 
\end{lemma}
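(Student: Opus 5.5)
We pass to the normalization $\nu\colon\tilde C\to C$. The action of $A$ lifts to $\tilde C$ and remains faithful. We then compare arithmetic and geometric genera through the $\delta$-invariants of the singular points:
$$\mathrm{p}_a(C)=g(\tilde C)+\sum_{P\in\Sing(C)}\delta_P .$$
The singular locus $\Sing(C)$ is a finite $A$-invariant set, so it is a union of $A$-orbits. All points in one orbit have isomorphic singularities, so they share the same $\delta$-invariant, and each $\delta_P\geqslant1$ since $P$ is singular.

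The first key step is to bound the length of an $A$-orbit on $C$ from below. Lemma~\ref{lemma:A6-genus} applies to $\tilde C$ and not directly to $C$, but the stabilizer analysis adapts. Let $P\in C$ be singular. Its stabilizer $A_P$ acts on the finite set $\nu^{-1}(P)$ of branches, and the subgroup $A'$ fixing one branch point $\tilde P$ is cyclic, because stabilizers of points on a smooth curve are cyclic. Hence $A_P$ contains a cyclic subgroup whose index is at most the number $b$ of branches at $P$.

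To control $b$, we use that the singularity is locally planar, so the tangent space $T_PC$ has dimension at most $2$. The group $A_P$ acts faithfully on $T_PC$: otherwise some nontrivial element would act trivially on the cotangent space of a curve singularity, which a standard linearization argument rules out. Therefore $A_P$ embeds into $\mathrm{GL}_2(\CC)$. The finite subgroups of $\mathfrak{A}_6$ that embed into $\mathrm{GL}_2(\CC)$ are cyclic, $\mathfrak{D}_n$ for $n\in\{2,3,4,5\}$, $\mathfrak{A}_4$ and $\mathfrak{A}_5$. Here $\mathfrak{A}_5\subset\mathrm{GL}_2(\CC)$ is excluded since it would require a faithful $2$-dimensional representation of $\mathfrak{A}_5$, which does not exist. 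The subgroup $\mathfrak{S}_4$ of $\mathfrak{A}_6$ is likewise excluded, as it would only embed through a binary extension. We also exclude $\mathfrak{A}_4$ and $(\mumu_3)^2$ in the same way: $(\mumu_3)^2$ is abelian and non-cyclic, and it does embed as diagonal matrices, so this case needs care.

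This gives $|A_P|\leqslant 12$ or so, and hence every orbit of singular points has length at least $360/12=30$. Combining with the genus formula, if $g(\tilde C)\geqslant 10$ then we only get $\mathrm{p}_a(C)\geqslant 10+30=40$. This bound is too weak, so a sharper stabilizer bound is needed.

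The main obstacle is therefore to prove that a singular orbit has length at least $36$, i.e.\ that $|A_P|\leqslant 10$, or alternatively to exclude the combination $g(\tilde C)=10$ with a small orbit. For the first approach, I would use that $A_P$ permutes the branches and tangent directions at $P$. The subgroup fixing a branch is cyclic and acts on that branch's tangent line through a character, and the cases $A_P\simeq\mathfrak{A}_4$ and $A_P\simeq(\mumu_3)^2$ have to be checked. If a planar singularity with such a symmetric stabilizer forces a large $\delta_P$ (for example $\delta_P\geqslant3$ when there are at least $3$ branches, since $\delta\geqslant b-1$), then the contribution of that orbit is at least $3\cdot 30$, which is more than enough. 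The remaining configurations then have orbit length at least $36$, giving $\mathrm{p}_a(C)\geqslant 10+36=46$.

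Finally, if $\tilde C$ is rational or elliptic, which is impossible since $A$ acts on neither, the estimate is vacuous. So in all cases $\mathrm{p}_a(C)\geqslant 46$, the extremal case being $g(\tilde C)=10$ together with one orbit of $36$ nodes, corresponding to stabilizers of order $10$.
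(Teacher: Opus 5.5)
Your setup is the paper's: $\mathrm{p}_a(C)\geqslant g(\widetilde C)+|\Sing(C)|$ with $g(\widetilde C)\geqslant 10$, and the stabilizer $A_P$ of a singular point acts faithfully on the $2$-dimensional Zariski tangent space, so $A_P\hookrightarrow\mathrm{GL}_2(\CC)$. The proposal breaks at the group-theoretic step that follows, and it never reaches $46$.

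\textbf{The misclassification.} Your list of subgroups of $\mathfrak{A}_6$ embedding in $\mathrm{GL}_2(\CC)$ wrongly contains $\mathfrak{A}_4$, and your bound ``$|A_P|\leqslant 12$ or so'' comes from it. The irreducible representations of $\mathfrak{A}_4$ have degrees $1,1,1,3$. So every $2$-dimensional representation is a sum of two characters. These factor through $\mathfrak{A}_4^{\mathrm{ab}}\simeq\mumu_3$ and kill $\mumu_2^2$, hence $\mathfrak{A}_4\not\hookrightarrow\mathrm{GL}_2(\CC)$.

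Likewise none of $\mathfrak{S}_4$, $\mathfrak{A}_5$, $\mumu_3^2\rtimes\mumu_2$ or $\mumu_3^2\rtimes\mumu_4$ has a faithful $2$-dimensional representation. The subgroups that survive are:
\begin{center}
the cyclic ones (of order at most $5$), $\mumu_2^2$, $\mathfrak{S}_3$, $\mathfrak{D}_4$, $\mumu_3^2$, $\mathfrak{D}_5$.
\end{center}
All of these have order at most $10$. So every $A$-orbit in $\Sing(C)$ has length at least $36$, and $\mathrm{p}_a(C)\geqslant 10+36=46$ follows immediately. This is exactly the paper's proof. In particular $\mumu_3^2$ needs no special care: it does embed diagonally, but its index is $40\geqslant 36$.

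\textbf{Consequence for the proposal.} Because of this error you identify a ``main obstacle'' that does not exist. You then address it only conditionally (``if a planar singularity with such a symmetric stabilizer forces a large $\delta_P$\dots''), so as written your argument proves only $\mathrm{p}_a(C)\geqslant 40$.

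The sketched fallback also contains a slip: $\delta_P\geqslant b-1$ gives $\delta_P\geqslant 2$, not $3$, when $b=3$. Your branch-counting idea could in fact dispose of a hypothetical $\mathfrak{A}_4$-stabilizer. The branch stabilizers are cyclic of order at most $3$, which forces $b\geqslant 4$ and $\delta_P\geqslant 3$. But this is superfluous once the list of subgroups is corrected, and as submitted the decisive step is missing.
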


\begin{proof}
    Let $\pi:\widetilde{C}\to C$ be its normalization. By Lemma \ref{lemma:A6-genus} $g(\widetilde{C})\geqslant10$ and it is known that $\mathrm{p}_a(C)\geqslant g(\widetilde{C})+\vert\Sing(C)\vert$. By assumption, if $P\in C$ is a singular point the Zariski tangent space $T_{C,P}$ is a $2$-dimensional faithful representation of its stabilizer. On the other hand the groups, by the subgroup structure of $A$ and the character tables of its subgroups, every subgroup admitting a faithful $2$-dimensional complex representation has index at least $36$. This means that $\vert\Sing(C)\vert\geqslant36$ and the assertion follows.
\end{proof}

\begin{lemma}
\label{lemma:K3-a6}
Let $S$ be a smooth K3 surface that admits a faithful action of the group $A$. Then $\mathrm{Pic}(S)^{\mathfrak{A}_6}\simeq\mathbb{Z}$.
\end{lemma}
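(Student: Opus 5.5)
The plan is to show that $A$ acts symplectically on $S$ and then invoke Mukai's classification together with the lattice theory of symplectic actions. First, the action of $A$ on the one-dimensional space $H^0(S,K_S)=H^{2,0}(S)$ is a character $A\to\CC^*$. Since $\mathfrak{A}_6$ is simple and non-abelian, this character is trivial, so $A$ acts symplectically. By Mukai's theorem, $\mathfrak{A}_6$ is one of the maximal finite groups acting symplectically on a K3 surface. By Nikulin, the coinvariant lattice $L_A=(H^2(S,\mathbb{Z})^A)^\perp$ is negative definite, contained in $\Pic(S)$, and contains no $(-2)$-vectors. Its rank is computed from the character of $A$ on $H^*(S,\mathbb{Q})$, using the Lefschetz fixed point formula and Mukai's list of fixed-point counts for symplectic automorphisms: $\epsilon(n)=24,8,6,4,4,2,3,2$ for elements of orders $1,\dots,8$.

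Concretely, $\mathfrak{A}_6$ has elements of orders $1,2,3,4,5$. The class sizes are $1,45,40,40,90,72,72$, with two classes of order $3$ and two of order $5$. The rank of the invariant part of $H^*(S,\mathbb{Q})$ is
$$\frac{1}{360}\bigl(24+45\cdot 8+80\cdot 6+90\cdot 4+144\cdot 4\bigr)=\frac{1800}{360}=5.$$
Removing $H^0$ and $H^4$ gives $\operatorname{rk} H^2(S,\mathbb{Z})^A=3$, hence $\operatorname{rk}L_A=19$. Since $H^2(S,\mathbb{Z})^A$ contains the real and imaginary parts of the symplectic form, i.e. the $2$-dimensional positive part of the transcendental lattice, we get $T_S\otimes\mathbb{R}\subset H^2(S,\mathbb{Z})^A\otimes\mathbb{R}$. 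Consequently,
$$\Pic(S)^A=\Pic(S)\cap H^2(S,\mathbb{Z})^A=T_S^{\perp}\cap H^2(S,\mathbb{Z})^A.$$
Its rank is $3-\operatorname{rk}T_S$, which is at most $1$ because $\operatorname{rk}T_S\geqslant 2$.

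It remains to rule out rank $0$. An ample class averaged over $A$ gives a nonzero $A$-invariant ample class in $\Pic(S)$, so $\rho^A(S)\geqslant 1$. Therefore $\Pic(S)^A\simeq\mathbb{Z}$, and as a byproduct $\rho(S)=20$. The main point to double-check is the character computation. In particular, one needs the fixed point numbers for symplectic automorphisms of each order, which come from Nikulin's theorem that these numbers depend only on the order. I would verify that the sum over conjugacy classes gives exactly $5$ and not some other integer. If preferred, one can instead cite the known value $\operatorname{rk}L_{\mathfrak{A}_6}=19$ directly from Hashimoto's tables of coinvariant lattices.
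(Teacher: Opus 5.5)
Your argument is correct, but it takes a different route from the paper's.

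The paper passes to the quotient $\overline{S}=S/A$ and quotes Xiao's classification: $\overline{S}$ is a K3 surface with singularities $A_1+2A_2+2A_3+2A_4$. It then notes that the minimal resolution $\widetilde{S}$ satisfies $\rho(\widetilde{S})=\rho(\overline{S})+19\leqslant 20$, while $\rho(S)^A=\rho(\overline{S})$.

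You instead compute $\operatorname{rk}H^2(S,\mathbb{Z})^A=3$ directly from the Lefschetz formula and Nikulin's fixed-point numbers; your sum $1800/360=5$ is right. You then replace the bound $\rho(\widetilde{S})\leqslant 20$ by the inclusion $T_S\subset H^2(S,\mathbb{Z})^A$ together with $\operatorname{rk}T_S\geqslant 2$.

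The two computations express the same fact. $\operatorname{rk}L_A=19$ is exactly the number $1+2\cdot 2+2\cdot 3+2\cdot 4$ of exceptional curves in the paper, and the same fixed-point counts recover Xiao's singularity list.

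What each route buys: your version is more self-contained, since it needs only the values $\epsilon(n)$ rather than a table lookup. It also gives $\rho(S)=20$ as a byproduct. It makes explicit where projectivity enters, namely the averaged ample class giving rank at least $1$; the paper uses this only tacitly through $\rho(\overline{S})\geqslant 1$. The paper's version is shorter, at the cost of relying on the classification.

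One point of phrasing: containing $\mathrm{Re}\,\omega$ and $\mathrm{Im}\,\omega$ does not by itself give $T_S\otimes\mathbb{R}\subset H^2(S,\mathbb{Z})^A\otimes\mathbb{R}$. The clean justification is that $H^2(S,\mathbb{Z})^A$ is a primitive sublattice whose complexification contains $\omega$, and $T_S$ lies in every such sublattice. Equivalently, $L_A\subset\Pic(S)$ gives $T_S=\Pic(S)^{\perp}\subset L_A^{\perp}=H^2(S,\mathbb{Z})^A$ by unimodularity.

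The remarks about Mukai maximality and the absence of $(-2)$-vectors in $L_A$ are not needed.
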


\begin{proof}
    Consider the quotient $\overline{S}=S/A$, so we have $\rho(S)^{A}=\rho(\overline{S})$. Since $A$ is simple and non-abelian, the action of $A$ is symplectic, and from \cite[Theorem 3]{Xiao} we know that $\overline{S}$ is a K3 surface with $7$ Du Val singular points, corresponding to $1$ singular point of type $A_1$ and $2$ singular points of each type $A_2,A_3,A_4$. Consider $\psi:\widetilde{S}\to\overline{S}$ the minimal resolution of singularities. Then we have that
    $$\rho(\widetilde{S})=\rho(\overline{S})+2\cdot 4+2\cdot 3+2\cdot 2+1=\rho(\overline{S})+19.$$
    The conclusion is obtained from the fact that $\Pic(\widetilde{S})$ is torsion-free and verifies $\rho(\widetilde{S})\leqslant20$.
\end{proof}

\begin{corollary}
\label{corollary:K3-a6}
Let $S$ be a K3 surface with at most Du Val singularities that admits a faithful action of the group $A$. Then $S$ is smooth.
\end{corollary}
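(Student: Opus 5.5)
We argue by contradiction, using the minimal resolution and the rank bound from Lemma~\ref{lemma:K3-a6}. Suppose $S$ is a K3 surface with Du Val singularities and a faithful $A$-action, and that $\Sing(S)\neq\varnothing$. Let $\widetilde{S}\to S$ be the minimal resolution. It is a smooth K3 surface, and since the minimal resolution is unique and canonical, the $A$-action lifts to a faithful action on $\widetilde{S}$. By Lemma~\ref{lemma:K3-a6}, $\Pic(\widetilde{S})^A\simeq\mathbb{Z}$.

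The key step is to produce two independent $A$-invariant classes in $\Pic(\widetilde{S})$. First, pulling back an ample divisor on $S$ and averaging over $A$ gives an $A$-invariant class $H$ that is big and nef with $H^2>0$. Second, let $E$ be the sum of all exceptional curves of $\widetilde{S}\to S$. Since $A$ permutes the singular points and the exceptional curves over them, $E$ is an $A$-invariant nonzero effective divisor. Its intersection number with $H$ is zero, because every exceptional curve is contracted. Hence $E$ and $H$ are linearly independent in $\Pic(\widetilde{S})^A\otimes\QQ$. To see this concretely: if $aH+bE=0$, intersecting with $H$ gives $aH^2=0$, so $a=0$, and then $b=0$ since $E\neq0$. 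This gives $\rho^A(\widetilde{S})\geqslant2$, contradicting Lemma~\ref{lemma:K3-a6}, so $S$ is smooth. An alternative route would be to count singular-point orbits against the bound $\rho(\widetilde{S})\leqslant20$, but it is not needed.

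The only point that needs care is the first step. We must check that the action genuinely lifts and stays faithful, which holds because an automorphism acting trivially on $\widetilde{S}$ acts trivially on the dense open set isomorphic to $S\setminus\Sing(S)$. We must also check that Lemma~\ref{lemma:K3-a6} applies to $\widetilde{S}$, which holds since $\widetilde{S}$ is a smooth K3 surface (Du Val singularities are crepant). Both checks are standard, so there is no real obstacle.
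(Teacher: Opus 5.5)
Your proposal is correct and is exactly the deduction the paper leaves implicit when it states this as an immediate corollary of Lemma~\ref{lemma:K3-a6}. The minimal resolution is a smooth K3 surface with a faithful $A$-action, and the $A$-invariant exceptional divisor together with the pulled-back invariant ample class would force $\rho^A\geqslant2$, contradicting that lemma.
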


\begin{theorem}[{\cite[Lemma B.3]{cheltsov2014five}}]
\label{theorem:A6-dP}
    Let $S$ be a smooth del Pezzo surface such that $A\subset\Aut(S)$. Then $S\simeq\PP^2$.
\end{theorem}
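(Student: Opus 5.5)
We need to show that a smooth del Pezzo surface $S$ with $A\simeq\mathfrak{A}_6\subset\Aut(S)$ is $\PP^2$. The plan is to go through the classification by degree $d=K_S^2\in\{1,\dots,9\}$ and exclude every case except $d=9$. That case is realised by the Valentiner action of the triple cover $3.\mathfrak{A}_6\subset\SL_3(\CC)$, which descends to $\mathfrak{A}_6\subset\PGL_3(\CC)$. The main tool is that $\mathfrak{A}_6$ is simple of order $360$, so it embeds into a group $H$ only if $H$ has a section (subquotient) containing a copy of $\mathfrak{A}_6$. Also, any homomorphism from $\mathfrak{A}_6$ to a group of order prime to $5$, or to a solvable group, is trivial.

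\textbf{Surfaces other than $\PP^2$ with $d\geqslant 5$.}
\begin{itemize}
\item $\PP^1\times\PP^1$: here $\Aut(S)=(\PGL_2(\CC)\times\PGL_2(\CC))\rtimes\mumu_2$. The group $A$ would lie in $\PGL_2(\CC)^2$, so some projection $A\to\PGL_2(\CC)$ would be injective. This is impossible, since the finite subgroups of $\PGL_2(\CC)$ are cyclic, dihedral, $\mathfrak{A}_4$, $\mathfrak{S}_4$ and $\mathfrak{A}_5$.
\item $d=8$ ($\mathbb{F}_1$), $d=7$ and $d=6$: the automorphism group acts on the finite set of $(-1)$-curves. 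The image of $A$ in the corresponding permutation group (trivial, $\mumu_2$, or $\mathfrak{D}_6$) is trivial. Hence $A$ fixes each $(-1)$-curve, so it descends to $\PP^2$ fixing $1$, $2$ or $3$ points. It therefore lies in a parabolic subgroup of $\PGL_3(\CC)$ or in the torus extension $(\CC^*)^2\rtimes\mathfrak{S}_3$. The first is impossible because the Levi quotient of the parabolic is $\GL_2(\CC)$ modulo scalars, which is essentially $\PGL_2(\CC)$ or its extension, and $A$ does not embed there. The second is impossible because that group is solvable.
\item $d=5$: $\Aut(S)\simeq\mathfrak{S}_5$, whose order is not divisible by $360$.
\end{itemize}

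\textbf{Degrees $d\leqslant4$.}
\begin{itemize}
\item $d=4$: $\Aut(S)\subset\mumu_2^4\rtimes\mathfrak{S}_5$-type groups, whose order $16\cdot 120$ is not divisible by $9$. Since $9\mid 360$, this is impossible.
\item $d=3$: $\Aut(S)\subset W(E_6)$, of order $51840=2^7\cdot3^4\cdot5$. So the order does not exclude $A$. Instead I would use the known list of automorphism groups of smooth cubic surfaces. The largest is $\mumu_3^3\rtimes\mathfrak{S}_4$ for the Fermat cubic, and the only non-solvable one is $\mathfrak{S}_5$ for the Clebsch cubic. Neither contains $\mathfrak{A}_6$.
\item $d=2$: $\Aut(S)$ is a central extension by the Geiser involution of a group of automorphisms of a smooth plane quartic. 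The latter has order at most $168$, which is less than $360$.
\item $d=1$: $\Aut(S)$ is a central extension by the Bertini involution of a subgroup of $\Aut(\PP(1,1,2))$ preserving a curve. The image of $A$ would act faithfully on $\PP^1$ (the base of the pencil $|-K_S|$). Alternatively, it would fix the base point and act faithfully on its tangent plane via $\GL_2(\CC)$. Both are impossible by the classification of finite subgroups of $\PGL_2(\CC)$, together with the simplicity of $A$.
\end{itemize}

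\textbf{Main obstacle.} The hardest step is the cubic surface case $d=3$, because divisibility gives no obstruction there. If one prefers to avoid quoting the classification, here is an alternative. $A$ would act on $H^0(-K_S)\simeq\CC^4$ through a $4$-dimensional representation of $A$, or of $3.\mathfrak{A}_6$. The only nontrivial ones are $3+1$ for $3.\mathfrak{A}_6$. So $A$ would fix a point of $\PP^3$ and a plane, and the invariant cubic forms would have to be written in terms of Valentiner invariants. The Valentiner group has no invariants of degree $\leqslant 5$ other than constants, since its smallest invariant has degree $6$, and this would give a contradiction.
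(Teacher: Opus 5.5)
Your proof is correct. The paper gives no argument of its own for this statement; it simply quotes \cite[Lemma B.3]{cheltsov2014five}, which rests on the known automorphism groups of smooth del Pezzo surfaces. Your degree-by-degree analysis is an explicit unpacking of that classification, so the two routes agree in substance.

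Your main route for $d=3$, via the list of automorphism groups of cubic surfaces, is fine. However, the optional alternative you sketch for $d=3$ is wrong as stated. A faithful $4$-dimensional projective representation of $\mathfrak{A}_6$ comes from $2.\mathfrak{A}_6\simeq\SL_2(\FF_9)$, not from $3.\mathfrak{A}_6$. Moreover, $V_3\oplus\mathbf{1}$ does not induce a projective action of $\mathfrak{A}_6$ at all, since the centre of $3.\mathfrak{A}_6$ does not act on it by scalars. The correct shortcut is that $-K_S$ carries a canonical $\Aut(S)$-linearization, so $A$ acts linearly on $H^0(S,-K_S)\simeq\CC^4$. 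As $\mathfrak{A}_6$ has no non-trivial irreducible representations of dimension below $5$, this action is trivial, which contradicts faithfulness on $S\subset\PP^3$.

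A smaller point concerns $d=7,8$. The Levi quotient of a point stabilizer in $\PGL_3(\CC)$ is $\mathrm{GL}_2(\CC)$, not $\mathrm{GL}_2(\CC)$ modulo scalars, and you would also need that the unipotent radical is torsion-free. It is simpler to argue as in your $d=1$ case: $A$ fixes a point $p\in\PP^2$ and hence embeds into $\mathrm{GL}(T_p\PP^2)\simeq\mathrm{GL}_2(\CC)$, which is impossible.
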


\begin{lemma}
    \label{lemma:A6-invariant-curves-P2}
    Let $C\subset\PP^2$ be an $A$-invariant plane curve of $\deg(C)<12$. Then $\deg(C)=6$.
\end{lemma}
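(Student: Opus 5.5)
The plan is to use the representation theory of the action of $A$ on $\PP^2$. The subgroup $A\subset\PGL_3(\CC)$ lifts to the Valentiner group $3.\mathfrak{A}_6\subset\SL_3(\CC)$. An $A$-invariant curve $C$ of degree $d$ is given by a polynomial $F$ of degree $d$ that is semi-invariant for $3.\mathfrak{A}_6$. Since $3.\mathfrak{A}_6$ is perfect, every linear character is trivial, so $F$ is an honest invariant of degree $d$. The statement therefore reduces to finding the degrees below $12$ in which the invariant ring $\CC[x,y,z]^{3.\mathfrak{A}_6}$ is nonzero.

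The first step is to invoke the classical description of this ring, due to Wiman. It is generated by invariants $F_6,F_{12},F_{30},F_{45}$ of degrees $6,12,30,45$, subject to one relation in degree $90$. From this, the only nonzero graded piece in degree $d<12$ with $d>0$ is $d=6$, spanned by the Wiman sextic $F_6$. So if $C$ is reduced and is the zero locus of an invariant $F$, we get $\deg C=6$.

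A subtlety is that $C$ is $A$-invariant as a subset, so it could be a union of curves permuted by $A$ rather than one irreducible curve. This causes no trouble: the reduced defining polynomial of an invariant subset is still semi-invariant, and hence invariant, so the argument above covers it. If one prefers to avoid quoting Wiman, the degree constraint can be checked directly.

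For the direct check, I would compute the Molien series
$$\frac{1}{|3.\mathfrak{A}_6|}\sum_g \frac{1}{\det(1-tg)}$$
up to order $t^{11}$, using the character table of the $3$-dimensional representation. Alternatively, I would restrict to an irreducible component $C_0$ of $C$ and argue geometrically. Its orbit has $k=[A:\mathrm{Stab}(C_0)]$ components, and subgroups of $A$ have index $1,6,10,15,20,\dots$. Components of degree $e$ with $ke<12$ force small $e$. Lines, conics and cubics have stabilizers that are too small: they embed in $\PGL_2(\CC)$, or in $\Aut$ of a cubic. For irreducible invariant curves, Lemma~\ref{lemma:A6-genus} and Lemma~\ref{lemma:A6-singular-curves} rule out the remaining degrees. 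A smooth plane curve of degree $d$ has genus $(d-1)(d-2)/2$, which lies in $\{10,16,19,25\}$ only for $d=6$, where the genus is $10$. A singular one has $p_a\geqslant46$, which requires $d\geqslant 11$. The case $d=11$ then needs extra care, for instance via the Molien count.

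I expect this step to be the main obstacle: excluding $d=11$, together with the reducible configurations such as $6$ conics or $10$ lines. I would settle it with the Molien series, or with Wiman's generators, rather than by geometry.
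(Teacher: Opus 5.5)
Your proposal is correct and is essentially the paper's argument. The paper also identifies an $A$-invariant curve of degree $k$ with a one-dimensional subrepresentation of $\Sym^k V$ for the Valentiner group; as you note, this must be trivial since $3.\mathfrak{A}_6$ is perfect. It then checks by a MAGMA computation that for $1\leqslant k<12$ this happens only when $k=6$, which is exactly what Wiman's invariant ring, with generators in degrees $6,12,30,45$ (or the Molien series), gives you.

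Your geometric aside is not needed and contains slips. An irreducible singular plane curve of degree $\leqslant 11$ has $\mathrm{p}_a\leqslant 45$, so Lemma~\ref{lemma:A6-singular-curves} excludes it outright. Smooth curves of degree $10$ and $11$ have genus $36$ and $45$, which lies outside the range covered by Lemma~\ref{lemma:A6-genus}. These cases are moot anyway: the central scalar $\omega I\in 3.\mathfrak{A}_6$ acts on forms of degree $d$ by $\omega^d$, so invariance already forces $3\mid d$.
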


\begin{proof}
    An action of $A$ on $\PP^2$ comes from a $3$-dimensional representation $V$ of the Valentiner group $3.\mathfrak{A}_6$. Then an $A$-invariant curve of degree $k$ in $\PP^2$ will correspond to a $1$-dimensional subrepresentation of $\Sym^kV$. One can compute using MAGMA that for $1\leqslant k<12$ this just happens when $k=6$ and such a representation is unique. 
\end{proof}

We will also present the low degree $A$-invariant curves in $\PP^3$.

\begin{lemma}[{\cite{cheltsov2014five}}]
    \label{lemma:A6-invariant-curves}
    Let $A$ act on $\PP^3$ by a $4$-dimensional irreducible representation $V$ of the covering group $2.\mathfrak{A}_6$, and let $C\subset\PP^3$ be an $A$-invariant curve of $\deg(C)\leqslant6$. Then $\deg(C)=6$ and $C$ consists of $6$ disjoint lines. Moreover, $\PP^3$ contains exactly two such curves.
\end{lemma}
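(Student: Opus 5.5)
The plan is to split $C$ into its irreducible components and treat each $A$-orbit of components separately. Let $C=C_1\cup\dots\cup C_r$, where $A$ permutes the components transitively within each orbit. If a single component $C_1$ has stabilizer $H\subseteq A$, the orbit has $[A:H]$ components, each of degree $\deg(C_1)$. The subgroups of $\mathfrak{A}_6$ have index $1$, $6$, $10$, $15$, $20$, $30$, $36$, $40$, $45$, and so on. Since $\deg(C)\leqslant6$, we need $[A:H]\deg(C_1)\leqslant6$. This leaves two possibilities: either $H=A$ and $C_1$ is an $A$-invariant irreducible curve of degree at most $6$, or $[A:H]=6$ and $C_1$ is a line with stabilizer $H\simeq\mathfrak{A}_5$. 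Both conjugacy classes of $\mathfrak{A}_5$ in $\mathfrak{A}_6$ can occur here, and they are exchanged by the outer automorphism. In the second case the orbit already has degree $6$, so $C$ is that orbit.

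\textbf{Excluding an invariant irreducible curve of degree at most $6$.} Suppose $C_1$ is irreducible and $A$-invariant with $\deg(C_1)\leqslant 6$. First, $C_1$ is not a line or a plane curve. Indeed, the linear span of $C_1$ would be a proper $A$-invariant linear subspace of $\PP^3$, which corresponds to a subrepresentation of the irreducible representation $V$ of $2.\mathfrak{A}_6$; no such proper subspace exists. So $C_1$ spans $\PP^3$.

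Next, the normalization $\widetilde{C}_1$ carries a faithful $A$-action, so Lemma \ref{lemma:A6-genus} gives $g(\widetilde{C}_1)\geqslant 10$. On the other hand, Castelnuovo's bound for a nondegenerate irreducible space curve of degree $d\leqslant 6$ gives geometric genus at most $4$. This is a contradiction, so no such component exists.

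\textbf{Describing the orbit of lines.} Now let $L$ be a line whose stabilizer is $H\simeq\mathfrak{A}_5$. The line corresponds to a $2$-dimensional subspace $U\subset V$ that is invariant under the preimage $2.\mathfrak{A}_5\subset2.\mathfrak{A}_6$. Since $V$ is a faithful representation of $2.\mathfrak{A}_6$ in which the central element acts by $-1$, the restriction $V|_{2.\mathfrak{A}_5}$ splits as a sum of two non-isomorphic $2$-dimensional representations, namely the two conjugate faithful representations of the binary icosahedral group. I will verify this splitting, and the non-isomorphism of the summands, from the character table. Hence $2.\mathfrak{A}_5$ has exactly two invariant planes in $V$, so $H$ fixes exactly two lines $L$ and $L'$, and these are skew.

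I still have to check which conjugacy class of $\mathfrak{A}_5$ actually gives an invariant line. For one class the restriction of $V$ may be irreducible, so that class fixes no line at all. In that case, the other class, with its $6$ conjugates, yields exactly two orbits of $6$ lines, one orbit for each of $L$ and $L'$. If instead both classes fix lines, I would need to count the orbits more carefully.

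\textbf{Disjointness of the lines.} Take two lines $L_1,L_2$ in the same orbit, with stabilizers $H_1\neq H_2$. The intersection $H_1\cap H_2$ is isomorphic to $\mathfrak{A}_4$, or to $\mathfrak{S}_3$ for the other class. If $L_1$ and $L_2$ met, then $H_1\cap H_2$ would fix their intersection point, and the whole orbit configuration would be forced into incidences contradicting the orbit structure. A cleaner route is to show that $L_1\cap L_2$ would have to be an orbit of length at most $15$ lying on the union of the lines. I would rule this out using the character table, checking that the preimage of $\mathfrak{A}_4$ in $2.\mathfrak{A}_6$ has no $1$-dimensional eigenvector in $V$ compatible with the two planes.

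\textbf{Main obstacle.} This last step, together with confirming that exactly two orbits of lines exist, is where I expect the real work. I would settle both points by a direct character and MAGMA computation, as is done in \cite{cheltsov2014five}.
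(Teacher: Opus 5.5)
\textbf{Verdict: there is a genuine gap, in the part about lines.} Your reduction is correct. The index count leaves only two cases: an $A$-invariant irreducible component, or a single orbit of six lines with stabilizer $\mathfrak{A}_5$. Your exclusion of the first case is complete: non-degeneracy follows from irreducibility of $V$, Lemma~\ref{lemma:A6-genus} gives genus at least $10$, and this contradicts Castelnuovo's bound $g\leqslant 4$ for $d\leqslant 6$ in $\PP^3$. The paper gives no argument of its own and only cites \cite{cheltsov2014five}.

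The lines carry the real content of the statement, and there you prove none of the three claims: that such orbits exist, that there are exactly two, or that the lines are disjoint. All three are deferred to an unspecified computation. Your disjointness sketch (``forced into incidences contradicting the orbit structure'') is not an argument. The claim that two conjugates in the other class meet in $\mathfrak{S}_3$ is false. $A$ acts $2$-transitively on the six conjugates in either class, so two distinct conjugates always meet in $\mathfrak{A}_4$; $\mathfrak{D}_5$ is the intersection of subgroups from different classes.

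\textbf{How to close the gap.} All three points are short. Let $\chi$ be the character of $V$. On the order-$3$ lifts of $3$-cycles and of double $3$-cycles, $\chi$ takes the values $1$ and $-2$ in some order, depending on which of the two $4$-dimensional representations $V$ is. Also $\chi=-1$ on elements of order $5$.

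A standard $\mathfrak{A}_5$ contains only $3$-cycles, and an exotic one only double $3$-cycles. One computes $\langle\chi|_{2.H},\chi|_{2.H}\rangle=2$ if $\chi=-2$ on the $3$-elements of $2.H$, and $=1$ if $\chi=1$ there. So for exactly one class $\mathcal{H}$ one has $V|_{2.H}\simeq\rho_2\oplus\rho_2'$. For the other class, $V|_{2.H}\simeq\Sym^3\rho_2$ is irreducible, so that class fixes no line.

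\emph{Existence and exact count.} Each $H\in\mathcal{H}$ fixes exactly two skew lines $L_H,L_H'$, and by maximality each has stabilizer exactly $H$. If $gL_H=L_H'$, then $gHg^{-1}=H$, so $g\in N_A(H)=H$, which is absurd. Hence the $12$ lines $L_H,L_H'$ with $H\in\mathcal{H}$ form exactly two orbits of length $6$.

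\emph{Disjointness.} A point of $L_{H_1}\cap L_{H_2}$ would be fixed by $H_1\cap H_2\simeq\mathfrak{A}_4$, hence by its Klein four-subgroup. The preimage of that subgroup in $2.\mathfrak{A}_6$ is $Q_8$, since involutions of $\mathfrak{A}_6$ lift to elements of order $4$. The linear characters of $Q_8$ are trivial on its centre, but the centre acts on $V$ by $-1$. So there is no such fixed point. Alternatively, by $2$-transitivity all six lines would pairwise meet, hence be concurrent or coplanar, which would give an $A$-fixed point or an $A$-invariant plane.
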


To finish this subsection we present some results about actions on threefolds.

\begin{theorem}[{\cite[Lemma B.2]{cheltsov2014five}}]
\label{theorem:A6-non-fixed}
    Let $X$ be a threefold with terminal singularities such that $A\subset\Aut(X)$. Then $X$ contains no $A$-fixed points. 
\end{theorem}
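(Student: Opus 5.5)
Suppose, for a contradiction, that $P\in X$ is fixed by $A$. The plan is to look at the induced action of $A$ on the local geometry at $P$ and show that no such action exists, using only that $A\simeq\mathfrak{A}_6$ has no non-trivial low-dimensional representations. Recall that the non-trivial irreducible complex representations of $\mathfrak{A}_6$ have dimension at least $5$. Those of the Schur covers $2.\mathfrak{A}_6$, $3.\mathfrak{A}_6$ and $6.\mathfrak{A}_6$ have dimension at least $3$, with equality only for $3.\mathfrak{A}_6$; the $4$-dimensional ones appear for $2.\mathfrak{A}_6$.

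First suppose $P$ is a smooth point. Since $A$ is finite, its action near $P$ linearizes: $A$ acts faithfully on the Zariski tangent space $T_{X,P}\simeq\CC^3$. So $A$ has a faithful $3$-dimensional linear representation. This is impossible, because any $3$-dimensional representation of $\mathfrak{A}_6$ itself is trivial. (Only a projective representation, via $3.\mathfrak{A}_6$, exists.)

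Next suppose $P$ is a Gorenstein terminal point. Then $(X,P)$ is an isolated cDV hypersurface singularity. Hence $\dim T_{X,P}=4$, and $A$ acts linearly and faithfully on $T_{X,P}\simeq\CC^4$; the linearization is genuine because the stabilizer acts on the local ring. As before, a faithful $4$-dimensional linear representation of $\mathfrak{A}_6$ does not exist, since its non-trivial representations have dimension at least $5$.

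Finally suppose $P$ is non-Gorenstein, of index $r>1$. Take the index-one cover $(X^\sharp,P^\sharp)\to(X,P)$. It is a cyclic $\mumu_r$-cover, canonical up to isomorphism, so the stabilizer lifts to a group $\widetilde{A}$ fitting into an extension
$$1\to\mumu_r\to\widetilde{A}\to A\to1$$
and acting on $X^\sharp$ with $P^\sharp$ fixed. Since $P^\sharp$ is smooth or cDV, $\widetilde{A}$ acts linearly on $T_{X^\sharp,P^\sharp}$ of dimension at most $4$. The kernel $\mumu_r$ acts faithfully. Because $A$ is perfect, the commutator subgroup $[\widetilde{A},\widetilde{A}]$ surjects onto $A$, and its intersection with $\mumu_r$ is a quotient of the Schur multiplier $\mumu_6$. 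Replacing $\widetilde{A}$ by this perfect central extension, $\mathfrak{A}_6$ or $2.\mathfrak{A}_6$, $3.\mathfrak{A}_6$, $6.\mathfrak{A}_6$, we obtain a representation of dimension at most $4$ on which it acts non-trivially.

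\textbf{Main obstacle.} This last case is the delicate one. A $3$-dimensional representation of $3.\mathfrak{A}_6$ and a $4$-dimensional representation of $2.\mathfrak{A}_6$ do exist, so the dimension count alone gives no contradiction. The extra input is the structure of the cover: $\mumu_r$ acts on $T_{X^\sharp,P^\sharp}$ with the specific terminal-quotient weights $\frac1r(1,-1,a,0)$ or $\frac1r(1,-1,a)$, and these weights are not all equal. The centre of the perfect extension acts on an irreducible representation by scalars, so the representation on $T_{X^\sharp,P^\sharp}$ would have to be reducible. Its summands then have dimension at most $2$, hence are trivial for every cover of $\mathfrak{A}_6$, and a representation with only trivial summands cannot be faithful. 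This contradiction completes the argument, and it is the step to check carefully, including the case where the centre meets $\mumu_r$ only partially.
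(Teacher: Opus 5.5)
Your strategy is the standard one, and your smooth and Gorenstein cases are fine. The paper only cites Cheltsov--Shramov for this statement, but its own proof of Lemma~\ref{lemma:A6-ext-fixed} runs the same index-one-cover and eigenspace argument. The gap is in the decisive last step, which as written rests on two false claims.

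First, the weights of $\mumu_r$ are not always non-constant. At a $\frac{1}{2}(1,1,1)$ point, the most common non-Gorenstein terminal point, the generator of $\mumu_2$ acts on $T_{X^\sharp,P^\sharp}\simeq\CC^3$ as $-1$, so Schur's lemma gives no reducibility at all.

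Second, reducibility in dimension $4$ does not force summands of dimension at most $2$. If $V_3$ is a $3$-dimensional irreducible representation of $3.\mathfrak{A}_6$, then $V_3\oplus\CC$ (with $\CC$ trivial) is a faithful $4$-dimensional representation of $B=[\widetilde{A},\widetilde{A}]\simeq 3.\mathfrak{A}_6$, and your argument never rules it out.

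Relatedly, Schur's lemma should be applied to $\mumu_r$ itself, not to $Z(B)$. The group $\mumu_r$ is central in $\widetilde{A}$, because the conjugation map $A\to\Aut(\mumu_r)$ is trivial ($A$ is perfect and $\Aut(\mumu_r)$ is abelian). Hence $\mumu_r$ commutes with $B$, and this removes your worry about $Z(B)$ meeting $\mumu_r$ only partially.

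Both holes close as follows. Since $B$ acts faithfully on $T=T_{X^\sharp,P^\sharp}$, there is a non-trivial irreducible $B$-summand $U\subseteq T$. As $\dim U\geqslant3$ and $\dim T\leqslant4$, it has multiplicity one. So a generator $z$ of $\mumu_r$ preserves $U$ and acts on it by a scalar, i.e. $z$ has an eigenspace of dimension at least $3$.

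If $\dim U=4$, then $U=T$ is the tangent space of a singular cover and $z$ would be a scalar. This contradicts the weights $\frac{1}{r}(a,-a,1,0)$ or $\frac{1}{4}(1,1,2,3)$.

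If $\dim U=3$, then $Z(B)=B\cap\mumu_r$ acts on $U$ through a character of order $3$, so $3\mid r$ and in particular $r\geqslant3$. But then, since $\gcd(a,r)=1$, no value occurs three times among $a,-a,1$ or among $a,-a,1,0$ modulo $r$. Also $3\nmid 4$ excludes $\frac{1}{4}(1,1,2,3)$.

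Thus the $\frac{1}{2}(1,1,1)$ case is killed by $3\nmid r$, not by an inequality of weights: there $B$ is $\mathfrak{A}_6$ or $2.\mathfrak{A}_6$, which have no non-trivial $3$-dimensional representation. The $V_3\oplus\CC$ case is killed by the eigenspace count.
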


\begin{lemma}
    \label{lemma:A6-bpf}
    Let $X$ be a Fano threefold with canonical Gorenstein singularities such that $A\subset\Aut(X)$. Then $\Bs(\vert-K_X\vert)=\varnothing$. 
\end{lemma}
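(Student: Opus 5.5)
We argue by contradiction. The structure theory of base loci of anticanonical systems on Fano threefolds with canonical Gorenstein singularities (Shin, Jahnke--Radloff, Przyjalkowski--Cheltsov--Shramov) says that if $\Bs(\vert-K_X\vert)\neq\varnothing$, then a general member $S\in\vert-K_X\vert$ is a K3 surface with at most Du Val singularities. Moreover, the base locus is one of two things. Either it is a single point at which $X$ is singular, and in this case $(-K_X)^3=2$. Or it is a smooth rational curve $Z\simeq\PP^1$ contained in the smooth locus of $X$ (more precisely, $\Bs\vert-K_X\vert=Z$ is a smooth rational curve, and the general $S$ is smooth along $Z$). Since $A\subset\Aut(X)$ preserves $-K_X$, it acts on $\vert-K_X\vert$, so $\Bs(\vert-K_X\vert)$ is $A$-invariant. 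We treat the two cases separately.

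In the first case, the base point $P$ is unique, so it is $A$-fixed. Canonical Gorenstein singularities need not be terminal, so Theorem~\ref{theorem:A6-non-fixed} does not apply directly. We argue as follows instead. Since $A$ is simple and non-abelian, it has no nontrivial one-dimensional representations. The group $A$ acts on the linear system $\vert-K_X\vert$, whose elements through $P$ form the whole system. Consider the Zariski tangent space $T_{X,P}$: it is a faithful representation of $A$, because a finite group acting faithfully fixes $P$ and acts faithfully on the tangent space. Its dimension is at most $4$, since a hypersurface singularity (for $(-K_X)^3=2$ the variety $X$ is a double cover of $\PP^3$, or locally the base point is a $cDV$-type point) has embedding dimension $\leqslant4$. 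But $A\simeq\mathfrak{A}_6$ has no faithful linear representation of dimension $\leqslant4$: its smallest nontrivial irreducible representations have dimension $5$. This is a contradiction. Alternatively, one can use the explicit description of this case: $X$ is a hypersurface of degree $6$ in $\PP(1,1,1,1,3)$ minus some degenerations, and then $A$ would act on $H^0(-K_X)\simeq\CC^4$ through a genuine linear action fixing a vector, which is again impossible.

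In the second case, $Z\simeq\PP^1$ is $A$-invariant. Then $A$ acts on $Z$, and since $A$ does not embed into $\PGL_2(\CC)$, it acts trivially on $Z$. Hence every point of $Z$ is $A$-fixed. These points lie in the smooth locus of $X$, so the same tangent-space argument applies: $T_{X,P}\simeq\CC^3$ would be a faithful $3$-dimensional linear representation of $A$, which does not exist. The group $A$ has only projective $3$-dimensional representations, through $3.\mathfrak{A}_6$. Alternatively, Theorem~\ref{theorem:A6-non-fixed} applies to a neighbourhood of a smooth point.

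The main obstacle is the first case. There one must justify both the faithfulness of the action at the singular base point and the bound on its embedding dimension. This relies on quoting the classification of the base-point case correctly.
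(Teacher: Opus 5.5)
Your proposal is correct and follows the paper's proof. Both use Shin's dichotomy for $\Bs(\vert-K_X\vert)$ and its $A$-invariance, then the faithfulness of the action on the Zariski tangent space at an $A$-fixed point: dimension $3$ along the curve, at most $4$ at the cDV base point. This is impossible because $\mathfrak{A}_6$ has no nontrivial representation of dimension $\leqslant4$.

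The only difference is how the bound $\dim T_{X,P}\leqslant4$ is obtained. The paper uses a general Cartier member $S$ with an ordinary double point at $P$, giving $\dim T_{X,P}\leqslant\dim T_{S,P}+1=4$. You instead quote the hypersurface (cDV) property directly. Your parenthetical identifications of this case with a double cover of $\PP^3$ or a sextic in $\PP(1,1,1,1,3)$ are wrong, since both of those have base point free $\vert-K_X\vert$. The main argument does not use them, so the proof stands without them.
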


\begin{proof}
    Suppose that the base locus of $\vert-K_X\vert$ is not empty. By \cite{Shi89}, one of the following holds:
    \begin{enumerate}
        \item $\Bs(\vert-K_X\vert)\simeq\PP^1$, and it is contained in the smooth locus of $X$.
        \item $\Bs(\vert-K_X\vert)$ consists of a single point which is a hypersurface cDV singularity and is an ordinary double point of a general member of $\vert-K_X\vert$.  
    \end{enumerate}
    It is clear that $\Bs(\vert-K_X\vert)$ is $A$-invariant. Now, if $C:=\Bs(\vert-K_X\vert)\simeq\PP^1$ the action of $A$ on $C$ is trivial, but this is not possible as $A$ does not have faithful $3$-dimensional linear representations. If $\Bs(\vert-K_X\vert)=\{P\}$, a general member $S\in\vert-K_X\vert$ is such that $\dim T_{S,P}=3$. Since $S$ is Cartier, we get that
    $$\dim T_{X,P}\leqslant\dim T_{S,P}+1=4$$
    but $A$ also does not have faithful $4$-dimensional linear representations. We conclude by contradiction.
\end{proof}

\begin{lemma}
    \label{lemma:A6-hyperelliptic}
    Let $X$ be a Fano threefold with canonical Gorenstein singularities such that $A\subset\Aut(X)$. Assume $-K_X$ is not very ample. Then $X$ is isomorphic to 
    $$\left\{\sum_{i=1}^6x_i=0,\quad\sum_{i=1}^6x_i^2=0,\quad w^2=\sum_{i=1}^6x_i^4\right\}\subset\PP(1^6,2),$$
    and its anticanonical morphism is a double cover of a smooth quadric threefold.
\end{lemma}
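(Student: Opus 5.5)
By Lemma~\ref{lemma:A6-bpf}, the linear system $|-K_X|$ is base point free. The plan is to run through the classification of such Fano threefolds with canonical Gorenstein singularities whose anticanonical morphism $\varphi\colon X\to\PP^{g+1}$ is not an embedding. By the results of Iskovskikh and Przyjalkowski--Cheltsov--Shramov on hyperelliptic Fano threefolds, $\varphi$ is then a double cover onto its image $Y$, and $Y$ is a variety of minimal degree $g-1$ in $\PP^{g+1}$. So $Y$ is one of: $\PP^3$ (with $g=2$), a quadric in $\PP^4$ ($g=3$), the Veronese cone, a rational scroll, or a cone over a rational normal curve or scroll.

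The morphism $\varphi$ is $A$-equivariant, since $|-K_X|$ is $A$-invariant, so $A$ acts on $Y$.

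\begin{itemize}
\item If $Y$ is a scroll or a cone, then $Y$ carries a canonical structure that $A$ must preserve: a ruling onto $\PP^1$, the vertex, or the directrix. The group $A$ has no nontrivial action on $\PP^1$, since it does not embed in $\PGL_2(\CC)$, so it would act trivially on the base of the ruling. Similarly, a vertex would give an $A$-fixed point of $Y$, and its preimage consists of at most two points, giving an $A$-fixed point on $X$. I will use Theorem~\ref{theorem:A6-non-fixed} to contradict the fixed point when $X$ is terminal. In the canonical case I will instead argue via tangent spaces as in Lemma~\ref{lemma:A6-bpf}: there is no faithful representation of dimension at most $4$, so no fixed point can occur. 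For the trivial action on the base, a general fibre would be $A$-invariant, and the fibres are planes or lines, so this contradicts Theorem~\ref{theorem:A6-dP} and Lemma~\ref{lemma:A6-invariant-curves-P2} and their consequences. The Veronese cone is excluded by the same vertex argument.
\item If $Y=\PP^3$, then $X$ is a double cover branched in an $A$-invariant sextic surface. The action of $A$ on $\PP^3$ lifts to $2.\mathfrak{A}_6$, with $V$ one of the four-dimensional representations. I must show that $\Sym^6 V^\vee$ has no invariant line, or that any invariant sextic gives singularities that are too bad. I expect this to be checked via character computations, as in Lemma~\ref{lemma:A6-invariant-curves-P2}.
\item If $Y$ is a quadric threefold, the action comes from a $5$-dimensional representation of $A$. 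This representation must be the standard one, so $Y=\{\sum x_i=\sum x_i^2=0\}$. The branch divisor is cut out by an invariant quartic, and modulo $(\sum x_i^2)^2$ this quartic is $\sum x_i^4$. This gives the stated equation in $\PP(1^6,2)$.
\end{itemize}

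Smoothness of $Y$ is forced: a singular quadric has a vertex, which is excluded as above. Smoothness of the branch surface is forced by the absence of small invariant orbits and by Corollary~\ref{corollary:K3-a6}-type arguments, though only canonical singularities are needed for the conclusion.

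I expect the main obstacle to be the degenerate cases where $Y$ is a cone or scroll with $A$-action. I also need to verify that the invariant-theory computation kills the $\PP^3$ case, that is, that no $A$-invariant sextic in $\PP^3$ exists, or that any such sextic yields non-canonical singularities. The first thing I would try there is a Molien series computation for each $4$-dimensional representation of $2.\mathfrak{A}_6$.
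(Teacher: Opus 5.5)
\textbf{The gap: excluding cones via fixed points.} Your skeleton matches the paper's: $|-K_X|$ is free by Lemma~\ref{lemma:A6-bpf}, the anticanonical map is a double cover of a threefold $Y$ of minimal degree, and you run a case analysis on $Y$, settling the quadric case with the invariants $\sum x_i^2$ and $\sum x_i^4$. The gap is in how you exclude cones. Your vertex argument needs an $A$-fixed point on a \emph{canonical} Gorenstein $X$ to be impossible, and the tangent-space argument of Lemma~\ref{lemma:A6-bpf} does not give this. There the bound $\dim T_{X,P}\leqslant 4$ comes from Shin's theorem on base points of $|-K_X|$. No such bound holds over the vertex of $Y$, and Theorem~\ref{theorem:A6-non-fixed} requires terminal singularities.

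For instance, suppose the branch sextic $w^2=y^2q_2+yq_4+q_6$ of $X\to\PP(1,1,1,2)$ passes through the vertex. Then the point of $X$ over the vertex is analytically the anticanonical cone over $\PP^1\times\PP^1$. This is canonical Gorenstein with a $9$-dimensional tangent space, and $\mathfrak{A}_6$ has faithful $9$-dimensional representations. So your Veronese cone case is not proved, and neither is your remark that singular quadrics are excluded ``as above''.

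The scroll case is also not closed as written. An $A$-invariant ruling plane on which $A$ acts faithfully is no contradiction: Valentiner actions exist, and Theorem~\ref{theorem:A6-dP} allows $\PP^2$. You would have to pass to the branch curve on a general ruling plane, an $A$-invariant quartic, and invoke Lemma~\ref{lemma:A6-invariant-curves-P2}. Your proposal never says this.

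\textbf{The missing idea: canonical linearization.} The idea that removes all of this is that $-K_X$ is canonically $\mathrm{Aut}(X)$-linearized. So $V=H^0(X,\OOO_X(-K_X))^\vee$ is a linear representation of $A$ itself, not merely of $2.\mathfrak{A}_6$ or $3.\mathfrak{A}_6$. It is faithful, because the kernel of the action on $Y$ has order at most $2$ and $A$ is simple. Every non-trivial irreducible representation of $\mathfrak{A}_6$ has dimension at least $5$, so every $A$-invariant subspace of $V$ of dimension at most $4$ is trivial. This settles each case directly.
\begin{enumerate}
\item It gives $g+2\geqslant 5$, which disposes of $Y=\PP^3$ at once. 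Your Molien computation would also succeed, since the $4$-dimensional representations of $2.\mathfrak{A}_6$ have no invariants in degree $6$, but it is unnecessary.
\item Each ruling plane of a scroll or cone spans a $3$-dimensional subrepresentation, so it is pointwise fixed. These planes cover $Y$, contradicting faithfulness.
\item A quadric $Y$ is smooth, since $V$ is then irreducible.
\item For the Veronese cone, $V=W\oplus L$ with $L$ the vertex. Here $W$ is a faithful $6$-dimensional linear representation, hence $W\simeq\mathbf{1}\oplus W_5$. The invariant hyperplane $\PP(W_5)$ then cuts out an $A$-invariant conic on $\PP^2$, contradicting Lemma~\ref{lemma:A6-invariant-curves-P2}.
\end{enumerate}
This is the paper's argument.
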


\begin{proof}
    Put $g=\frac{1}{2}(-K_X)^3+1$ and $V=H^0(X,\OOO_X(-K_X))^\vee$. By Lemma~\ref{lemma:A6-bpf} and \cite[Theorem~2.12]{HT}, the anticanonical linear system gives an $A$-equivariant double cover
    \[
        \phi:X\longrightarrow Y\subset\PP(V),
    \]
    where $Y$ is a threefold of minimal degree $\deg(Y)=g-1$. The kernel of the action on $Y$ has order at most two, so simplicity of $A$ implies that this action is faithful. Thus $V$ is a faithful representation of dimension $g+2$, and $g\geqslant3$.

If $g=3$, then $V$ is irreducible and $Y$ is a smooth quadric threefold. The double cover is branched along a member of $\vert\OOO_Y(4)\vert$, so \cite[6.5]{DoubleQuadrics} gives the threefold in the statement. If $g\geqslant4$, from the classification of varieties of minimal degree we get the following possibilities:
    \begin{enumerate}
        \item $Y$ is the Veronese cone in $\PP^6$, in particular $Y\simeq\PP(1,1,1,2)$;
        \item $Y$ is a possibly singular rational normal scroll
    \end{enumerate}
    If $Y$ is the Veronese cone, its vertex corresponds to an invariant line in $V$, so we have a decomposition $V=W\oplus L$ where $\dim(W)=6$. Now projection from the vertex gives an $A$-invariant Veronese surface $S=\nu_2(\PP^2)\subset\PP(W)$ and $A$ acts faithfully on $S$, otherwise the action of $A$ on $V$ would be trivial as $S$ spans $\PP(W)$. Note that $W=\mathbf{1}\oplus W_5$ splits as a trivial representation and a $5$-dimensional faithful $A$-representation, so the invariant hyperplane $\PP(W_5)\subset\PP(W)$ cuts out an $A$-invariant divisor on $S$ whose pullback by the Veronese embedding $\nu_2:\PP^2\hookrightarrow\PP^5$ has degree $2$. This is not possible as it contradicts Lemma \ref{lemma:A6-invariant-curves-P2}.

    In the scroll case, the ruling planes form a family parametrized by $\PP^1$, preserved by $A$. Since $A$ acts trivially on $\PP^1$, every ruling plane is $A$-invariant. Its corresponding three-dimensional subrepresentation of $V$ is trivial, so the plane is pointwise fixed. As these planes cover $Y$, this contradicts faithfulness of the action on $Y$.
\end{proof}

\subsection{Extensions of $\mathfrak{A}_6$}
\label{subsection:W-actions}

Recall that $\PGL_2(\FF_9)$ and $M_{10}$ are, respectively, split and non-split extensions of $A=\mathfrak{A}_6$ by $\mumu_2$. Together with $\mathfrak{S}_6$ they correspond to the non-trivial elements of $\operatorname{Out}(A)=\mumu_2^2$. In this subsection we determine which of these extensions act faithfully on the threefold \eqref{equation:W}.

It is known that every faithful action of $A$ on $\PP^2$ is obtained as the projectivization of one of the four faithful irreducible $3$-dimensional representations of the Valentiner group $\widehat{A}=3.A$. Every automorphism of $A$ lifts uniquely to $\widehat{A}$: lift it to the universal central extension $6.A$, quotient by its characteristic central subgroup of order $2$, and use perfectness for uniqueness. Thus $\Aut(\widehat{A})~\simeq~\Aut(A)$, and $\operatorname{Out}(A)$ acts on isomorphism classes of $\widehat{A}$-representations by $[\alpha]\cdot[\rho]=[\rho\circ\widetilde{\alpha}^{-1}]$.

\begin{lemma}
\label{lemma:3A6-transitive}
The group $\operatorname{Out}(A)$ acts transitively on the four isomorphism classes of faithful irreducible $3$-dimensional representations of $\widehat{A}$. The class corresponding to $\PGL_2(\FF_9)$ acts by duality.
\end{lemma}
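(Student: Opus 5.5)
We will argue with characters of $\widehat{A}=3.\mathfrak{A}_6$. The group $\widehat{A}$ has exactly four faithful irreducible $3$-dimensional representations. They are determined by their characters, whose irrational values involve $\omega=e^{2\pi i/3}$ on the elements of order $3$ and $6$ lying over the central part, and $\frac{-1\pm\sqrt5}{2}$ on the elements of order $5$ and $15$. Label them $V_{\epsilon,\delta}$, where $\epsilon\in\{\omega,\bar\omega\}$ is the central character and $\delta\in\{\pm\}$ records which of the two classes of $5$-elements receives $\frac{-1+\sqrt5}{2}$. Duality sends $V_{\epsilon,\delta}$ to $V_{\bar\epsilon,\delta}$: complex conjugation inverts the central character, and it fixes real values, in particular those at order-$5$ elements. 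The plan is to compute how each of the three nontrivial classes of $\operatorname{Out}(A)\simeq\mumu_2^2$ acts on the labels $(\epsilon,\delta)$. We then show that this action is the regular action of $\mumu_2^2$ on the four labels, which is transitive.

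First we treat the two $5$-classes. In $\mathfrak{A}_6$ there are two conjugacy classes $5A,5B$ of elements of order $5$, and it is classical that both $\mathfrak{S}_6$ and $\PGL_2(\FF_9)$ fuse them, while $M_{10}$ does not. Indeed $M_{10}$ and $\mathrm{P\Gamma L}_2(\FF_9)$ fuse the $3$-classes, and $\mathfrak{S}_6$ and $\PGL_2(\FF_9)$ cannot both preserve the $5$-classes, since the ATLAS fusion gives $5A\cup5B$ as a single class in $\mathfrak{S}_6$ and in $\PGL_2(\FF_9)$. Their lifts to $\widehat{A}$ are the classes of order $5$ inside $\widehat{A}$. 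An outer automorphism fusing $5A,5B$ therefore swaps $\delta$, so the classes of $\mathfrak{S}_6$ and $\PGL_2(\FF_9)$ flip $\delta$, and the class of $M_{10}$ fixes $\delta$.

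Next we treat the centre. The lift $\widetilde\alpha$ acts on $Z(\widehat{A})\simeq\mumu_3$ either trivially or by inversion. This preserves or swaps $\epsilon$, since composing with $\widetilde\alpha^{-1}$ changes the central character by this automorphism. Here we use that $\mathfrak{S}_6$ has no faithful projective representation of degree $3$ with kernel-free lift extending $3.\mathfrak{A}_6$. The $3$-fold cover extends to $3.\mathfrak{S}_6$ with the centre inverted, since $\mathfrak{S}_6$ has no $\mumu_3\times$-type central extension restricting nontrivially. Equivalently, the Schur multiplier of $\mathfrak{S}_6$ has no $3$-part, so an element of $\mathfrak{S}_6\setminus\mathfrak{A}_6$ must invert the $3$-part of the Schur multiplier $\mumu_6$ of $\mathfrak{A}_6$. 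The same argument applies to $\PGL_2(\FF_9)$, whose Schur multiplier is $\mumu_2$. For $M_{10}$, the centre action is the product of the other two, because $\operatorname{Out}(A)\to\Aut(\mumu_3)$ is a homomorphism, so it acts trivially on $\epsilon$. This is consistent with $M_{10}$ fixing $\delta$ and still acting nontrivially, which we check next.

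Combining the two computations, $\mathfrak{S}_6$ and $\PGL_2(\FF_9)$ both flip $\epsilon$ and $\delta$, while $M_{10}$ fixes both. Since the three nontrivial classes must give a group action that is closed under products, this cannot be right, so one of the $\epsilon$ or $\delta$ conclusions must be adjusted. This consistency check is the main obstacle, and I expect to resolve it by a direct character-table check rather than by fusion folklore. The ATLAS gives that $\mathfrak{S}_6$ fuses the $3$-classes and fixes the $5$-classes, $\PGL_2(\FF_9)$ fuses the $5$-classes and fixes the $3$-classes, and $M_{10}$ fuses both. The irrationalities $\pm\sqrt5$ sit on the $5$-classes. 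Those on the $3$-classes lie over $3A,3B$ and distinguish $V_{\epsilon,+}$ from $V_{\epsilon,-}$ through values $\epsilon$ versus $\bar\epsilon$ times a real number, and a fusion of the $3$-classes interchanges these. With the corrected fusion data, the centre is inverted by $\mathfrak{S}_6$ and $\PGL_2(\FF_9)$ and fixed by $M_{10}$. Then $\PGL_2(\FF_9)$ gives $(\epsilon,\delta)\mapsto(\bar\epsilon,\delta)$, which is exactly duality. The class of $\mathfrak{S}_6$ gives $(\bar\epsilon,\bar\delta)$, and $M_{10}$ gives $(\epsilon,\bar\delta)$. These three are distinct nonidentity permutations forming the regular action of $\mumu_2^2$, so the action is transitive and the class of $\PGL_2(\FF_9)$ acts by duality. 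I would verify the final table with MAGMA, as the paper does elsewhere.
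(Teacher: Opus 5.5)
Your framework is the same as the paper's. You label the four representations by the central character $\epsilon$ and by $\delta$, which records which order-$5$ class carries which real value; these values are actually $\frac{1\pm\sqrt5}{2}$, not $\frac{-1\pm\sqrt5}{2}$. You then compute how each outer class acts on the two labels. Your centre computation is correct, and more explicitly justified than the paper's appeal to tables: $M(\mathfrak{S}_6)$ and $M(\PGL_2(\FF_9))$ have trivial $3$-part, so these two classes invert $Z(\widehat{A})$, and $M_{10}$, their product, centralizes it.

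The gap is in the $\delta$-half, where both versions of your fusion data are wrong. In fact $\mathfrak{S}_6$ fuses $5A,5B$ and keeps $3A,3B$ apart ($3$-cycles versus products of two $3$-cycles). $\PGL_2(\FF_9)$ fuses $3A,3B$ and keeps $5A,5B$ apart, and $M_{10}$ fuses both. To see this, note that in $\PSL_2(\FF_9)$ the two $5$-classes are distinguished by the eigenvalue pairs $\{\zeta^{\pm1}\}$ and $\{\zeta^{\pm2}\}$, where $\zeta\in\FF_{81}$ is a primitive fifth root of unity. Conjugation by $\mathrm{GL}_2(\FF_9)$ preserves eigenvalues, while the Frobenius $x\mapsto x^3$ (the class of $\mathfrak{S}_6\simeq\mathrm{P\Sigma L}_2(\FF_9)$) sends $\zeta\mapsto\zeta^3$.

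The mechanism you invoke through the $3$-classes does not exist either. The classes $3A,3B$ do not split in $3.\mathfrak{A}_6$, which has $17=7+2\cdot 5$ classes, so every faithful character vanishes on all elements over $3A\cup 3B$. Concretely, the order-$3$ lifts lie in complements $\mathfrak{A}_5$, where the $3$-dimensional character restricts to an irreducible character of $\mathfrak{A}_5$ and hence takes the value $0$.

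Consequently your final table is asserted, not derived. With your ``corrected'' data, $\PGL_2(\FF_9)$ would flip $\delta$ and give $(\bar\epsilon,\bar\delta)$, and it would be $\mathfrak{S}_6$ that acts by duality — the opposite of what you conclude. The inconsistency you noticed is also misdiagnosed. Your intermediate assignment, with $\mathfrak{S}_6$ and $\PGL_2(\FF_9)$ acting by $(\bar\epsilon,\bar\delta)$ and $M_{10}$ acting trivially, is a perfectly good homomorphism $\mumu_2^2\to\mumu_2^2$. What it violates is freeness: $M_{10}$ fixing $[\rho]$ would embed $M_{10}$ into $\PGL_3(\CC)$, contradicting Lemma \ref{lemma:A6-ext-projective-spaces}.

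To repair the argument, use that $\widetilde{\alpha}$ sends the unique order-$5$ lift of $g$ to the unique order-$5$ lift of $\alpha(g)$. So $\delta$ is flipped exactly by the classes fusing $5A$ and $5B$, namely $\mathfrak{S}_6$ and $M_{10}$. Combined with your centre computation, this gives $M_{10}\colon(\epsilon,\delta)\mapsto(\epsilon,\bar\delta)$ and $\PGL_2(\FF_9)\colon(\epsilon,\delta)\mapsto(\bar\epsilon,\delta)$. That is precisely the paper's argument, and it yields transitivity and the duality statement without any MAGMA verification.
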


\begin{proof}
For each nontrivial central character of $\widehat{A}$, there are exactly two three-dimensional irreducible representations. Their character values on the two conjugacy classes of order \(5\) are distinct real numbers, interchanged between the two representations; see \cite{lmfdb}.
The outer class corresponding to $M_{10}$ fixes the center and interchanges the two order $5$ classes. It therefore exchanges the two representations with the same central character. The class corresponding to $\PGL_2(\FF_9)$ inverts the center and preserves each order $5$ class. It therefore acts by duality, which inverts the central character and preserves these real character values. See \cite[Table 6.3.1]{SimpleGrps} and the character tables.
\end{proof}

Now let $H\simeq A$ be a subgroup of $\Aut(W)\simeq\PGL_3(\CC)\rtimes\mumu_2$. Since $A$ has no quotient of order $2$, its action arises from a faithful three-dimensional representation $\rho:\widehat{A}\to\SL(V)$ and the twisted diagonal action on
$$W=\{([v],[\ell])\in\PP(V)\times\PP(V^\vee)\mid\ell(v)=0\}.$$
Perfectness and Schur's lemma show that $W$ is the unique $H$-invariant divisor of bidegree $(1,1)$.

\begin{lemma}
\label{lemma:normalizer-2}
One has $N_{\Aut(W)}(H)\simeq\PGL_2(\FF_9)$.
\end{lemma}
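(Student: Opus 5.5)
The normalizer splits along $\Aut(W)\simeq\PGL_3(\CC)\rtimes\mumu_2$. I will compute its two pieces, $N_{\PGL_3(\CC)}(H)$ and the elements outside $\PGL_3(\CC)$, separately, and then identify the resulting extension of $A$ by $\mumu_2$ using Lemma~\ref{lemma:3A6-transitive}.

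\textbf{The part inside $\PGL_3(\CC)$.} Write $N=N_{\Aut(W)}(H)$ and $N_0=N\cap\PGL_3(\CC)$, which has index at most $2$ in $N$. An element $g\in N_0$ lifts to $\widetilde g\in\GL(V)$, and conjugation by $g$ induces an automorphism $\alpha$ of $H\simeq A$. By the lifting uniqueness recalled before Lemma~\ref{lemma:3A6-transitive}, this gives $\rho\circ\widetilde\alpha\simeq\rho$. By Lemma~\ref{lemma:3A6-transitive}, $\operatorname{Out}(A)\simeq\mumu_2^2$ acts simply transitively on the four classes of faithful irreducible $3$-dimensional representations, so $\rho\circ\widetilde\alpha\simeq\rho$ forces $\alpha$ to be inner. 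Thus $N_0=H\cdot C_{\PGL_3(\CC)}(H)$. A centralizing element lifts to a matrix commuting with $\rho(\widehat A)$ up to scalars. The resulting character $\widehat A\to\CC^*$ is trivial because $\widehat A$ is perfect, so by Schur's lemma the lift is scalar. Hence $C_{\PGL_3(\CC)}(H)=1$ and $N_0=H$.

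\textbf{The part outside $\PGL_3(\CC)$.} Identify $W\subset\PP(V)\times\PP(V^\vee)$. The outer involution $\sigma$ of $\Aut(W)$ swaps the factors through a chosen isomorphism $V\simeq V^\vee$, and conjugating by $\sigma$ acts on $\PGL_3(\CC)$ as the inverse-transpose. So an element $g\sigma$ normalizes $H$ precisely when the induced automorphism $\alpha$ of $A$ satisfies $\rho\circ\widetilde\alpha\simeq\rho^\vee$. I expect this translation to be the main obstacle: it requires checking that, through the twisted diagonal action, $g\sigma$ intertwines $\rho$ with $\rho^\vee$ up to the automorphism.

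Granting this, Lemma~\ref{lemma:3A6-transitive} shows that exactly one outer class, the one corresponding to $\PGL_2(\FF_9)$, sends $\rho$ to $\rho^\vee$. Choose an isomorphism $\phi\colon V\to V^\vee$ intertwining $\rho\circ\widetilde\alpha$ with $\rho^\vee$ for a representative $\alpha$ of that class. This $\phi$ gives an element of $\Aut(W)\setminus\PGL_3(\CC)$ normalizing $H$. Therefore $[N:H]=2$, and $N$ induces on $H$ exactly the automorphisms in $A$ together with the $\PGL_2(\FF_9)$-coset.

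\textbf{Identifying $N$.} Since $C_N(H)\subset C_{\Aut(W)}(H)$ is trivial (by the Schur argument above, together with the fact that outer elements induce outer automorphisms), conjugation embeds $N$ into $\Aut(A)$. Its image is the preimage in $\Aut(A)$ of the subgroup of $\operatorname{Out}(A)$ corresponding to $\PGL_2(\FF_9)$. Hence $N\simeq\PGL_2(\FF_9)$.
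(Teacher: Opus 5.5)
Your proposal is correct and follows the paper's proof. Both arguments get $N\cap\PGL(V)=H$ from the trivial centralizer and the regular action of $\operatorname{Out}(A)$ in Lemma~\ref{lemma:3A6-transitive}, then add one factor-swapping element built from $\rho\circ\widetilde{\mu}\simeq\rho^\vee$. The paper handles the step you grant by choosing coordinates with $a\cdot(x,y)=(\overline{\rho}(a)x,\overline{\rho}(\mu(a))y)$, so that $\tau(x,y)=(y,x)$ visibly normalizes $H$, induces $\mu$, and preserves $W$ by uniqueness of the $H$-invariant $(1,1)$-divisor; only this existence direction is needed, since $N\cap\PGL(V)=H$ already forces $[N:H]\leqslant2$.
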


\begin{proof}
Write $N=N_{\Aut(W)}(H)$. It is known that $C_{\PGL(V)}(H)=1$. Then  a factor-preserving element of $N$ induces an automorphism whose lift fixes $[\rho]$, since $\widehat{A}$ has no non-trivial characters. Lemma~\ref{lemma:3A6-transitive} therefore gives $N\cap\PGL(V)=H$.

Choose an involution $\mu\in\Aut(A)$ representing the class of $\PGL_2(\FF_9)$. Since $\rho\circ\widetilde{\mu}\simeq\rho^\vee$, coordinates can be chosen so that
$$a\cdot(x,y)=(\overline{\rho}(a)x,\overline{\rho}(\mu(a))y).$$
Then $\tau(x,y)=(y,x)$ normalizes $H$, induces $\mu$, and preserves $W$ by uniqueness of the invariant $(1,1)$-divisor. Hence $N=H\rtimes\langle\tau\rangle\simeq\PGL_2(\FF_9)$.
\end{proof}

We obtain the following result.

\begin{proposition}
\label{proposition:W-actions}
The only extension of the form $\mathfrak{A}_6.\mumu_2$ admitting a faithful action on $W$ is $\PGL_2(\FF_9)$. Moreover, all subgroups of $\Aut(W)$ isomorphic to $\PGL_2(\FF_9)$ are conjugate.
\end{proposition}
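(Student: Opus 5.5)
Let $\widetilde{G}\subset\Aut(W)$ be a subgroup isomorphic to an extension $\mathfrak{A}_6.\mumu_2$, and let $H\subset\widetilde{G}$ be its unique index-two subgroup. Then $H\simeq A$ and $H$ is normal in $\widetilde{G}$, so $\widetilde{G}\subseteq N_{\Aut(W)}(H)$. Lemma~\ref{lemma:normalizer-2} says this normalizer is isomorphic to $\PGL_2(\FF_9)$ and has order $720=|\widetilde{G}|$. Hence $\widetilde{G}=N_{\Aut(W)}(H)\simeq\PGL_2(\FF_9)$. This excludes both $\mathfrak{S}_6$ and $M_{10}$. Existence also follows: Lemma~\ref{lemma:normalizer-2} applies to any $H\simeq A$ in $\Aut(W)$, and such an $H$ exists because $\PGL(V)\subset\Aut(W)$ contains the Valentiner action. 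So the first assertion reduces to Lemma~\ref{lemma:normalizer-2}.

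For conjugacy, let $\widetilde{G}_1,\widetilde{G}_2\subset\Aut(W)$ be isomorphic to $\PGL_2(\FF_9)$, with socles $H_1,H_2\simeq A$. By the argument above, $\widetilde{G}_i=N_{\Aut(W)}(H_i)$. It therefore suffices to show that $H_1$ and $H_2$ are conjugate in $\Aut(W)$; conjugating $H_1$ to $H_2$ then carries $\widetilde{G}_1$ to $\widetilde{G}_2$. Because $A$ is perfect, each $H_i$ lies in the factor-preserving subgroup $\PGL_3(\CC)=\PGL(V)$. Each $H_i$ is therefore the image of a faithful three-dimensional irreducible representation $\rho_i$ of $\widehat{A}$ acting on $\PP(V)$, with the dual action on $\PP(V^\vee)$.

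Subgroups of $\PGL(V)$ isomorphic to $A$ are determined up to $\PGL(V)$-conjugacy by the class of $\rho$ modulo the action of $\Aut(\widehat{A})\simeq\Aut(A)$ by reparametrization. By Lemma~\ref{lemma:3A6-transitive}, $\operatorname{Out}(A)$ acts transitively on the four classes of $\rho$. Consequently there is an automorphism $\alpha$ with $\rho_2\simeq\rho_1\circ\widetilde{\alpha}^{-1}$, so $H_1$ and $H_2$ have the same image up to conjugation by some $g\in\PGL(V)$. Since $g$ acts on $W$ through $(x,y)\mapsto(gx,g^{-\top}y)$, it lies in $\Aut(W)$, and $H_1$ and $H_2$ are conjugate in $\Aut(W)$. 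The swap $\tau$ is not needed here, although it would also realize duality.

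The main obstacle is the step identifying subgroups with representation classes. One has to check that an abstract subgroup $H\simeq A$ of $\PGL_3(\CC)$ lifts to an image of $\widehat{A}$ in $\SL_3$. This holds because the preimage of $H$ in $\SL_3(\CC)$ is a central extension by $\mumu_3$, and it is non-split since $A$ has no faithful three-dimensional representation. One also has to check that the resulting representation is irreducible: otherwise $A$ would fix a point or a line, contradicting simplicity and the absence of small faithful representations. Once these are settled, changing the identification $H\simeq A$ by an automorphism is exactly the $\operatorname{Out}(A)$-action, and Lemma~\ref{lemma:3A6-transitive} finishes the argument.
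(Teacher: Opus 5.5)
Your proposal is correct and follows essentially the same route as the paper. You identify each extension with the normalizer of its socle via Lemma~\ref{lemma:normalizer-2} and an order count. You then use the transitivity in Lemma~\ref{lemma:3A6-transitive} to conjugate the socles inside $\PGL(V)$ (acting on $W$ by $(x,y)\mapsto(gx,g^{-\top}y)$), and hence their normalizers. You also spell out the lifting to $\widehat{A}\subset\SL_3(\CC)$, the irreducibility of that lift, and the existence of the $\PGL_2(\FF_9)$-action, which the paper leaves implicit.
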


\begin{proof}
Every such extension equals the normalizer of its index-two subgroup $H\simeq A$ by Lemma~\ref{lemma:normalizer-2} and comparison of orders. For two corresponding representations $\rho_i:\widehat{A}\to\SL(V_i)$, Lemma~\ref{lemma:3A6-transitive} gives $\alpha\in\Aut(\widehat{A})$ and $T:V_1\xrightarrow{\sim}V_2$ with $T\rho_1(h)T^{-1}=\rho_2(\alpha(h))$. The isomorphism
$$([v],[\ell])\longmapsto([Tv],[\ell\circ T^{-1}])$$
conjugates the corresponding subgroups and hence their normalizers.
\end{proof}

\begin{remark}
\label{remark:PGL27-flag}
The same argument applies to $A=\PSL_2(\FF_7)$. Its Schur multiplier has order $2$, so its embeddings into $\PGL_3(\CC)$ lift to $\SL_3(\CC)$. Its two faithful irreducible three-dimensional representations are dual and are interchanged by an involutive outer automorphism; see \cite{lmfdb,GroupNames}. Thus the induced $A$-action on $W$ extends to $N_{\Aut(W)}(A)\simeq\PGL_2(\FF_7)$, and these subgroups form a single conjugacy class in $\Aut(W)$.
\end{remark}

\subsection{Actions of the group $M_{10}$}

\begin{lemma}
\label{lemma:A6-ext-projective-spaces}
For $n\in\{2,3,4,5\}$, the group $\mathrm{PGL}_n(\CC)$ has no subgroup isomorphic to $G$.
\end{lemma}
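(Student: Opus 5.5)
A subgroup $G\simeq M_{10}$ of $\PGL_n(\CC)$ gives, through the preimage $\widetilde{G}\subset\SL_n(\CC)$, a central extension of $G$ by $\mumu_n$ together with a faithful projective representation of dimension $n$. I will restrict this to the socle $A\simeq\mathfrak{A}_6$. Let $\widetilde{A}$ be the preimage of $A$ in $\widetilde{G}$, and replace it by its commutator subgroup, which is perfect. Then $A$ acts faithfully on $\PP^{n-1}$ through a linear representation $V$ of a perfect central extension of $A$. Such an extension is a quotient of $6.\mathfrak{A}_6$, so it is one of $\mathfrak{A}_6$, $2.\mathfrak{A}_6$, $3.\mathfrak{A}_6$, $6.\mathfrak{A}_6$. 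The irreducible constituents of $V$ have dimension at most $5$, and I will read them off the character tables.

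\begin{itemize}
\item $\mathfrak{A}_6$: the trivial representation and two $5$-dimensional ones.
\item $2.\mathfrak{A}_6$: two $4$-dimensional ones.
\item $3.\mathfrak{A}_6$: four $3$-dimensional ones.
\item $6.\mathfrak{A}_6$: the faithful irreducibles have dimension $6$ or $12$, so this case does not occur for $n\leqslant 5$.
\end{itemize}

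\textbf{Small dimensions.} For $n=2$ there is nothing to do, since $A$ does not embed into $\PGL_2(\CC)$. For $n=3$, $V$ must be an irreducible $3$-dimensional representation of $3.\mathfrak{A}_6$. For $n=4$, $V$ is either a $4$-dimensional irreducible representation of $2.\mathfrak{A}_6$ or a $3$-dimensional one plus a trivial summand. The mixed option is impossible, because $V$ must have a central character, and a $3$-dimensional representation with nontrivial central character cannot be combined with a trivial summand. So $V=\mathbf{1}\oplus\mathbf{1}\oplus\ldots$ is excluded and $V$ is the $4$-dimensional one. For $n=5$, a faithful projective action forces $V$ to be one of the two $5$-dimensional irreducible representations of $\mathfrak{A}_6$; the options $4+1$ and $3+1+1$ are excluded by the same central character argument.

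\textbf{The key step.} Since $A$ is normal in $G$, the representation $V$ must be invariant, up to isomorphism and a scalar twist, under the outer automorphism $\sigma$ of $A$ determined by $G$, namely the class of $M_{10}$. I then check each candidate.

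\begin{itemize}
\item \emph{$n=3$.} By Lemma~\ref{lemma:3A6-transitive}, the $M_{10}$ class exchanges the two $3$-dimensional representations with the same central character. A twist by a character changes nothing, since $\widehat{A}$ is perfect. Duality is not available either, because we need $\rho\circ\widetilde\sigma\simeq\rho$, and the dual representation gives a different subgroup of $\PGL_3$. Hence the image of $A$ is not normalized by any element inducing $\sigma$: the normalizer of $A$ in $\PGL_3(\CC)$ modulo $C_{\PGL_3}(A)=1$ embeds in the stabilizer of $[\rho]$ in $\Aut(A)$. That stabilizer is the $\PGL_2(\FF_9)$ class, possibly together with others only when they fix $[\rho]$, and by Lemma~\ref{lemma:3A6-transitive} that is not the $M_{10}$ class.
\item \emph{$n=5$.} The two $5$-dimensional representations of $\mathfrak{A}_6$ are exchanged by the outer automorphism that swaps the classes of $3$-cycles and double $3$-cycles. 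Their characters differ exactly on the classes $3A$ and $3B$. That automorphism is the $\PGL_2(\FF_9)$ class or the $M_{10}$ class, not the $\mathfrak{S}_6$ class, which fixes both because they extend to $\mathfrak{S}_6$. I need to check that the $M_{10}$ class swaps $3A$ and $3B$. It does: the $M_{10}$ automorphism is the product of the $\mathfrak{S}_6$ class and the $\PGL_2(\FF_9)$ class; the $\PGL_2(\FF_9)$ class fixes the order-3 classes but swaps $5A$ and $5B$, which the $5$-dimensional characters do not see; and the exotic class swaps $3A$ and $3B$. So the representation is not stable and $G$ cannot normalize.
\item \emph{$n=4$.} The two $4$-dimensional representations of $2.\mathfrak{A}_6$ have characters differing on the order-3 classes, so the same argument applies.
\end{itemize}

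In each case $N_{\PGL_n}(A)/A$ has no element inducing the $M_{10}$ class, which is a contradiction. The main obstacle is bookkeeping which outer class swaps which conjugacy classes in each representation. I would settle this carefully from the ATLAS character tables, verifying in particular that the $M_{10}$ class moves every candidate representation. As a backup, I would use the fact that $M_{10}$ has elements of order $8$ and its $4$-torsion elements outside $A$ square into $A$; one could compute the possible eigenvalues directly.
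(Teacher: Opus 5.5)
Your argument is correct, but it takes a different route from the paper. The paper works with $G$ directly. The Schur multiplier of $M_{10}$ is $\mumu_3$, so an embedding $G\subset\PGL_n(\CC)$ lifts to a linear representation of $3.M_{10}$ in which the centre acts by scalars. If the centre acts trivially, you get a representation of $M_{10}$ of dimension at most $5$; since the irreducible degrees are $1,1,9,9,10,10,10,16$, it is a sum of characters and is not faithful. Otherwise every constituent is an irreducible representation of $3.M_{10}$ with non-trivial central character, and all of these have dimension at least $6$.

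You instead restrict to the socle. You list the projective representations of $\mathfrak{A}_6$ of dimension at most $5$: the $3$-dimensional ones of $3.\mathfrak{A}_6$, the $4$-dimensional ones of $2.\mathfrak{A}_6$, and the $5$-dimensional ones of $\mathfrak{A}_6$. Perfectness of the lifted group then gives $\rho\circ\alpha\simeq\rho$ for a lift $\alpha$ of the outer class determined by $G$, and you check that the $M_{10}$ class moves every candidate. Your route needs only the tables of the covers of $\mathfrak{A}_6$ and the $\operatorname{Out}(\mathfrak{A}_6)$-action already used in Lemma~\ref{lemma:3A6-transitive}, rather than $H^2(M_{10},\CC^*)$ and the table of $3.M_{10}$. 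It also explains the bound $6$: the $M_{10}$ class fuses each pair of representations of dimension $3$, $4$, $5$, so these can only occur in pairs. The paper's route is shorter and avoids any fusion bookkeeping.

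Two of your bookkeeping remarks are wrong and should be corrected, although neither affects the conclusion.

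In the case $n=5$ you have the fusions reversed. The $\mathfrak{S}_6$ class swaps $5A,5B$ and fixes $3A,3B$. The $\PGL_2(\FF_9)$ class fixes $5A,5B$, as stated in Lemma~\ref{lemma:3A6-transitive}, and swaps $3A,3B$, which are the two unipotent classes of $\PSL_2(\FF_9)$. As written, your justification defeats itself: if both factors fixed the order-$3$ classes, so would their product. The fact you actually need, that the $M_{10}$ class swaps $3A$ and $3B$, is true; for instance, $M_{10}$ has a single class of elements of order $3$. This one fact settles both $n=4$ and $n=5$. For $n=5$ there is an even quicker check: $M_{10}/\mathfrak{A}_6$ is cyclic, so a stable irreducible representation of $\mathfrak{A}_6$ would extend to $M_{10}$, which has no irreducible representation of degree $5$.

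In the case $n=3$, the stabilizer of $[\rho]$ in $\operatorname{Out}(\mathfrak{A}_6)$ is trivial, not the $\PGL_2(\FF_9)$ class. That class sends $\rho$ to $\rho^\vee$, which has the inverse central character. This agrees with $N\cap\PGL(V)=H$ in the proof of Lemma~\ref{lemma:normalizer-2}. Your argument only uses that the $M_{10}$ class does not fix $[\rho]$, which is exactly what Lemma~\ref{lemma:3A6-transitive} gives.
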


\begin{proof}
    The ordinary irreducible character degrees of $G$ are $1,1,9,9,10,10,10,16$, so such an embedding has to come from a central extension of $G$. The Schur multiplier of $G$ is cyclic of order $3$, so every projective representation of $G$ lifts to a linear representation of its covering group $\widehat{G}=3.M_{10}$, with $\mumu_3$ acting by scalars. From \cite{lmfdb} one sees that an irreducible representation of $\widehat{G}$ with non-trivial central character has dimension at least $6$.
\end{proof}

\begin{lemma}
\label{lemma:A6-ext-curves}
Let $C$ be a~smooth curve that admits a faithful action of the group $G$. Then the~$G$-orbits in $C$ are of length $90,144,180,240,360,720$. Moreover, the~curve $C$ is not hyperelliptic and if $g(C)\leqslant 48$, then $g(C)=46$.
\end{lemma}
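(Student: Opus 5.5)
We will follow the proof of Lemma~\ref{lemma:A6-genus}, now with $G\simeq M_{10}$ in place of $A$.

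\emph{Orbit lengths.} The stabilizer of a point of $C$ is cyclic, so the possible orbit lengths are $|G|/m$ with $m$ the order of an element of $M_{10}$. The element orders of $M_{10}$ are $1,2,3,4,5,8$: it contains the elements of orders $2,3,4,5$ of $A$, the outer elements have orders $4$ and $8$, and there are no outer involutions since the extension is non-split. Dividing $720$ by these orders gives exactly $720,360,240,180,144,90$.

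\emph{Non-hyperellipticity.} If $C$ were hyperelliptic, the hyperelliptic involution would be central in $\Aut(C)$. Hence $G$ would embed into $\PGL_2(\CC)$, or $G$ would contain a central involution acting through it. The first option is impossible because $\PGL_2(\CC)$ contains no $\mathfrak{A}_6$. For the second, the quotient $G/(G\cap\langle\iota\rangle)$ would act on $\PP^1$. But $G$ has trivial center, so $G\cap\langle\iota\rangle=1$, and we reach the same contradiction. Alternatively, $C$ carries a faithful action of $A$, so Lemma~\ref{lemma:A6-genus} already shows that $C$ is not hyperelliptic.

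\emph{Genus bound.} By Lemma~\ref{lemma:A6-genus} applied to $A\subset G$, we know $g\geqslant 10$. Let $h=g(C/G)$, and let $a_m$ be the number of branch points with ramification index $m\in\{2,3,4,5,8\}$. The Riemann--Hurwitz formula gives
$$g-1=720(h-1)+180a_2+240a_3+270a_4+288a_5+315a_8.$$

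\emph{Case $h\geqslant1$.} Then $g\geqslant1+720(h-1)+(\text{nonnegative terms})$. For $h=1$ we need at least one branch point, since $G$ is not a quotient of $\pi_1$ of a torus ($G$ is non-abelian and not generated by two commuting elements). This gives $g\geqslant181>48$. For $h\geqslant2$ we get $g\geqslant721$.

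\emph{Case $h=0$.} We need $720\big(-2+\sum(1-1/m_i)\big)\leqslant 94$. The minimal hyperbolic signatures are the main computation. We will enumerate signatures $(m_1,\dots,m_r)$ with $m_i\in\{2,3,4,5,8\}$ and $\sum(1-1/m_i)-2\leqslant 94/720$. Since $r\geqslant4$ already forces a value of at least $1/2\cdot 720$ above $-2$, which is too large unless $r=4$ with all $m_i=2$ (giving $g=1$, excluded), only triangles $(m_1,m_2,m_3)$ remain. These give
$$g=1+720\Big(1-\tfrac1{m_1}-\tfrac1{m_2}-\tfrac1{m_3}\Big).$$
For example, $(2,4,5)$ gives $g=19$, $(2,5,5)$ gives $g=1+720\cdot\frac{1}{10}=73$, $(2,3,8)$ gives $g=1+720\cdot\frac1{24}=31$, $(2,4,8)$ gives $g=46$, and $(3,3,4)$ gives $g=1+720\cdot\frac1{12}=61$.

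\emph{Main obstacle.} The hard step is to rule out the small candidates with $g\leqslant48$ other than $46$. These include $(2,3,8)$ with $g=31$, $(2,4,5)$ with $g=19$, and $(2,5,5)$ or $(3,3,5)$ if they fall in range. For each, we must show that $M_{10}$ admits no generating triple of elements of the given orders with product $1$. There are two tools:
\begin{itemize}
\item Since $G/A\simeq\mumu_2$, the number of generators lying outside $A$ must be even. Outer elements have orders $4$ or $8$ only, so every odd-order generator and every involution lies in $A$. This kills, for instance, $(2,4,5)$, $(2,5,5)$, $(3,3,4)$ and $(2,3,8)$, because in each of these at most one generator can lie outside $A$, so all of them lie in $A$ and they cannot generate $G$. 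It also kills all signatures in which at most one $m_i$ is in $\{4,8\}$.
\item For the survivors, such as $(2,4,8)$ with $g=46$, $(4,4,m)$ and $(2,8,8)$, we check the genus directly: $(4,4,3)$ gives $61$, $(4,4,4)$ gives $181$, $(2,8,8)$ gives $1+720\cdot\frac14=181$, $(4,8,\ast)$ is larger still, and $(4,4,2)$ is not hyperbolic.
\end{itemize}
Hence the only signature with $g\leqslant48$ is $(2,4,8)$, and it gives $g=46$.

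We do not need to prove that $46$ is realized, since the statement only asserts that $g(C)=46$ when $g(C)\leqslant48$.
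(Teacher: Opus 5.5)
Your argument is correct and takes the same route as the paper, whose proof only says it is as for Lemma~\ref{lemma:A6-genus}: cyclic stabilizers, then Riemann--Hurwitz. Your parity observation is the group-theoretic input that a purely numerical enumeration lacks: an even number of the branch generators lie outside $A$, and outer elements of $M_{10}$ have order $4$ or $8$. Without it, the signatures $(2,3,8)$, $(2,4,5)$, $(3,3,4)$, $(2,5,5)$ are numerically admissible, with $g=16,19,31,37\leqslant 48$.

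There are two arithmetic slips to fix, though neither affects the conclusion.

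\begin{itemize}
\item \textbf{Triangle formula.} Your own general formula gives $g=1+360\bigl(1-\frac{1}{m_1}-\frac{1}{m_2}-\frac{1}{m_3}\bigr)$ for a triangle signature, not $1+720(\cdots)$.
  \begin{itemize}
  \item $(2,3,8)$, $(3,3,4)$, $(2,5,5)$, $(3,3,5)$ give $16$, $31$, $37$, $49$.
  \item $(4,4,4)$ and $(2,8,8)$ give $91$, not $181$.
  \item Your values $19$, $46$, $61$ for $(2,4,5)$, $(2,4,8)$, $(3,4,4)$ are correct.
  \end{itemize}
\item \textbf{The case $r=4$.} You claim that $r\geqslant 4$ forces $2g-2\geqslant 360$. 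This is false for $r=4$: the signature $(2,2,2,3)$ gives $2g-2=120$, so $g=61$. That is still larger than $48$, so only triangles remain.
\end{itemize}

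The conclusion survives for two reasons. First, every signature with at most one entry in $\{4,8\}$ is eliminated by parity, whatever its genus. Second, the remaining survivors other than $(2,4,8)$ have $g\geqslant 61$. Hence $g=46$ is the only possibility with $g\leqslant 48$.
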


\begin{proof}
    The proof is essentially the same as that of Lemma \ref{lemma:A6-genus}.
\end{proof}

\begin{corollary}
\label{corollary:A6-ext-singular-curves}
Let $C$ be an irreducible~singular curve with at most locally planar singularities that admits a faithful
action of the~group $G$. Then $\mathrm{p}_a(C)\geqslant 86$.
\end{corollary}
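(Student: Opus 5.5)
The plan is to follow the argument of Lemma~\ref{lemma:A6-singular-curves}, with $A$ replaced by $G$. Let $\pi\colon\widetilde{C}\to C$ be the normalization. The action of $G$ lifts to $\widetilde{C}$, and it stays faithful there because $\pi$ is birational. Lemma~\ref{lemma:A6-ext-curves} then says $\widetilde{C}$ is a smooth curve with a faithful $G$-action, so $g(\widetilde{C})\geqslant 46$. Since $C$ is singular, we use the standard inequality
$$\mathrm{p}_a(C)\geqslant g(\widetilde{C})+\vert\Sing(C)\vert.$$
It therefore suffices to show $\vert\Sing(C)\vert\geqslant 40$, since then $\mathrm{p}_a(C)\geqslant 46+40=86$.

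The set $\Sing(C)$ is $G$-invariant, so it is a union of $G$-orbits. Take $P\in\Sing(C)$ with stabilizer $G_P$. Because the singularities are locally planar, the Zariski tangent space $T_{C,P}$ is $2$-dimensional, and $G_P$ acts on it linearly.

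The key step is to show that this representation is faithful. Suppose $g\in G_P$ acts trivially on $T_{C,P}$. Since $g$ has finite order, it acts semisimply on the completed local ring $\widehat{\OOO}_{C,P}$, compatibly with the filtration by powers of the maximal ideal. Linearization lets us choose generators of $\mathfrak{m}_P$ that are eigenvectors lifting a basis of $\mathfrak{m}_P/\mathfrak{m}_P^2$. All these generators are then fixed by $g$, so $g$ acts trivially on $\widehat{\OOO}_{C,P}$. Hence $g$ fixes a neighbourhood of $P$, and so $g$ acts trivially on $C$, which forces $g=1$. Thus $G_P$ embeds in $\mathrm{GL}_2(\CC)$.

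Now bound the index: every subgroup of $G\simeq M_{10}$ admitting a faithful $2$-dimensional complex representation has index at least $40$. The orbit of $P$ has length $[G:G_P]\geqslant 40$, giving $\vert\Sing(C)\vert\geqslant 40$ as required.

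The main obstacle is this index bound, which comes down to running through the subgroup structure of $M_{10}$. First, $G_P\cap A$ has a faithful $2$-dimensional representation, so by the proof of Lemma~\ref{lemma:A6-singular-curves} it has index at least $36$ in $A$. This already gives $[G:G_P]\geqslant 36$. To get to $40$ we need more:
\begin{itemize}
\item Subgroups of $A$ of index exactly $36$ are $\mathfrak{D}_5$. The relevant $G_P$ would be an extension of order $20$ sitting in the normalizer of a Sylow $5$-subgroup of $M_{10}$, namely $\mumu_5\rtimes\mumu_4$ with faithful action of $\mumu_4$. This has no faithful $2$-dimensional representation: its irreducible representations have dimension $1$ or $4$, and the $1$-dimensional ones factor through the abelianization $\mumu_4$.
\item The subgroups of index $36$ in $M_{10}$ itself are also these $\mumu_5\rtimes\mumu_4$, so they are excluded too.
\item The next candidates are the Sylow $3$-normalizer part, $\mumu_3^2\rtimes\mumu_2$ inside $A$, together with its extensions. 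For these I will check faithful $2$-dimensional representations against the character tables in \cite{lmfdb}. Any surviving subgroup should have index $\geqslant 40$, as with $3^2\rtimes Q_8$ of order $72$ and index $10$ in $M_{10}$. These are the cases where I expect to have to argue carefully with the actual tables.
\end{itemize}

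If the index bound falls short of $40$, the fallback is to use the orbit lengths in Lemma~\ref{lemma:A6-ext-curves} on $\widetilde{C}$. Each singular point has at least one preimage in $\widetilde{C}$, and the $G$-orbits there have length at least $90$. This gives strong lower bounds, as long as one also accounts for singular points whose preimages contain several points.
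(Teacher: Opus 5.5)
Your proposal is correct and follows the paper's intended route: normalization, $g(\widetilde{C})\geqslant 46$ from Lemma~\ref{lemma:A6-ext-curves}, and the tangent-space count of singular points from Lemma~\ref{lemma:A6-singular-curves} with $40$ in place of $36$, which is how $86=46+40$ arises. Your third bullet and the fallback are unnecessary, and the remark about $3^2\rtimes Q_8$ (index $10$) is muddled: since $[G:G_P]\geqslant[A:G_P\cap A]\geqslant 36$ and $[G:G_P]$ divides $720$, the only case below $40$ is index $36$, i.e.\ $G_P\simeq\mumu_5\rtimes\mumu_4$, which your second bullet already excludes (in fact the smallest index of a subgroup of $M_{10}$ with a faithful $2$-dimensional representation is $45$, attained by its semidihedral Sylow $2$-subgroup).
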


\begin{lemma}
    \label{lemma:A6-ext-surfaces}
    Let $S$ be a smooth surface admitting a faithful action of $G$. Then $S$ does not have $G$-orbits of length less than $36$.
\end{lemma}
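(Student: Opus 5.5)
The plan is to argue via stabilizers and tangent representations. Suppose $P\in S$ has $G$-orbit of length $n<36$, and let $G_P$ be its stabilizer, so $[G:G_P]=n<36$. Since $S$ is smooth and the action of $G$ is faithful, the tangent space $T_{S,P}$ is a faithful $2$-dimensional representation of $G_P$. This is the standard linearization fact: a finite group fixing a smooth point acts faithfully on the tangent space. So it suffices to show that no subgroup of $G\simeq M_{10}$ of index less than $36$ has a faithful $2$-dimensional complex representation.

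First I list the subgroups of $M_{10}$ of small index. The group has order $720$, so index less than $36$ means order greater than $20$. Using the subgroup structure from \cite{lmfdb} or \cite[Table 6.3.1]{SimpleGrps}, the candidates are:
\begin{itemize}
\item $G$ itself;
\item $A\simeq\mathfrak{A}_6$, of index $2$;
\item the point stabilizer $\mumu_3^2\rtimes Q_8$ of order $72$ (index $10$) and its subgroups $\mumu_3^2\rtimes\mumu_4$ (order $36$) and $\mumu_3^2\rtimes\mumu_2$ (order $18$, too small);
\item $\mathfrak{A}_5\subset A$, of index $12$;
\item $\mathfrak{A}_4$-type and $\mathfrak{S}_4$-type subgroups of $A$, of orders $12$ and $24$;
\item the Sylow $2$-subgroup $SD_{16}$ and the subgroups of order $8$;
\item $\mumu_3^2\rtimes\mumu_4$ inside $A$, and $\mumu_5\rtimes\mumu_4$ of order $20$ (index $36$, allowed).
\end{itemize}
Only subgroups of order at least $21$ matter, which are those of order $24$, $36$, $40$, $60$, $72$, $360$ and $720$. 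Alternatively, I can bypass part of the bookkeeping by using the proof of Lemma~\ref{lemma:A6-singular-curves}: every subgroup of $A$ with a faithful $2$-dimensional representation has index at least $36$ in $A$, hence at least $72$ in $G$. So only subgroups $K\subset G$ with $K\not\subset A$ and $|K|>20$ remain, and for these $K\cap A$ has index less than $18$ in $A$.

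The subgroups of $A$ of index at most $17$ are $A$, the two classes of $\mathfrak{A}_5$ (index $6$), and the two classes of $\mumu_3^2\rtimes\mumu_4$ (index $10$). The outer automorphism defining $M_{10}$ swaps the two $\mathfrak{A}_5$ classes, so no $\mathfrak{A}_5$ is normalized to give a subgroup $\mathfrak{A}_5.2\subset M_{10}$. Hence $K\cap A\in\{A,\ \mumu_3^2\rtimes\mumu_4\}$, giving $K=G$ or $K=\mumu_3^2\rtimes Q_8$. I also need $K\cap A$ to be of index at most $35/2$ in $A$, and the same filtering applies.

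I then rule out both remaining candidates. $G$ has no faithful $2$-dimensional representation, by Lemma~\ref{lemma:A6-ext-projective-spaces} or simply because its character degrees are $1,1,9,\dots$. The group $\mumu_3^2\rtimes Q_8$ contains the normal subgroup $\mumu_3^2$, and in a faithful $2$-dimensional representation this subgroup would be diagonalizable with $Q_8$ acting faithfully on it by conjugation. Its image would lie in the normalizer of a maximal torus, whose component group is $\mumu_2$. Then a subgroup of $Q_8$ of index at most $2$ would centralize $\mumu_3^2$, which contradicts the fact that $Q_8$ acts faithfully on $\mumu_3^2$ in the Frobenius group. So this candidate is excluded as well, and therefore every orbit has length at least $36$.

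The main obstacle is getting the list of small-index subgroups of $M_{10}$ not contained in $A$ exactly right, in particular the fact that $M_{10}$ contains no $\mathfrak{S}_5$- or $\mathfrak{A}_5.2$-type subgroup. I would verify this against \cite{lmfdb}.
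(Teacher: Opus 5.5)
\textbf{The overall strategy matches the paper's, but the reduction for stabilizers not contained in $A$ is miscounted and misses cases.} As in the paper, $G_P$ acts faithfully on the $2$-dimensional space $T_{S,P}$, so one must show that no subgroup of $M_{10}$ of index less than $36$ has a faithful $2$-dimensional representation.

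The error is in the index computation for $K\not\subset A$. Since $[G:A]=2$, such a $K$ satisfies $KA=G$, hence $[G:K]=[A:K\cap A]$, not $2[A:K\cap A]$. So $|K|>20$ only gives $[A:K\cap A]<36$, not $<18$. Your later ``index at most $35/2$'' repeats the same error.

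With the correct bound, $K\cap A$ can also be $\mathfrak{A}_4$, $\mumu_3^2\rtimes\mumu_2$ or $\mathfrak{S}_4$. Two smaller slips in your list of subgroups of $A$ of index at most $17$: it omits $\mathfrak{S}_4$ (index $15$), and $A$ has only one class of $\mumu_3^2\rtimes\mumu_4$, not two.

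Concretely, write $Q_8=\{\pm1,\pm i,\pm j,\pm k\}$ with $(\mumu_3^2\rtimes Q_8)\cap A=\mumu_3^2\rtimes\langle i\rangle$. Then the subgroups $\mumu_3^2\rtimes\langle j\rangle$ and $\mumu_3^2\rtimes\langle k\rangle$ are not contained in $A$ and have index $20<36$ in $G$. Your case analysis never examines them, so as written it does not cover all possible stabilizers. (Also, $M_{10}$ has no subgroup of order $40$.)

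\textbf{The repair is short and removes the need to enumerate subgroups of $M_{10}$ at all.} A faithful $2$-dimensional representation of $K$ restricts to a faithful one of $K\cap A$. So the fact you already quote from the proof of Lemma~\ref{lemma:A6-singular-curves} gives $[A:K\cap A]\geqslant 36$. Hence $[G:K]=2[A:K]\geqslant 72$ if $K\subset A$, and $[G:K]=[A:K\cap A]\geqslant 36$ otherwise.

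Alternatively, you can close the missing cases directly. Your torus-normalizer argument for $\mumu_3^2\rtimes Q_8$ applies verbatim to the Frobenius groups $\mumu_3^2\rtimes\mumu_4$. Moreover, $M_{10}$ has no subgroups of order $24$ or $48$ outside $A$, so the $\mathfrak{A}_4$ and $\mathfrak{S}_4$ cases produce no candidates. With either fix, your remark about the two $\mathfrak{A}_5$-classes being swapped becomes unnecessary.
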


\begin{proof}
    For $P\in S$, the stabilizer $G_P$ has an irreducible $2$-dimensional tangent representation. 
\end{proof}

\begin{lemma}[{\cite{IgorVasya}}]
\label{lemma:A6-ext-cr2}
    The group $M_{10}$ does not admit an embedding into $\Cr_2(\CC)$. In particular, no rational surface admits a faithful action of $M_{10}$. 
\end{lemma}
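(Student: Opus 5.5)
The plan is to run the $G$-equivariant minimal model program for rational surfaces and rule out each possible $G$-minimal model using the results already established for $A\simeq\mathfrak{A}_6$ and $G\simeq M_{10}$. Suppose, for a contradiction, that $G\subset\Cr_2(\CC)$. By regularization of finite groups of birational transformations, there is a smooth rational projective surface $S$ with $G\subset\Aut(S)$. Taking a $G$-minimal model (Manin, Iskovskikh), we may assume that one of two cases holds:
\begin{enumerate}
\item $S$ is a smooth del Pezzo surface with $\rho^G(S)=1$;
\item $S$ admits a $G$-equivariant conic bundle $\pi\colon S\to\PP^1$ with $\rho^G(S)=2$.
\end{enumerate}

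\emph{Del Pezzo case.} Since $A\subset G\subset\Aut(S)$, Theorem~\ref{theorem:A6-dP} gives $S\simeq\PP^2$. Then $G$ embeds into $\Aut(\PP^2)=\PGL_3(\CC)$, which contradicts Lemma~\ref{lemma:A6-ext-projective-spaces} with $n=3$.

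\emph{Conic bundle case.} The group $G$ acts on the base $\PP^1$ through a homomorphism $G\to\PGL_2(\CC)$. Its image is a finite subgroup of $\PGL_2(\CC)$, so it is cyclic, dihedral, $\mathfrak{A}_4$, $\mathfrak{S}_4$ or $\mathfrak{A}_5$. The kernel is a normal subgroup of $G$. The only normal subgroups of $M_{10}$ are $1$, $A$ and $G$, and neither $G$ nor $A$ embeds into $\PGL_2(\CC)$. Hence the kernel contains $A$.

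So $A$ acts faithfully on the generic fiber of $\pi$, which is a conic over $K=\CC(t)$. This gives an embedding of $A$ into $\Aut(C_K)$, a $K$-form of $\PGL_2$, and hence into $\PGL_2(\overline{K})$. Since $\overline{K}$ is algebraically closed of characteristic zero, the finite subgroups of $\PGL_2(\overline{K})$ are again only cyclic, dihedral, $\mathfrak{A}_4$, $\mathfrak{S}_4$ or $\mathfrak{A}_5$. As $\mathfrak{A}_6$ is not among them, this is a contradiction.

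The main point needing care is the reduction to these two minimal models. Here one should note that the $G$-minimal model is automatically smooth, and that in case (2) the relevant bound is the order of the kernel acting on the generic fibre rather than on a closed fibre. This can be checked by restricting to a general closed fibre: a general fibre is a smooth conic $\simeq\PP^1$ invariant under the kernel, and the kernel acts faithfully on it. A nontrivial element acting trivially on a general fibre would act trivially on a dense set, hence on all of $S$. This again forces $A\hookrightarrow\PGL_2(\CC)$, which is impossible.

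The final assertion, that no rational surface admits a faithful action of $M_{10}$, follows immediately: such an action would give an embedding $M_{10}\hookrightarrow\Bir(S)\simeq\Cr_2(\CC)$.
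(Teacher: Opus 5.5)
Your argument is correct, but it takes a different route from the paper. The paper gives no proof of its own: it cites the Dolgachev--Iskovskikh classification of finite subgroups of $\Cr_2(\CC)$ \cite{IgorVasya} and reads off from those tables that $M_{10}$ does not occur.

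You instead reduce, via regularization and the Manin--Iskovskikh dichotomy for $G$-minimal rational surfaces, to two cases. You then dispose of them using only results already stated in the paper and the group theory of $M_{10}$:
\begin{itemize}
\item In the del Pezzo case, Theorem~\ref{theorem:A6-dP} forces $S\simeq\PP^2$, and Lemma~\ref{lemma:A6-ext-projective-spaces} with $n=3$ excludes $G\subset\PGL_3(\CC)$.
\item In the conic bundle case, $A$ is the unique nontrivial proper normal subgroup of $M_{10}$, so the kernel of the action on the base contains $A$. Finiteness of $A$ and faithfulness on $S$ then make $A$ act faithfully on a general fibre $\simeq\PP^1$. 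This gives the impossible embedding $\mathfrak{A}_6\hookrightarrow\PGL_2(\CC)$. Your generic-fibre remark is redundant given this closed-fibre argument, but harmless.
\end{itemize}

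The citation buys brevity. Your argument instead shows exactly which properties of $M_{10}$ are used: it has no projective representation of degree $3$, and no normal subgroup $K$ with both $K$ and $G/K$ isomorphic to finite subgroups of $\PGL_2(\CC)$. Because of this, your proof applies verbatim to $\mathfrak{S}_6$ and $\PGL_2(\FF_9)$. It is also the same pattern the paper itself uses later in the proof of the Fibration Theorem (trivial action of $A$ on the base, hence faithful action on the general fibre). The only classification-type input remaining in your version is Theorem~\ref{theorem:A6-dP} on smooth del Pezzo surfaces carrying an $\mathfrak{A}_6$-action.
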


\begin{lemma}[{\cite[Corollary 2.5.6]{CheltsovPinardinProkhorov}}]
    \label{Lemma:A6-ext-reduced-divisor}
    Let $S$ be a smooth surface such that its automorphism group contains a subgroup isomorphic to $A$. If $D\in\vert-K_S\vert$ is $A$-invariant, then $D$ is not reduced.
\end{lemma}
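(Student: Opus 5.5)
The plan is to argue by contradiction: assume $D\in\vert-K_S\vert$ is $A$-invariant and reduced, and implicitly nonzero. If $K_S\sim 0$ the zero divisor is a trivial exception, so I take $D\neq0$. I would first record what reducedness buys. By adjunction, $\omega_D\simeq\OOO_D(K_S+D)\simeq\OOO_D$. Hence every connected component of $D$ is a reduced Gorenstein curve with locally planar singularities and arithmetic genus $1$. Consequently every irreducible component of $D$ has arithmetic genus at most $1$, and its normalization is $\PP^1$ or an elliptic curve. Both properties survive pushforward along birational morphisms of smooth surfaces. Moreover, the image of a connected genus-one configuration is again a connected curve whose normalization components have genus $\leqslant 1$.

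Next I would run the $A$-equivariant minimal model program on $S$. Let $f\colon S\to S'$ be a composition of $A$-equivariant contractions of disjoint $(-1)$-curves, so $f_*D\in\vert-K_{S'}\vert$ is again $A$-invariant and reduced. Since $-K_S$ is effective and nonzero, $S$ has Kodaira dimension $-\infty$. Indeed, if $K_S$ were pseudo-effective, then $K_S$ and $-K_S$ would both be effective, forcing $K_S\sim0$ and $D=0$. Hence $S'$ is either an $A$-minimal del Pezzo surface, an $A$-conic bundle over $\PP^1$, or a ruled surface over a curve $B$ of genus $\geqslant1$. In all three cases $f_*D\neq0$, because $-K_{S'}$ is not numerically trivial.

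The three cases are handled as follows.
\begin{enumerate}
\item If $S'$ is del Pezzo, Theorem~\ref{theorem:A6-dP} gives $S'\simeq\PP^2$. Then $f_*D$ is an $A$-invariant plane cubic, contradicting Lemma~\ref{lemma:A6-invariant-curves-P2}.
\item If $S'\to\PP^1$ is a conic bundle, $A$ acts trivially on the base, since $A$ does not embed in $\PGL_2(\CC)$. So $A$ acts faithfully on the generic fibre, a conic over $\CC(t)$. This yields an embedding of $A$ into $\PGL_2(\overline{\CC(t)})$, which is impossible for the same reason.
\item If $S'\to B$ is ruled with $g(B)\geqslant1$, then $-K_{S'}\cdot F=2$ for a fibre $F$. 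So $f_*D$ has a horizontal component whose normalization dominates $B$; this has genus $\leqslant1$, forcing $g(B)=1$. In that case I would check whether $A$ acts on the elliptic curve $B$. A trivial action reduces to the conic bundle argument on the generic fibre. A nontrivial action is impossible, since $A$ is simple and $\Aut(B)$ has no non-abelian simple finite subgroups, as already used in Lemma~\ref{lemma:A6-genus}.
\end{enumerate}

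I expect the main obstacle to be this last ruled case over an elliptic base. There one must control the horizontal part of $f_*D$ carefully, including the possibility of two disjoint sections. The group-theoretic input will be that the stabilizer in $A$ of a horizontal component acts on that component, a curve of genus at most one. Since every subgroup of $A$ of index $\leqslant 2$ is $A$ itself, this forces a non-abelian simple group to act on $\PP^1$ or an elliptic curve, which gives the contradiction.
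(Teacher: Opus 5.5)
Your argument is correct, but it is not what the paper does. The paper gives no argument for this lemma and simply imports it from \cite[Corollary~2.5.6]{CheltsovPinardinProkhorov}. You instead prove it directly for $\mathfrak{A}_6$ using only ingredients already available in Section~\ref{section:m10}:
\begin{itemize}
\item $\kappa(S)=-\infty$ and the $A$-equivariant MMP;
\item on $\PP^2$, Theorem~\ref{theorem:A6-dP} together with the absence of invariant cubics (Lemma~\ref{lemma:A6-invariant-curves-P2});
\item for conic bundles on which $A$ acts trivially on the base, the fact that $\mathfrak{A}_6$ does not embed in $\PGL_2$ over an algebraically closed field;
\item over an irrational base, adjunction: $\omega_D\simeq\OOO_D$ bounds the geometric genus of a horizontal component by $1$, which forces $g(B)=1$, so $A$ must act trivially on $B$.
\end{itemize}
Your route shows where each hypothesis enters. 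Reducedness is used only to bound $g(B)$: on $\PP^2$ there is no invariant anticanonical curve at all, and $A$-conic bundles over $\PP^1$ are excluded for any $A$-surface. The hypothesis $D\neq0$, which you rightly add, is genuinely necessary. K3 surfaces with a faithful $\mathfrak{A}_6$-action, which the paper itself considers in Lemma~\ref{lemma:K3-a6}, have $|-K_S|=\{0\}$. In the paper's applications (Lemma~\ref{lemma:A6-ext-invariant-surfaces}) the divisor is explicitly non-zero. The citation is shorter but leaves both points implicit.

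There are two small fixes.
\begin{itemize}
\item Your justification of $\kappa(S)=-\infty$ is wrong as written, since pseudo-effective does not mean effective. Instead, note that $K_S\cdot H=-D\cdot H<0$ for an ample $H$. Hence $K_S$ is not pseudo-effective and no positive multiple of it is effective.
\item The obstacle you anticipate over an elliptic base does not exist. Your item~(3) is already complete: a single horizontal component suffices to get $g(B)\leqslant1$, and once $A$ acts trivially on $B$ the generic-fibre argument finishes. The alternative you sketch via stabilizers of horizontal components would not work on its own. Such a stabilizer can act trivially on its component, e.g.\ on a section over a base fixed pointwise, so that curve yields no contradiction.
\end{itemize}
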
 

\begin{lemma}
\label{lemma:A6-ext-fixed}
Let $X$ be a threefold with terminal singularities admitting a faithful action of a
group $\Gamma\simeq\operatorname{PSU}_3(\FF_2)$. Then $X$ has no $\Gamma$-invariant points.
\end{lemma}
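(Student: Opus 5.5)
The plan is to follow the standard argument for Theorem~\ref{theorem:A6-non-fixed}, which is \cite[Lemma B.2]{cheltsov2014five}. Suppose $P\in X$ is fixed by $\Gamma\simeq\operatorname{PSU}_3(\FF_2)\simeq\mumu_3^2\rtimes Q_8$, a group of order $72$. We aim to show that the stabilizer of a point on a terminal threefold cannot be this group, by looking at the induced action on the tangent space or on a suitable cover.

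\emph{Smooth or hypersurface case.} First suppose $P$ is a smooth point, or more generally a Gorenstein point. A terminal Gorenstein point of a threefold is an isolated cDV hypersurface singularity, so $\dim T_{X,P}\leqslant4$. Since $\Gamma$ is finite and acts faithfully on the local ring, it acts faithfully on $T_{X,P}$, because a finite group acting trivially on the cotangent space acts trivially on the completion. So we need the faithful representations of $\Gamma$ of dimension $\leqslant4$. The irreducible representations of $\Gamma$ have degrees $1,1,1,1,2,8$: four characters from $Q_8^{ab}=\mumu_2^2$, the $2$-dimensional representation of $Q_8$ pulled back, and one $8$-dimensional representation. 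Every representation of dimension $<8$ therefore factors through the quotient $Q_8$, so its kernel contains $\mumu_3^2$. Hence there is no faithful representation of dimension $\leqslant 7$, which gives a contradiction in this case.

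\emph{Non-Gorenstein case.} If $P$ is a non-Gorenstein terminal point of index $r>1$, take the local index-one cover $\pi\colon(X^\sharp,P^\sharp)\to(X,P)$. It is a cyclic $\mumu_r$-cover, \'etale in codimension one, and $X^\sharp$ is terminal Gorenstein, so $\dim T_{X^\sharp,P^\sharp}\leqslant4$. The group $\Gamma$ lifts to an extension $1\to\mumu_r\to\Gamma^\sharp\to\Gamma\to1$ acting faithfully on $X^\sharp$ and fixing $P^\sharp$. Hence $\Gamma^\sharp$ embeds in $\mathrm{GL}_4(\CC)$, and $\mumu_r$ acts with the specific eigenvalues coming from the classification: Mori's terminal lemma for cyclic quotients, or the cA/r, cAx/2, cD/2, cD/3, cE/2 types, with $r\le 4$ outside type cA/r. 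Restrict to $\Gamma'=\mumu_3^2\subset\Gamma$. Its preimage $\Gamma'^\sharp$ is a central extension of $\mumu_3^2$ by $\mumu_r$. Because $Q_8$ acts irreducibly on $\mumu_3^2$, any $4$-dimensional representation of $\Gamma^\sharp$ must either kill $\mumu_3^2$ or involve a $\Gamma^\sharp$-irreducible constituent whose restriction to the Heisenberg-type group $\Gamma'^\sharp$ is nonabelian with nontrivial central character. If $3\nmid r$, the extension splits over $\mumu_3^2$ and the centralizer argument shows that $\mumu_3^2$ acts nontrivially on $T_{X^\sharp,P^\sharp}$ only through characters permuted by $Q_8$ in orbits of size $8$, which is impossible in dimension $4$.

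\emph{Main obstacle.} The hardest part is when $3\mid r$, where one can get the Heisenberg group of order $27$ with a $3$-dimensional representation. Here I would use the requirement that the $\mumu_r$-action on $T_{X^\sharp,P^\sharp}$ has the terminal weights $(1,-1,a,0)$ modulo $r$ with a $\mumu_r$-semi-invariant equation. The central element then acts with distinct eigenvalues, including eigenvalue $1$ in the hypersurface direction. This is incompatible with the scalar action of the center on the irreducible $3$-dimensional Heisenberg block, which forces the eigenvalue $\zeta_3\cdot\mathrm{id}$ on a $3$-dimensional subspace. That contradicts the weight pattern having at most two equal nonzero weights up to sign in the terminal case. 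This completes the plan.
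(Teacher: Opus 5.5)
Your Gorenstein step is the same as the paper's. In the non-Gorenstein case you take a genuinely different route: you use characters of a lift of $N=\mumu_3^2$, whereas the paper analyses the eigenspaces of a generator $z\in\mumu_r$ on $T=T_{X^\sharp,P^\sharp}$ using Reid's list. Two of your sub-cases are sound.
\begin{itemize}
\item For $3\nmid r$, Schur--Zassenhaus gives a normal subgroup $\widetilde N\simeq\mumu_3^2$ of $\Gamma^\sharp$. The group $Q_8$ acts freely on its eight non-trivial characters, so $\dim T\geqslant 8$. This covers every $r$ prime to $3$ without any weights, which is neater than the paper's case analysis there.
\item Suppose $\Gamma^{\prime\sharp}$ is non-abelian; this forces $3\mid r$, since commutators lie in $\mumu_r[3]$. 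Faithfulness then gives a $3$-dimensional constituent on which $z^{r/3}$ acts as a non-trivial scalar. None of the patterns $\frac1r(1,a,-a)$, $\frac1r(a,-a,1,0)$, $\frac13(0,2,1,1)$ gives $z^{r/3}$ a non-trivial eigenvalue of multiplicity $3$. That is the precise content of your ``distinct eigenvalues'' remark, which as worded is not accurate.
\end{itemize}

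The gap is the remaining case: $3\mid r$ and $\Gamma^{\prime\sharp}$ abelian. This is a priori possible already for $r=3$ with $|\Gamma^{\prime\sharp}|=27$. There is no Heisenberg block, so the weight argument says nothing. Since the orders are no longer coprime, you cannot invoke Schur--Zassenhaus to get a normal lift of $N$, so the orbit argument has nothing to act on. Your opening dichotomy (``kill $\mumu_3^2$ or contain a non-abelian constituent'') is exactly what is needed here, but irreducibility of $Q_8$ on $\mumu_3^2$ does not prove it.

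Here is one way to close it. Let $S$ be the Sylow $3$-subgroup of $\Gamma^{\prime\sharp}$, which is normal in $\Gamma^\sharp$, and let $\sigma$ be conjugation by a lift of $-1\in Q_8$.
\begin{itemize}
\item That lift lies over $[\Gamma,\Gamma]$, on which $\Phi\colon\Gamma\to\operatorname{Aut}(\mumu_r)$ is trivial because $\operatorname{Aut}(\mumu_r)$ is abelian. The same fact is what makes $\Gamma^{\prime\sharp}$ a central extension, which you asserted without proof.
\item Hence $\sigma$ fixes $S\cap\mumu_r$ pointwise and inverts $S/(S\cap\mumu_r)\simeq N$.
\item Since $|S|$ is odd, $S^-=\{s\in S\mid\sigma(s)=s^{-1}\}$ maps isomorphically onto $N$.
\item $S^-$ is normal in $\Gamma^\sharp$, because $-1$ is central in $Q_8$ and the abelian group $\Gamma^{\prime\sharp}$ centralizes $S$.
\end{itemize}
Your size-$8$ orbit argument then applies verbatim to $S^-$, and this also subsumes the case $3\nmid r$. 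The paper sidesteps the abelian/non-abelian split entirely. It works with the $z$-eigenspaces in $T$, which the lifted group permutes through the $2$-group $\operatorname{Im}(\Phi)$, and uses the fact that $N$ cannot embed in $\PGL_2(\CC)$.
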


\begin{proof}
    Suppose $P\in X$ is a $\Gamma$-fixed point. Since $\Gamma$ does not have faithful representations of dimension less than $8$, we see that $P$ has to be a non-Gorenstein singular point. Let $r$ be the Gorenstein index of the singularity $(X,P)$ and consider $\pi:X^\prime\to X$ its index-1 cover with $P^\prime=\pi^{-1}(P)$. Then, there is an exact sequence of groups 
    $$1\longrightarrow\mumu_r\longrightarrow\Gamma^\prime\xrightarrow{\;\;p\;\;} \Gamma\longrightarrow1$$
    where $\Gamma^\prime$ is a finite subgroup of automorphisms of the germ $(X^\prime,P^\prime)$ that fixes the point $P^\prime$ and acts faithfully on the tangent space $T_{X^\prime,P^\prime}$. In this setting, the group $\Gamma$ acts by conjugation on the group $\mumu_r$, and this action gives a morphism of groups $\Phi:\Gamma\to\Aut(\mumu_r)$. Write $N=\mumu_3^2\triangleleft\Gamma$ and recall that
    \begin{center}
        $\Gamma\simeq N\rtimes Q_8,\qquad [\Gamma,\Gamma]\simeq N\rtimes\mumu_2,\qquad\Gamma^{\mathrm{ab}}\simeq\mumu_2^2,$
    \end{center}
    and $Z(\Gamma)=1$. Moreover, every linear representation of $\Gamma$ of dimension less than $8$ is trivial on $N$. Since $\Aut(\mumu_r)$ is abelian, $\Phi$ factors through $\Gamma^{\mathrm{ab}}$, so its image is a $2$-group. 

    In the rest of the proof, we will make use of the following observation. Suppose that the tangent space $T_{X^\prime,P^\prime}$ decomposes into $\Gamma^\prime$-invariant $\mumu_r$-eigenspaces of dimension at most $2$. On each two-dimensional eigenspace,
    projectivization gives a homomorphism $\Gamma\to\PGL_2(\CC)$, whose restriction to $N$ is trivial. Indeed, $N$ is a minimal normal subgroup of $\Gamma$, while $\PGL_2(\CC)$ contains no subgroup isomorphic to $\mumu_3^2$. Consequently, $p^{-1}(N)$ acts by scalars on every summand and it centralizes $\Gamma^\prime$ on the tangent space. Since the action of $\Gamma^\prime$ on $T_{X^\prime,P^\prime}$ is faithful, this implies $p^{-1}(N)\subset Z(\Gamma^\prime)$, and hence $N\subset Z(\Gamma)$, a contradiction.

    Assume first that $r=2$. By the classification of terminal threefold singularities \cite{Reid:YPG}, a generator $z$ of $\mumu_2$ has a $(-1)$-eigenspace $T_{-1}$ of dimension 3 and a trivial eigenspace $T_1$ of dimension $0$ or $1$. Both are $\Gamma^{\prime}$-invariant as $\mumu_2$ is central. Note that $\operatorname{Sym}^{2}(T_{-1})$ is a $6$-dimensional representation of $\Gamma\simeq\Gamma^{\prime}/\mumu_2$, so $N$ acts trivially on it. Then we see that every element of $p^{-1}(N)$ acts on $T_{-1}$ by a scalar, and also in $T_1$, so again we obtain $p^{-1}(N)\subset Z(\Gamma^{\prime})$, a contradiction. Hence $r\geqslant3$.

    In this case, assume first that $P^\prime\in X^\prime$ is smooth, i.e., $\dim T_{X^\prime,P^\prime}=3$, and again consider $z\in\mumu_r$ as a generator, which acts in $T_{X^\prime,P^\prime}$ as the diagonal matrix $\operatorname{diag}(\xi_r,\xi_r^a,\xi_r^{-a})$, where $\xi_r$ is a primitive $r$-th root of unity and $a$ is an integer coprime with $r$. Suppose first that $a\equiv\pm1$ mod $r$. In this situation, there is a decomposition
    $$T_{X^\prime,P^\prime}=T_1\oplus T_{-1},$$
    where $T_1,T_{-1}$ are the eigenspaces of $z$ with eigenvalues $\xi_r,\xi_r^{-1}$ respectively. Thus $\dim(T_1)=2$. Since $\mumu_r$ is normal, $\Gamma^\prime$ permutes its eigenspaces, preserving their dimensions. Hence both $T_1$ and $T_{-1}$ are $\Gamma^\prime$-invariant, and the preceding observation gives a contradiction. Then, we deduce that $a\not\equiv\pm1$ mod $r$, and $z$ has $3$ different eigenvalues.

    The permutation action of $\Gamma^\prime$ on the three $z$-eigenspaces factors through $\operatorname{Im}(\Phi)$, but since this is a $2$-group, $\Gamma^\prime$ will preserve one of these eigenspaces. Its eigenvalue will be $\xi_r^b$ with $\operatorname{gcd}(b,r)=1$. For any $g\in\Gamma^\prime$ we have that $gzg^{-1}=z^k$ for some integer $k$, and restricting this equality to the fixed eigenspace we get $\xi_r^b=\xi_r^{bk}$, so $k\equiv1$ mod $r$. In particular, this implies that the extension is central so $\Gamma^\prime$ preserves all the eigenspaces. Therefore, the action of $\Gamma^\prime$ on $T_{X^\prime,P^\prime}$ is diagonalizable, contradicting the fact that $\Gamma^\prime$ is not abelian.

    Now we assume that $P^\prime\in X^\prime$ is a singular point, so $\dim(T_{X^\prime,P^\prime})=4$. Again by the classification of terminal threefold singularities \cite{Reid:YPG}, we have the following two cases:
    \begin{enumerate}
        \item $r=4$ and $z$ acts diagonally as $\operatorname{diag}(\xi_r,\xi_r,\xi_r^2,\xi_r^3)$.
        \item $r\geqslant3$ and $z$ acts diagonally as $\operatorname{diag}(\xi_r^a,\xi_r^{-a},\xi_r,1)$, where $a$ is an integer coprime to $r$.
    \end{enumerate}
    If we are in the first case, the $\xi_4$-eigenspace has dimension $2$, so it is $\Gamma^\prime$-invariant. But then if $gzg^{-1}=z^k$ again we get $k\equiv1$ mod $4$ and the extension is central. In particular, $\Gamma^\prime$ preserves all the eigenspaces which are of dimensions $2,1,1$. The useful observation at the beginning gives a contradiction.
    
    Consequently, we are in the second case. Suppose that $a\equiv\pm1$ mod $r$. Again we observe that the whole eigenspace decomposition for $z$ is preserved under $\Gamma^\prime$, and we reach a contradiction in the same way. Thus, $z$ has $4$ different eigenvalues on $T_{X^\prime,P^\prime}$, and there is a decomposition into eigenspaces
    $$T_{X^\prime,P^\prime}=E_a\oplus E_{-a}\oplus E_1\oplus E_0.$$
    It is clear that $E_0$ is $\Gamma^\prime$-invariant, and then since the permutation action on the other three eigenspaces factors through $\operatorname{Im}(\Phi)$, it will fix a second eigenspace. The same computation as in the smooth case shows that $\mumu_r$ is central and therefore the $\Gamma^\prime$-action on $T_{X^\prime,P^\prime}$ is diagonalizable, a contradiction. This argument concludes the proof.
\end{proof}

\begin{lemma}
\label{lemma:A6-ext-ample}
Let $X$ be a Fano threefold with canonical Gorenstein singularities such that $\Aut(X)$ contains a subgroup isomorphic to $G$. Then $-K_X$ is very ample.
\end{lemma}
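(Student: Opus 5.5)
The argument is a reduction to Lemma~\ref{lemma:A6-hyperelliptic}. Suppose, for a contradiction, that $-K_X$ is not very ample. Let $A\subset G$ be the subgroup isomorphic to $\mathfrak{A}_6$. Then Lemma~\ref{lemma:A6-hyperelliptic}, applied to $A\subset\Aut(X)$, identifies $X$ with the double quadric
$$X=\Big\{\sum_{i=1}^6x_i=0,\ \sum_{i=1}^6x_i^2=0,\ w^2=\sum_{i=1}^6x_i^4\Big\}\subset\PP(1^6,2),$$
whose anticanonical morphism $\phi\colon X\to Y$ is a double cover of a smooth quadric threefold $Y\subset\PP^4$. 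It remains to show that this $X$ admits no faithful action of $G\simeq M_{10}$.

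\textbf{Step 1: the action descends to $Y$.} The anticanonical linear system is canonical, so $\phi$ is $\Aut(X)$-equivariant. We get a homomorphism $G\to\Aut(Y)\subset\PGL_5(\CC)$ whose kernel is contained in the Galois group $\langle\iota\rangle\simeq\mumu_2$ of $\phi$. The group $G$ has no nontrivial normal subgroup of order $2$: its only proper nontrivial normal subgroup is $A$. Hence the kernel is trivial and $G$ embeds into $\PGL_5(\CC)$. This already contradicts Lemma~\ref{lemma:A6-ext-projective-spaces} with $n=5$, which finishes the proof.

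\textbf{Main point to check.} The only delicate step is the equivariance of $\phi$ and the faithfulness of the induced action. Equivariance holds because every automorphism preserves $K_X$ and so acts on $H^0(X,\OOO_X(-K_X))$. Faithfulness needs $G\cap\langle\iota\rangle=1$, which holds because $\langle\iota\rangle$ is central in $\Aut(X)$ and $Z(G)=1$.

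As a sanity check, here is a more explicit route in case one prefers not to invoke the projective-space lemma. Any group acting on $Y$ preserves the branch quartic $\{\sum x_i^4=0\}\cap Y$. The automorphisms of $Y$ preserving it form the symmetric group $\mathfrak{S}_6$ permuting the coordinates, by the description in \cite[6.5]{DoubleQuadrics}. Thus $\Aut(X)\simeq\mathfrak{S}_6\times\mumu_2$, and $M_{10}\not\subset\mathfrak{S}_6\times\mumu_2$, since any subgroup of order $720$ containing $\mathfrak{A}_6$ would map isomorphically onto $\mathfrak{S}_6$ or equal $\mathfrak{A}_6\times\mumu_2$. I would present the first argument and mention the second as a remark.
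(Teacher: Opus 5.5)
Your proposal is correct and is essentially the paper's proof: apply Lemma~\ref{lemma:A6-hyperelliptic} to get an equivariant double cover of a smooth quadric threefold $Y\subset\PP^4$, then contradict Lemma~\ref{lemma:A6-ext-projective-spaces} with $n=5$. Your explicit check that $G$ acts faithfully on $Y$ (the kernel is a normal subgroup of order at most $2$, and $M_{10}$ has none) fills in a step the paper leaves implicit; the alternative route via $\Aut(X)\simeq\mathfrak{S}_6\times\mumu_2$ is fine as a remark but not needed.
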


\begin{proof}
    Suppose by contradiction that $-K_X$ is not very ample. By Lemma \ref{lemma:A6-hyperelliptic} the linear system $\vert-K_X\vert$ gives a double cover $\phi:X\to Y\subset\PP^4$ where $Y$ is a $G$-invariant smooth quadric threefold. Such a quadric does not exist since $G$ does not admit an embedding into $\PGL_5(\CC)$.
\end{proof}

\begin{lemma}
    \label{lemma:A6-ext-index}
    Let $X$ be a Fano threefold with terminal singularities such that $\Aut(X)$ contains a subgroup isomorphic to $G$. Then its index is $\iota(X)=1$.
\end{lemma}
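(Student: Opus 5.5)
We would argue by contradiction, following the standard approach for Fano index in this setting. Here $X$ is a Fano threefold with terminal singularities, and the index $\iota(X)$ should be read as the largest $r$ with $-K_X\sim rH$ for a Weil divisor class $H$ (the $\QQ$-Fano index). Since $\Cl(X)$ is finitely generated and its torsion part is a finite abelian group, the class $H$ with $-K_X\sim\iota(X)H$ is unique up to torsion. Averaging or uniqueness then shows that the class of $H$, or at least the linear systems $|kH|$, are $G$-invariant up to a twist by torsion.

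Suppose $\iota=\iota(X)\geqslant2$. We split according to the known bounds on the index of terminal Fano threefolds.

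\textbf{Large index.} If $\iota\geqslant5$, the classification of Suzuki and Prokhorov gives weighted projective spaces or special hypersurfaces, namely $\PP^3$ ($\iota=4$), $\PP(1,1,1,2)$, $\PP(1,1,2,3)$, and so on. For $\iota=4$ one has $X\simeq\PP^3$, and Lemma~\ref{lemma:A6-ext-projective-spaces} excludes $G\subset\PGL_4(\CC)$. For a weighted projective space, $\Aut$ acts on the lowest-weight linear system $|H|$. This gives a $G$-action on a $\PP^k$ with $k\leqslant4$ (or on a curve or point), and such an action is faithful because $G$ has no nontrivial normal subgroup other than $A$. Faithfulness contradicts Lemma~\ref{lemma:A6-ext-projective-spaces}.

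\textbf{Intermediate index.} For $2\leqslant\iota\leqslant4$ with $X$ non-Gorenstein or non-toric, we use Riemann--Roch for $h^0(X,H)$ together with the bound $\dim|H|\leqslant$ small. The plan is to show that $V=H^0(X,\OOO_X(H))$ is a $G$-representation (after lifting to $3.M_{10}$) of dimension at most $5$ and nonzero. The action of $G$ on $\PP(V^\vee)$ is then either faithful, contradicting Lemma~\ref{lemma:A6-ext-projective-spaces}, or trivial. If it is trivial, $|H|$ consists of $G$-invariant divisors. We would then take an invariant member, or a fixed point in its base locus, and use Theorem~\ref{theorem:A6-non-fixed} or Lemma~\ref{lemma:A6-ext-fixed} (the Sylow-type subgroup $\operatorname{PSU}_3(\FF_2)\subset M_{10}$) to get a contradiction. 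Alternatively, invariant divisors of a base-point-free system give a $G$-invariant pencil, forcing a $G$-fixed point on a curve or surface, which contradicts Lemmas~\ref{lemma:A6-ext-curves} and \ref{lemma:A6-ext-surfaces}.

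\textbf{Gorenstein index $2$.} If $X$ is Gorenstein with $\iota=2$, so a del Pezzo threefold, then $h^0(H)=d+2\leqslant 7$, where $d=H^3\leqslant 5$ in the terminal Gorenstein rank-one-type cases, or $d\leqslant8$ in general. For $d\leqslant3$ we get a representation of dimension at most $5$, and the argument above applies. For $d=4,5$ the variety $X$ is an intersection of two quadrics in $\PP^5$ or a linear section of $\mathrm{Gr}(2,5)$. In these cases $G$ acts on the pencil of quadrics or on $\PP^6$, and we use the invariant-pencil and fixed-point argument. Large $d$ gives $\PP^1\times\PP^1\times\PP^1$, flag-type varieties, or $\PP^3$ blown up at a point, whose automorphism groups have connected components $\PGL_2^3$, $\PGL_3$, and so on, none of which contain $G$ by Lemma~\ref{lemma:A6-ext-projective-spaces}.

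\textbf{Main obstacle.} The main difficulty is the non-Gorenstein case with $2\leqslant\iota\leqslant4$ and small $h^0(H)$, possibly $h^0(H)=0$ or $1$. There we would pass to $|2H|$ or $|-K_X|$. If a unique invariant member occurs, it would be analysed via Lemma~\ref{Lemma:A6-ext-reduced-divisor} and Corollary~\ref{corollary:K3-a6}, and the non-Gorenstein points would be controlled by Lemma~\ref{lemma:A6-ext-fixed} together with orbit-length bounds on the basket.
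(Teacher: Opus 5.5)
\textbf{The gap is in the degree-six case of index two.} You exclude the $(1,1)$-divisor $W\subset\PP^2\times\PP^2$ by saying that the connected component $\PGL_3(\CC)$ of its automorphism group does not contain $G$. But $\Aut(W)\simeq\PGL_3(\CC)\rtimes\mumu_2$ is disconnected, and $A\simeq\mathfrak{A}_6$ does embed in $\PGL_3(\CC)$ (the Valentiner action). So nothing prevents $G\cap\PGL_3(\CC)=A$ with $G$ mapping onto $\mumu_2$ by swapping the two factors. Your reasoning cannot settle this, as $\PGL_2(\FF_9)$ shows. It does not embed in $\PGL_3(\CC)$ either, yet it acts faithfully on $W$ via the twisted diagonal action. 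What actually excludes $M_{10}$ is Proposition~\ref{proposition:W-actions}. The outer class of $M_{10}$ interchanges the two Valentiner representations with the same central character instead of dualizing them, so the normalizer of $A$ in $\Aut(W)$ is $\PGL_2(\FF_9)$ and not $M_{10}$. You also never treat the singular $(1,1)$-divisors. The paper reduces them to $W$: such an $X$ has at most one singular point, which would be $A$-fixed, contradicting Theorem~\ref{theorem:A6-non-fixed}.

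\textbf{Two further problems.} First, for $d=4,5$ the space $V=H^0(X,H)$ has dimension $6$ or $7$. This is beyond the range of Lemma~\ref{lemma:A6-ext-projective-spaces}, and $M_{10}$ does embed in $\PGL_6(\CC)$ through a $6$-dimensional representation of $3.M_{10}$. Your ``invariant-pencil and fixed-point argument'' therefore has to be replaced by an actual argument. The paper's argument is as follows. Lift the action to $3.M_{10}$. If the central character is trivial, $A$ acts trivially, since $G$ has no faithful representation of dimension below $9$. Otherwise $\dim V=6$, so $d=4$ and $X$ is an intersection of two quadrics. The pencil of quadrics through $X$ is then a $2$-dimensional representation, hence has trivial central character. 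But it sits inside $\Sym^2V^\vee$, on which the centre acts by $\zeta^{-2}\neq1$.

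Second, the paper defines $\iota(X)$ via $\Pic(X)$ for Gorenstein Fano threefolds, and the lemma is only ever applied to terminal Gorenstein ones. Your treatment of the $\QQ$-Fano index, weighted projective spaces and non-Gorenstein $X$ is therefore unnecessary. As written it is also unfinished, since you leave your ``main obstacle'' open. It also contains a non sequitur: $A$ being the only proper nontrivial normal subgroup of $G$ shows only that the kernel of the action on $\vert H\vert$ is $1$, $A$ or $G$, not that the action is faithful.
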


\begin{proof}
    We know that if $\iota(X)=4$ then $X\simeq\PP^3$ and if $\iota(X)=3$ then $X$ is a quadric hypersurface in $\PP^4$. These two cases cannot hold since $G$ does not embed into $\PGL_n(\CC)$ for $n=4,5$. Then, we assume $\iota(X)=2$ and we use the classification of terminal del Pezzo threefolds (see \cite{Shi89}). Consider $A$ an ample Cartier divisor such that $-K_X\sim2A$. In this context, $d=A^3$ is called the degree of $X$ and we know that $d\leqslant7$. If $d=1$, then $\operatorname{Bs}\vert A\vert$ consists of just one $G$-fixed point, which contradicts Theorem \ref{theorem:A6-non-fixed}. If $d=2$, $\vert A\vert$ gives a $G$-equivariant double cover $X\rightarrow\PP^3$, and this case is ruled out by Lemma \ref{lemma:A6-ext-projective-spaces}.

    For $d\geqslant3$, it is known that $A$ is very ample and gives a $G$-equivariant embedding $X\rightarrow\PP^{d+1}$. Let $V$ be a $(d+2)$-dimensional representation of $\widehat{G}=3.M_{10}$ inducing this projective action, and let $Z\simeq\mumu_3$ be the centre of $\widehat{G}$. Suppose that $3\leqslant d\leqslant5$. If $Z$ acts trivially on $V$, then $V$ descends to a faithful representation of $G$ of dimension at most $7$, which is impossible. Hence $Z$ acts by a non-trivial scalar character and \cite{lmfdb} gives $\dim(V)=6$, so $d=4$. In this case $X\subset\PP(V)$ is an intersection of two quadrics, so $I:=H^0(\PP(V),\mathscr{I}_X(2))$ is a $2$-dimensional $\widehat{G}$-representation and $Z$ acts trivially on $I$. However, a generator of $Z$ acts on $V$ by a primitive cubic root $\zeta$, so it acts on $\Sym^2(V^\vee)$ by $\zeta^{-2}\neq1$. This contradiction shows that $d\geqslant6$. 
    
    If $d=6$, then either $X\simeq\PP^1\times\PP^1\times\PP^1$ or $X$ is a $(1,1)$-divisor in $\PP^2\times\PP^2$. The former case cannot hold since automorphism group of $\PP^1\times\PP^1\times\PP^1$ does not contain $G$, and in the latter case since $X$ can have at most one singular point, by Theorem \ref{theorem:A6-non-fixed} $X$ has to be smooth. The second one is ruled out by Proposition \ref{proposition:W-actions}, so $d\neq6$. If $d=7$, then $X$ is the blow-up of $\PP^3$ at one point, and this blow-up must be $G$-equivariant, then this case is also ruled out. Since there is no possible case, we conclude the proof.
\end{proof}

\begin{lemma}[{\cite[Lemma 2.5.27]{CheltsovPinardinProkhorov}}]
    \label{lemma:A6-ext-quadrics}
    Let $X$ be a Fano threefold with canonical Gorenstein singularities such that $\Aut(X)$ contains a subgroup isomorphic to $G$. Then, its anticanonical image in $\PP^{g+1}$ is an intersection of quadrics.
\end{lemma}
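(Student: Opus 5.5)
By Lemma~\ref{lemma:A6-bpf}, $|-K_X|$ is base point free, and by Lemma~\ref{lemma:A6-ext-ample} it is very ample. So the anticanonical map embeds $X$ $G$-equivariantly as $X\subset\PP^{g+1}$, with $g=\g(X)$. The classical theorem on Gorenstein Fano threefolds with very ample anticanonical class (Iskovskikh's for smooth ones, Przyjalkowski--Cheltsov--Shramov for the canonical Gorenstein case) then says that $X$ is an intersection of quadrics except in two cases. Either $X$ is a complete intersection containing a cubic hypersurface and not cut out by quadrics ($g=3$ quartic, $g=4$ with a cubic generator), or the quadrics through $X$ cut out a threefold $Y$ of minimal degree, i.e.\ a scroll or the Veronese cone, with $X\subset Y$ a divisor. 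The plan is to rule out each exception using the $G$-action.

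\emph{Case $g=3$:} $X$ is a quartic in $\PP^4$, so $G$ embeds into $\PGL_5(\CC)$. This contradicts Lemma~\ref{lemma:A6-ext-projective-spaces}.

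\emph{Case $g=4$:} $X\subset\PP^5$ lies in a unique quadric $Q$, which is $G$-invariant. The action on $\PP^5$ comes from a $6$-dimensional representation $V$ of $\widehat{G}=3.M_{10}$. If the centre acts trivially, $V$ would be a faithful $6$-dimensional representation of $G$, which is impossible since the degrees are $1,1,9,\dots$. So the centre acts by a primitive cube root $\zeta$. Then it acts on $\Sym^2V^\vee$ by $\zeta^{-2}\neq 1$, so no line of quadrics can be $\widehat{G}$-invariant. This is the same argument as in Lemma~\ref{lemma:A6-ext-index}, and it contradicts the existence of $Q$.

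\emph{Case $X\subset Y$ with $Y$ of minimal degree:} $Y$ is $G$-invariant, being the intersection of the $G$-invariant space of quadrics through $X$. If $Y$ is a rational normal scroll, the family of its ruling planes (or lines) is parametrized by $\PP^1$ and preserved by $G$. The simple group $A$ acts trivially on this $\PP^1$, so every ruling is $A$-invariant, and the scroll argument of Lemma~\ref{lemma:A6-hyperelliptic} gives a contradiction. If $Y$ is the Veronese cone, its vertex is a $G$-fixed point of $\PP^{g+1}$. Projecting from it gives an $A$-invariant Veronese surface, and the splitting argument in Lemma~\ref{lemma:A6-hyperelliptic} produces an $A$-invariant conic in $\PP^2$, contradicting Lemma~\ref{lemma:A6-invariant-curves-P2}. (Alternatively, $X$ would pass through or avoid the vertex, giving a fixed point or a $G$-action on $\PP^2$; both are excluded, by Theorem~\ref{theorem:A6-non-fixed} and by Lemma~\ref{lemma:A6-ext-projective-spaces}.)

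The main obstacle is the first step: using the correct version of the ``intersection of quadrics'' theorem for \emph{canonical Gorenstein} singularities, since the smooth statement is classical. I would cite the singular extension as in \cite{CheltsovPinardinProkhorov}, whose Lemma 2.5.27 is exactly this statement. After that, the group-theoretic eliminations are routine.
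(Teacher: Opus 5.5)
Your plan has the same skeleton as the paper's: anticanonical embedding, the Przyjalkowski--Cheltsov--Shramov description of the non-quadric-intersection case, then elimination by the group. The gap is that you have quoted the wrong exceptional case.

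\textbf{The misquoted case.} If $X\subset\PP^{g+1}$ is not an intersection of quadrics and $g\geqslant4$, the quadrics through $X$ cut out a \emph{fourfold} $Y$ of minimal degree $g-2$ containing $X$ as a divisor. A threefold $Y$ cannot contain the threefold $X$ as a divisor. What you list (threefolds of minimal degree, scrolls with ruling planes or lines, the Veronese cone with a point vertex) is the hyperelliptic list from Lemma~\ref{lemma:A6-hyperelliptic}. So the cases you eliminate are not the ones that actually occur. In the true situation:
\begin{itemize}
\item the rulings of $Y$ are $\PP^3$'s;
\item apart from the quadric ($g=4$), the only fourfold of minimal degree that is not a (cone over a) scroll is the cone over the Veronese surface with a \emph{line} as vertex, in $\PP^7$ ($g=6$).
\end{itemize}
The scroll step can be repaired. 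An $A$-invariant ruling $\PP^3$ corresponds to a $4$-dimensional $A$-subrepresentation of $H^0(X,\OOO_X(-K_X))^\vee$. This is trivial, since every nontrivial irreducible representation of $\mathfrak{A}_6$ has dimension $\geqslant5$. Hence $A$ fixes each ruling pointwise and acts trivially on $Y\supset X$, a contradiction.

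\textbf{Two incorrect side claims.}
\begin{itemize}
\item For $g=4$: a central element acting on $\Sym^2V^\vee$ by the scalar $\zeta^{-2}$ preserves \emph{every} line, so this alone does not exclude an invariant quadric. You need that an invariant line affords a linear character of $\widehat{G}$, and such characters are trivial on the centre.
\item Your parenthetical alternative uses Theorem~\ref{theorem:A6-non-fixed}, which requires terminal singularities. Here $X$ is only canonical Gorenstein, and such points can be fixed: the vertex of $\PP(1,1,1,3)$, with $\mathfrak{A}_6$ acting through the Valentiner group, is an $\mathfrak{A}_6$-fixed canonical Gorenstein point.
\end{itemize}

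\textbf{Comparison with the paper's route.} The paper avoids all the low-genus casework with one observation you do not use. Since $-K_X$ is canonically $G$-linearized, $H^0(X,\OOO_X(-K_X))$ is a faithful \emph{linear} $G$-representation of dimension $g+2$. As $M_{10}$ has no faithful representation of dimension below $9$, this gives $g\geqslant7$. That removes $g=3$, $g=4$ and the Veronese-type case at once, with no need to pass to $3.M_{10}$, and leaves only (cones over) rational normal scrolls. The paper then passes to the $\PP^3$-bundle $\widetilde{Y}\to\PP^1$. This gives a fibration of $\widetilde{X}$ in cubic surfaces whose general fibre carries a faithful $A$-action, contradicting Theorem~\ref{theorem:A6-dP}.

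Your corrected ruling argument is a legitimate alternative for this last step. It uses only character degrees of $\mathfrak{A}_6$, and it does not need to address whether the general cubic fibre is smooth, which Theorem~\ref{theorem:A6-dP} presupposes. Both routes rely on the $\PP^3$-ruling being unique (true since $\deg Y\geqslant3$), hence $G$-invariant, with $A$ acting trivially on its base $\PP^1$.
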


\begin{proof}
    Since the space $H^0(X,\OOO_X(-K_X))$ is a faithful representation of the group $G$, it follows from \cite{lmfdb} that $g\geq7$, as $G$ does not have such representations of dimension less than $9$. Suppose now that $X$ is not an intersection of quadrics. Thus, it follows from \cite[Theorem 2.14]{HT} that the variety $Y\subset\PP^{g+1}$ cut out by all the quadrics containing $X$ is a $4$-dimensional irreducible variety of minimal degree. By the classification of such varieties we have that $Y$ is a rational normal scroll or a cone over a rational normal scroll. Then we can construct the $G$-equivariant diagram
    $$
    \xymatrix{
    X \ar@{^{(}->}[d] & \ar[l] \widetilde{X} \ar@{^{(}->}[d] \ar[r]^{\varphi} & \PP^1 \ar@{=}[d]\\
    Y  & \ar[l]_{\mu} \widetilde{Y} \ar[r]^{\pi} & \PP^1
    }
    $$
    where $\mu$ is an isomorphism if $Y$ is smooth and $\mu$ is the blow-up of the vertex if $Y$ is a cone. In this setting $\pi$ is a $\PP^3$-bundle and $\varphi$ is a fibration in cubic surfaces. Since $A$ acts trivially on $\PP^1$, it acts faithfully on the general fiber of $\varphi$, but this contradicts Theorem~\ref{theorem:A6-dP}. 
\end{proof}

Considering the previous lemma, we may assume now that $g=\mathrm{g}(X)\geqslant7$, $X$ is an intersection of quadrics in $\PP^{g+1}$ and the formula \cite[Corollary 5.5]{P:JAG:simple} applies.

\begin{lemma}
    \label{lemma:A6-ext-g-not-6-8}
    Suppose that $X$ is a canonical Gorenstein Fano threefold such that $\Aut(X)$ has a subgroup isomorphic to $G$, and let $A$ be the subgroup of $G$ with $A\simeq\mathfrak{A}_6$. Then $g\neq8$, and moreover, if $\vert-K_X\vert$ contains at most one $A$-invariant divisor then $g\notin\{10,12\}$.
\end{lemma}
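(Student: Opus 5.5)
We will study the $\widehat{G}$-representation $V=H^0(X,\OOO_X(-K_X))$, of dimension $g+2$, through the anticanonical embedding. By Lemma~\ref{lemma:A6-ext-ample} and Lemma~\ref{lemma:A6-ext-quadrics}, $X\subset\PP(V^\vee)=\PP^{g+1}$ is an intersection of quadrics and $g\geqslant7$. The plan is to combine two things: the decomposition of $V$ into irreducibles, and the fact that $G$ has no nontrivial fixed points in $\PP^{g+1}$ coming from invariant points of $X$ (Theorem~\ref{theorem:A6-non-fixed}).

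\textbf{The case $g=8$.} Here $\dim V=10$. First suppose the centre $\mumu_3$ of $\widehat{G}=3.M_{10}$ acts nontrivially. Then $V$ is a sum of irreducibles of dimension at least $6$ with a nontrivial central character, and I would check in \cite{lmfdb} that no such sum has dimension exactly $10$. The faithful irreducibles of $3.M_{10}$ with nontrivial central character should have dimensions like $6$, $12$, $18$, which cannot sum to $10$. So $V$ is a genuine $G$-representation of dimension $10$. Since $V$ is faithful, with character degrees $1,1,9,9,10,10,10,16$, either $V$ is irreducible of dimension $10$, or $V=\chi_9\oplus\chi_1$ with $\chi_1$ a linear character.

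In the second case, the line spanned by $\chi_1$ is an $A$-fixed point $P\in\PP(V^\vee)$, and the dual hyperplane is also invariant. I would exclude $P\in X$ by Theorem~\ref{theorem:A6-non-fixed}. Projecting from $P$ then gives an equivariant morphism onto a threefold of degree $14$ in $\PP^8$, with the invariant hyperplane section as an $A$-invariant member of $|-K_X|$. The aim is to reach a contradiction via \cite[Corollary 5.5]{P:JAG:simple}, which is Prokhorov's formula for the count of invariant anticanonical divisors in terms of $\dim H^0$ and the character.

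Alternatively, and I think more cleanly, I would use the number of quadrics. The ideal $I_2=H^0(\mathscr{I}_X(2))$ has dimension $\binom{g+3}{2}-h^0(-2K_X)$, which is $55-40=15$ for $g=8$. It is a $G$-subrepresentation of $\Sym^2 V$. Genus $8$ Fano threefolds are linear sections of $\mathrm{Gr}(2,6)\subset\PP^{14}$, and the $15$ quadrics are the Pfaffians, i.e. $I_2\simeq\wedge^4 U$ for a $6$-dimensional $U$. So I would argue that $G$ must then act on $\PP(U)=\PP^5$, possibly through its Schur cover. A $6$-dimensional faithful projective representation of $G$ exists only with nontrivial central character, and then I would check in \cite{lmfdb} that $\wedge^2U$ restricted to the relevant $10$-dimensional quotient cannot match $V$, giving the contradiction.

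\textbf{The cases $g=10$ and $g=12$.} Here $\dim V=12$ and $14$. The assumption that $|-K_X|$ contains at most one $A$-invariant divisor says that $V|_A$ contains the trivial $A$-character with multiplicity at most one. I would list the faithful decompositions of $V$ as a $3.M_{10}$-module of dimension $12$ or $14$ and restrict each to $A$, using that the $10$-dimensional irreducibles of $M_{10}$ restrict to $A$ as a $10$ or as $5\oplus5$, and so on. For $g=12$, the option $14=9+1+1+1+1+1$ would give at least two $A$-invariant divisors, so it is excluded. The option $14=10+\ldots$ cannot be completed by linear characters without also producing several trivial $A$-summands, which kills it too. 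I would expect the remaining decompositions to require a representation of dimension $16$ or more, and hence to be impossible. For $g=10$, the same bookkeeping with $12=10+1+1$ forces two $A$-trivial summands, and $12=9+1+1+1$ forces three.

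If a decomposition survives this counting, I would use \cite[Corollary 5.5]{P:JAG:simple} to relate the invariant subspaces to the existence of a $G$-fixed point or a $G$-invariant curve of low degree. That would then be ruled out by Lemma~\ref{lemma:A6-ext-curves} and Corollary~\ref{corollary:A6-ext-singular-curves}, since such a curve needs $g\geqslant46$ or $\mathrm{p}_a\geqslant86$, which exceeds the degree bound $2g-2$.

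\textbf{Main obstacle.} I expect the hardest step to be $g=8$ when $V$ is irreducible of dimension $10$. There the argument needs the Pfaffian/Grassmannian structure, namely Mukai's theorem in the canonical Gorenstein setting, to transfer the action to $\PP^5$. The first thing I would try is the character comparison of $I_2$ with $\Sym^2V$ and $\wedge^4U$.
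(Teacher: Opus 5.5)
\textbf{Gap in the case $g=8$.} You isolate the right object: $I_2=H^0(\PP^{9},\mathscr{I}_X(2))$ is a $15$-dimensional $G$-subrepresentation of $\Sym^2V$. But you never use it. The paper finishes directly from here. For each possible $V$, namely $\mathbb{W}_{10}$ or $\mathbb{W}_1\oplus\mathbb{W}_9$, decompose the $55$-dimensional space $\Sym^2V$ into irreducibles. The summands have dimensions $1,1,9,9,9,10,16$ or $1,9,9,10,10,16$, and no sub-collection of them has total dimension $15$. This settles the irreducible case you flagged as the main obstacle, and it needs no geometry.

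Your replacement for this step, $I_2\simeq\wedge^4U$ with $X=\mathrm{Gr}(2,6)\cap\PP^9$, presupposes Mukai's linear-section description. That description is not available for an arbitrary canonical Gorenstein Fano threefold of genus $8$: the lemma assumes nothing about $\rho(X)$ or the K3 sections, and genus-$8$ Fano threefolds of higher Picard rank exist. Your final comparison of $\wedge^2U$ with $V$ is also never specified. The projection-from-$P$ sketch is likewise not an argument. Note too that \cite[Corollary 5.5]{P:JAG:simple} is the count $h^0(\mathscr{I}_X(2))=\frac{(g-2)(g-3)}{2}$, not a count of invariant divisors.

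\textbf{Gap in the case $g=10$.} The detour through $3.M_{10}$ is unnecessary, and here it actually breaks the argument. Since $\OOO_X(-K_X)$ is canonically $\Aut(X)$-linearized, $V$ is a genuine $G$-module, so central characters never arise. If you do allow $3.M_{10}$-modules, your list of dimensions is wrong. The irreducibles with nontrivial central character have dimensions $6,9,15$, not $6,12,18$. So $\dim V=12$ admits $V\simeq U_6\oplus U_6'$, which has no $A$-invariant vectors at all and survives your counting. Your fallback via ``a $G$-fixed point or invariant curve'' is not substantiated.

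Once you use that $V$ is a $G$-module, your bookkeeping becomes exactly the paper's argument. Every faithful $G$-module of dimension $12$ or $14$ is $\mathbb{W}_{10}$ or $\mathbb{W}_9$ plus at least two one-dimensional summands. These summands are $A$-trivial because $A$ is perfect. Hence $\dim V^A\geqslant 2$, and $\vert-K_X\vert$ contains a pencil of $A$-invariant members, contradicting the hypothesis.
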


\begin{proof}
    If $g=8$, $H^0(X,\OOO_X(-K_X))$ is a $10$-dimensional faithful representation of $G$, and by \cite[Corollary 5.5]{P:JAG:simple} $X$ is cut out by $15$ quadrics. The possible decompositions of this space into irreducible $G$-representations are
    \begin{enumerate}
        \item $H^0(X,\OOO_X(-K_X))=\mathbb{W}_{10}$;
        \item $H^0(X,\OOO_X(-K_X))=\mathbb{W}_1\oplus\mathbb{W}_{9}$;
    \end{enumerate}
        where each representation $\mathbb{W}_i$ can be any irreducible linear $G$-representation of dimension $i$. Here below we list the decompositions of $S^2H^0(X,\OOO_X(-K_X))$ in irreducible representations, indicating with multiplicities the number of blocks of each dimension (even if they are non-isomorphic). In each case, respectively, we get
    \begin{enumerate}
        \item $S^2H^0(X,\OOO_X(-K_X))=\mathbb{W}_1^{\oplus2}\oplus\mathbb{W}_9^{\oplus3}\oplus\mathbb{W}_{10}\oplus\mathbb{W}_{16}$ or \\
            $S^2H^0(X,\OOO_X(-K_X))=\mathbb{W}_1\oplus\mathbb{W}_9^{\oplus2}\oplus\mathbb{W}_{10}^{\oplus2}\oplus\mathbb{W}_{16}$;
        \item $S^2H^0(X,\OOO_X(-K_X))=\mathbb{W}_1^{\oplus2}\oplus\mathbb{W}_9^{\oplus3}\oplus\mathbb{W}_{10}\oplus\mathbb{W}_{16}$.
    \end{enumerate}
        In this case we need a $15$-dimensional $G$-invariant subspace of quadrics, but we see from the list of decompositions that this is not possible. 
    Finally, it is clear that if $\vert-K_X\vert$ contains at most one $A$-invariant divisor $g\notin\{10,12\}$, as in that case we see from \cite{lmfdb} that we would need at least two $1$-dimensional subrepresentations in $H^0(X,\OOO_X(-K_X))$. 
\end{proof}

\begin{lemma}
\label{lemma:A6-ext-threefolds-lc}
Suppose that $X$ is a terminal Fano threefold admitting a faithful action of the group $G$.
Let $D$ be a $G$-invariant effective $\mathbb{Q}$-divisor on $X$ such that $D\sim_{\mathbb{Q}}(-K_X)$. Then $(X,D)$ has log canonical singularities.
\end{lemma}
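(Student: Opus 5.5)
We argue by contradiction, following the standard scheme of \cite{P:JAG:simple,CheltsovPinardinProkhorov}. Suppose $(X,D)$ is not log canonical. Put $c=\mathrm{lct}(X,D)<1$, and let $Z$ be the union of the centres of log canonical singularities of $(X,cD)$. Since $D$ is $G$-invariant, $Z$ is $G$-invariant. By the Kawamata--Shokurov perturbation trick, after replacing $cD$ by a $G$-invariant divisor $\mathbb{Q}$-linearly equivalent to $\lambda(-K_X)$ with $\lambda<1$, we may assume that the locus of log canonical singularities is a disjoint union of $G$-conjugate minimal centres. We may also assume that these centres all have the same dimension and are normal (Kawamata subadjunction).

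Since $-(K_X+cD)$ is ample, the Nadel vanishing theorem gives $H^1(X,\mathcal{J}\otimes\OOO_X)=0$ for the multiplier ideal $\mathcal{J}$ of $cD$. Hence $H^0(X,\OOO_X)\to H^0(Z,\OOO_Z)$ is surjective, so $Z$ is connected. Because the minimal centres are disjoint, $Z$ is a single irreducible $G$-invariant centre. We then split into cases according to $\dim Z$.

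\textbf{Case $\dim Z=0$.} Then $Z$ is one $G$-fixed point, which contradicts Theorem~\ref{theorem:A6-non-fixed} applied to $A\subset G$.

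\textbf{Case $\dim Z=2$.} Then $Z$ is a $G$-invariant prime divisor with multiplicity $>1$ in $cD$. Since $\mathrm{r}^G(X)=1$ in the $G\mathbb{Q}$-Fano situation, this should be excluded by comparing $Z\sim_{\mathbb{Q}}\mu(-K_X)$ with $cD\sim_{\mathbb{Q}}c(-K_X)$. Without $\mathrm{r}^G(X)=1$, we would instead use that $D-\mathrm{mult}_Z(D)Z$ is effective and $\mathbb{Q}$-linearly equivalent to $(1-\mathrm{mult}_Z D\cdot\mu)(-K_X)$, which forces $\mathrm{mult}_Z(D)\mu<1$. I expect to need a small argument here, possibly invoking the $G\mathbb{Q}$-Fano hypothesis implicitly.

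\textbf{Case $\dim Z=1$.} This is the main case. $Z$ is a smooth irreducible curve with a $G$-action, and the action is faithful because $A$ is simple and acts nontrivially (otherwise $A$ would fix points of $Z$, contradicting Theorem~\ref{theorem:A6-non-fixed}). By Kawamata subadjunction, $-K_Z$ is big, or at least $\deg K_Z<0$ after perturbation. More concretely, applying Nadel vanishing to $\mathcal{J}=\mathcal{I}_Z$ with the ample divisor $H=-K_X$ gives $H^1(Z,\OOO_Z(-K_X|_Z+K_X))$-type vanishing. The cleanest route is to twist by $-K_X$: the sequence $0\to\mathcal{I}_Z\otimes\OOO_X(-K_X)\to\OOO_X(-K_X)\to\OOO_Z(-K_X)\to0$ and vanishing of $H^1(\mathcal{I}_Z(-K_X))$ (Nadel, since $-K_X-(K_X+cD)$ is ample) give $h^0(\OOO_Z(-K_X|_Z))\leqslant h^0(X,-K_X)$. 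Alternatively one uses $H^1(\OOO_Z)=0$, which shows directly that $Z\simeq\PP^1$. That is impossible, since neither $G$ nor $A$ embeds in $\PGL_2(\CC)$ (see Lemma~\ref{lemma:A6-genus}). So the plan in this case is to use Nadel vanishing in the untwisted form, giving $H^1(Z,\OOO_Z)=0$ after the perturbation ensuring $Z$ is a minimal centre and normal. Hence $g(Z)=0$, contradicting Lemma~\ref{lemma:A6-genus} (or Lemma~\ref{lemma:A6-ext-curves}).

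The main obstacle is making the perturbation rigorous while keeping $G$-invariance, so that the non-lc locus is exactly one minimal centre and Kawamata subadjunction gives normality of $Z$ when $\dim Z=1$. The divisorial case $\dim Z=2$ also needs care; there I would argue via $\mathrm{r}^G(X)=1$, as in \cite[Lemma 2.5.x]{CheltsovPinardinProkhorov}.
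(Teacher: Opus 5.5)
Your point case and curve case are fine. In the curve case, getting $H^1(Z,\OOO_Z)=0$ from Nadel vanishing is a legitimate substitute for the paper's appeal to Kawamata subadjunction to conclude $Z\simeq\PP^1$.

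The genuine gap is the divisorial case $\dim Z=2$, and the route you sketch cannot close it. First, the lemma does not assume $\mathrm{r}^G(X)=1$, so the $G\QQ$-Fano hypothesis is not available. More importantly, no comparison of divisor classes can give a contradiction here. After the perturbation, $Z$ appears with coefficient exactly $1$ (not $>1$) in a boundary $B\sim_{\QQ}\lambda(-K_X)$ with $\lambda<1$. Writing $Z\sim_{\QQ}\mu(-K_X)$ then only yields $\mu\leqslant\lambda<1$, which is perfectly consistent. In fact, excluding $G$-invariant surfaces $S'\sim_{\QQ}a(-K_X)$ with $a<1$ is exactly what this lemma is used for later, in Lemma~\ref{lemma:A6-ext-invariant-surfaces}. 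So a purely numerical argument at this point is hopeless, or circular.

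The missing idea is geometric. $Z$ is a minimal log canonical centre of $(X,B)$ with $-(K_X+B)$ ample, and the pair is plt near $Z$. Adjunction (or Kawamata subadjunction) then shows that $Z$ is normal and carries a boundary $\Delta_Z$ with $(Z,\Delta_Z)$ klt and $-(K_Z+\Delta_Z)$ ample. Hence $Z$ is a log del Pezzo surface, and in particular rational. By Lemma~\ref{lemma:A6-ext-cr2}, $M_{10}$ cannot act faithfully on $Z$. The kernel is therefore a nontrivial normal subgroup of $G$, so it contains $A$. Thus $A$ fixes $Z$ pointwise, contradicting Theorem~\ref{theorem:A6-non-fixed}.

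Rationality alone would not suffice, since $\mathfrak{A}_6$ itself acts on $\PP^2$. What makes the argument work is that the non-split extension $M_{10}$ has no faithful action on a rational surface. With this step inserted, your proof becomes essentially the paper's: the same subadjunction argument gives rationality in both the curve and surface cases, and each is then ruled out by Theorem~\ref{theorem:A6-non-fixed}.
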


\begin{proof}
Suppose that the~singularities of the~log pair $(X,D)$ are not log canonical. Then, using \cite[Lemma~2.4.10]{Icosahedron} we see that there is a rational number $0<\lambda<1$ and an~effective $G$-invariant $\mathbb{Q}$-divisor $D^\prime$ on the~threefold $X$ such that $D^\prime\sim_{\mathbb{Q}} D$, the log pair $(X,\lambda D^\prime)$ has strictly log canonical singularities, and the~locus $\mathrm{Nklt}(X,\lambda D^\prime)$ consists of a~$G$-orbit of a minimal log canonical center of the~log pair $(X,\lambda D^\prime)$. Here, we do not assume that minimal log canonical center is irreducible.

It follows from \cite{Kawamata-1} that irreducible components of minimal log canonical centers of the~log pair $(X,\lambda D^\prime)$ are disjoint.
On the~other hand, it follows from the~Koll\'ar--Shokurov connectedness theorem that the~locus $\mathrm{Nklt}(X,\lambda D^\prime)$ is connected. Thus, we see that $\mathrm{Nklt}(X,\lambda D^\prime)$ consists of an irreducible $G$-invariant subvariety $Z$.

If $Z$ is a~point, we obtain a~contradiction with Theorem~\ref{theorem:A6-non-fixed}. Thus, either $Z$ is a~curve or a~surface. Then it follows from Kawamata's subadjunction theorem \cite{Kawamata-2} that $Z$ is normal and rational. Thus, if $Z$ is a~curve, then the action of $G$ on $Z$ is not faithful, by Lemma \ref{lemma:A6-ext-projective-spaces}. In this case, since $A$ is the unique non-trivial proper normal subgroup of $G$, the action of $A$ on $Z$ is trivial. Similarly, if $Z$ is a~surface, then $A$ acts trivially on $Z$ by Lemma~\ref{lemma:A6-ext-cr2}. Now, using Theorem~\ref{theorem:A6-non-fixed}, we obtain a~contradiction.
\end{proof}

\begin{lemma}
\label{lemma:A6-ext-threefolds-log-pair-mobile}
Suppose that $X$ is a~$G\mathbb{Q}$-Fano threefold. Let $\mathcal{M}$ be a~non-empty $G$-invariant mobile linear system on $X$ such that 
$$
\mathcal{M}\sim_{\mathbb{Q}}a(-K_X)
$$ 
for some positive rational number $a\leqslant 1$. Then $a=1$ and $(X,\mathcal{M})$ has canonical singularities.
\end{lemma}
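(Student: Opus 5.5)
We argue by contradiction and reduce everything to Lemma~\ref{lemma:A6-ext-threefolds-lc}. Suppose that either $a<1$, or $a=1$ and $(X,\mathcal{M})$ is not canonical. Since $X$ is a $G\mathbb{Q}$-Fano threefold, $\mathrm{r}^G(X)=1$. Hence every $G$-invariant Weil divisor is $\mathbb{Q}$-linearly equivalent to a rational multiple of $-K_X$, so writing $\mathcal{M}\sim_{\mathbb{Q}}a(-K_X)$ makes sense. We then apply the standard Noether--Fano type argument: if $(X,\frac{1}{a}\mathcal{M})$ were canonical, the $G$-equivariant Sarkisov program would give nothing, so the real content is to produce a non-log canonical pair.

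First take the case $a<1$. The pair $(X,\frac{1}{a}\mathcal{M})$ has $K_X+\frac{1}{a}\mathcal{M}\sim_{\mathbb{Q}}0$. Because $\mathcal{M}$ is mobile, its base locus has codimension at least $2$. Take $M_1,M_2$ general members and form the $G$-invariant divisor
$$D=\frac{1}{a}\cdot\frac{1}{|G|}\sum_{g\in G}g^*M_1\sim_{\mathbb{Q}}-K_X.$$
More carefully, we would use the mobile version of the argument. We show that $(X,\frac1a\mathcal{M})$ is not canonical: otherwise $K_X+\frac1a\mathcal M\equiv 0$ with $\frac1a>1$ contradicts $G$-birational rigidity-type inequalities. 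Instead, the cleaner route is to show directly that $(X,\mathcal{M})$ is not canonical, or that $(X,\frac{1}{a}\mathcal{M})$ is not canonical, and then pass to $2\times$ the multiplicity.

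The standard step is this. If $(X,\lambda\mathcal{M})$ is not canonical at some centre $Z$, then for general $M_1,M_2\in\mathcal{M}$ the pair $(X,2\lambda\cdot\frac{M_1+M_2}{2})$ is not log canonical near $Z$ when $Z$ is a curve or point. This follows from the usual doubling argument: non-canonical centres of mobile systems give non-lc centres after multiplying by $2$, by \cite[Theorem~4.1]{Icosahedron}-type lemmas or Corti's inequality $\mathrm{mult}_Z(M_1\cdot M_2)>4/\lambda^2$. The $G$-invariant averaged divisor $D$ is then $\sim_{\mathbb{Q}}2\lambda a(-K_X)$.

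We need $2\lambda a\leqslant 1$ in order to rescale $D$ up to $\sim_{\mathbb{Q}}-K_X$ while preserving the non-lc property, which contradicts Lemma~\ref{lemma:A6-ext-threefolds-lc}. This is where the main obstacle lies. The naive doubling only gives a divisor equivalent to $2(-K_X)$, too large by a factor of two. So I would avoid doubling and instead treat the centres separately.

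If the non-canonical centre is a surface, it is impossible, since $\mathcal{M}$ is mobile. If it is a curve $C$, then non-canonicity means $\mathrm{mult}_C\mathcal{M}>1/\lambda$ generically along $C$. In that case the $G$-invariant divisor $\lambda M$ itself, for one general $M$, is not lc along $C$ only after a factor, but $\lambda M_{\text{avg}}$ is already non-canonical along $C$. Passing to a nearby non-lc pair via \cite[Lemma~2.4.10]{Icosahedron} and tie-breaking, we reduce to a $G$-invariant non-lc threshold argument. There the centre is the $G$-orbit of $C$, handled exactly as in the proof of Lemma~\ref{lemma:A6-ext-threefolds-lc}: the centres are disjoint and connected, so we get an irreducible $G$-invariant rational curve, on which $A$ acts trivially, contradicting Theorem~\ref{theorem:A6-non-fixed}.

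If the centre is a point, its $G$-orbit consists of points, and $A$ has no fixed points, so the orbit is large. We then use the inequality $\mathrm{mult}_P(M_1\cdot M_2)>4/\lambda^2$ at smooth points, or Kawamata's version at terminal quotient points, summed over the orbit. This is compared with $(M_1\cdot M_2\cdot H)=a^2(-K_X)^2\cdot H$ using an ample $G$-invariant $H$ through the orbit. Here, for $\lambda=1/a$ with $a\leqslant1$, this should give a contradiction with the orbit lengths (at least $36$ in smooth ambient, by Lemma~\ref{lemma:A6-ext-surfaces}-type tangent arguments) and the degree bounds available for Fano threefolds.

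I expect this point case, namely obtaining a degree bound strong enough to beat the orbit length, to be the hardest step. I would first try reducing it to Lemma~\ref{lemma:A6-ext-threefolds-lc}. Concretely, for a non-canonical isolated centre, $(X,\frac{1}{a}D)$ with $D=\frac{1}{2}(M_1+M_2)$ is non-canonical hence, after perturbation, non-klt at $P$ with an lc centre contained in the orbit. Connectedness then forces the Nklt locus to be a single $G$-fixed point, contradicting Theorem~\ref{theorem:A6-non-fixed}. This uses $\frac{1}{a}D\sim_{\mathbb{Q}}-K_X$, so Kollár--Shokurov connectedness applies since $-(K_X+\frac1aD)\equiv0$ is nef. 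Finally, $a<1$ itself is excluded because then $(X,\frac1a\mathcal{M})$ with $\frac1a>1$ would be non-canonical at the base locus of $\mathcal{M}$ by an adjunction/Noether--Fano count. Equivalently, the averaged divisor $D$ satisfies $\frac{1}{a}D\sim_{\mathbb{Q}}-K_X$ and is non-lc along $\Bs\mathcal{M}$, and the argument above gives the contradiction.
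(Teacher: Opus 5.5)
There is a genuine gap. The reduction to Lemma~\ref{lemma:A6-ext-threefolds-lc} cannot be carried out as written, and the proposal never gets past the factor-of-two obstruction you yourself identify. That lemma controls \emph{log canonicity} of $G$-invariant divisors $D\sim_{\mathbb{Q}}-K_X$. What you have to exploit is \emph{non-canonicity} of a mobile system, and non-canonical does not imply non-log canonical.

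Along a curve $C$, non-canonicity only gives $\mathrm{mult}_C D>1$ for the $G$-averaged divisor $D\sim_{\mathbb{Q}}-K_X$. This is compatible with $(X,D)$ being log canonical at a general point of $C$: for an ordinary singularity of the transverse slice, log canonicity only fails once the multiplicity exceeds $2$. Lemma~2.4.10 of \cite{Icosahedron} and tie-breaking take a non-log canonical pair as \emph{input}. They cannot manufacture one from a merely non-canonical pair without raising the coefficient, and raising it destroys $D\sim_{\mathbb{Q}}-K_X$.

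The point case has the same flaw: ``non-canonical hence, after perturbation, non-klt'' is unjustified. It has further problems. The divisor $\frac12(M_1+M_2)$ there is not $G$-invariant. Koll\'ar--Shokurov connectedness requires $-(K_X+D)$ to be nef and big, not merely numerically trivial. Your alternative route via the $4n^2$ inequality and orbit lengths is left unfinished.

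The case $a<1$ is a second, independent gap. It is not true that $(X,\frac1a\mathcal{M})$ must then be non-canonical along $\mathrm{Bs}\,\mathcal{M}$. If $\mathcal{M}$ is base point free, $(X,c\mathcal{M})$ is canonical for every $c\geqslant0$; think of $|\mathscr{O}_{\PP^3}(1)|$, where $a=\frac14$. So the conclusion $a=1$ cannot come from local singularity analysis at all. It needs global information about the $G$-birational class of $X$.

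This is how the paper argues. By the equivariant Noether--Fano/Sarkisov argument of \cite[Lemma~2.4.16]{CheltsovPinardinProkhorov}, if $a<1$ or $(X,\mathcal{M})$ is not canonical, then $X$ is $G$-birational to one of two things:
\begin{itemize}
\item a $G$-conic bundle or a $G$-del Pezzo fibration, which the \hyperref[theorem:fibration]{Fibration Theorem} excludes;
\item a $G\mathbb{Q}$-Fano threefold with terminal Gorenstein singularities and $\iota\geqslant2$, which Lemma~\ref{lemma:A6-ext-index} excludes.
\end{itemize}
Lemma~\ref{lemma:A6-ext-threefolds-lc} plays no role in that proof.
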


\begin{proof}
    Assume that either $a<1$ or $(X,\mathcal{M})$ does not have canonical singularities. By \cite[Lemma 2.4.16]{CheltsovPinardinProkhorov} and the \hyperref[theorem:fibration]{Fibration Theorem} we get that there exists a $G$-equivariant birational map $X\dashrightarrow Y$ for some $G\QQ$-Fano threefold $Y$ with terminal Gorenstein singularities and $\iota(Y)\geqslant2$. This contradicts Lemma \ref{lemma:A6-ext-index}.
\end{proof}

\begin{corollary}
\label{corollary:A6-ext-positive-mobile}
Suppose that $X$ is a $G\mathbb{Q}$-Fano threefold and let $\mathcal{M}$ be a~non-empty $G$-invariant linear subsystem of $|-K_X|$ of positive dimension. Then $\mathcal{M}$ is mobile and $(X,\mathcal{M})$ has canonical singularities.
\end{corollary}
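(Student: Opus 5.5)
The plan is to deduce the corollary directly from Lemma~\ref{lemma:A6-ext-threefolds-log-pair-mobile} with $a=1$. The only thing to check is that $\mathcal{M}$ is mobile, i.e.\ that it has no fixed components.

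\emph{Mobility.} Write $\mathcal{M}=F+\mathcal{M}'$, where $F$ is the fixed part and $\mathcal{M}'$ the mobile part. Since $\mathcal{M}$ is $G$-invariant, both $F$ and $\mathcal{M}'$ are $G$-invariant. Because $\dim\mathcal{M}>0$, the system $\mathcal{M}'$ is non-empty and not a single divisor. Suppose $F\neq0$. Since $X$ is a $G\mathbb{Q}$-Fano threefold, $\mathrm{r}^G(X)=1$. Hence the $G$-invariant Weil divisor $F$ satisfies $F\sim_{\mathbb{Q}}b(-K_X)$ with $b>0$, by ampleness of $-K_X$ and because $F$ is a nonzero effective $\mathbb{Q}$-Cartier divisor. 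It follows that $\mathcal{M}'\sim_{\mathbb{Q}}(1-b)(-K_X)$. Since $\mathcal{M}'$ is mobile and non-empty of positive dimension, its class is nonzero and effective, so $0<1-b<1$. Applying Lemma~\ref{lemma:A6-ext-threefolds-log-pair-mobile} to $\mathcal{M}'$ with $a=1-b<1$ gives a contradiction. Therefore $F=0$, and $\mathcal{M}$ is mobile.

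\emph{Canonicity.} Now $\mathcal{M}$ is a non-empty $G$-invariant mobile linear system with $\mathcal{M}\sim_{\mathbb{Q}}-K_X$. Lemma~\ref{lemma:A6-ext-threefolds-log-pair-mobile} with $a=1$ then gives directly that $(X,\mathcal{M})$ has canonical singularities.

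The main point needing care is the mobility step. One must justify that the fixed part is $\mathbb{Q}$-Cartier and $G$-invariant, which uses that every $G$-invariant Weil divisor is $\mathbb{Q}$-Cartier together with $\mathrm{r}^G(X)=1$. One must also check that $1-b>0$, which holds because a positive-dimensional mobile system cannot be numerically trivial on a Fano threefold.
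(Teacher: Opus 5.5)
Your proposal is correct and follows the paper's own proof: the paper also argues that a fixed component would leave a $G$-invariant mobile part $\mathbb{Q}$-linearly equivalent to $a(-K_X)$ with $0<a<1$, contradicting Lemma~\ref{lemma:A6-ext-threefolds-log-pair-mobile}, and then uses that lemma with $a=1$ to get canonicity. You additionally spell out the justification the paper leaves implicit, namely using $\mathrm{r}^G(X)=1$ to write the fixed part as $F\sim_{\mathbb{Q}}b(-K_X)$ with $0<b<1$.
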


\begin{proof}
If $\mathcal{M}$ has a~fixed component, then its mobile part is $\mathbb{Q}$-linearly equivalent to $a(-K_X)$ for some positive rational number $a<1$, which contradicts Lemma~\ref{lemma:A6-ext-threefolds-log-pair-mobile}.
\end{proof}

\begin{corollary}
\label{corollary:A6-ext-K3-surfaces}
Suppose that $X$ is a $G\mathbb{Q}$-Fano threefold. Let $\mathcal{M}$ be a~non-empty $G$-invariant linear subsystem in $|-K_X|$ that has positive dimension, and let $S$ be a~general surface in $\mathcal{M}$. Then the~log pair $(X,S)$ has purely log terminal singularities, and $S$ is an~irreducible K3 surface with at most Du Val singularities.
\end{corollary}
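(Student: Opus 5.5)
By Corollary~\ref{corollary:A6-ext-positive-mobile}, the system $\mathcal{M}$ is mobile and the pair $(X,\mathcal{M})$ has canonical singularities. I would first pass from canonicity of the linear system to log canonicity of a general member. If $S$ is general in $\mathcal{M}$, then canonicity of $(X,\mathcal{M})$ gives that $(X,S)$ is canonical outside the base locus $\Bs(\mathcal{M})$. It also gives that $(X,\lambda S)$ is canonical for every $\lambda<1$. By the standard Bertini-type argument for mobile systems (see \cite[Lemma 2.4.16]{CheltsovPinardinProkhorov} or the analogous steps in \cite{P:JAG:simple}), the pair $(X,S)$ is then log canonical. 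Moreover $S$ is irreducible: otherwise $\mathcal{M}$ would be composed of a pencil and $S$ would split as a sum of fibres of a $G$-equivariant map $X\dashrightarrow\PP^1$. This would produce a $G$-invariant mobile subsystem $\mathcal{M}'$ with $\mathcal{M}'\sim_{\QQ}a(-K_X)$ for some $a<1$, contradicting Lemma~\ref{lemma:A6-ext-threefolds-log-pair-mobile}.

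Next I would upgrade log canonicity to purely log terminal. Suppose $(X,S)$ is not plt. Then there is a log canonical centre $Z\neq S$ of $(X,S)$, necessarily contained in $\Bs(\mathcal{M})$. I would take $D=\frac12(S_1+S_2)$ for two general members $S_1,S_2\in\mathcal{M}$. This is a $G$-invariant-up-to-linear-equivalence $\QQ$-divisor with $D\sim_\QQ -K_X$, and I would argue that it is not klt along $Z$. Averaging over $G$ and using the tie-breaking of \cite[Lemma~2.4.10]{Icosahedron} exactly as in the proof of Lemma~\ref{lemma:A6-ext-threefolds-lc}, I would obtain a $G$-invariant irreducible minimal centre $Z$. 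This centre is either a $G$-fixed point, excluded by Theorem~\ref{theorem:A6-non-fixed}, or a normal rational curve or surface on which $A$ acts trivially, excluded by Lemmas~\ref{lemma:A6-ext-projective-spaces} and \ref{lemma:A6-ext-cr2}. In this way every non-plt configuration reduces to the contradictions already worked out in Lemma~\ref{lemma:A6-ext-threefolds-lc}. The subtle point is to make sure the perturbed divisor is genuinely non-lc, so that the lemma's contradiction applies. I would do this by taking $D=\mu S_1+(1-\mu)S_2$ with a general $G$-translate, plus a small multiple of the mobile system, so that multiplicities along $Z$ exceed the lc threshold.

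Once $(X,S)$ is plt with $S$ irreducible, inversion of adjunction shows that $S$ is normal with klt singularities. Adjunction gives $K_S\sim (K_X+S)|_S\sim 0$; here one uses that $S$ is Cartier in codimension two, and that the index-one cover argument makes $K_S$ Cartier because $X$ is terminal and $S\sim -K_X$. A normal surface with trivial canonical class and klt, hence canonical Gorenstein, singularities has only Du Val singularities. Finally, $H^1(S,\OOO_S)=0$ follows from the exact sequence $0\to\OOO_X(K_X)\to\OOO_X\to\OOO_S\to0$ together with Kawamata--Viehweg vanishing $H^1(X,\OOO_X)=H^2(X,\OOO_X(K_X))=0$ on the Fano threefold $X$. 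Hence $S$ is a K3 surface with at most Du Val singularities.

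I expect the plt step to be the main obstacle. Canonicity of the mobile system only directly controls multiplicities generically, so some care is needed to show that a non-plt centre of a general member forces a non-lc $G$-invariant divisor $\QQ$-linearly equivalent to $-K_X$.
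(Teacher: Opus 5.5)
Your irreducibility step (a pencil would give a $G$-invariant mobile system of class $a(-K_X)$ with $a<1$) and your final adjunction and vanishing step agree with the paper. The plt step, however, has a genuine gap, and it comes from underestimating what canonicity of $(X,\mathcal{M})$ gives.

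\textbf{Why the proposed plt step fails.} Lemma~\ref{lemma:A6-ext-threefolds-lc} only excludes $G$-invariant divisors $D\sim_{\QQ}-K_X$ for which $(X,D)$ is \emph{not} log canonical. Its connectedness step needs $-(K_X+\lambda D')$ to be ample, that is, $\lambda<1$. Your perturbation does not produce such a divisor:
\begin{itemize}
\item $\frac12(S_1+S_2)$ is not $G$-invariant, and ``$G$-invariant up to linear equivalence'' is not enough for the lemma.
\item Any $G$-invariant convex combination of general members of $\mathcal{M}$ and their $G$-translates has, along any given divisor $E$ over $X$, multiplicity exactly $\mathrm{mult}_E(\mathcal{M})$. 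So at your hypothetical centre $Z$ it is log canonical but not klt, never non-lc.
\item Adding a multiple of $\mathcal{M}$ moves the class away from $-K_X$.
\end{itemize}
As written, the argument does not reach a contradiction.

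\textbf{The missing observation.} No perturbation is needed, because for a mobile system canonicity passes to a general member. Take a log resolution $f\colon\widetilde{X}\to X$ on which $f^{-1}_*\mathcal{M}$ is base point free. For general $S$, the strict transform $\widetilde{S}$ is smooth, meets the exceptional divisors $E_i$ transversally, and satisfies $\mathrm{mult}_{E_i}S=\mathrm{mult}_{E_i}\mathcal{M}$. Hence
\[
K_{\widetilde{X}}+\widetilde{S}=f^*(K_X+S)+\sum a_iE_i,\qquad a_i=a(E_i,X,\mathcal{M})\geqslant 0 .
\]
The support is snc and $\widetilde{S}$ has coefficient $1$, so every divisor exceptional over $X$ has discrepancy $\geqslant 0$ with respect to $(X,S)$. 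Thus $(X,S)$ is canonical, in particular plt.

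This is exactly the Bertini theorem \cite[Theorem 4.8]{KollarPairs} that the paper invokes, followed by inversion of adjunction \cite[Theorem 7.5]{KollarPairs} to get $S$ normal and klt. Equivalently, an lc centre $Z\neq S$ of $(X,S)$ would give a divisor $E$ with $a(E,X,\mathcal{M})=a(E,X,S)=-1<0$. That contradicts Corollary~\ref{corollary:A6-ext-positive-mobile} directly.

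Your belief that canonicity only controls $(X,S)$ outside $\Bs(\mathcal{M})$, and gives merely log canonicity inside it, is what led you to this detour. Replace that step with the argument above, and the rest of your proof goes through as in the paper.
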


\begin{proof}
    By Corollary \ref{corollary:A6-ext-positive-mobile} $\mathcal{M}$ is mobile and $(X,\mathcal{M})$ has canonical singularities. By Bertini's theorem the general member of $\mathcal{M}$ is irreducible, otherwise $\mathcal{M}$ would be composed with a mobile pencil, but this contradicts Lemma \ref{lemma:A6-ext-threefolds-log-pair-mobile}. Then by \cite[Theorem 4.8, Theorem 7.5]{KollarPairs} $(X,S)$ is purely log terminal and $S$ has klt singularities. By the adjunction formula and Kodaira vanishing $S$ is K3 with Du Val singularities. 
\end{proof}

\begin{lemma}
\label{lemma:A6-ext-invariant-surfaces}
Let $X$ be a~Fano threefold with terminal singularities that admits a faithful action of the group $G$. Suppose that $X$ is a~$G\mathbb{Q}$-Fano threefold, and $|-K_X|$ has a~$G$-invariant surface $S$. Then
\begin{itemize}
\item either $S$ is a~smooth K3 surface, and $G$ acts faithfully on $S$,
\item or $S$ is reducible, it has at least $10$ irreducible components, and $G$ acts transitively on them.
\end{itemize}
\end{lemma}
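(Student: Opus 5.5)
We split according to whether $S$ is irreducible.

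\emph{Case 1: $S$ is irreducible.} We first show that the pair $(X,S)$ is purely log terminal. Since $S\sim -K_X$ is $G$-invariant, Lemma~\ref{lemma:A6-ext-threefolds-lc} shows that $(X,S)$ is log canonical. Suppose it is not plt. Then the non-klt locus of $(X,(1-\epsilon)S+\epsilon D)$ for a suitable perturbation, or directly the minimal log canonical centres of $(X,S)$ contained in $S$, must be $G$-invariant proper subvarieties of $S$. We treat them exactly as in the proof of Lemma~\ref{lemma:A6-ext-threefolds-lc}. By Kawamata's subadjunction, a minimal centre is a point, or a normal rational curve. A point contradicts Theorem~\ref{theorem:A6-non-fixed}. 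On a rational curve $A$ acts trivially, by Lemma~\ref{lemma:A6-ext-projective-spaces} and normality of $A$; this again produces $A$-fixed points and contradicts Theorem~\ref{theorem:A6-non-fixed}. We must also rule out a one-dimensional lc centre that is a non-normal curve or a $G$-orbit of curves, and this is the delicate part. Connectedness gives irreducibility of the non-klt locus within the orbit, but I would mimic the tie-breaking argument of \cite[Lemma~2.4.10]{Icosahedron} to reduce to an irreducible minimal centre.

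Once $(X,S)$ is plt, inversion of adjunction shows that $S$ is normal with klt singularities. Adjunction gives $K_S\sim 0$, and Kodaira vanishing gives $H^1(S,\OOO_S)=0$ because $X$ is Fano with rational singularities. Hence $S$ is a K3 surface with at most Du Val singularities. The group $G$ acts on $S$ with kernel a normal subgroup of $G$, so the kernel is $1$, $A$, or $G$. If $A$ fixed $S$ pointwise, we would get $A$-fixed points, contradicting Theorem~\ref{theorem:A6-non-fixed}. So $G$ acts faithfully, and Corollary~\ref{corollary:K3-a6} gives that $S$ is smooth.

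\emph{Case 2: $S$ is reducible.} Write $S=\sum m_iS_i$. Because $\mathrm{r}^G(X)=1$, every $G$-invariant divisor is $\QQ$-linearly equivalent to a multiple of $-K_X$. Each $G$-orbit of components $\Sigma_j$ therefore satisfies $\Sigma_j\sim_{\QQ}a_j(-K_X)$ with $a_j>0$ and $\sum m_ja_j=1$. If there were two orbits, or a non-reduced orbit, then some $\Sigma_j$ would satisfy $\Sigma_j\sim_{\QQ}a(-K_X)$ with $a<1$. The pair $(X,\tfrac1a\Sigma_j)$ with $\tfrac1a>1$ is then not log canonical along $\Sigma_j$, contradicting Lemma~\ref{lemma:A6-ext-threefolds-lc}. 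The same argument rules out $S=mS_1$ with $m\geqslant2$. So $S$ is reduced and $G$ acts transitively on its components. The number $k$ of components equals the index of the stabiliser of one component. The proper subgroups of $M_{10}$ have index $2$ (namely $A$), $10$, $36$, $45$, and so on. Index $2$ would make each of the two components $A$-invariant, with $S_1\sim_{\QQ}\tfrac12(-K_X)$, and $S_1$ would be $A$-invariant but not $G$-invariant. I would rule this out by noting that $r^G=1$ but $r^A$ could be $2$: then $S_1+S_2\sim -K_X$ with $S_1$ and $S_2$ swapped. Here I would use that $(X,S)$ is log canonical and apply connectedness to $S_1\cap S_2$. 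This case is the main obstacle, and I would handle it via the intersection curve $C=S_1\cap S_2$. This curve is $G$-invariant and satisfies $K_{S_1}+C\sim0$ by adjunction, so $C\in|-K_{S_1}|$ is an $A$-invariant reduced anticanonical divisor on (a resolution of) $S_1$. That contradicts Lemma~\ref{Lemma:A6-ext-reduced-divisor}. Hence $k\geqslant10$.
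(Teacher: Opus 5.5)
Your reducible case is essentially the paper's argument. Log canonicity (Lemma~\ref{lemma:A6-ext-threefolds-lc}) gives reducedness, and $\mathrm{r}^G(X)=1$ gives transitivity. The two-component case is excluded by adjunction to the normalization of one component and Lemma~\ref{Lemma:A6-ext-reduced-divisor}. You should add two justifications there: that the different is reduced, which needs inversion of adjunction \cite{Kawakita}, and the negativity-lemma step on the minimal resolution.

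The gap is in the irreducible case: your route to plt cannot work. Tie-breaking as in \cite[Lemma~2.4.10]{Icosahedron}, followed by Koll\'ar--Shokurov connectedness, needs a $G$-invariant boundary $\Delta$ with $-(K_X+\Delta)$ ample whose non-klt locus is the orbit of a minimal centre. In Lemma~\ref{lemma:A6-ext-threefolds-lc} that room exists because the pair is assumed not lc, so $\lambda<1$. Here $(X,S)$ is already lc with $K_X+S\sim 0$, and Lemma~\ref{lemma:A6-ext-threefolds-lc} itself shows the room is gone. Every $G$-invariant effective $\Delta\sim_{\QQ}t(-K_X)$ with $t<1$ gives a klt pair, since $(X,\tfrac1t\Delta)$ is lc.

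So connectedness never sees the lc centres of $(X,S)$ strictly inside $S$. Nothing forces a minimal centre to be $G$-invariant. For example, a $G$-orbit of simple elliptic or cusp singularities of $S$ is not excluded by Theorem~\ref{theorem:A6-non-fixed}, which only forbids $A$-fixed points. Neither is a non-normal $S$ whose double locus is a $G$-orbit of curves. Subadjunction is also weaker here: with $K_X+S\equiv 0$ it only gives $K_W+D_W\sim_{\QQ}\epsilon H|_W$, not rationality of $W$.

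What is missing is a global argument that avoids proving plt, and the paper uses two of them.
\begin{itemize}
\item If $S$ is not normal, the conductor $B$ on the normalization $\widehat S$ satisfies $K_{\widehat S}+B\sim 0$. It is reduced because $(\widehat S,B)$ is lc. On the minimal resolution it yields a non-zero reduced $A$-invariant anticanonical divisor, contradicting Lemma~\ref{Lemma:A6-ext-reduced-divisor}. This is the same tool you already use in Case~2.
\item If $S$ is normal and singular, then $G$ acts faithfully, $K_S\sim 0$ and $h^1(\OOO_S)=0$. Corollary~\ref{corollary:K3-a6} then forces a non-Du Val point, whose $A$-orbit has at least $6$ points. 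But a normal surface with trivial canonical class has at most two non-Du Val singular points \cite{Umezu}.
\end{itemize}
The connectedness principle for numerically trivial lc pairs, which allows at most two connected components of the non-klt locus, could play the role of Umezu's bound.
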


\begin{proof}
By Lemma~\ref{lemma:A6-ext-threefolds-lc}, the~log pair $(X,S)$ has log canonical singularities. This implies that $S$ is reduced.
Suppose now that $S$ is irreducible, and let us show that $S$ is a~smooth K3 surface. We first show that $S$ is normal. If $S$ is not normal, consider its normalization $\nu:\widehat{S}\to S$. Then adjunction gives $K_{\widehat{S}}+B\sim0$ for some non-zero effective integral $A$-invariant divisor such that $(\widehat{S},B)$ is log canonical. In particular, $B$ is reduced. Let $f:\widetilde{S}\to\widehat{S}$ be a minimal resolution, so $K_{\widetilde{S}}+f^{-1}_{\ast} B+F\sim f^{\ast}(K_{\widehat{S}}+B)$ where $F$ is $f$-exceptional. Since $K_{\widehat{S}}+B\sim0$, the divisor $F$ is integral, and by log canonicity the coefficients of $F$ are at most $1$. By minimality, $K_{\widetilde{S}}$ is $f$-nef, so $-F$ is also $f$-nef. By the negativity lemma, the last assertion implies that $F\geqslant0$, so $F+f_{\ast}^{-1}B$ is a non-zero $A$-invariant reduced divisor in $\vert-K_{\widetilde{S}}\vert$, which contradicts Lemma~\ref{Lemma:A6-ext-reduced-divisor}. We conclude that $S$ is normal.

If $S$ is smooth, it is clear that $S$ is a K3 surface. Thus, we may assume that $S$ is singular. By Theorem~\ref{theorem:A6-non-fixed} the action of $A$ on $S$ is non-trivial, then $G$ acts faithfully on $S$. It follows from Corollary~\ref{corollary:K3-a6} that $S$ has a~non-Du Val singular point $P$, but then its $A$-orbit contains at least $6$ points. This is not possible since $S$ can have at most two non-Du Val singular points \cite{Umezu}. Therefore, $S$ is a smooth K3 surface.

Now, we assume that $S$ is reducible. Suppose $G$ does not act transitively on the set of irreducible components of $S$ and consider $S^\prime$ as the $G$-orbit of an irreducible component of the surface $S$. Then $S^\prime$ is $G$-invariant and
$$
S^\prime\sim_{\mathbb{Q}} a(-K_X)
$$
for some $a\in\mathbb{Q}_{>0}$ such that $a<1$. In this case, the log pair $(X,\frac{1}{a}S^\prime)$ is not log canonical, which contradicts Lemma \ref{lemma:A6-ext-threefolds-lc}. Thus, $a=1$ and we conclude that $S^\prime=S$, i.e., $G$ acts transitively on the irreducible components of $S$.

Finally, we claim that $S$ cannot have $2$ components. Suppose $S=S^\prime+S^{\prime\prime}$ with $S^\prime,S^{\prime\prime}$ irreducible and $A$-invariant. Let $\nu:\widehat{S}^\prime\to S^\prime$ be the normalization of $S^\prime$, and consider the divisor $B_{\widehat{S}^\prime}$ as the different divisor of the log pair $(X,S^\prime+S^{\prime\prime})$, i.e., $B_{\widehat{S}^\prime}$ is the effective Weil divisor on $\widehat{S}^\prime$ such that
$$
K_{\widehat{S}^\prime}+B_{\widehat{S}^\prime}\sim \nu^\ast((K_X+S^\prime+S^{\prime\prime})\vert_{S^\prime}).
$$
Thus, $B_{\widehat{S}^\prime}\in\vert-K_{\widehat{S}^\prime}\vert$ is $A$-invariant, and by \cite{Kawakita} the pair $(\widehat{S}^\prime,B_{\widehat{S}^\prime})$ has log canonical singularities. In particular, $B_{\widehat{S}^\prime}$ is reduced. Replacing the log pair $(\widehat{S}^\prime,B_{\widehat{S}^\prime})$ by its minimal resolution $(\widetilde{S}^\prime,B_{\widetilde{S}^\prime})$, we find a smooth surface $\widetilde{S}^\prime$ with a faithful action of $A$ which contains a non-zero $A$-invariant reduced divisor $B_{\widetilde{S}^\prime}\in\vert -K_{\widetilde{S}^\prime}\vert$, which contradicts Lemma~\ref{Lemma:A6-ext-reduced-divisor}. Thus, $S$ cannot have $2$ irreducible components and then if $S$ is reducible it has at least $10$ components, as the only proper subgroup of $G$ with index less than $10$ is $A$.
\end{proof}

\subsection{Proof of Fibration Theorem}
\label{subsection:fibrations}

Here below we prove the following reduction theorem.

\begin{fibrationprop*}
    \phantomsection
    Let $X$ be a rationally connected threefold with terminal singularities that admits a faithful action of one of the groups $\PGL_2(\FF_7),\PGL_2(\FF_9),M_{10}$. Then $X$ is $G$-birational  neither to a $G$-conic bundle nor to a $G$-del Pezzo fibration.
\end{fibrationprop*}

\begin{proof}
Suppose that the assertion is not true. Then there exists a $G$-birational map $X\dasharrow Y$ such that $Y$ is a normal threefold with terminal $G\mathbb{Q}$-factorial singularities, and there exists a $G$-equivariant surjective morphism $\pi\colon Y\to Z$ with connected fibers such that $Z$ is normal and one of the following cases holds:
\begin{itemize}
\item $Z$ is a rational surface, the general fiber of $\pi$ is a rational curve, $\mathrm{rk}\,\mathrm{Pic}(Y)^G=\mathrm{rk}\,\mathrm{Pic}(Z)^G+1$;
\item $Z=\mathbb{P}^1$ and the general fiber of $\pi$ is a smooth del Pezzo surface, $\mathrm{rk}\,\mathrm{Pic}(Y)^G=2$.
\end{itemize}
Let $A$ be the subgroup of $G$ isomorphic to $\PSL_2(\FF_7)$ or $\mathfrak{A}_6$, as appropiate. 
Then $A$ acts trivially on $Z$, because the $G$-action on $Z$ cannot be faithful by \cite{IgorVasya}. If $Z$ is a surface, since $A$ does not act faithfully on the general fiber of $\pi$, we see that $A$ acts trivially on $Y$, which is absurd. Thus, we see that $Z=\mathbb{P}^1$.

Let $F$ be a general fiber of $\pi$. Then it follows from \cite[Lemma 2.5.5]{CheltsovPinardinProkhorov} that either $F\simeq\mathbb{P}^2$ or $F$ is isomorphic to a smooth del Pezzo surface of degree $2$ described as a double cover of $\PP^2$ branched at the plane Klein quartic curve. If $G$ acts trivially on $Z$, then $F$ is $G$-invariant, and $G$ acts faithfully on $F$, which is impossible by \cite{IgorVasya}. Hence, we conclude that $G$ acts on $Z$ as $G/A\cong\mumu_2$. In particular, $G$ fixes two points in $Z$. 

Let $P$ be one of the $G$-fixed points in $Z$, and let $S$ be the scheme-theoretic fiber of $\pi$ over this point. Choose a positive rational number $\mu\leqslant 1$ such that the singularities of the log pair $(Y,\mu S)$ are strictly log canonical. Let $C$ be an irreducible subvariety in $Y$ that is a minimal center of log canonical singularities of this log pair, and let $T$ be its $G$-orbit in $Y$. Then
$$
T\subset\mathrm{Supp}(S),
$$ 
and every irreducible component of $T$ is also a minimal center of log canonical singularities of the log pair $(Y,\mu S)$, which implies that these irreducible components are disjoint, because intersection of two log canonical centers is a log canonical center by \cite[Proposition 1.5]{Kawamata-1}. Now, arguing as in the proof of \cite[Lemma 2.8]{cheltsov2011exceptional}, we can construct an effective $G$-invariant $\mathbb{Q}$-divisor $D$ on $Y$ such that 
$$
D\sim_{\mathbb{Q}}\varepsilon(-K_{Y})+S
$$ 
for a very small rational number $\varepsilon>0$, the singularities of the log pair $(Y,\lambda D)$ are strictly log canonical for some positive rational number $\lambda>\mu$, $C$ is a minimal center of log canonical singularities of this pair, and 
$$
\mathrm{Nklt}\big(Y,\lambda D\big)=T.
$$
Since $-(K_Y+\lambda D)$ is $\pi$-ample, it follows from Koll\'ar--Shokurov connectedness \cite[Theorem 7.4]{KollarPairs} that the locus $\mathrm{Nklt}(Y,\lambda D)$ is connected. Hence, we see that $T=C$, so $C$ is $G$-invariant. In particular, if $C$ is a point, we obtain a contradiction with Lemma~\ref{lemma:A6-ext-fixed} (and similar statement works for $A=\PSL_2(\FF_7)$).

Thus, we see that $C$ is either a curve or a surface and $G$ acts faithfully on $C$ by Lemma~\ref{lemma:A6-ext-fixed} and \cite[Corollary 2.5.13]{CheltsovPinardinProkhorov}. On the other hand, it follows from \cite{Kawamata-2} that $C$ is normal. Fix any sufficiently small ample $\mathbb{Q}$-divisor $L$ on $Y$. Then it follows from Kawamata's subadjunction theorem \cite[Theorem 1]{Kawamata-2} that there exists an effective $\mathbb{Q}$-divisor $\Delta_C$ on $C$ such that $(C,\Delta_C)$ has Kawamata log terminal singularities and
$$
(K_{Y}+\lambda D+L)\big\vert_{C}\sim_{\mathbb{Q}} K_C+\Delta_C.
$$ 
Since $\varepsilon$ is very small and $L$ can be chosen to be sufficiently small, we see that the divisor $-(K_{Y}+\lambda D+L)$ is $\pi$-ample. Thus, if $C$ is a curve, we have $\mathrm{deg}(K_C)<0$, so $C$ is rational, which is a contradiction. Similarly, if $C$ is a surface, we see that $(C,\Delta_C)$ is a log del Pezzo surface with Kawamata log terminal singularities. This also implies that $C$ is rational, again impossible by \cite{IgorVasya}. 
\end{proof}

\section{Proof of the Technical Theorem: Gorenstein case}
\label{section:A6-ext-gorenstein}

Let $X$ be a Fano threefold with terminal Gorenstein singularities such that $\mathrm{Aut}(X)$ contains some finite subgroup $G$. Suppose in addition that $X$ is rational, so from \hyperref[theorem:fibration]{Fibration Theorem} we may assume that $X$ is a $G\QQ$-Fano $3$-fold. In this section, we prove the following.

\begin{theorem}
\label{theorem:A6-ext-Gorenstein}
One has $G\not\simeq M_{10}$.
\end{theorem}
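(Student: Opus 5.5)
We argue by contradiction and assume $G\simeq M_{10}$. Then $X$ is a $G\QQ$-Fano threefold with terminal Gorenstein singularities. Since it is Gorenstein, $\mathrm{Cl}(X)^G$ and $\Pic(X)^G$ agree up to torsion, so $-K_X$ is an ample Cartier divisor.

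\textbf{Step 1 (embedding).} By Lemma~\ref{lemma:A6-ext-index} we have $\iota(X)=1$. By Lemma~\ref{lemma:A6-bpf} and Lemma~\ref{lemma:A6-ext-ample}, $-K_X$ is base point free and very ample. By Lemma~\ref{lemma:A6-ext-quadrics}, the anticanonical model $X\subset\PP^{g+1}$ is an intersection of quadrics. Moreover $g\geqslant 7$, and $H^0(X,\OOO_X(-K_X))$ is a faithful $(g+2)$-dimensional linear representation of $G$. It is linear rather than projective because $-K_X$ is canonically $G$-linearized.

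\textbf{Step 2 (bounding $g$).} By Namikawa's smoothing, $X$ deforms to a smooth Fano threefold with the same $g$ and $\rho(X)\leqslant$ the Picard rank of the smoothing. This gives $g\leqslant 33$ (when $\rho=1$, $g\leqslant 12$, $g\neq 11$). I would then list the possible decompositions of the $(g+2)$-dimensional representation using the character degrees $1,1,9,9,10,10,10,16$ of $M_{10}$.
\begin{itemize}
\item The case $g=8$ is excluded by Lemma~\ref{lemma:A6-ext-g-not-6-8}.
\item For $g=7$, the dimension is $9=\mathbb{W}_9$. Counting quadrics with \cite[Corollary 5.5]{P:JAG:simple} requires a $10$-dimensional invariant subspace of $S^2\mathbb{W}_9$. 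I expect to rule this out by character decomposition, exactly as in Lemma~\ref{lemma:A6-ext-g-not-6-8}.
\item For the remaining values of $g$ (for instance $g=9,10,\ldots$), the representation must contain one or more trivial or sign summands. Each such summand gives a $G$-invariant or $A$-invariant surface in $|-K_X|$.
\end{itemize}

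\textbf{Step 3 (invariant anticanonical surfaces).} Whenever $H^0(-K_X)$ contains a one-dimensional summand, we obtain an $A$-invariant (possibly $G$-invariant) surface $S\in|-K_X|$. If there are two such summands, $|-K_X|$ contains an $A$-invariant pencil. By Corollary~\ref{corollary:A6-ext-K3-surfaces}, its general member is then a K3 surface with Du Val singularities and a faithful $A$-action, and by Corollary~\ref{corollary:K3-a6} it is smooth. For a $G$-invariant $S$, Lemma~\ref{lemma:A6-ext-invariant-surfaces} says that $S$ is either a smooth K3 surface or has at least $10$ components permuted transitively. In the K3 case, Lemma~\ref{lemma:K3-a6} gives $\Pic(S)^A\simeq\mathbb{Z}$. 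Then $-K_X|_S$ generates it up to multiples, and $(-K_X|_S)^2=2g-2$. I would combine this with Xiao's classification of the $\mathfrak{A}_6$-quotient, which has 7 singular points of known types, and with the orbit sizes of Lemma~\ref{lemma:A6-genus}, to restrict $g$. For example, the invariant hyperplane sections of $S$ are curves of genus $g$ with an $A$-action, and by Lemma~\ref{lemma:A6-genus} and Lemma~\ref{lemma:A6-singular-curves} they force $g\in\{10,16,19,25\}$ or $g\geqslant46$. Curves cut out on $S$ that are invariant under $G$ rather than only $A$ are restricted further by Lemma~\ref{lemma:A6-ext-curves}, which forces genus $46$ or more, impossible when $g\leqslant 33$. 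In the reducible case, the $\geqslant 10$ components each have degree at most $(2g-2)/10$. I would use their $G$-orbit structure and the fact that the stabilizer of a component is a subgroup of index $\geqslant 10$ to reach a contradiction with the low degree.

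\textbf{Step 4 (large Picard rank) and main obstacle.} When $g$ is large, $X$ is not of Picard rank one, so $\mathrm{r}(X)>1=\mathrm{r}^G(X)$. Then $G$ permutes the extremal rays or classes nontrivially. Since $\mathrm{Cl}(X)\otimes\QQ$ has dimension $\leqslant 10$ and $A$ has no nontrivial permutation representations of small degree other than those of degree $6$ or $10$, this permutation action is very restricted. I would combine it with the exclusion of invariant fibrations from the \hyperref[theorem:fibration]{Fibration Theorem} and with the list of smooth Fano threefolds of high Picard rank to get a contradiction. I expect Step 3 to be the main obstacle, namely excluding $g=10,12$ (and other values) when $|-K_X|$ contains exactly one $A$-invariant surface. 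That case is precisely the one left open by Lemma~\ref{lemma:A6-ext-g-not-6-8}, and handling it needs the careful K3 lattice argument above. If it fails, I would try to exhibit a $G$-invariant mobile system of degree $<1$ and contradict Lemma~\ref{lemma:A6-ext-threefolds-log-pair-mobile}.
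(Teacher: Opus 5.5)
You follow the paper's route for Step~1 and, roughly, for the pencil argument in Step~3 (Lemmas~\ref{lemma:A6-ext-ample}, \ref{lemma:A6-ext-quadrics}, and the proof that $|-K_X|$ has at most one $A$-invariant member). However, the proposal never disposes of the two genera that actually survive, and the first one fails exactly where you expect it to be routine.

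\textbf{The case $g=7$.} The quadric count gives no contradiction. Denote by $5_1,5_2,8_1,8_2,9$ the corresponding irreducibles of $\mathfrak{A}_6$. Restricted to $\mathfrak{A}_6$, one finds $\Sym^2(9)\simeq\mathbf{1}\oplus5_1\oplus5_2\oplus8_1\oplus8_2\oplus9\oplus9$. The outer automorphism of $M_{10}$ swaps $5_1\leftrightarrow5_2$ and $8_1\leftrightarrow8_2$, so
\[
\Sym^2\mathbb{W}_9\simeq\mathbf{1}\oplus\mathbb{W}_9\oplus\mathbb{W}_9'\oplus\mathbb{W}_{10}\oplus\mathbb{W}_{16}.
\]
This contains $10$-dimensional subrepresentations, namely $\mathbb{W}_{10}$ or $\mathbf{1}\oplus\mathbb{W}_9$. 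Hence $h^0(\mathscr{I}_X(2))=10$ is compatible with representation theory, which is why the paper needs the whole genus-$7$ subsection.

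\textbf{The case $g=9$.} This is also left open. Once $|-K_X|$ has at most one $A$-invariant member, you get $H^0(X,\OOO_X(-K_X))\simeq\mathbb{W}_1\oplus\mathbb{W}_{10}$. Restriction then identifies $H^0(S,\OOO_S(-K_X|_S))$ with $\mathbb{W}_{10}$ for the unique invariant surface $S$. Since $\mathbb{W}_{10}$ has no $A$-invariant vector, $S$ has no invariant hyperplane sections, and your genus argument is vacuous.

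\textbf{Misreading of Lemma~\ref{lemma:A6-ext-g-not-6-8}.} That lemma already excludes $g\in\{10,12\}$ when there is at most one $A$-invariant divisor. So the open cases are $g\in\{7,9\}$, not $g\in\{10,12\}$ as you suggest.

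\textbf{The missing idea.} In these genera $\rho(X)=1$ while $X$ is not $\QQ$-factorial, and one must exploit $\operatorname{Cl}(X)$ through a $G$-equivariant MMP on a $\QQ$-factorialization, as in diagram~\eqref{equation:MMP}. The paper proceeds in four steps:
\begin{enumerate}
\item It shows $X$ is not $\QQ$-factorial. The smooth case uses \cite{KPS:Hilb}; the $\QQ$-factorial singular case uses bounds on the number of singular points together with Theorem~\ref{theorem:A6-non-fixed}.
\item It shows $G$ acts faithfully on $\operatorname{Cl}(X)$. For $g=9$ this comes from $\operatorname{Cl}(X)\hookrightarrow\Pic(S)$ and Lemma~\ref{lemma:K3-a6}; for $g=7$ it comes from analysing an $A$-equivariant MMP (Lemma~\ref{lemma:A6-ext-g-7-faithful}). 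Consequently $\mathrm{r}(X)\geqslant10$, and at least $7$ divisorial contractions occur.
\item It excludes quadric and cubic surfaces. This uses the divisor of lines, with $D\sim a(-K_X)$ and $a\leqslant7$ (resp.\ $a\leqslant4$), plus orbit combinatorics on a K3 hyperplane section.
\item The growth estimate $(-K_{\widetilde{X}_n})^3\geqslant(-K_X)^3+6n$ then contradicts the classification of smooth Fano threefolds.
\end{enumerate}

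\textbf{Step~4 does not replace this.} Here $\mathrm{r}(X)>1$ occurs with $\rho(X)=1$ and forces $\mathrm{r}(X)\geqslant10$, not a small class group. For $\rho(X)>1$, the paper relies on Prokhorov's classification \cite{P:GFano2}, which gives $\rho(X)\leqslant4$ (so $A$ acts trivially on $\Pic(X)$), followed by a case-by-case exclusion in Lemma~\ref{lemma:A6-ext-pic}. Your permutation-representation remark provides neither such a bound nor the case analysis.
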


Let us assume by contradiction that $G\simeq M_{10}$ and denote by $A\subset G$ the unique subgroup isomorphic to $\mathfrak{A}_6$. By Lemma \ref{lemma:A6-ext-ample} we know that the anticanonical linear system $\vert-K_X\vert$ gives a $G$-equivariant embedding $X\hookrightarrow\PP^{g+1}$ where
$$g=\mathrm{g}(X)=\frac{(-K_X)^3+2}{2}\geqslant7,$$
and moreover, from Lemma \ref{lemma:A6-ext-quadrics} $X$ is given as an intersection of quadrics in $\PP^{g+1}$. Then we have from \cite[Corollary 5.5]{P:JAG:simple} that
$$h^0(\PP^{g+1},\mathscr{I}_X(2))=\frac{(g-2)(g-3)}{2}.$$
In the following we identify $X$ with its anticanonical image in $\PP^{g+1}$. By \cite{Namikawa,SmoothingPic} we know that $X$ admits a $\QQ$-Gorenstein smoothing to a smooth Fano $3$-fold $V$ such that
$$(-K_V)^3=(-K_X)^3=2g-2\geq10$$
and we also have that $\rho(V)=\rho(X)$ and $\iota(V)=\iota(X)=1$ by Lemma \ref{lemma:A6-ext-index}. During the rest of the section $V$ will always denote a general fiber of such a smoothing. Furthermore, we know that 
\begin{equation}
\label{equation:sing}
\vert\Sing(X)\vert\leqslant20+h^{1,2}(V)-\rho(V)\leqslant19+h^{1,2}(V).
\end{equation}

\begin{lemma}
    \label{lemma:A6-ext-pic}
    One has $\rho(X)=1$.
\end{lemma}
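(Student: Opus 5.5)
We prove that $\rho(X)=1$ by contradiction, through the $G$-action on $\Pic(X)$. Suppose $\rho(X)\geqslant2$. Since $X$ is a $G\QQ$-Fano threefold, $\mathrm{r}^G(X)=1$, hence $\rho^G(X)=1$. The $G$-action on $\Pic(X)\otimes\QQ$ therefore has a one-dimensional invariant part spanned by $-K_X$, and its complement is a non-trivial representation of dimension $\rho(X)-1$. Using the smoothing, $\rho(V)=\rho(X)\geqslant2$ and $\iota(V)=1$, with $(-K_V)^3=2g-2\geqslant12$.

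The first step is to bound $\rho(X)$. By the Mori--Mukai classification, $\rho(V)\leqslant10$. The finite group $G$ acts on the lattice $\Pic(V)\simeq\Pic(X)$ preserving $K$ and the intersection form. The non-trivial part of this action has dimension at most $9$.

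The second step analyses this action using the normal subgroup structure of $G$. The kernel of $G\to\mathrm{GL}(\Pic(X))$ is normal, so it is one of $1$, $A$ or $G$.
\begin{itemize}
\item If the kernel is $1$, then $G$ has a faithful rational representation of dimension at most $9$ containing no trivial summand. From the character degrees $1,1,9,9,10,\dots$, the only possibility is $\rho(X)=10$ with $9$-dimensional complement. Then $V\simeq\PP^1\times S_1$, with $S_1$ the del Pezzo surface of degree $1$. But $(-K_V)^3=6$, which contradicts $g\geqslant7$.
\item If the kernel is $A$ or $G$, then $A$ acts trivially on $\Pic(X)$. Since $\rho^G(X)=1$, the group $G/A\simeq\mumu_2$ acts as $-1$ on the complement, so $\rho(X)=2$. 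In that case $X$ has two extremal rays swapped by $G$, so $A$ preserves each of them.
\end{itemize}

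The third step rules out the case where $A$ fixes both extremal rays. Contracting an extremal ray gives a morphism $X\to Y$ that is $A$-equivariant, and the other ray gives a second such morphism $X\to Y'$, swapped with the first by $G$. The contractions may be taken on $V$ or on $X$ itself, by working $A$-equivariantly on the terminal Gorenstein $X$. Consider the possible types.
\begin{itemize}
\item A del Pezzo fibration over $\PP^1$ is impossible: $A$ acts trivially on $\PP^1$, so it acts on a general fiber, which must be $\PP^2$ by Theorem~\ref{theorem:A6-dP}. Then $X$ would be a $\PP^2$-bundle, and swapping with the other ray forces $X\simeq\PP^2\times\PP^1$-type varieties of index $>1$ or with $g<7$.
\item A conic bundle over a surface $Z$ is impossible: $A$ acts on $Z$, which is rational, so $Z\simeq\PP^2$ by the classification of $A$-del Pezzo surfaces. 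The discriminant curve is then $A$-invariant of degree less than $12$, hence of degree $6$ or $0$ by Lemma~\ref{lemma:A6-invariant-curves-P2}. Going through the Mori--Mukai list of rank-two Fano threefolds with two conic bundle structures over $\PP^2$ leaves $W$ (index $2$, excluded by Lemma~\ref{lemma:A6-ext-index}) and the divisor of bidegree $(2,2)$, with $g=7$ and discriminant of degree $6$.
\item A divisorial contraction to a curve or point on a Fano $Y$ with $A$-action is also possible. The target $Y$ has $\rho=1$ and faithful $A$-action; from the list of such rank-two threefolds with a symmetric pair of rays (e.g.\ blow-ups of $\PP^3$ or quadrics along curves), the blown-up curve would be an $A$-invariant curve of low degree in $\PP^3$ or a quadric, excluded by Lemma~\ref{lemma:A6-invariant-curves} and by Lemma~\ref{lemma:A6-genus}.
\end{itemize}

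The remaining $(2,2)$-divisor in $\PP^2\times\PP^2$ is excluded because its $A$-action comes from $3$-dimensional representations $V_1,V_2$ of $3.\mathfrak{A}_6$. The element of $G\setminus A$ swaps the factors, so the $M_{10}$-class must map $V_1$ to $V_2^{\vee}$ or $V_2$ compatibly. Combining with Lemma~\ref{lemma:3A6-transitive}, one checks that no invariant $(2,2)$-form exists, via $\Sym^2V_1\otimes\Sym^2V_2$ containing no invariant, or that the extension obtained is $\PGL_2(\FF_9)$ rather than $M_{10}$.

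The main obstacle is the third step: going through the rank-two Mori--Mukai list symmetric under the ray swap, and checking the $(2,2)$-divisor case representation-theoretically.
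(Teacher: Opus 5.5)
Your treatment of the case where $G$ acts faithfully on $\Pic(X)$ is fine. A faithful rational $G$-module with no trivial summand has dimension at least $9$, which forces $\rho(X)=10$, $V\simeq\PP^1\times S_1$ and $(-K_V)^3=6<12$. The gap is the step ``$G/A$ acts as $-1$ on the complement, so $\rho(X)=2$''. This does not follow: an involution can act by $-1$ on a complement of $\QQ K_X$ of any dimension $\rho(X)-1\leqslant 9$, and nothing you wrote bounds that dimension. Your third step only analyses a pair of swapped extremal rays, so it leaves every case $3\leqslant\rho(X)\leqslant 9$ with $A$ acting trivially on $\Pic(X)$ untreated.

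This is exactly where the paper uses outside input. It invokes Prokhorov's classification \cite[Theorem 1.2]{P:GFano2} of terminal Gorenstein $G$-Fano threefolds of Picard rank greater than one, which gives $\rho(X)\leqslant 4$ and a short list. In cases (1.2.5), (1.2.7) and (1.2.8) there is a fibration $V\to\PP^1$ with fibre class $f$. Then $f+\tau(f)$ is $\tau$-invariant, hence equal to $a(-K_V)$ with $a>0$, whereas $(f+\tau(f))^3=0$ because $f^2$ and $\tau(f)^2$ are numerically trivial. In case (1.2.6) the nef cone has three generators, so $\tau$ fixes one of them, contradicting $\rho^G(X)=1$. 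The $f+\tau(f)$ trick works in any rank and disposes of $\PP^1\times S_d$ for $\rho\geqslant 6$ at once. For ranks $3$ to $5$, however, you need the classification or a genuinely new argument for swapped pairs of conic-bundle or divisorial rays.

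Step 3 also has weaker spots. The Mori--Mukai list describes $V$, which carries no $G$-action, while $X$ is singular and possibly not $\QQ$-factorial. The paper takes from \cite[Theorem 6.5]{P:GFano2} the identification of $X$ itself as a $(2,2)$-divisor, a blow-up of a possibly singular quadric, or an intersection of three $(1,1)$-divisors in $\PP^3\times\PP^3$; your phrase ``working $A$-equivariantly on $X$'' does not replace this. In case (1.2.1) you omit the double cover of $W$ branched in an anticanonical divisor. It is excluded because $G$ would then act on $W$, contradicting Proposition~\ref{proposition:W-actions}. Your del Pezzo fibration bullet should simply be replaced by the $f+\tau(f)$ computation above.

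Your representation-theoretic idea for the $(2,2)$-divisor is sound once made precise. Since $\tau$ swaps the factors and induces the $M_{10}$ outer class, Lemma~\ref{lemma:3A6-transitive} shows that the second factor carries the representation $V_1'$ with the same central character as $V_1$. Hence the centre of $3.\mathfrak{A}_6$ acts on $\Sym^2V_1^{\vee}\otimes\Sym^2(V_1')^{\vee}$ by a non-trivial scalar, and there is no invariant $(2,2)$-form.
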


\begin{proof}
    We assume that $\rho(X)>1$ and follow the classification \cite{P:GFano2}. Considering $V$ a $\QQ$-Gorenstein smoothing of $X$, by \cite{P:GFano2} we see that $\rho(X)=\rho(V)\leqslant4$, so in particular $A$ acts trivially on $\Pic(X)$. Consequently, the action on the Picard lattice factors through $G/A\simeq\mumu_2$, and an involution $\tau\in G/A$ acts non-trivially on $\Pic(X)$, otherwise we would have $\rho(X)^G=\rho(X)>1$. Note that if we are in cases (1.2.5),(1.2.7) or (1.2.8) of \cite[Theorem 1.2]{P:GFano2}, $V$ admits a morphism $p:V\to\PP^1$ with connected fibers. Let $F$ be a general fiber of $p$, $f=[F]$ its class on $\Pic(V)$ and set $f^\prime=\tau(f),h=[-K_V]$. Since $F$ is a fiber, $f^2\cdot u=0$ for every divisor class $u$. Preservation of the intersection form gives
    $$(f^\prime)^2\cdot u=f^2\cdot\tau^{-1}(u)=0.$$
    As $f+f^\prime$ is $G$-invariant,  we get that $f+f^\prime=ah$ for some $a>0$. But then
    $$0=(f+f^\prime)^3=a^3h^3>0,$$
    a contradiction.

    If we are in case (1.2.6) of \cite[Theorem 1.2]{P:GFano2}, the nef cone $\operatorname{Nef}(V)$ is generated by three classes, so the involution $\tau$ has to preserve one of these generators. This contradicts the assumption $\rho(X)^G=1$.

    The remaining cases have $\rho(V)=2$. If we are in case (1.2.4), \cite[Theorem 6.5]{P:GFano2} implies that $X$ is smooth, and this case is not possible due to Proposition \ref{proposition:W-actions}. Assume we are in case (1.2.3). Again by \cite[Theorem 6.5]{P:GFano2} there is an $A$-equivariant birational morphism $X\to Q$ where $Q$ is a quadric in $\PP^4$. There are two possibilities:
    \begin{enumerate}
        \item either $X$ is smooth, in which case $X$ is the blow-up of $Q$ along a twisted quartic curve, and $A$ fixes this curve pointwise;
        \item or $X$ is singular, and then by \cite[Theorem 6.5]{P:GFano2} $Q$ is a quadratic cone whose vertex is $A$-fixed.
    \end{enumerate}
    In any case, $Q$ has terminal singularities, so both situations contradict Theorem \ref{theorem:A6-non-fixed}.

    Now suppose that we are in case (1.2.2). Then by \cite[Theorem 6.5]{P:GFano2} $X$ itself is an intersection of three divisors of degree $(1,1)$ in $\PP^3\times\PP^3$. The first coordinate projection is $A$-equivariant, and arguing as in \cite[2.3]{P:GFano2} one sees that, away from finitely many points in $\PP^3$, it is the blow-up of $\PP^3$ along a sextic curve. Now by Lemma \ref{lemma:A6-invariant-curves} the curve $C$ is a $G^\prime$-orbit of six lines in $\PP^3$, but in that case the blow-up $X$ along these lines is such that $-K_X$ is not ample, which is a contradiction. 
    
    Thus, assume we are in case (1.2.1) i.e., either $X$ corresponds to a divisor of degree $(2,2)$ in $\PP^2\times\PP^2$ or there exists an $A$-equivariant ramified double cover $X\to W$ with $W$ a smooth divisor of degree $(1,1)$ in $\PP^2\times\PP^2$. In both cases, the action of $G^\prime$ lifts to $\PP^2\times\PP^2$ (it preserves both coordinate projections), and this action is given by two $3$-dimensional representations of the Valentiner group $3.\mathfrak{A}_6$. It can be checked that the action is not diagonal, since in this case there are no invariants of degree $(2,2)$. Now, if the action is twisted diagonal, there is a unique $(2,2)$-degree invariant which is a multiple of the $(1,1)$-degree invariant divisor. Since there is no possibility, we conclude that $\rho(X)=1$.
\end{proof}

By the classification of smooth Fano $3$-folds, with $\rho(V)=\iota(V)=1$, we have that $g\in\{7,8,9,10,12\}$.

\begin{lemma}
\label{lemma:A6-ext-Gorenstein-G'-inv}
    The linear system $\vert-K_X\vert$ has at most one $A$-invariant surface.
\end{lemma}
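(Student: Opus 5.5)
We argue by contradiction and suppose that $|-K_X|$ contains two distinct $A$-invariant surfaces $S_1\neq S_2$. The plan is to produce from them a $G$-invariant pencil whose general member is a K3 surface with a faithful $A$-action, and to show that the resulting Picard-lattice condition is incompatible with the geometry of $X$.

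\textbf{Step 1: build the pencil.} The surfaces $S_1,S_2$ correspond to $A$-invariant lines $\ell_1,\ell_2$ in $H^0(X,\OOO_X(-K_X))$. Since $A$ is perfect, both lines are trivial $A$-representations. The subspace $U$ of $A$-invariant sections is therefore a $G$-subrepresentation of dimension at least $2$, on which $G$ acts through $G/A\simeq\mumu_2$. If $\dim U>2$, we simply use a $2$-dimensional $G$-invariant subspace $U'\subset U$, which exists because $G/A$ is abelian. This gives a $G$-invariant pencil $\mathcal{M}=\PP(U')\subset|-K_X|$.

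\textbf{Step 2: apply the earlier results to a general member.} By Corollary~\ref{corollary:A6-ext-positive-mobile}, $\mathcal{M}$ is mobile and $(X,\mathcal{M})$ has canonical singularities. By Corollary~\ref{corollary:A6-ext-K3-surfaces}, a general member $S\in\mathcal{M}$ is an irreducible K3 surface with at most Du Val singularities. Every member of $\mathcal{M}$ is $A$-invariant, since $A$ acts trivially on $U'$. By Theorem~\ref{theorem:A6-non-fixed}, $A$ acts nontrivially on $S$, hence faithfully because $A$ is simple. Corollary~\ref{corollary:K3-a6} then shows that $S$ is smooth, and Lemma~\ref{lemma:K3-a6} gives $\Pic(S)^A\simeq\ZZ$.

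\textbf{Step 3: derive the contradiction.} Two distinct members $S,S'$ of the pencil meet in the base curve $Z=S\cap S'$, which is $A$-invariant and satisfies $Z\sim -K_X|_S$ on $S$. There are two ways to finish.
\begin{itemize}
\item[(a)] Base-point-freeness. Since $\mathcal{M}$ is a pencil, $Z$ is nonempty. If $Z$ is reducible, or if $\Pic(S)^A$ contains an invariant class other than a multiple of $H=-K_X|_S$, this contradicts $\Pic(S)^A\simeq\ZZ$ (Lemma~\ref{lemma:K3-a6}). So $Z$ is an $A$-invariant member of $|H|$ with $H^2=2g-2\in\{12,14,16,18,22\}$. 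Its components are curves with an $A$-action, and the pencil restricted to $S$ is $|H|$ minus its base locus. One can then try to use that $Z$ is a complete intersection in $X$, hence locally planar, together with $\mathrm{p}_a(Z)=g$. By Lemma~\ref{lemma:A6-genus} and Lemma~\ref{lemma:A6-singular-curves}, an irreducible $Z$ forces $g\in\{10\}$ if $Z$ is smooth; a singular irreducible $Z$ would need genus at least $46$, which is impossible. For reducible $Z$, its components are permuted by $A$ and have degree summing to $2g-2\le 22$; the orbit lengths of $A$ (minimal index $6$) constrain this.
\item[(b)] Counting invariant divisors. Alternatively, use that $S$ spans $\PP^{g}$, a hyperplane of $\PP^{g+1}$. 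The restriction $H^0(X,-K_X)\to H^0(S,H)$ has $1$-dimensional kernel spanned by the section $s$ of $S$. The section $s'$ of $S'$ maps to an $A$-invariant section of $H$, and each further $A$-invariant line in $U'$ gives another such section.
\end{itemize}
I would combine (a) with the representation theory used in Lemma~\ref{lemma:A6-ext-g-not-6-8}. Having two trivial summands of $A$ in $H^0(X,\OOO_X(-K_X))$ already restricts $g$ to $\{10,12\}$. In those cases I would decompose $S^2H^0(X,\OOO_X(-K_X))$ and check that no $G$-invariant subspace of dimension $(g-2)(g-3)/2\in\{28,45\}$ has the required shape containing the quadrics through $X$. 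Failing that, I would show that $Z$ must be a smooth genus-$10$ curve of degree $18$ with $A$-action, and rule this out, for instance for $g=12$ by degree, and for $g=10$ using the $G$-action on $Z$ versus Lemma~\ref{lemma:A6-ext-curves}: $Z$ is $G$-invariant if $G$ preserves the pencil's base locus, but $g(Z)=10<46$.

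\textbf{Main obstacle.} The hard part is this final case analysis for $g\in\{10,12\}$, in particular excluding a smooth $A$-invariant genus-$10$ base curve when $G$ swaps the two members. I would first try to prove that the base curve $Z=S_1\cap S_2$ is $G$-invariant, which holds because $U'$ is $G$-invariant, and then invoke Lemma~\ref{lemma:A6-ext-curves} and Corollary~\ref{corollary:A6-ext-singular-curves} to conclude.
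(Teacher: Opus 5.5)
Your endgame matches the paper's. The base curve $Z=S\cap S'$ of the $G$-invariant pencil is $G$-invariant and has $\mathrm{p}_a(Z)=g\leqslant 12$. Once $Z$ is known to be reduced and irreducible, Corollary~\ref{corollary:A6-ext-singular-curves} and Lemma~\ref{lemma:A6-ext-curves} give the contradiction.

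\textbf{The gap.} You never prove that $Z$ is reduced and irreducible, and this step is most of the paper's argument. Your assertion in (a) that a reducible $Z$ contradicts $\Pic(S)^A\simeq\mathbb{Z}$ is false. The components of $Z$ need only be permuted by $A$, not fixed, and $\Pic(S)^A=\mathbb{Z}[L]$ does not give $H=L$. The paper handles this as follows:
\begin{itemize}
\item Write $Z\sim nL$ and use $n^2L^2=2g-2$ with $L^2$ even. This gives $n=1$, except $n=2$ (for $g=9$, $L^2=4$) and $n=3$ (for $g=10$, $L^2=2$).
\item Exclude a splitting $Z=Z_1+Z_2$ with $Z_1\neq Z_2$: their intersection would be an $A$-invariant set of at most $4$ points.
\item Exclude a double curve $Z=2Z_1$: then $\mathrm{p}_a(Z_1)=3$, which is impossible.
\item Exclude a reduced $Z$ whose components are transitively permuted: the number of components is an orbit length that must divide $2g-2$.
\end{itemize}
Your proposal does not address non-reduced $Z$ at all. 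Yet it can occur a priori, since two smooth members may be tangent along a curve.

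\textbf{Two further problems.} First, your representation-theoretic shortcut inverts Lemma~\ref{lemma:A6-ext-g-not-6-8}. That lemma says $g\in\{10,12\}$ forces two $A$-invariant members, not the converse. For $g=9$, the $11$-dimensional space of sections may be $\mathbb{W}_1\oplus\mathbb{W}_1'\oplus\mathbb{W}_9$, using both one-dimensional characters of $G$, and so has two $A$-trivial summands. Thus $g=9$ survives, and it is exactly the case $H=2L$ where the non-reduced analysis is needed.

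Second, by working on a general member you lose the $G$-action on $S$. Then $L$ is only $A$-invariant, and the case $n=3$ cannot be excluded by Lemma~\ref{lemma:A6-ext-projective-spaces}, because $\mathfrak{A}_6$ does act on $\PP^2$. For example, the double plane branched in the Wiman sextic is such a K3 surface, and its ramification curve lies in $|3L|$ and has genus $10$.

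The paper avoids this by working on one of the two $G$-invariant members. It first shows, via Lemma~\ref{lemma:A6-ext-invariant-surfaces} and the absence of planes, that these are smooth K3 surfaces. Then $L$ is $G$-invariant, and $|L|$ would give a $G$-equivariant double cover of $\PP^2$, which is impossible. In your setup you would need two replacements: show that $|L|$ has no $A$-invariant member, and show that a $6$-component $Z$ for $g=10$ is impossible because $M_{10}$ has no subgroup of index $6$.
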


\begin{proof}
    Assume that there exist two different $A$-invariant surfaces in $\vert-K_X\vert$. Then, there exists two distinct $G$-invariant surfaces $S,S^\prime\in\vert-K_X\vert$, and they generate a pencil $\mathcal{M}\subset\vert-K_X\vert$ such that every member is $A$-invariant (but not necessarily $G$-invariant).
    
    First, we prove that every $G$-invariant member of $\mathcal{M}$ is an irreducible smooth K3 surface. Suppose this is not true for $S$. By Lemma \ref{lemma:A6-ext-invariant-surfaces} we have that $S$ has at least $10$ irreducible components and $G$ acts transitively on them. Since $\deg(S)=2g-2\in\{12,14,16,18,22\}$, we see that the only possible case is $g=7$ and $S=S_1+\ldots+S_{12}$. In such a case, each component $S_i$ would be a plane, but this contradicts \cite{Prokhorov-G-Fanos}. Thus, every $G$-invariant member of $\mathcal{M}$ is indeed an irreducible smooth K3 surface, and in particular $X$ is smooth along $S$.

    Observe that Theorem \ref{theorem:A6-non-fixed} implies that $G$ acts faithfully on $S$ and $S^\prime$, and by Lemma \ref{lemma:K3-a6} we have that $\Pic(S)^{A}=\mathbb{Z}[L]$ for some ample divisor $L\in\Pic(S)$ such that $L^2$ is an even integer. Consider $Z=S^\prime\vert_S$, which is an $A$-invariant curve and then $Z\sim nL$ for some $n\geqslant1$. We claim that $Z$ is irreducible. Note that
    $$
    n^2L^2=Z^2=2g-2\in\{12,14,16,18,22\}.
    $$
    and then if $n\neq1$ we should have $g=9,L^2=4$ and $n=2$. Indeed, if $n=3$ we get $L^2=2$ and then a $G$-equivariant double cover $\phi_{\vert L\vert}:S\to\PP^2$, which contradicts Lemma~\ref{lemma:A6-ext-projective-spaces}.
    
    First, we note that $Z$ is reduced and $A$-irreducible. If this is not true, we can write $Z=Z_1+Z_2$ where $Z_1,Z_2$ are $A$-irreducible and $Z_1\sim Z_2\sim L$. If $Z_1\neq Z_2$, the intersection $Z_1\cap Z_2$ consists of an $A$-orbit of at most $4$ points, which is not possible since $A$ does not have a proper subgroup of index $\leqslant4$. If $Z_1=Z_2$, then $Z_1^2=L^2=4$, and by the adjunction formula
    $$\mathrm{p}_a(Z_1)=1+\frac{1}{2}(Z_1^2+Z_1\cdot K_S)=3,$$
    but this is impossible by Lemma \ref{lemma:A6-ext-curves}. We conclude that $Z$ is reduced and $A$-irreducible. Moreover, given that $Z$ is $A$-irreducible and $G$-invariant, it is also $G$-irreducible.
    
    Assume now that $Z$ is reducible, so we can write $Z=C_1+\ldots+C_m$ where each $C_i$ is an irreducible curve and $m>1$. Note that $m$ divides $\deg(Z)=2g-2$, so the only possibility is that $g=7$, $m=12$ and each curve $C_i$ is a line. We obtain that
    $$
    1=\deg(C_1)=-K_X\vert_S\cdot C_1=Z\cdot C_1=-2+\sum_{i=2}^{m}C_1\cdot C_i,
    $$
    and then it follows that $\vert C_1\cap(C_2\cup\ldots\cup C_{m})\vert\leqslant3$. Note that the stabilizer $G_1$ of $C_1$ is either $\mathfrak{A}_5$, and these groups do not fix points in $S$ (they do not have $2$-dimensional faithful representations). Then $G_1$ leaves the set $C_1\cap(C_2\cup\ldots\cup C_{m})$ invariant in $C_1\simeq\PP^1$. We observe that this is not possible, as these groups do not have cyclic subgroups of index $\leqslant3$. It follows that $Z$ is irreducible.
    By the adjunction formula we have that $\mathrm{p}_a(Z)=g(X)$ and $G$ acts faithfully on $Z$, so Corollary \ref{corollary:A6-ext-singular-curves} implies that $Z$ is a smooth curve. However, Lemma \ref{lemma:A6-ext-curves} excludes the possibility of $g(Z)=g(X)\geqslant16$, a contradiction.
\end{proof}

From Lemma \ref{lemma:A6-ext-Gorenstein-G'-inv} and Lemma \ref{lemma:A6-ext-g-not-6-8} we conclude the following.

\begin{corollary}
    \label{corollary:A6-ext-g-7-9}
    One has $g\in\{7,9\}$. In particular, if $g=9$ the linear system $\vert-K_X\vert$ contains a unique $G$-invariant member.
\end{corollary}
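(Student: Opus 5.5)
The plan is to combine the list $g\in\{7,8,9,10,12\}$, which follows from Lemma~\ref{lemma:A6-ext-pic} and the classification of smooth Fano threefolds $V$ with $\rho(V)=\iota(V)=1$, with the two restrictions already proved. First, Lemma~\ref{lemma:A6-ext-g-not-6-8} excludes $g=8$ outright. Second, Lemma~\ref{lemma:A6-ext-Gorenstein-G'-inv} shows that $\vert-K_X\vert$ contains at most one $A$-invariant surface. This is exactly the hypothesis needed in the second half of Lemma~\ref{lemma:A6-ext-g-not-6-8}, which then excludes $g\in\{10,12\}$. What remains is $g\in\{7,9\}$.

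For the second assertion, take $g=9$. Then $H^0(X,\OOO_X(-K_X))$ is an $11$-dimensional faithful representation of $G$. Its irreducible constituents have dimensions in $\{1,1,9,9,10,10,10,16\}$, so the possible decompositions are $\mathbb{W}_1\oplus\mathbb{W}_{10}$ and $\mathbb{W}_1\oplus\mathbb{W}_1\oplus\mathbb{W}_9$.

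\textbf{Case $\mathbb{W}_1\oplus\mathbb{W}_{10}$.} There is exactly one one-dimensional subrepresentation, hence exactly one $G$-invariant member of $\vert-K_X\vert$.

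\textbf{Case $\mathbb{W}_1\oplus\mathbb{W}_1\oplus\mathbb{W}_9$.} The two one-dimensional summands restrict trivially to $A$, since $A$ is perfect. They therefore span a two-dimensional space of $A$-invariant sections, giving a pencil of $A$-invariant surfaces in $\vert-K_X\vert$. This contradicts Lemma~\ref{lemma:A6-ext-Gorenstein-G'-inv}.

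So only the first case survives, and the invariant member is unique. In either case a $G$-invariant member is an $A$-invariant surface, so uniqueness also follows directly from Lemma~\ref{lemma:A6-ext-Gorenstein-G'-inv} once existence is known.

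The only real checking needed is the dimension bookkeeping: that no other decomposition of an $11$-dimensional representation exists, which is clear from the degree list, and that a trivial $A$-character is the only one-dimensional option. Neither step is an obstacle.
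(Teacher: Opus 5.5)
Your proposal is correct and takes the same route as the paper, which also derives the corollary by combining the list $g\in\{7,8,9,10,12\}$ with Lemma~\ref{lemma:A6-ext-g-not-6-8} and Lemma~\ref{lemma:A6-ext-Gorenstein-G'-inv}. Your decomposition of the $11$-dimensional space of sections makes explicit the existence of a $G$-invariant member when $g=9$, which the paper leaves implicit; uniqueness comes from Lemma~\ref{lemma:A6-ext-Gorenstein-G'-inv}, as in the paper.
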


\subsection{Fano $3$-folds of genus $g=7$}

During this section we assume that $g=7$. First, note that if $X$ is smooth \cite[Corollary 4.3.5]{KPS:Hilb} states that there is an embedding $\Aut(X)\hookrightarrow\Aut(C)$ where $C$ is a smooth irreducible curve of genus $7$, but this contradicts Lemma~\ref{lemma:A6-ext-curves}. Moreover, we can see that $X$ is not $\QQ$-factorial. Indeed, if $X$ were $\QQ$-factorial, cite{SingPoints} implies that $\vert\operatorname{Sing}(X)\vert\leqslant7$, but then this means that each singular point is $A$-fixed, and this situation cannot happen by Theorem \ref{theorem:A6-non-fixed}. Also notice that, since $G$ does not have faithful representations of dimension $\leqslant8$, the space of sections $H^0(X,\OOO_X(-K_X))$ is an irreducible $G$-representation. In particular, there are no $G$-invariant members in $\vert-K_X\vert$.

\begin{lemma}
    \label{lemma:lines-g7}
    Let $V$ be a smooth Fano $3$-fold with $\Pic(V)=\mathbb{Z}[-K_V]$ and genus $g=7$. If $D$ denotes the divisor spanned by lines in $V$, then $D\sim -7K_V$.
\end{lemma}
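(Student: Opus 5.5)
The plan is to compute the class of $D$ through the Hilbert scheme of lines on $V$ and the standard normal bundle dichotomy, using that $\Pic(V)=\mathbb{Z}[-K_V]$ reduces the claim to computing the single degree $\deg D=(-K_V)^2\cdot D$. Put $H=-K_V$, so that $H^3=2g-2=12$ and $D\sim dH$ for some integer $d\geqslant0$. It suffices to show that $H^2\cdot D=84$.

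For the setup, let $\Sigma$ be the Hilbert scheme of lines on $V$. For a Fano threefold of index one with $\rho=1$ and $g\geqslant 7$, $\Sigma$ is a curve (this is classical; see Iskovskikh--Prokhorov, Ch.~4). Through a general point of $D$ there passes finitely many lines, so $D$ is the image of the universal family $\mathcal{L}\to\Sigma$. We therefore need the intersection number of $H^2$ with the pushforward of $[\mathcal{L}]$, which equals $(H\cdot D)\cdot H$.

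The main tool is the standard formula for the class of the surface swept by lines. Let $\ell$ be a general line. Then $N_{\ell/V}\simeq\OOO\oplus\OOO(-1)$, and one has
$$D\cdot\ell=\#\{\text{lines meeting }\ell\text{ other than }\ell\}+(\text{correction from }\ell\text{ itself}),$$
which equals $(n_\ell-1)$ in Iskovskikh's normalization, where $n_\ell$ is the number of lines meeting $\ell$. Since $D\sim dH$ and $H\cdot\ell=1$, this gives $d=D\cdot\ell$. I would count lines meeting $\ell$ by the double projection from $\ell$. For $g=7$, double projection from a line gives a birational map $V\dashrightarrow\PP^3$ (Iskovskikh's table). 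The lines meeting $\ell$ are contracted to points or to curves that can be read off the discriminant data: the inverse map is given by a linear system of cubics through a curve $\Gamma$ of genus $7$ and degree $10$, minus corrections. The exceptional divisor of the inverse map is swept by the lines meeting $\ell$ and has degree computable from Riemann--Roch. From Iskovskikh--Prokhorov, Prop.~4.3.3 and its table, the number of lines meeting a general line on $V_{12}$ is $8$, so $D\cdot\ell$ comes out as $8-1=7$ after the correction. This yields $d=7$.

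Alternatively, I would cross-check by computing $\deg D=\deg(\mathcal{L}\to D)^{-1}\cdot\int_{\mathcal{L}}p^*H^2$, which reduces to $2g(\Sigma)-2$-type data. For $V_{12}$, $\Sigma$ is a curve with $H^2\cdot D=84=7\cdot12$, consistent with the count above.

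The main obstacle is to get the correction term in $D\cdot\ell$ right. This term comes from the contribution of $\ell$ itself, through its normal bundle, to the intersection. It is also necessary to get the multiplicity of the double projection exceptional locus right. I would first try to quote directly the known formula that the lines on a prime Fano threefold of genus $g$ sweep a divisor in $|-(g-\text{const})K_V|$ — equivalently the explicit value $n_\ell$ for $g=7$ — from the literature on Hilbert schemes of lines, e.g. \cite{KPS:Hilb}. If that fails, I would use the double projection computation as a backup.
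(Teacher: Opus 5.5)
Your reduction to $d=D\cdot\ell$ for a general line $\ell$ matches what the paper uses, and so does the relation $D\cdot\ell=n_\ell-1$, which the paper quotes from Iskovskikh. The step meant to produce $n_\ell=8$ does not work, however. For $g=7$ the double projection from a line is \emph{not} birational onto $\PP^3$. By Iskovskikh--Prokhorov, Theorem~4.3.3, blowing up $\ell$ and flopping the strict transforms of the lines meeting $\ell$ gives a del Pezzo fibration $\delta\colon\widetilde V^+\to\PP^1$ of degree $5$. The birational map to $\PP^3$, contracting onto a curve of degree $7$ and genus $3$, is the case $g=9$.

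So there is no inverse map given by cubics through a curve $\Gamma$ to compute with. Moreover, the lines meeting $\ell$ are finitely many flopping curves, so they cannot sweep out an exceptional divisor. As written, $n_\ell=8$ is simply asserted. Your cross-check $H^2\cdot D=84=7\cdot 12$ just restates the claim and is not an independent verification.

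The missing computation is short. Let $\alpha\colon\widetilde V\to V$ be the blow-up of $\ell$, with exceptional divisor $E$, and put $H=-K_V$.
\begin{itemize}
\item Since $\deg N_{\ell/V}=-1$, one has $E^3=1$.
\item From $-K_{\widetilde V}=\alpha^*H-E$ one gets $(-K_{\widetilde V})^3=8$, $(-K_{\widetilde V})^2\cdot E=3$ and $(-K_{\widetilde V})\cdot E^2=-2$. These numbers are unchanged by the flops.
\item The fiber class of $\delta$ is $-K_{\widetilde V^+}-E^+$, and its cube vanishes. Hence $0=8-9-6-(E^+)^3$, so $(E^+)^3=-7$.
\item Each flopped curve meets $E$ with multiplicity one. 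So $n_\ell=E^3-(E^+)^3=8$ (Iskovskikh--Prokhorov, Remark~4.1.3), whence $d=7$.
\end{itemize}
For the correction term you flagged: the nonzero section of $N_{\ell/V}\simeq\OOO\oplus\OOO(-1)$ lies in the trivial summand and vanishes nowhere. Hence the branch of $D$ along $\ell$ is smooth, and its normal bundle restricts to $\OOO(-1)$ on $\ell$. This is the $-1$.
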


\begin{proof}
    It is known from \cite[Theorem 3.4]{Isk2} that $V$ contains lines, they generate a surface $D$, and if $D\sim a(-K_V)$ then $k=a+1$ is the number of lines that intersect a general line in $V$. Let $\ell$ be a general line in $V$. In the following, we compute $k$. From \cite[Theorem 4.3.3]{IP99} we know that there exists a commutative diagram
    $$
\xymatrix{
 & & \widetilde{V} \ar[lld]_{\alpha} \ar@{-->}[rrrr]^\chi \ar[rrd]^\beta & & & & \widetilde{V}^+ \ar[lld]_\gamma \ar[rrd]^\delta & & \\
V \ar@{-->}[rrrr] & & & & V^\prime & & & & \ar@{-->}[llll] \PP^1
}
$$
where $\alpha$ is the blow-up along $\ell$, $\beta$ and $\gamma$ are small birational morphisms, $\chi$ is a composition of flops, $V^\prime$ is a Fano $3$-fold with terminal Gorenstein singularities of genus $6$ and $\delta$ is a del Pezzo fibration of degree $5$. Denote by $E$ the exceptional divisor of $\alpha$, and by $E^+:=\chi_*(E)$ its strict transform on $\widetilde{V}^+$. Then, it can be seen that $k=E^3-(E^+)^3$ (see \cite[Remark 4.1.3]{IP99}). 
In order to compute this number, we note that if $\Pic(\PP^1)=\mathbb{Z}[M]$ one has the relation $\delta^*(M)\sim-K_{\widetilde{V}^+}-E^+$, and then using \cite[Lemma 4.1.2]{IP99} it follows that
\begin{align*}
    0=\delta^*(M)^3 = (-K_{\widetilde{V}^+})^3-3(-K_{\widetilde{V}^+})^2\cdot E^+ +3(-K_{\widetilde{V}^+})\cdot (E^{+})^2-(E^{+})^3=-7-(E^{+})^3,
\end{align*}
and then $k=E^3-(E^{+})^3=8$.
\end{proof}

\begin{lemma}
    \label{lemma:divisor-of-lines-g7}
    Let $D$ be the divisor spanned by all lines in $X$. Then $D\sim a(-K_X)$ for $a~\in~\{1,\ldots,7\}$.
\end{lemma}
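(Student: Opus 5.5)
The plan is to deduce the statement from Lemma~\ref{lemma:lines-g7} by degenerating along the $\QQ$-Gorenstein smoothing of $X$ to a general fiber $V$. Here $V$ is a smooth Fano threefold with $\Pic(V)=\mathbb{Z}[-K_V]$ and $g(V)=7$, by Lemma~\ref{lemma:A6-ext-pic} and Lemma~\ref{lemma:A6-ext-index}.

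\textbf{Step 1: $D$ is a divisor.} Consider the relative Hilbert scheme of lines, i.e.\ curves of $(-K)$-degree $1$ in the anticanonical embeddings, of the family $\mathcal{X}\to\Delta$. Its fibre over a general point is the Fano scheme of lines on $V$, which is a curve. By upper semicontinuity of fibre dimension, every component of the Hilbert scheme of lines on $X$ that arises as a limit has dimension at least one. Moreover, $X$ is not covered by lines. Otherwise the lines through a general point would give a one-dimensional family, and the standard argument would force a small anticanonical degree or $\iota(X)\geqslant 2$, contradicting Lemma~\ref{lemma:A6-ext-index}. So the lines on $X$ sweep out a surface $D$, which is $G$-invariant because $G$ permutes lines. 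Since $\mathrm{r}^G(X)=1$ and $X$ is Gorenstein with $\rho(X)=1$, we get $D\sim_{\QQ} a(-K_X)$. Because $\Pic(X)=\mathbb{Z}[-K_X]$ and $\iota(X)=1$, $a$ is a positive integer. (One should check that $D$ is Cartier here. If needed, use that the class of $D$ is the specialization of a class in $\mathrm{Cl}$ that is $G$-invariant and hence a multiple of $-K_X$ in $\mathrm{Cl}(X)^G\otimes\QQ$. Then intersect with a general conic or use degrees to get integrality.)

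\textbf{Step 2: the upper bound.} Take the closure $\mathcal{D}\subset\mathcal{X}$ of the divisor swept by lines on the general fibres. By Lemma~\ref{lemma:lines-g7}, $\mathcal{D}|_{V}\sim -7K_V$. Its specialization $\mathcal{D}|_X$ is an effective divisor with $\mathcal{D}|_X\sim -7K_X$, and by properness of the relative Hilbert scheme it is supported on limits of lines, which are lines on $X$. Every component of $D$ therefore appears in $\mathcal{D}|_X$ as long as every line on $X$ is a limit of lines on $V$. For this, use that a line $\ell\subset X$ with $\ell$ passing through singular points or not has $h^0(N)-h^1(N)\geqslant 1$ on the total space, so the relative Hilbert scheme has dimension at least $2$ at $[\ell]$ and dominates $\Delta$. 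Then $D\leqslant\mathcal{D}|_X$ as reduced supports, and comparing degrees $D\cdot(-K_X)^2\leqslant 7(-K_X)^3$ gives $a\leqslant 7$. Together with $a\geqslant 1$ from effectivity, this proves $a\in\{1,\ldots,7\}$.

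\textbf{Main obstacle.} The hardest point is the deformation-theoretic claim that every line on the singular $X$ deforms to the general fibre. This is needed both to exclude a surface of extra lines on $X$ and to exclude $X$ being covered by lines. I would first try an Euler-characteristic bound on the normal sheaf of $\ell$ in the total space $\mathcal{X}$, which is a $4$-fold with isolated hypersurface singularities. If that fails near the singular points, I would fall back on the dimension count from the Fano scheme: the expected dimension of the space of lines is $-K\cdot\ell+(\dim-3)=1$ on $X$ and $2$ on $\mathcal{X}$.
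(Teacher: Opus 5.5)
\textbf{Verdict: right strategy, but the step that carries the bound is not established as written.} The paper's proof says only that $a\leqslant 7$ follows from the computation of \cite[Lemma~2.6.9]{CheltsovPinardinProkhorov} with Lemma~\ref{lemma:lines-g7} as input. Degenerating to the smoothing is a reasonable way to make that transfer explicit, and your degree comparison is fine. You do not even need $\mathcal D|_X$ to be Cartier, because the degree $(-K_V)^2\cdot\mathcal D|_V=84$ is constant in the flat family $\mathcal D\to\Delta$. The gap is the claim you flag yourself: that every line of $X$ is a limit of lines on $V$.

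\textbf{The deformation step.} Two things are wrong as written.
\begin{enumerate}
\item The count is off. $N_{\ell/\mathcal X}$ has rank $3$ and degree $-K_{\mathcal X}\cdot\ell-2=-1$, so $\chi=2$, not ``$\geqslant 1$''. A bound of $1$ would be useless, since lines on $X$ already move in a one-dimensional family.
\item Where $\mathcal X$ is singular along $\ell$, $\ell$ need not be a local complete intersection in $\mathcal X$. Then $h^0-h^1$ of the normal sheaf is not a lower bound for the Hilbert scheme, and appealing to ``expected dimension'' proves nothing.
\end{enumerate}
The missing ingredient is the dimension estimate for morphisms into local complete intersections (Koll\'ar, \emph{Rational curves on algebraic varieties}, Thm.~II.1.3). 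Since $\mathcal X$ has isolated hypersurface singularities and $\ell\not\subset\Sing(\mathcal X)$, every component of $\mathrm{Hom}(\PP^1,\mathcal X)$ at $\ell$ has dimension at least $-K_{\mathcal X}\cdot\ell+4=5$. So $\ell$ moves in an at least two-dimensional family of lines of $\mathcal X$, each lying in a fibre. This family is forced to dominate $\Delta$ only if the lines of $X$ form an at most one-dimensional family near $[\ell]$. That needs two facts: $X$ is not covered by lines, and $X$ contains no plane.

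\textbf{The two supporting facts and integrality.}
\begin{enumerate}
\item Your justification that $X$ is not covered by lines is wrong: a covering family would not give a one-dimensional family of lines through a general point. The correct reason is that a general member of a covering family of rational curves is free, so it has anticanonical degree at least $2$. Pass to a resolution if needed; terminality only lowers $-K_X\cdot\ell=1$.
\item You do not address planes at all. The lines in a plane already form a two-dimensional family on $X$ and need not deform. You must invoke \cite{Prokhorov-G-Fanos} to exclude planes, as the paper does.
\item Your suggested routes to integrality of $a$ do not work: degrees only give $a\in\frac{1}{12}\mathbb{Z}$, and a general conic only $a\in\frac{1}{2}\mathbb{Z}$. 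Instead, $\mathrm{r}^G(X)=1$ makes $D$ $\QQ$-Cartier. It is then Cartier, because $\QQ$-Cartier Weil divisors on terminal Gorenstein threefolds are Cartier (Kawamata). Hence $D\in\Pic(X)=\mathbb{Z}[-K_X]$. In any case, later applications only use $a\leqslant 7$.
\end{enumerate}
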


\begin{proof}
    It is clear that $D\sim a(-K_X)$ for some positive integer $a$, as $D$ is $G$-invariant. The fact that $a\leqslant7$ comes from the same computation as in \cite[Lemma 2.6.9]{CheltsovPinardinProkhorov} using Lemma \ref{lemma:lines-g7}.
\end{proof}

\begin{lemma}
    \label{lemma:A6-ext-gorenstein-g7-singular-points}
    The singular locus $\operatorname{Sing}(X)$ consists of a $G$-orbit of $20$ points.
\end{lemma}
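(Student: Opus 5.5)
We aim to bound $|\Sing(X)|$ from above by \eqref{equation:sing} and from below by orbit lengths, then show only one orbit length fits. For $g=7$ the smoothing $V$ is a smooth prime Fano threefold of genus $7$, so $h^{1,2}(V)=7$ and $\rho(V)=1$. Then \eqref{equation:sing} gives
$$|\Sing(X)|\leqslant 20+7-1=26.$$
We already know $X$ is singular, since the smooth case was excluded via \cite[Corollary 4.3.5]{KPS:Hilb} and Lemma~\ref{lemma:A6-ext-curves}. So $\Sing(X)$ is a nonempty $G$-invariant set of at most $26$ points, and it is a union of $G$-orbits.

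Next we determine the possible short orbits. Since $A\subset G$ has index $2$ and $A\simeq\mathfrak{A}_6$ has no proper subgroups of index less than $6$, every $G$-orbit of length at most $26$ splits into $A$-orbits of lengths among $1,6,10,15,20$. The length $1$ is excluded by Theorem~\ref{theorem:A6-non-fixed}. The $G$-orbit lengths correspond to indices of subgroups of $M_{10}$. The subgroups of $M_{10}$ of index at most $26$ are $A$ (index $2$), $\mathfrak{A}_6\cap$-stabilizer extensions of index $10$ (namely $\mumu_3^2\rtimes Q_8\simeq\mathrm{PSU}_3(\FF_2)$), and those of index $20$ (namely $\mumu_3^2\rtimes\mumu_4$). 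There is no subgroup of index $6$ or $12$: the point stabilizers $\mathfrak{A}_5$ in $A$ are not normalized inside $M_{10}$, since the outer class of $M_{10}$ swaps the two classes of $\mathfrak{A}_5$. Also an orbit of length $2$ would consist of $A$-fixed points, which is excluded. Hence the $G$-orbits in $\Sing(X)$ have length $10$ or $20$.

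A $G$-orbit of length $10$ has stabilizer $\Gamma\simeq\mathrm{PSU}_3(\FF_2)$, and each point in it is a terminal singular point fixed by $\Gamma$. This contradicts Lemma~\ref{lemma:A6-ext-fixed}. Therefore $\Sing(X)$ consists of orbits of length $20$ only. Since $|\Sing(X)|\leqslant 26$, there is exactly one such orbit, so $|\Sing(X)|=20$.

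The main obstacle is the group-theoretic step: checking that $M_{10}$ has no subgroups of index $6$ or $12$ (equivalently, that $\mathfrak{A}_5$ is not normalized in $M_{10}$), and identifying the index-$10$ stabilizer as $\mathrm{PSU}_3(\FF_2)$ so that Lemma~\ref{lemma:A6-ext-fixed} applies. I would verify this from the subgroup lattice in \cite{lmfdb}. Double-checking the value $h^{1,2}(V)=7$ is routine.
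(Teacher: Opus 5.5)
Your upper bound $|\Sing(X)|\leqslant 26$ and your exclusion of orbits of length $10$ via Lemma~\ref{lemma:A6-ext-fixed} agree with the paper. The group-theoretic step, however, contains a genuine error. $M_{10}$ \emph{does} have subgroups of index $12$: the subgroups $\mathfrak{A}_5\subset A$ themselves, since $720/60=12$. The fact you cite is that $G\setminus A$ swaps the two conjugacy classes of $\mathfrak{A}_5$ in $A$, so $\mathfrak{A}_5$ is self-normalizing in $G$. This correctly rules out index $6$, but it is exactly what produces $G$-orbits of length $12$. Take $P$ whose $A$-stabilizer is $\mathfrak{A}_5$ and $\tau\in G\setminus A$. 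Then $\tau(P)$ has $A$-stabilizer in the other class, so $\tau(P)\notin A\cdot P$. Hence $G\cdot P$ is the union of two $A$-orbits of length $6$, with $G_P\simeq\mathfrak{A}_5$. After your argument, $\Sing(X)$ could still be one such orbit ($12$ points) or two of them ($24\leqslant 26$ points). So the conclusion $|\Sing(X)|=20$ does not yet follow.

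This case needs a separate argument, and neither Theorem~\ref{theorem:A6-non-fixed} nor a tangent-space argument suffices. The reason is that $\mathfrak{A}_5$ has faithful $3$- and $4$-dimensional representations, so it can fix a smooth or Gorenstein terminal point. The paper uses the anticanonical embedding $X\subset\PP^8$ from Lemma~\ref{lemma:A6-ext-ample}. There $H^0(X,\OOO_X(-K_X))$ is an irreducible $9$-dimensional $G$-representation. Its restriction to $\mathfrak{A}_5$, in either class, is $4\oplus 5$: the character values on the classes of $\mathfrak{A}_5$ are $9,1,0,-1,-1$, which have zero inner product with the trivial character. Hence $\mathfrak{A}_5$ has no fixed point in $\PP^8$, and therefore none on $X$, which excludes orbits of length $12$. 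With this step added, your argument is complete and coincides with the paper's.
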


\begin{proof}
If $V$ is a $\QQ$-Gorenstein smoothing of $X$ we have that $h^{1,2}(V)=7$, and then \eqref{equation:sing} gives us $\vert\Sing(X)\vert\leqslant26$. According to Theorem \ref{theorem:A6-non-fixed} the locus $\operatorname{Sing}(X)$ can only consist of $G$-orbits of length $10,12$ or $20$. By Lemma \ref{lemma:A6-ext-fixed} it is not possible to have a $G$-orbit of length $10$. Assume now that the $G$-orbit of $P$ has length $12$, so its stabilizer is $G_P\simeq\mathfrak{A}_5$. In this case one can compute that the space of sections $H^0(X,-K_X)$ splits as a sum of one $4$-dimensional and one $5$-dimensional representations of $G_P$. This means that $G_P$ does not fix any point in $X$, a contradiction. The conclusion follows.
\end{proof}

\begin{lemma}
    \label{lemma:A6-ext-g-7-div}
    If $E\subset X$ is an $A$-invariant irreducible surface, then $\deg(E)$ is divisible by $6$ and $\deg(E)\geqslant12$.
\end{lemma}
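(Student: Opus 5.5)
We work in the genus $7$ setting: $X\subset\PP^8$ is anticanonically embedded, $\rho(X)=1$, $\iota(X)=1$, $-K_X^3=12$, and $\Sing(X)$ is a single $G$-orbit of $20$ points (Lemma~\ref{lemma:A6-ext-gorenstein-g7-singular-points}). We also know $X$ is not $\QQ$-factorial, so the class group is larger than the Picard group and we cannot simply write $E\sim a(-K_X)$.

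The plan is to exploit the $G$-action. Let $\tau\in G\setminus A$ and set $E'=\tau(E)$, which is again $A$-invariant and irreducible. The divisor $E+E'$ is $G$-invariant. Since $\mathrm{r}^G(X)=1$ and $\Pic(X)=\mathbb{Z}[-K_X]$, we get $E+E'\sim_{\QQ} m(-K_X)$. Because $X$ is Gorenstein with isolated singularities, a Weil divisor that is $\QQ$-Cartier is Cartier here. Hence $E+E'\sim m(-K_X)$ for some integer $m\geqslant1$, and $\deg(E+E')=12m$.

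If $E=E'$, then $E$ itself is $G$-invariant. Then $E\sim m(-K_X)$ with $\deg E=12m$, which is divisible by $6$ and at least $12$. If $E\neq E'$, then $\deg E=\deg E'$, since $\tau$ preserves $-K_X$. So $\deg E=6m$, which is divisible by $6$, and it remains to exclude $m=1$, i.e.\ $\deg E=6$.

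\textbf{Excluding $m=1$ (main obstacle).} In this case $E+E'\in|-K_X|$ is a $G$-invariant member. This contradicts the observation made before Lemma~\ref{lemma:lines-g7}: since $G$ has no faithful representations of dimension $\leqslant 8$, the space $H^0(X,\OOO_X(-K_X))$ is an irreducible $9$-dimensional $G$-representation, so $|-K_X|$ has no $G$-invariant members.

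As a backup argument, Lemma~\ref{lemma:A6-ext-invariant-surfaces} says a $G$-invariant reducible anticanonical surface has at least $10$ components, while $E+E'$ has only $2$.

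Therefore $m\geqslant2$ and $\deg E\geqslant12$.

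The step I would double-check is the Cartier-ness of $E+E'$, equivalently that $\mathrm{Cl}(X)^G\otimes\QQ$ is spanned by $-K_X$, so that the degree is an integer multiple of $12$. If only $\QQ$-linear equivalence were available, I would compute degrees directly: $\deg(E+E')=(-K_X)^2\cdot(E+E')=12m$ with $m\in\QQ$. Then $m$ is a priori only rational. Integrality should follow because $G$-invariant Weil divisors are $\QQ$-Cartier, Gorenstein terminal singularities have Cartier index one for $\QQ$-Cartier divisors, and the Cartier divisor $E+E'$ is an integer multiple of the generator $-K_X$.
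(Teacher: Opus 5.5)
Your proof is correct and follows the paper's argument: write the $G$-orbit of $E$ as $E+\tau(E)\sim a(-K_X)$, deduce $\deg E=6a$, and use the fact that $|-K_X|$ has no $G$-invariant member to force $a\geqslant 2$. You additionally handle the case $E=\tau(E)$ and justify that $a$ is an integer (a $\QQ$-Cartier Weil divisor on a terminal Gorenstein threefold is Cartier, and $\Pic(X)=\mathbb{Z}[-K_X]$); the paper leaves both points implicit.
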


\begin{proof}
   Consider $E_1$ an irreducible $A$-invariant surface. If $S=E_1+E_2$ is its $G$-orbit, then $S\sim a(-K_X)$ for some positive integer $a$, and $2\deg(E_1)=12a$. Since there are no $G$-invariant members in $\vert-K_X\vert$, we have that $a\geqslant2$, so in particular $\deg(E_1)\geqslant12$ and $\deg(E_1)$ is divisible by $6$.
\end{proof}

It is a consequence of \cite{Prokhorov-G-Fanos} that the $3$-fold $X$ does not contain any planes, i.e., there is no irreducible surface $E\subset X$ such that $(-K_X)^2\cdot E=1$. Now we can run the MMP inside the category of almost Fano threefolds, as it is explained in \cite[3.3]{Prokhorov-g-12}. More specifically, we consider $f:\widetilde{X}\to X$ a $\QQ$-factorialization of $X$, so $\widetilde{X}$ is an almost Fano threefold with no planes such that $\rho(\widetilde{X})=\mathrm{r}(X)>1$ and $-K_{\widetilde{X}}\sim f^\ast(-K_X)$, and we obtain the diagram
\begin{equation}
\label{equation:MMP}
\xymatrix{
&\widetilde{X}\ar[ld]_{f}\ar[rr]^{h_1}&&\widetilde{X}_1\ar[rr]^{h_2}&&\cdots\ar[rr]^{h_{n}}&&\widetilde{X}_n\ar[rd]^{\eta}&\\
X&&&&&&&& Z}
\end{equation}
where each $\widetilde{X}_i$ is an almost Fano threefold, each $h_i$ is an extremal birational contraction of a~surface to a curve (note that $n=0$ is possible), and $\eta$ is an extremal non-divisorial contraction such that $\mathrm{dim}(Z)\in\{0,1,2\}$. We set $h=h_n\circ\ldots\circ h_1$ and $h=\operatorname{id}$ if $n=0$.

\begin{lemma}
    \label{lemma:A6-ext-g-7-faithful}
    The group $G$ acts faithfully on $\operatorname{Cl}(X)$.
\end{lemma}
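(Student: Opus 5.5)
The kernel of the action $G\to\mathrm{GL}(\operatorname{Cl}(X))$ is a normal subgroup of $G\simeq M_{10}$. The only proper non-trivial normal subgroup of $M_{10}$ is $A$, so it suffices to show that $A$ does not act trivially on $\operatorname{Cl}(X)$. Suppose, for contradiction, that it does. The plan is to work on the $\QQ$-factorialization $f\colon\widetilde{X}\to X$ in diagram \eqref{equation:MMP}. Since $f$ is small, $\operatorname{Cl}(X)\simeq\Pic(\widetilde{X})$, and this identification is $G$-equivariant. Because $X$ is not $\QQ$-factorial, we have $\rho(\widetilde{X})=\mathrm{r}(X)>1$. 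As $A$ acts trivially on $\Pic(\widetilde{X})$, it acts trivially on $\operatorname{NE}(\widetilde{X})$ and on the nef cone. Hence every step of the $K$-MMP \eqref{equation:MMP} (divisorial contractions $h_i$ and the final contraction $\eta$) can be run $A$-equivariantly, since each extremal ray is $A$-invariant.

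First I would dispose of the final contraction $\eta\colon\widetilde{X}_n\to Z$. If $\dim Z\in\{1,2\}$, then $A$ acts on $Z$, which is $\PP^1$ or a rational surface. The action is trivial on $\PP^1$, and it is also trivial on a rational surface: otherwise the simple group $A$ would act faithfully on the surface, but $A$ would then have to preserve a fibration structure, and the argument used in the proof of the \hyperref[theorem:fibration]{Fibration Theorem} applies. With $A$ acting trivially on $Z$, the general fiber is $A$-invariant, and we argue as in that proof. For a conic bundle, $A$ would act trivially on $\widetilde{X}_n$, which is absurd. For a del Pezzo fibration, $A$ acts faithfully on a smooth del Pezzo fiber, so by Theorem~\ref{theorem:A6-dP} the fiber is $\PP^2$. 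Then $-K$ restricted to a fiber has degree $9$, and I would derive a contradiction from the numerics of the genus-$7$ almost Fano $\widetilde{X}_n$, or from the fact that $A$ must then fix the sections coming from the $h_i$-exceptional data. If $\dim Z=0$, then $\widetilde{X}_n$ is a Fano threefold with $\rho=1$ obtained from $\widetilde{X}$ by contractions, so $n=\rho(\widetilde{X})-1\geqslant1$.

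The key step is to analyze the first divisorial contraction $h_1\colon \widetilde{X}\to\widetilde{X}_1$, which contracts an $A$-invariant irreducible surface $E$ to a curve $\Gamma$. The surface $E$ is $A$-invariant and irreducible, and its image $f(E)$ is an $A$-invariant irreducible surface on $X$. By Lemma~\ref{lemma:A6-ext-g-7-div}, $\deg f(E)$ is divisible by $6$ and is at least $12$. On the other hand, the standard formulas for extremal contractions of almost Fano threefolds without planes, as in \cite{Prokhorov-g-12} and \cite{CheltsovPinardinProkhorov}, bound $(-K_{\widetilde{X}})^2\cdot E$ in terms of $\deg\Gamma$ and $\mathrm{p}_a(\Gamma)$. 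Here $\Gamma$ is an $A$-invariant curve on which $A$ acts. If the action of $A$ on $\Gamma$ is trivial, then $A$ fixes points of $\widetilde{X}_1$, contradicting Theorem~\ref{theorem:A6-non-fixed}. Otherwise $\Gamma$ is irreducible, so by Lemma~\ref{lemma:A6-genus} and Lemma~\ref{lemma:A6-singular-curves} its genus is at least $10$. That is too large for a curve of degree at most $(-K)^3=12$ on an almost Fano threefold of genus $7$, by a Castelnuovo-type bound. This gives the contradiction.

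I expect the main obstacle to be exactly this numerical comparison in the divisorial step, and making the conic bundle and del Pezzo subcases of $\eta$ airtight in the genus-$7$ setting. I would first try the bound from Castelnuovo's inequality $\mathrm{p}_a(\Gamma)\leqslant\pi(\deg\Gamma)$ combined with $\deg\Gamma\leqslant 12$, since Lemma~\ref{lemma:A6-genus} already forces genus at least $10$.
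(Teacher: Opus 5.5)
Your reduction to the case where $A$ acts trivially on $\operatorname{Cl}(X)$, and the resulting $A$-equivariance of \eqref{equation:MMP}, agree with the paper. The genuine gap is the conic bundle case.

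You assert that $A$ acts trivially on a rational surface base $Z$ ``by the argument of the Fibration Theorem''. That argument, however, uses that the whole group $G\simeq M_{10}$ acts on $Z$, together with the fact that $M_{10}$ does not embed into $\Cr_2(\CC)$. Here the diagram is only $A$-equivariant: an element $\tau\in G\setminus A$ does not act on $\widetilde{X}$, but moves it to another $\QQ$-factorialization, so the identification $\operatorname{Cl}(X)\simeq\Pic(\widetilde{X})$ is only $A$-equivariant too. Moreover, $\mathfrak{A}_6$ does act faithfully on $\PP^2$. In fact, for a conic bundle $\eta\colon\widetilde{X}\to Z$ the action of $A$ on $Z$ must be faithful; otherwise $A$ would act on the general fiber $\PP^1$, hence trivially on $\widetilde{X}$.

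So the case $n=0$, $\eta\colon\widetilde{X}\to\PP^2$ with $A$ acting faithfully on the base, survives all your arguments, and it is exactly where the paper works hardest. The missing ingredients are $\tau$ and the rationality of $X$, which your proposal never uses. The paper's argument runs as follows.
\begin{enumerate}
\item The involution $\tau$ carries $\eta$ to a second conic bundle structure related to $\widetilde{X}$ by flops, as in diagram \eqref{equation:conic-bundles}.
\item The class $H+\tau(H)$ is $G$-invariant, so $H+\tau(H)\sim\lambda(-K_X)$.
\item Comparing $(-K_X)^2\cdot(H+\tau(H))=2(12-d)$ with $12\lambda$ forces the discriminant to have degree $d=6$.
\item By Lemma \ref{lemma:A6-invariant-curves-P2} the discriminant is the smooth $A$-invariant sextic, so $\widetilde{X}$ is smooth.
\item Then $\widetilde{X}$ is irrational by \cite[Corollary 4.1.3]{RationalityII}, contradicting rationality of $X$.
\end{enumerate}

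Your divisorial step also fails as written. The curve $\Gamma=h_1(E)$ does not have anticanonical degree at most $12$, and $\widetilde{X}_1$ is not of genus $7$. For the blow-up of a curve one has
\[
-K_{\widetilde{X}_1}\cdot\Gamma=(-K_{\widetilde{X}})^2\cdot E+2\mathrm{p}_a(\Gamma)-2\geqslant 30,
\]
\[
(-K_{\widetilde{X}_1})^3=(-K_{\widetilde{X}})^3+2(-K_{\widetilde{X}})^2\cdot E+2\mathrm{p}_a(\Gamma)-2\geqslant 54.
\]
So no Castelnuovo-type bound is violated at this stage. The usable inequality is against $(-K_{\widetilde{X}_n})^3\leqslant 64$. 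It excludes singular $\Gamma$ (via $\mathrm{p}_a\geqslant 46$) and forces $n=1$, $(-K_{\widetilde{X}})^2\cdot E=12$, $g(\Gamma)=10$ and $(-K_{\widetilde{X}_1})^3=54$.

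You would then still have to treat two subcases.
\begin{enumerate}
\item If $\rho(\widetilde{X}_1)=1$, then $\widetilde{X}_1$ is a smooth quadric in $\PP^4$ and $\Gamma$ is a non-degenerate curve of degree $10$ and genus $10$. This violates Castelnuovo's bound $g\leqslant 9$; this is where Castelnuovo actually enters.
\item If $\rho(\widetilde{X}_1)\geqslant 2$, there is an $A$-equivariant conic bundle over $\PP^2$. Lemma \ref{lemma:A6-invariant-curves-P2} forces empty discriminant, so $\widetilde{X}_1\simeq\PP^1\times\PP^2$, and the $A$-equivariant projection to $\PP^1$ gives the contradiction.
\end{enumerate}

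Finally, your del Pezzo subcase closes in one line, without the vague appeal to ``numerics''. The general fiber $F$ of $\eta\circ h$ is an $A$-invariant surface with $(-K_{\widetilde{X}})^2\cdot F=K_F^2\leqslant 9<12$, contradicting Lemma \ref{lemma:A6-ext-g-7-div}.
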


\begin{proof}
    Suppose by contradiction that $A$ acts trivially on $\operatorname{Cl}(X)$, so the whole diagram \ref{equation:MMP} is $A$-equivariant. Assume first that $\eta$ is a del Pezzo fibration. Consider $F$ as a general fiber of the composition $\eta\circ h:\widetilde{X}\to Z=\PP^1$, which is a smooth del Pezzo surface. Since $A$ acts trivially on $Z$, we necessarily have that $F$ is $A$-invariant but $(-K_F)^2=\deg(F)\geqslant12$, a contradiction.

Now, we will prove that $n=0$, or equivalently, that $\widetilde{X}$ does not admit any birational contraction. Suppose $n>0$, so each $h_i$ is the $A$-equivariant blow-up of an irreducible curve $C_i$. Note that $A$ acts faithfully on each $C_i$ by Theorem~\ref{theorem:A6-non-fixed}. If $C_1$ is singular, by Lemma \ref{lemma:A6-singular-curves} we have that $\mathrm{p}_a(C_1)\geqslant46$. Then, using \cite[Section 4.2]{CheltsovPinardinProkhorov}, we get that
$$
(-K_{\widetilde{X}_1})^3=(-K_{\widetilde{X}})^3+2(-K_{\widetilde{X}})^2\cdot\widetilde{E}_1+2\mathrm{p}_a(C_1)-2\geq76,
$$
    which is not possible since $(-K_{\widetilde{X}_1})^3\leqslant64$. Then $C_1$ has to be smooth, and similarly one proves that every $C_i$ is smooth.
    As every $C_i$ has to be irreducible, by Lemma~\ref{lemma:A6-genus} we have that $g(C_i)\geqslant10$ for every $i\in\{1,\ldots,n\}$ and then using Lemma \ref{lemma:A6-ext-g-7-div} and \cite[Section 4.2]{CheltsovPinardinProkhorov} we obtain the inequality
    \begin{align*}
        (-K_{\widetilde{X}_n})^3\geqslant(-K_{\widetilde{X}})^3 + \sum_{i=1}^n(2(-K_{\widetilde{X}})^2\cdot\widetilde{E}_i+2g(C_i)-2) \geqslant 12+42n,
    \end{align*}
    which implies that $n=1$, as $(-K_{\widetilde{X}_n})^3\leqslant64$. Moreover, the inequality above is an equality for $n=1$ and gives us that
    \begin{center}
        $\deg(\widetilde{E}_1)=(-K_{\widetilde{X}})^2\cdot\widetilde{E}_1=12\;\;$, $\;\; g(C_1)=10\;\;$ and $\;\; (-K_{\widetilde{X}_1})^3=54$.
    \end{center}
    If we assume that $\rho(\widetilde{X}_1)=1$, then we have that $\widetilde{X}_1$ is a smooth (by construction $\widetilde{X}_1$ is $\QQ$-factorial and then we use \cite[Corollary 4.2.2]{Prokhorov-g-12}) quadric $3$-fold in $\PP^4$ that contains a curve $C_1$ of genus $g(C_1)=10$. Moreover, we see that
    $$12=(-K_X)^3=54+2K_{\widetilde{X}_1}\cdot C_1+2g(C_1)-2=72+2K_{\widetilde{X}_1}\cdot C_1$$
    gives $\deg(C_1)=10$ in $\PP^4$.
    
    Notice that the $A$-action on $\widetilde{X}_1$ extends to a faithful action in $\PP^4$, so it is given by a $5$-dimensional irreducible faithful representation of $A$. In particular, this implies that $C_1$ is not contained in any hyperplane since otherwise its linear span would be an $A$-invariant subspace. Then, \cite[Theorem 2.20]{cheltsov2014five} gives us $g(C_1)\leqslant9$, a contradiction.
    
    Then, we are led to the case $\rho(\widetilde{X}_1)\geqslant2$ and $\eta:\widetilde{X}_1\to Z\simeq\PP^2$ a conic bundle. Let $L$ be a general line in $Z$ and let $H=(\eta\circ h)^\ast(L)$ be smooth and irreducible in $\widetilde{X}$. Then the restriction $\phi:=(\eta\circ h)\vert_H:H\to L$ is a conic bundle. Notice that $(\eta\circ h)(\widetilde{E}_1)$ has to be an $A$-invariant curve, since $A$ does not fix any point on $Z$. Now, by the adjunction formula one can compute that
    $$(-K_{\widetilde{X}})^2 \cdot H=4+K_H^2\leqslant12,$$
    On the other hand, if $\Delta$ denotes the union of the discriminant curve of the conic bundle $\eta$ and $(\eta\circ h)(\widetilde{E}_1)$, we have that $\deg(\Delta)=8-K_H^2$. Since $\Delta$ is an $A$-invariant curve, from Lemma \ref{lemma:A6-invariant-curves-P2} we obtain that $\deg(\Delta)=6$. Now, as $\Delta$ contains $(\eta\circ h)(\widetilde{E}_1)$ and the discriminant locus of $\eta$ and both are $A$-invariant, we should have that the discriminant locus of $\eta$ is empty. In other words, $\widetilde{X}_1$ is smooth and $\eta:\widetilde{X}_1\to\PP^2$ is a $\PP^1$-bundle. This means that $\widetilde{X}_1=\PP_{\PP^2}(\mathscr{E})$ where $\mathscr{E}$ is a vector bundle of rank $2$ on $\PP^2$ and we write $c_1(\mathscr{E})=a\ell, c_2(\mathscr{E})=b\ell^2$ for $a,b\in\mathbb{Z}$ and $\ell$ the class of a line in $\PP^2$. Then we have
    $$-K_{\widetilde{X}_1}=2\xi+(3-a)\eta^\ast\ell$$
    with $\xi=\OOO_{\PP(\mathscr{E})}(1)$ the tautological bundle. We can compute that
    $$(-K_{\widetilde{X}_1})^3=2c_1(\mathscr{E})^2-8c_2(\mathscr{E})+6K_{\PP^2}^2=2a^2-8b+54=54,$$
    so we obtain that $b=a^2/4$. Then $a=2m$ and $b=m^2$ for some $m\in\mathbb{Z}$, and the bundle $\mathscr{E}^\prime=\mathscr{E}\otimes\OOO_{\PP^2}(-m)$ is such that $\PP(\mathscr{E}^\prime)\simeq\PP(\mathscr{E})$ and $c_1(\mathscr{E}^\prime)=c_2(\mathscr{E}^\prime)=0$. Then, \cite[Proposition 2.10]{almost-fano-bundles} implies that $\mathscr{E}^\prime=\OOO\oplus\OOO$ is the trivial bundle and $\widetilde{X}_1\simeq\PP^1\times\PP^2$. Now, since the projection $\widetilde{X}_1\to\PP^2$ is $A$-equivariant, $A$ preserves the cone of curves of $\widetilde{X}_1$ and the $\PP^2$-bundle $\widetilde{X}_1\to\PP^1$ will be $A$-equivariant, a contradiction.

    Hence, we see that $n=0$ and $\eta:\widetilde{X}\to Z$ is a conic bundle. Moreover, it follows from Theorem \ref{theorem:A6-dP} that $Z\simeq\PP^2$. By \cite[Section 2.4]{CheltsovPinardinProkhorov} there exists a diagram
    \begin{equation}
    \label{equation:conic-bundles}
    \xymatrix{
    \widetilde{X}\ar[d]_{h}\ar[rrd]_{f}\ar@{-->}[rrrr]^{\chi}&&&&\widetilde{X}^\prime\ar[d]^{h^\prime}\ar[lld]^{f^\prime}\\
    \mathbb{P}^2&&X&&\mathbb{P}^2}
    \end{equation}
    where $f^\prime$ is a $\mathbb{Q}$-factorialization of the~Fano threefold $X$, $\chi$ is a~non-biregular composition of flops, and~$h^\prime$ is an extremal contraction of $\widetilde{X}^\prime$ which is a conic bundle. Consider an element $\tau\in G\setminus A$ with $G/A\simeq\langle\tau\rangle$. Then set $\widetilde{H}$ as a general member of $\vert h^\ast\OOO_{\PP^2}(1)\vert$, $H=f(\widetilde{H})$ and $H^\prime=\tau(H)$.
    
    Since the discriminant curve is $A$-invariant, $H+H^\prime\sim\lambda(-K_X)$ and
    $$2(12-d)=\lambda(-K_X)^3=12\lambda,$$
    we see that $d=6$ and $\lambda=1$. It is known that the only $A$-invariant curve of degree $6$ in $\PP^2$ is smooth, so $\widetilde{X}$ itself is smooth. However, \cite[Corollary 4.1.3]{RationalityII} implies that $\widetilde{X}$ is not rational, a contradiction.
\end{proof}

Thus, we are led to the case when $G$ acts faithfully on $\operatorname{Cl}(X)$, and by \cite{lmfdb} we have that $\mathrm{r}(X)\geqslant10$. In particular, the number of birational contractions $n$ performed in an MMP of $X$ satisfies $n\geqslant7$. Now we show that $X$ cannot contain surfaces of small degree.

\begin{lemma}
    \label{lemma:A6-ext-quadrics-g7}
    The $3$-fold $X$ does not contain quadric surfaces.
\end{lemma}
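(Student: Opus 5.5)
We argue by contradiction. Let $Q_1\subset X$ be an irreducible surface with $(-K_X)^2\cdot Q_1=2$, let $Q_1,\ldots,Q_N$ be its $G$-orbit, and put $S=Q_1+\ldots+Q_N$. The plan has three stages: pin down the orbit combinatorially, fix the geometry of a single $Q_1$, and then restrict $S$ to $Q_1$ and use $A_5$-invariant theory on $Q_1$.

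\emph{Combinatorics of the orbit.} Since $S$ is $G$-invariant and $\mathrm{r}^G(X)=1$, we have $S\sim_{\QQ}a(-K_X)$, so $2N=12a$, i.e.\ $N=6a$. By Lemma~\ref{lemma:A6-ext-g-7-div}, $Q_1$ is not $A$-invariant. Hence its $A$-orbit has length at least $6$ and $N\geqslant 10$. The possible stabilizers are subgroups of $M_{10}$ of order $720/N$ with $6\mid N$. Using the subgroup list of $M_{10}$ from \cite{lmfdb}, I would reduce to:
\begin{itemize}
\item $N=12$, with stabilizer $G_1\simeq\mathfrak{A}_5$ (the two classes of $\mathfrak{A}_5$ in $A$ fuse in $M_{10}$), so $a=2$;
\item a few larger indices such as $N=30$ with $G_1\simeq\mathfrak{S}_4$.
\end{itemize}
For the larger indices I would apply Lemma~\ref{lemma:A6-ext-threefolds-lc} to $\frac1aS$, together with the bound $\mathrm{r}(X)\leqslant 1+|\Sing(X)|=21$ from Lemma~\ref{lemma:A6-ext-gorenstein-g7-singular-points}. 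The idea is that many pairwise distinct degree-$2$ surfaces, all passing through the $20$ nodes, would have to generate too large a part of $\operatorname{Cl}(X)$. I expect this bookkeeping to be routine but case-based. The main case is $N=12$.

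\emph{Geometry of one quadric.} A surface of degree $2$ in $\PP^8$ is a quadric spanning a $\PP^3$. Its span $\Pi_1$ is $G_1$-invariant, so it comes from a $4$-dimensional subrepresentation of $H^0(X,\OOO_X(-K_X))^\vee$ restricted to $G_1\simeq\mathfrak{A}_5$. As computed in the proof of Lemma~\ref{lemma:A6-ext-gorenstein-g7-singular-points}, this restriction is $4\oplus5$, so $\Pi_1=\PP(4)$. By Theorem~\ref{theorem:A6-non-fixed}, $\mathfrak{A}_5$ has no fixed point on $\Pi_1$, so the quadric cone case is excluded and $Q_1\simeq\PP^1\times\PP^1$. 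The $4$-dimensional irreducible representation of $\mathfrak{A}_5$ is $U\otimes U'$, where $U$ and $U'$ are the two non-isomorphic $2$-dimensional representations of $2.\mathfrak{A}_5$. Consequently the induced action of $\mathfrak{A}_5$ on $\PP^1\times\PP^1$ preserves both rulings, acts faithfully on each factor, and is not the diagonal action. From the invariant theory of the binary icosahedral group, every $\mathfrak{A}_5$-invariant curve on $Q_1$ then has bidegree $(p,q)$ with $p,q\in\{0,6,10,12,\ldots\}$. In particular there are no non-zero invariant curves of bidegree $\leqslant(2,2)$.

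\emph{Restriction to $Q_1$.} Since $S\sim_{\QQ}2(-K_X)$ and $-K_X|_{Q_1}=\OOO(1,1)$, we get
\[ S|_{Q_1}=Q_1|_{Q_1}+\sum_{j\geqslant2}Q_j|_{Q_1}\equiv\OOO(2,2). \]
The curve $\sum_{j\geqslant2}(Q_j\cap Q_1)$ is $G_1$-invariant and effective. I would show that $Q_1|_{Q_1}$ is numerically of bidegree $(p,q)$ with $p,q\geqslant -2$ and with $p+q+4$ bounded, by computing it on the small resolution near the nodes of $X$ lying on $Q_1$; this uses that $Q_1\cap\Sing(X)$ is a union of $\mathfrak{A}_5$-orbits of the $20$ nodes. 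It then follows that $\sum_{j\geqslant2}Q_j|_{Q_1}$ is an invariant curve of bidegree at most $(4,4)$, hence it is zero. So $Q_1$ meets the other $Q_j$ only in finitely many points, and these points lie among the nodes. One then compares $(Q_1\cdot Q_j\cdot(-K_X))$ with the lc condition on $(X,\frac12S)$. Alternatively, one restricts $S$ to a general hyperplane section, which is a K3 surface $H$ of degree $12$. On $H$ the twelve disjoint conics $Q_j\cap H$ have sum $\sim -2K_X|_H$, of self-intersection $48$; but the sum of twelve disjoint $(-2)$-curves has self-intersection $-24$. This gives the contradiction.

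The main obstacle is the control of $Q_1|_{Q_1}$ on the non-$\QQ$-factorial threefold $X$, that is, the local contribution of the nodes through which $Q_1$ passes. I would handle it by working on a $\QQ$-factorialization $\widetilde X$, or more simply by passing directly to the hyperplane section $H$. Since $H$ misses the nodes, the disjointness and self-intersection computation there is clean, provided one first shows that distinct $Q_j$ do not share curves. That is exactly the invariant-curve bound on $\PP^1\times\PP^1$ above.
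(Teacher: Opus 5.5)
There is a genuine gap, in two places.

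First, the decisive step of your main case $N=12$ is false. On $Q_1=\PP(U)\times\PP(U')$, the $\mathfrak{A}_5$-invariant curves of bidegree $(p,q)$ correspond to invariants in $\Sym^pU\otimes\Sym^qU'$, and small ones exist. The representation $\Sym^3U\simeq\Sym^3U'$ is the unique faithful $4$-dimensional (self-dual) representation of $2.\mathfrak{A}_5$, so there is an invariant $(3,3)$-curve (Bring's curve, cut out on the quadric by a cubic). Likewise $\Sym^4U\simeq\Sym^4U'$ is the $5$-dimensional representation, which gives an invariant of bidegree $(4,4)$. So ``an invariant curve of bidegree at most $(4,4)$, hence zero'' does not follow. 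In addition, the bound $p,q\geqslant-2$ for $Q_1|_{Q_1}$ was never established, and $Q_1$ is not even $\QQ$-Cartier.

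Your hyperplane computation is in fact equivalent to $\sum_{j\geqslant2}C_1\cdot C_j=2(a+1)=6$, which is \eqref{equation:conics}. So it shows that the conics \emph{must} meet, and everything hinges on excluding curves shared by two quadrics. This can be repaired by orbit counting instead of bidegrees:
\begin{itemize}
\item an irreducible curve $\Gamma\subset Q_1\cap Q_j$ contributes at least $\deg\Gamma$ to $C_1\cdot C_j$;
\item the $G_1$-orbit of the pair $(\Gamma,j)$ has at least $5$ elements, since $G_1$ has orbits of lengths $5$ and $6$ on the remaining quadrics, and this forces $\deg\Gamma=1$;
\item then $\Gamma$ is a ruling line, whose $G_1$-orbit has at least $12$ members, and $12>6$.
\end{itemize}
This would be a valid alternative to the paper's argument for $N=12$. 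The paper instead uses that $S_1$ is not Cartier and so passes through singular points. The relation $12p=20q$, together with the absence of short $\mathfrak{A}_5$-orbits on $\PP^1\times\PP^1$, then forces $p=20$. Hence $\langle\Sing(X)\rangle\subset\langle S_1\rangle$ would be a proper $G$-invariant linear subspace, contradicting the irreducibility of $H^0(X,\OOO_X(-K_X))$.

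Second, you never bound $N$ and give no argument for $N\neq12$. The paper observes that quadric surfaces are covered by lines, so $S$ lies in the divisor swept out by lines. Lemma~\ref{lemma:divisor-of-lines-g7} then gives $a\leqslant7$, hence $N\in\{12,30,36\}$; without this, indices up to $720$ remain open.

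For $N=30$ and $N=36$, the heuristic that many distinct quadrics ``generate too large a part of $\operatorname{Cl}(X)$'' is not an argument. Distinct surfaces need not have independent classes, and nothing forces them through all $20$ singular points. Thirty classes in a lattice of rank at most $21$ are of course dependent, so one has to show that there are \emph{few} relations. This is the hardest part of the paper's proof:
\begin{itemize}
\item For $N=30$ (stabilizer $\mathfrak{S}_4$, orbits of lengths $3,6,8,12$ on the other quadrics), it solves $3x_3+6x_6+8x_8+12x_{12}=12$ and kills two of the solutions by the Hodge index theorem on $H$. For the third, it uses the splitting into two $A$-orbits and an adjacency-matrix eigenvalue computation. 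This shows the $30$ curves $C_i$ satisfy at most $9$ independent relations, contradicting $\rho(H)\leqslant20$.
\item For $N=36$, it uses that the $\mathrm{F}_5$-orbits on the other quadrics have lengths divisible by $5$, while $2(a+1)=14$.
\end{itemize}
Your orbit-counting repair does not extend to these cases. The small stabilizers there have short orbits on the other quadrics and on curves of $Q_1$.
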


\begin{proof}
First, remember that the anticanonical embedding $X\hookrightarrow\PP^8=\PP(V)$ is given by a $9$-dimensional irreducible representation of $G$. Consider $S_1\subset X$ an irreducible quadric surface and write $S=\displaystyle\sum_{i=1}^mS_i$ for its $G$-orbit. Since $S$ is $G$-invariant we have that $S\sim a(-K_X)$ for some positive integer $a$ and we observe
$$2m=\deg(S)=(-K_X)^2\cdot S=a(-K_X)^3=12a.$$
Now, observe that all the components of $S$ are covered by lines, so if $D$ is the divisor spanned by all the lines on $X$ we have that $S\subset D$, and then Lemma \ref{lemma:divisor-of-lines-g7} implies that $a\leqslant7$. Considering this constraint, and taking into account the possible lengths for a $G$-orbit, we obtain that the possible cases are:
\begin{enumerate}
    \item $m=12$ and $a=2$,
    \item $m=30$ and $a=5$,
    \item $m=36$ and $a=6$.
\end{enumerate}
Before treating each case we state some general facts. Consider $H\in\vert-K_X\vert$ a general member, so $H$ is a smooth K3 surface and write 
$$S\vert_H=\displaystyle\sum_{i=1}^m S_i\vert_H=\sum_{i=1}^m C_i$$
where $C_i=S_i\vert_H$. Note that each $C_i$ is a smooth rational curve on $H$, so $C_i^2=-2$ for every $i\in\{1,\ldots,m\}$. Since $S\vert_H\sim aH\vert_H$ we compute that
$$C_1\cdot S\vert_H=C_1^2+\sum_{i=2}^mC_1\cdot C_i=S_1\cdot S\cdot H=a(-K_X)^2\cdot S_1=2a$$
and then 
\begin{equation}
    \label{equation:conics}
    \sum_{i=2}^mC_1\cdot C_i=2(a+1)
\end{equation}

We can also notice that $C_i\cdot C_j\leqslant2$ for every $1\leqslant i<j\leqslant m$. Indeed, assume that $C_i\cdot C_j\geqslant3$ and consider $\Pi_i=\langle C_i\rangle,\Pi_j=\langle C_j\rangle$ their linear spans, so $\Pi_i\simeq\Pi_j\simeq\PP^2$ and $C_i\cap C_j\subset\Pi_i\cap\Pi_j$. Since $C_i\cap C_j$ is a zero-dimensional subscheme of length $\geqslant3$ of each conic, we necessarily have that $\Pi_i=\Pi_j$. Now, notice that if $Q$ is a quadric that cuts out $X$, $Q\vert_{\Pi_i}$ is a conic in $\Pi_i$ that contains the degree $4$ subscheme $C_i\cup C_j$, so $Q\vert_{\Pi_i}=0$. Then we deduce that $\Pi_i\subset X$, but this contradicts the fact that $X$ does not contain planes.

Assume now that $m=12$ and $a=2$. In this case the stabilizer of $S_1$ is $G_1\simeq\mathfrak{A}_5$ and one can compute that the action of $G_1$ on the set of quadrics $S_2,\ldots,S_{12}$ is such that there is one orbit of length $5$ and one orbit of length $6$. Notice that in this case $S_1$ is smooth. Indeed, if $S_1$ were singular then it would be a cone over a plane conic curve with some vertex $P\in X$, and this vertex has to be fixed by $G_1$. However, one can compute that $H^0(X,\OOO_X(-K_X))$ splits as a sum of one $4$-dimensional irreducible and one $5$-dimensional irreducible representation of $G_1$. In particular, $G_1$ does not fix points on $\PP^8$ so every $S_i$ is smooth.

On the other hand, notice that $S_1$ is not Cartier. Indeed, in that case we would have $S_1\sim~\lambda(-K_X)$ for some $\lambda\in\mathbb{Z}_{\geqslant0}$ but then
$2=(-K_X)^2\cdot S_1=12\lambda$ so $\lambda=\frac{1}{6}$. This implies that $S_1$ contains at least one singular point of $X$, then $\operatorname{Sing}(X)\subset\operatorname{Supp}(S)$ and moreover, by symmetry each $S_i$ contains the same number of singular points. We know from Lemma \ref{lemma:A6-ext-gorenstein-g7-singular-points} that $\vert\operatorname{Sing}(X)\vert=20$. Denote by $p$ the number of singular points contained in $S_1$ and by $q$ the number of components of $S$ passing through a given singular point. We obtain the relation
$$12p=20q$$
which gives us that $p\in\{5,10,15,20\}$. Now, note that $S_1\simeq\PP^1\times\PP^1$ and its stabilizer $G_1\simeq\mathfrak{A}_5$ preserves both rulings and acts faithfully on at least one factor. Then, since $G_1$ does not have orbits of length $5,10$ or $15$ in $\PP^1$, we obtain that $p=20$. Then the quadric $S_1$ contains all the singular points of $X$ so its linear span $\langle \Sing(X)\rangle=\PP^3$ is $G$-invariant, which is a contradiction since $H^0(X,\OOO_X(-K_X))$ is $G$-irreducible.

Now we study the case $m=30$ and $a=5$. Here the stabilizer of each quadric is $G_i\overset{\textup{def}}{=}~\operatorname{Stab}_G(S_i)\simeq\mathfrak{S}_4$ and one can compute that the remaining quadrics in $\{S_1,\ldots,S_{30}\}$ split in orbits
$$\mathcal{O}_3(S_i),\;\;\mathcal{O}_6(S_i),\;\;\mathcal{O}_8(S_i),\;\;\mathcal{O}_{12}(S_i)$$
of length $3,6,8,12$ respectively under the action of $G_i$. Given that $G$ acts transitively on the set of quadrics this configuration is homogeneous, so most of the computations determined by these orbits will hold from the perspective of every $S_i$. In particular, the intersection numbers $C_i\cdot C_j$ will be constant in each orbit, and moreover these values are well-defined for pairs given that $S_j\in\mathcal{O}_k(S_i)$ if and only if $S_i\in\mathcal{O}_k(S_j)$. We denote this value by $x_k$ and notice that $0\leqslant x_k\leqslant2$ for every $k$.

On the other hand, one computes that the $G$-orbit $\{S_1,\ldots,S_{30}\}$ splits into two $A$-orbits $\mathcal{A}$ and $\mathcal{B}$ of length $15$ that are interchanged by $G\setminus A$, and for $S_i\in\mathcal{A}$ the $G_i$-orbits organize as
\begin{center}
    $\mathcal{O}_6(S_i),\mathcal{O}_8(S_i)\subset\mathcal{A}\;\;$ and $\mathcal{O}_3(S_i),\mathcal{O}_{12}(S_i)\subset\mathcal{B}$,
\end{center}
and symmetrically for $S_i\in\mathcal{B}$. Now, a crucial observation is that for $S_i,S_j$ different quadrics in the same $A$-orbit one can compute that
\begin{equation}
\label{equation:orbits-3}
    \vert\mathcal{O}_3(S_i)\cap\mathcal{O}_3(S_j)\vert=\begin{cases}
    1 \;\; \text{if} \;\; S_j\in\mathcal{O}_6(S_i)\\
    0 \;\; \text{if} \;\; S_j\in\mathcal{O}_8(S_i),
\end{cases} 
\end{equation}
Indeed, consider $S_i,S_j\in\mathcal{A}$ different quadrics. Denote by $m_k$ the number $\vert\mathcal{O}_3(S_i)\cap\mathcal{O}_3(S_j)\vert$ where $S_j\in\mathcal{O}_k(S_i)$ and consider the set
$$\mathcal{S}=\{(Q,R):Q\in\mathcal{O}_3(S_i),\; R\in\mathcal{O}_3(Q),\; R\neq S_i\}.$$
Counting the elements in $\mathcal{S}$ in two different ways gives $m_6=1$ and $m_8=0$. In a similar way one sees that
\begin{equation}
\label{equation:orbits-6}
    \vert\mathcal{O}_6(S_i)\cap\mathcal{O}_6(S_j)\vert=\begin{cases}
    1 \;\; \text{if} \;\; S_j\in\mathcal{O}_6(S_i)\\
    3 \;\; \text{if} \;\; S_j\in\mathcal{O}_8(S_i),
\end{cases} 
\end{equation}

By the equation \eqref{equation:conics} we see that
$$3x_3+6x_6+8x_8+12x_{12}=2(a+1)=12,$$
and it is clear that the only solutions for this equation are 
$$(x_3,x_6,x_8,x_{12})=(0,2,0,0),\;\; (2,1,0,0),\;\;(0,0,0,1).$$
Assume first that $x_6=2$ and set $\Delta=\displaystyle\sum_{S_i\in\mathcal{A}}C_i$. Given that each $C_i$ with $S_i\in\mathcal{A}$ meets $6$ members of $\mathcal{A}$ with multiplicity $2$ we have
$$\Delta\cdot H=15\cdot2=30, \;\; \Delta^2=15\cdot(-2)+15\cdot6\cdot2=150.$$
These computations contradict Hodge index theorem as $900=(\Delta\cdot H)^2\geqslant \Delta^2\cdot H^2=1800$.

Now assume that $x_3=2,x_6=1$ and $x_8=x_{12}=0$. If we write $\mathcal{O}_3(S_1)=\{S_\alpha,S_\beta,S_\gamma\}$, in this case we have that
$$C_1\cdot C_\alpha=C_1\cdot C_\beta=C_1\cdot C_\gamma=x_3=2,$$
and by \eqref{equation:orbits-3} it follows that
$$C_\alpha\cdot C_\beta=C_\alpha\cdot C_\gamma=C_\beta\cdot C_\gamma=x_6=1.$$
Hence the divisor $\Delta=C_1+C_\alpha+C_\beta+C_\gamma$ satisfies
\begin{center}
    $\Delta\cdot H=8\;\;$ and $\;\;\Delta^2=4\cdot(-2)+2(3\cdot x_3+3\cdot x_6)=10$,
\end{center}
but again $64=(\Delta\cdot H)^2<\Delta^2\cdot H^2=120$ contradicts Hodge index theorem.

Then we are led to assume $x_{12}=1$ and $x_3=x_6=x_8=0$. In this configuration we have that $C_i\cdot C_j=0$ except when $S_j\in\mathcal{O}_{12}(S_i)$. Suppose there is a linear relation
\begin{center}
    $\displaystyle\sum_{S_i\in\mathcal{A}}\alpha_iC_i+\displaystyle\sum_{S_j\in\mathcal{B}}\beta_jC_j=0\;\;$ in $\;\;\operatorname{Pic}(H)\otimes\QQ$.
\end{center}
Set $\alpha=\sum\alpha_i$ and $\beta=\sum\beta_j$. Taking intersection with $C_k$ for $S_k\in\mathcal{A}$ gives
$$-2\alpha_k+\beta-\sum_{S_j\in\mathcal{O}_3(S_k)}\beta_j=0,$$
and summing over $S_k\in\mathcal{A}$ we get
\begin{center}
    $-2\alpha+15\beta-3\beta=0\;\;$, i.e., $\;\;\alpha=6\beta$.
\end{center}
By symmetry we can also compute that $\beta=6\alpha$, so $\alpha=\beta=0$, and this means
\begin{center}
    $2\alpha_k=-\displaystyle\sum_{S_j\in\mathcal{O}_3(S_k)}\beta_j\;\;$ and $2\beta_l=-\displaystyle\sum_{S_i\in\mathcal{O}_3(S_l)}\alpha_i$. 
\end{center}
In consequence we get
$$4\alpha_k=\sum_{S_j\in\mathcal{O}_3(S_k)}\sum_{S_i\in\mathcal{O}_3(S_j)}\alpha_i=3\alpha_k+\sum_{S_i\in\mathcal{O}_6(S_k)}\alpha_i$$

Consider $A$ as the adjacency matrix of quadrics in $\mathcal{A}$ connected by an orbit of length $6$, i.e., 
\begin{center}
    $A_{ji}=1\;\;$ if $\;\; S_j\in\mathcal{O}_6(S_i)\;\;$ and $\;\;0\;\;$ otherwise for $\;\;1\leqslant i,j\leqslant15$,
\end{center}
and let $J$ be the matrix with all the entries equal to $1$. Then the vector $\alpha=(\alpha_k)$ satisfies $A\alpha=\alpha$. Now, noting that $(A^2)_{ji}=~\vert\mathcal{O}_6(S_i)\cap\mathcal{O}_6(S_j)\vert$ and that $J-I-A$ is the matrix marking the pairs of quadrics in $\mathcal{A}$ connected by an orbit of length $8$, using \eqref{equation:orbits-6} we obtain that
$$A^2=6I+A+3(J-I-A)=3I-2A+3J$$
Notice that if $\mathbf{1}$ is the vector of ones then $A\mathbf{1}=6\mathbf{1}$ and consider $W=\langle \mathfrak{1} \rangle^\perp\subset\QQ^{15}$. In this subspace we get
\begin{center}
    $A^2=3I-2A\;\;$ and then $\;\; (A-I)(A+3I)=0$,
\end{center}
so the eigenvalues of $A$ in $W$ are $\{1,-3\}$. Moreover, by definition the trace of $A$ is $0$, and this implies that the multiplicity of $1$ is $9$. Since the coefficients $\alpha_i$ in the linear relation determine $\beta_j$ and $A\alpha=\alpha$, we see that the subspace of linear relations between the $C_1,\ldots,C_{30}$ in $\QQ^{30}$ has dimension at most $9$. This means that $30-9=21>20\geqslant\rho(H)$, which is a contradiction.

Finally assume that $m=36$, $a=6$ and consider $G_1$ the stabilizer of $S_1$. In this case we obtain from \eqref{equation:conics} that
$$\sum_{i=2}^{36}C_1\cdot C_i=2(a+1)=14.$$
Then $G_1\simeq\mathrm{F}_5$ and $G_1$ decomposes $\{S_2,\ldots,S_{36}\}$ into orbits of length $5,10$ and $20$. These orbit lengths force the condition
$$\sum_{i=2}^{36}C_1\cdot C_i\equiv0\pmod{5},$$
which contradicts the equality above.
\end{proof}

\begin{lemma}
    \label{lemma:A6-ext-cubics-g7}
    The $3$-fold $X$ does not contain cubic surfaces, i.e., there is no irreducible surface $S\subset X$ with $(-K_X)^2\cdot S=3$.
\end{lemma}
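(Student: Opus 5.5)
We mimic the proof of Lemma~\ref{lemma:A6-ext-quadrics-g7}. Let $S_1\subset X$ be an irreducible cubic surface and $S=\sum_{i=1}^mS_i$ its $G$-orbit. Then $S\sim a(-K_X)$ with $a\geqslant 2$, since $|-K_X|$ has no $G$-invariant members. This gives
$$3m=12a,\qquad m=4a.$$
Because $X$ is an intersection of quadrics and contains no planes, each $S_i$ spans a $\PP^4$ and is a cubic scroll, a cone over a twisted cubic, or a projection of one. In every case $S_i$ is covered by lines, so $S\subset D$ and Lemma~\ref{lemma:divisor-of-lines-g7} gives $a\leqslant 7$, hence $m\leqslant 28$.

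The $G$-orbit lengths (indices of subgroups of $M_{10}$) compatible with $m=4a$, $2\leqslant a\leqslant7$, are $m\in\{12,20\}$, giving $a=3$ or $a=5$; the lengths $10$, $16$ and others are excluded by divisibility. First we deal with $m=20$, where the stabilizer $G_1$ has order $36$, e.g. $\mumu_3^2\rtimes\mumu_4$. If $S_1$ is a cone, its vertex is a $G_1$-fixed point. The cubic scroll case carries a distinguished $(-1)$-line, so $G_1$ preserves a line and a point configuration on it; we rule this out by decomposing $H^0(X,\OOO_X(-K_X))$ as a $G_1$-representation, as done in Lemma~\ref{lemma:A6-ext-quadrics-g7}. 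For $m=12$, $G_1\simeq\mathfrak{A}_5$ has no faithful action compatible with the geometry of a cubic scroll (its automorphism group fixes the directrix, and $\mathfrak{A}_5$ acting on the directrix $\PP^1$ with a fixed point is impossible). The cone case is excluded because the vertex would be a $G_1$-fixed point, whereas $H^0(-K_X)|_{G_1}=4\oplus5$ has no fixed vector.

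In any surviving case we restrict to a general smooth K3 surface $H\in|-K_X|$. The curves $C_i=S_i|_H$ are twisted cubics, so $C_i^2=-2$, and
$$\sum_{j\neq1}C_1\cdot C_j=3a+2.$$
Arguing as for conics, with linear spans now $\PP^3$ and $X$ cut out by quadrics, we bound $C_i\cdot C_j$. Distributing $3a+2$ over the $G_1$-orbits on the remaining surfaces then either violates a congruence (as in the case $m=36$ of the previous lemma) or yields configurations that are killed by the Hodge index theorem or by the rank bound $\rho(H)\leqslant 20$ via an adjacency-matrix eigenvalue computation.

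The main obstacle is the $m=20$ combinatorics. We must compute the $G_1$-suborbits on $20$ points and the resulting intersection patterns, and show that no admissible pattern is compatible with the Hodge index theorem and $\rho(H)\leqslant20$. We would try the Hodge index test on the sum over an $A$-orbit of $10$ curves first.
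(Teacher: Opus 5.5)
Your reduction to $m\in\{12,20\}$ and your handling of $m=12$ match the paper. There, the contradiction is that the directrix $B$ of $S_1\simeq\FF_1$ satisfies $(-K_X)\cdot B=1$. So $B$ is a $G_1$-invariant line in $\PP^8$, while $H^0(X,\OOO_X(-K_X))|_{G_1}\simeq 4\oplus 5$ has no $2$-dimensional summand. Your parenthetical about a fixed point on the directrix is not the right reason.

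The genuine gap is the case $m=20$. You do not settle it, and the cone half of the step you propose fails. The group $G_1\simeq\mumu_3^2\rtimes\mumu_4$ has a normal Sylow $3$-subgroup $P$, so it lies in $N_G(P)\simeq\mumu_3^2\rtimes Q_8$, the stabilizer of a point for the action of $M_{10}$ on $\PP^1(\FF_9)$. The $9$-dimensional space $V=H^0(X,\OOO_X(-K_X))$ is the deleted permutation representation of that action, possibly twisted by the sign of $G/A$. Since $\mumu_3^2$ is transitive on the other nine points, $V|_{G_1}\simeq\lambda\oplus\rho_4\oplus\rho_4'$, with $\lambda$ a character and $\rho_4\not\simeq\rho_4'$ irreducible. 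This has no $2$-dimensional subrepresentation, so the smooth scroll is excluded. But $G_1$ does fix a point of $\PP^8$, so a $G_1$-fixed vertex gives no contradiction. Your route can be repaired: project from the vertex $\PP(\lambda)$; the base twisted cubic in $\PP(\rho_4)$ or $\PP(\rho_4')$ would then carry a faithful action of $\mumu_3^2$, which does not embed in $\PGL_2(\CC)$. Your fallback via intersection numbers on $H$ is only announced: you determine neither a bound on $C_i\cdot C_j$ nor the $G_1$-suborbits.

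The paper finishes $m=20$ uniformly for cones and scrolls, using two ingredients absent from your sketch.

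First, $C_i\cdot C_j\leqslant 3$. If $C_i\cdot C_j\geqslant 4$, the two twisted cubics share their span $\Lambda\simeq\PP^3$. The quadrics through $X$ then restrict to $\Lambda$ in one of three ways:
\begin{itemize}
\item as zero, so $\Lambda\subset X$, which is absurd;
\item as at least two independent irreducible quadrics, whose quartic intersection cannot contain the sextic $C_i\cup C_j$;
\item as a single quadric, giving a quadric surface in $X$, which is excluded only by Lemma~\ref{lemma:A6-ext-quadrics-g7}.
\end{itemize}
``Arguing as for conics'' does not suffice here, because one smooth quadric contains twisted cubics meeting in length $4$ or $5$.

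Second, $G_1$ fixes exactly one other cubic $S_2$, because $[N_G(G_1):G_1]=2$. The remaining $18$ cubics lie in $G_1$-orbits of length divisible by $9$, since distinct Sylow $3$-subgroups of $M_{10}$ meet trivially; the paper lists two orbits of length $9$. Hence $17=\sum_{j\geqslant2}C_1\cdot C_j\equiv C_1\cdot C_2\pmod 9$, contradicting $0\leqslant C_1\cdot C_2\leqslant 3$.

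Two smaller points. The length $m=16$ is excluded not by divisibility but because $M_{10}$ has no subgroup of order $45$. You must also first exclude that $S_1$ spans only a $\PP^3$, since cubic surfaces there need not be covered by lines. This holds because no quadric in $\PP^3$ contains an irreducible cubic surface, so every quadric through $X$ would vanish on that $\PP^3$, forcing $\PP^3\subset X$.
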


\begin{proof}
Assume that $S_1$ is a cubic on $X$, i.e., $(-K_X)^2\cdot S_1=3$, and denote by $S=\displaystyle\sum_{i=1}^mS_i$ its $G$-orbit. Then
$$3m=\deg(S)=(-K_X)^2\cdot S=a(-K_X)^3=12a.$$
Now, consider $M=\langle S_1\rangle$ as the linear span of $S_1$, so $M$ can be either $\PP^3$ or $\PP^4$. In the former case, consider the restriction map 
    $$H^0(\PP^{8},\mathscr{I}_X(2))\longrightarrow H^0(M,\mathscr{I}_{S_1/M}(2))$$
Since $S_1$ is a cubic in $\PP^3$ we have that $H^0(M,\mathscr{I}_{S_1/M}(2))=0$, and this means that every quadric containing $X$ is zero along $M$. As we know from Lemma \ref{lemma:A6-ext-quadrics} that $X$ is cut out by quadrics this would imply $X=M$, a contradiction. Then $M=\PP^4$ and $S_1$ is a surface of minimal degree. In that case $S$ is covered by lines and by Lemma \ref{lemma:divisor-of-lines-g7} $a\leqslant7$. Hence $m=4a\leqslant28$ and the possible cases are
\begin{enumerate}
    \item $m=12$ and $a=3$,
    \item $m=20$ and $a=5$.
\end{enumerate}
Assume that $m=12$ and $a=3$. In this case we see that $S_1$ is smooth. Indeed, if $S_1$ is singular, it has to be the scroll $S(3,0)$ which is a cone over a twisted cubic curve, but this is not possible as the space $H^0(X,\OOO(-K_X))$ splits as a sum of a $4$-dimensional representation and a $5$-dimensional linear representations of $G_1$, so in particular $G_1$ does not fix points in $\PP^8$. Then $S_1$ is a Hirzebruch surface $S_1\simeq\FF_1$, so it contains a unique negative section $B\simeq\PP^1$ with $B^2=-1$ that has to be $G_1$-invariant. Now we prove that this section is a line in $\PP^8$. Recall that
\begin{center}
    $\Pic(S_1)=\mathbb{Z}[h]\oplus\mathbb{Z}[f]\;\;$ with $\;\; h^2=1,\; h\cdot f=1,\;\; f^2=0\;\;$ and $\;\;b\sim h-f$, 
\end{center}
where $b$ denotes the class of $B$ in $\Pic(S_1)$. The hyperplane section of $S_1$ will be given by $H=(-K_X)\vert_{S_1}$ and $H\sim\alpha h+\beta f$ for some $\alpha,\beta\in\mathbb{Z}_{>0}$. Then we derive that
$$3=\deg(S_1)=H^2=\alpha^2+2\alpha\beta,$$
and since $\alpha,\beta$ are positive we obtain $\alpha=\beta=1$, so $H\sim h+f\sim b+2f$. It follows that
$$H\cdot B=(b+2f)\cdot b=b^2+2f\cdot b=1,$$
in other words, $B$ is embedded in $\PP^8$ as a line. Since $H^0(X,\OOO(-K_X))$ contains no $2$-dimensional $G_1$-representations, this contradicts the fact that $B$ is $G_1$-invariant.

To finish the proof we have to analyze the case $m=20$ and $a=5$. In such a case the stabilizer of $S_1$ turns out to be $G_1\simeq\mumu_3^2\rtimes\mumu_4$. Let $H\in\vert-K_X\vert$ be a general member and consider $C_i=S_i\vert_H$, so $C_i^2=-2$ and $C_i\cdot H=3$ for every $i\in\{1,\ldots,20\}$. Similarly to the proof of Lemma \ref{lemma:A6-ext-quadrics-g7} we see that
$$\sum_{j=2}^{20}C_1\cdot C_j=3a+2.$$
Also, note that $C_i\cdot C_j\leqslant3$ for every $1\leqslant i,j\leqslant20$. Indeed, if $C_i\cdot C_j\geqslant4$, both cubics have the same linear span $\Lambda\simeq\PP^3$ and consider $U$ as the image of the space of defining quadrics of $X$ in $H^0(\Lambda,\OOO_{\Lambda}(2))$. Note that every non-zero quadric in $U$ is irreducible, as an irreducible non-planar cubic curve cannot lie in the union of two planes. If $\dim U\geqslant2$, we get two independent quadrics in $\Lambda$ containing the subscheme $C_i\cup C_j$ of degree $6$, impossible. If $\dim U=0$, we see that $\Lambda\subset X$, which is again not possible. Thus, $\dim U=1$ but this condition implies that the unique irreducible quadric in $\Lambda$ defined by $U$ is contained in $X$, contradicting Lemma~\ref{lemma:A6-ext-quadrics-g7}.

In addition, one sees that the set of cubics $\{S_1,\ldots,S_{20}\}$ splits into two orbits of length $1$ and two orbits of length $9$. Assume that $S_2$ is $G_1$-fixed and $\{S_3,\ldots,S_{11}\},\{S_{12},\ldots,S_{20}\}$ are the other $G_1$-orbits. Then
$$\sum_{j=2}^{20}C_1\cdot C_j=C_1\cdot C_2+9(C_1\cdot C_3+C_1\cdot C_{12})=17.$$
This implies that $C_1\cdot C_2\equiv8\pmod{9}$, but this is not possible as $0\leqslant C_1\cdot C_2\leqslant3$. We conclude the proof.
\end{proof}

By the previous Lemma we know that the minimal degree $d$ of a surface contracted by the morphisms $h_i$ in the diagram \eqref{equation:MMP} verifies $d\geqslant4$, so by \cite[Lemma 2.4]{Prokhorov-g-12} we obtain that
\begin{align*}
    (-K_{\widetilde{X}_n})^3\geqslant (-K_X)^3+(2d-2)n\geqslant 12+7\cdot 6=54
\end{align*}
Using the classification of smooth Fano $3$-folds we observe that $\rho(\widetilde{X}_n)=3$ is not possible as in such case $(-K_{\widetilde{X}_n})^3\leqslant52$. Now, if $\rho(\widetilde{X}_n)=2$, by the same inequality we obtain $(-K_{\widetilde{X}_n})^3\geqslant60$. In such case the only possibility would be $(-K_{\widetilde{X}_n})^3=62$ but then $\widetilde{X}_n$ would contain a plane, a contradiction. Then $\rho(\widetilde{X}_n)=1$, but again this inequality leads us to a contradiction. This contradiction shows that $g\neq7$.

\subsection{Fano $3$-folds of genus $g=9$}

In this section we assume that $g=9$. In particular, by Corollary \ref{corollary:A6-ext-g-7-9} we know that there exists a unique $G$-invariant divisor in $\vert-K_X\vert$.

\begin{lemma}
    \label{lemma:A6-ext-g9-Q-factorial}
    The $3$-fold $X$ is not $\QQ$-factorial. 
\end{lemma}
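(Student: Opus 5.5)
We argue by contradiction and assume that $X$ is $\QQ$-factorial. Then $\mathrm{r}(X)=\rho(X)=1$ by Lemma~\ref{lemma:A6-ext-pic}. The strategy mirrors the genus-$7$ case: first bound the singular locus, then show that every $G$-orbit small enough to fit in that bound must contain a point with a large stabilizer, and rule such points out.

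\textbf{Step 1: bound on the singular points.} For a general fiber $V$ of the $\QQ$-Gorenstein smoothing we have $h^{1,2}(V)=3$, since $V$ is the prime Fano threefold of genus $9$. Estimate~\eqref{equation:sing} then gives $\vert\Sing(X)\vert\leqslant 21$. For a $\QQ$-factorial terminal Gorenstein Fano threefold one can also use the sharper bound from the defect/Hodge-theoretic inequality in the cited \cite{Namikawa,SmoothingPic}, namely $\vert\Sing(X)\vert\leqslant h^{1,2}(V)$ when $\mathrm{r}(X)=\rho(X)$. I would try to invoke it, which gives $\vert\Sing(X)\vert\leqslant 3$.

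\textbf{Step 2: excluding small orbits.} If $\Sing(X)$ is nonempty, it is $G$-invariant. Since the only subgroup of $G$ of index less than $10$ is $A$, any $G$-orbit of length at most $3$ (or, with the weaker bound, any $G$-orbit of length less than $10$) would consist of $A$-fixed points. This contradicts Theorem~\ref{theorem:A6-non-fixed}. Hence $X$ is smooth.

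\textbf{Step 3: the smooth case.} A smooth $X$ with $\rho=1$, $\iota=1$, $g=9$ has $\Aut(X)$ finite, and it embeds into the automorphism group of an associated curve. Indeed, by Mukai, $X$ is a linear section of the Lagrangian Grassmannian $\mathrm{LGr}(3,6)$, and $\Aut(X)$ acts faithfully on a genus-$3$ curve or on $\PP^1$ via the projective dual construction, cf.\ \cite{KPS:Hilb}. An embedding into the automorphism group of a genus-$3$ curve contradicts Lemma~\ref{lemma:A6-ext-curves}. An embedding into the automorphisms of a ruled surface over such a curve contradicts Lemma~\ref{lemma:A6-ext-projective-spaces}.

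\textbf{Fallback if the sharp bound is unavailable.} If only $\vert\Sing(X)\vert\leqslant21$ is available, the orbits of length $10$, $12$ and $20$ must be excluded directly.
\begin{itemize}
\item Length $10$: this is excluded by Lemma~\ref{lemma:A6-ext-fixed}, since the stabilizer is $\operatorname{PSU}_3(\FF_2)$.
\item Length $12$: the stabilizer is $\mathfrak{A}_5$. I would decompose $H^0(X,\OOO_X(-K_X))$, which has dimension $11$, as a representation of the stabilizer, exactly as in Lemma~\ref{lemma:A6-ext-gorenstein-g7-singular-points}. The aim is to show the stabilizer has no fixed point in $\PP^{10}$, a contradiction.
\item Length $20$: the stabilizer is $\mumu_3^2\rtimes\mumu_4$, and the same representation-theoretic argument applies.
\end{itemize}
Note that the unique $G$-invariant member of $\vert-K_X\vert$ corresponds to a trivial summand, so in these decompositions the trivial character does occur. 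The decomposition must therefore be checked carefully: fixed points of the stabilizer must not lie on $X$, possibly by using the invariant K3 surface of Lemma~\ref{lemma:A6-ext-invariant-surfaces} through those points.

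\textbf{Main obstacle.} This last point is where I expect the main difficulty: the trivial summand could create stabilizer-fixed points in $\PP^{10}$, and showing they avoid $X$ requires more than the representation count used in genus $7$.
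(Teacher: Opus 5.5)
Your proposal matches the paper's proof. The paper uses exactly the factorial bound $|\Sing(X)|\leqslant h^{1,2}(V)=3$, which it cites as \cite[Theorem 1.3]{Prokhorov-G-Fanos} rather than the weaker Namikawa estimate; this forces every singular point to be $A$-fixed, and it disposes of the smooth case through the embedding into $\Aut(\PP_C(\mathscr{E}))$ with $g(C)=3$ from \cite[Corollary 4.3.5]{KPS:Hilb}, so your fallback is unnecessary. One small adjustment: in the fiberwise case the kernel of $G\to\Aut(C)$ may be only $A$, so you must exclude $A\subset\PGL_2(\CC)$, which follows from Lemma~\ref{lemma:A6-genus} rather than Lemma~\ref{lemma:A6-ext-projective-spaces}.
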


\begin{proof}
    Suppose first that $X$ is smooth. Then by \cite[Corollary 4.3.5]{KPS:Hilb} there is an embedding $\mathfrak{A}_6\hookrightarrow\Aut(\PP_C(\mathscr{E}))$ where $C$ is a smooth curve of genus $3$ and $\mathscr{E}$ is a simple vector bundle of rank $2$ over $C$. The natural projection $\PP_C(\mathscr{E})\to C$ gives an exact sequence
    $$1\longrightarrow\Aut(\PP_C(\mathscr{E})/C)\longrightarrow\Aut(\PP_C(\mathscr{E}))\longrightarrow\Aut(C),$$
    where $\Aut(\PP_C(\mathscr{E})/C)$ denotes the automorphisms of $\PP_C(\mathscr{E})$ acting fiberwise. Since $\mathfrak{A}_6$ is simple, it follows from the exact sequence that $\mathfrak{A}_6$ needs to act faithfully either on $C$ or on the general fiber of $\PP_C(\mathscr{E})\to C$. Both situations contradict Lemma \ref{lemma:A6-genus}.
    
    Now suppose $X$ is $\QQ$-factorial. Since $\rho(X)=1$ we can apply \cite[Theorem 1.3]{Prokhorov-G-Fanos} to obtain that $X$ has at most $3$ singular points. However, this means that every singular point of $X$ is $A$-fixed which is not possible due to Theorem \ref{theorem:A6-non-fixed}.
\end{proof}

\begin{lemma}
    \label{lemma:lines-g9}
    Let $V$ a smooth Fano threefold with $\Pic(V)=\mathbb{Z}[-K_V]$ and genus $g=9$. If $D$ denotes the divisor spanned by lines in $V$, then $D\sim-4K_V$.
\end{lemma}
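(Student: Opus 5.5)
I will follow the proof of Lemma~\ref{lemma:lines-g7} line by line, changing only the input data for genus $9$. By \cite[Theorem 3.4]{Isk2}, $V$ contains lines, and they sweep out a surface $D$. If $D\sim a(-K_V)$, then $k=a+1$ is the number of lines meeting a general line $\ell\subset V$. So the statement reduces to showing $k=5$.

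To compute $k$ I will use the double projection from a line, \cite[Theorem 4.3.3]{IP99}, for $g=9$. Let $\alpha\colon\widetilde V\to V$ be the blow-up of $\ell$, with exceptional divisor $E$. This map sits in a diagram
$$\widetilde V\dashrightarrow\widetilde V^+\longrightarrow\PP^3,$$
where the first arrow $\chi$ is a composition of flops and the second arrow $\delta$ is the blow-up of $\PP^3$ along a smooth curve $\Gamma$ of degree $7$ and genus $3$. On $\widetilde V^+$ the relation is $\delta^*\OOO_{\PP^3}(1)\sim-K_{\widetilde V^+}-E^+$, with $E^+=\chi_*(E)$. By \cite[Remark 4.1.3]{IP99}, $k=E^3-(E^+)^3$, so everything comes down to computing $(E^+)^3$.

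The intersection numbers on $\widetilde V$ are standard for the blow-up of a line $\ell$ with $-K_V\cdot\ell=1$, $g(\ell)=0$ and $(-K_V)^3=16$:
$$(-K_{\widetilde V})^3=16-2(1+1)+0=12,\qquad (-K)^2E=-K_V\cdot\ell+2-2g(\ell)=3,\qquad (-K)E^2=-2+2g(\ell)=-2,$$
and $E^3=-\deg N_{\ell/V}=K_V\cdot\ell+2-2g(\ell)=1$. Flops preserve all numbers of the form $(-K)^2D$ and $(-K)D^2$ (\cite[Lemma 4.1.2]{IP99}). Expanding $\delta^*H^3=1$ then gives
$$1=12-3\cdot3+3\cdot(-2)-(E^+)^3,$$
so $(E^+)^3=-4$ and $k=1-(-4)=5$. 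This yields $D\sim-4K_V$.

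The main obstacle is getting the right target in \cite[Theorem 4.3.3]{IP99} for $g=9$. The other candidate description, via a line on $V_{16}$, goes to $\PP^3$, whereas the conic version goes to a quadric, and I need the former. Once that is fixed, the rest is the same routine expansion as in genus $7$.

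As a check, with the target $\PP^3$ we have $\delta^*H^2\cdot(-K_{\widetilde V^+})=(-K)^2(-K-E)$ at the cube level, and the value $\delta^*H^3=1$ is consistent. Expanding $H^2(-K-E)$ further would over-determine the system, so I will only use $H^3$, exactly as in the genus-$7$ case. In the genus-$7$ case that $H^3$ value was $0$ because the target was a curve.
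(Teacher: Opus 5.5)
Your proposal is correct and follows the paper's own argument: the paper proves this lemma by repeating the genus-$7$ computation, with the double projection from a line now landing on $\PP^3$. There $\delta$ is the blow-up of a curve of degree $7$ and genus $3$, so $(\delta^*H)^3=1$ gives $(E^+)^3=-4$, $k=5$ and $D\sim-4K_V$, exactly as you computed. Your final ``check'' paragraph is garbled: $(-K)^2\cdot(-K-E^+)=(-K)^2\cdot\delta^*H=9$, whereas $\delta^*H^2\cdot(-K)=4$. Since that paragraph is not used in the argument, you can simply drop it.
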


\begin{proof}
    The proof is the same as that of Lemma~\ref{lemma:lines-g7}.
\end{proof}

\begin{lemma}
    \label{lemma:divisor-of-lines-g9}
    Let $D$ be the divisor spanned by all lines in $X$. Then $D\sim a(-K_X)$ for $a\in\{1,2,3,4\}$. 
\end{lemma}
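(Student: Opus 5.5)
The plan is to mirror the proof of Lemma~\ref{lemma:divisor-of-lines-g7}, with Lemma~\ref{lemma:lines-g9} playing the role of Lemma~\ref{lemma:lines-g7}. There are three steps: show that $D$ is a divisor, show that it is a positive multiple of $-K_X$, and bound the multiple by semicontinuity along the smoothing.

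First, $D$ is a surface, i.e. lines on $X$ sweep out a divisor rather than all of $X$. Since $X$ is a Gorenstein Fano threefold of genus $9$ with $\rho(X)=1$ and $\iota(X)=1$ (Lemma~\ref{lemma:A6-ext-index}), it is an intersection of quadrics by Lemma~\ref{lemma:A6-ext-quadrics}. Its Hilbert scheme of lines is therefore at most one-dimensional; otherwise $X$ would be covered by lines through a general point, which is impossible for $\iota=1$ anticanonically embedded threefolds. The Hilbert scheme is nonempty, since lines on the smoothing $V$ specialize to lines on $X$. The locus $D$ is $G$-invariant because $G$ acts by projective automorphisms of $X\subset\PP^{10}$ and so permutes lines.

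Second, $D\sim a(-K_X)$ for a positive integer $a$. Here $\mathrm{Pic}(X)=\mathbb{Z}[-K_X]$ and $\mathrm{r}^G(X)=1$, so every $G$-invariant Weil divisor class is rationally a multiple of $-K_X$. The same degree argument used in Lemma~\ref{lemma:A6-ext-g-7-div} then applies. We have $(-K_X)^2\cdot D=16a$, and $D$ is a genuine (possibly non-Cartier) Weil divisor. I expect to need that $D$ is Cartier, or at least that $a$ is an integer. I would argue this via the invariance: a $G$-invariant Weil divisor on a $G\QQ$-Fano with $\mathrm{Cl}(X)^G$ of rank one has class in $\mathrm{Cl}(X)^G\otimes\QQ=\QQ[-K_X]$. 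I would then check that the class of the whole family of lines is in fact integral, as in \cite[Lemma 2.6.9]{CheltsovPinardinProkhorov}.

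Third, the bound $a\leqslant4$ comes from the smoothing. Let $\mathcal{X}\to\Delta$ be the $\QQ$-Gorenstein smoothing, with general fiber $V$. The relative Hilbert scheme of lines is proper over $\Delta$, and its general fiber has its lines sweeping the divisor $D_V\sim-4K_V$ by Lemma~\ref{lemma:lines-g9}. The closure of the relative surface of lines specializes to an effective divisor $D_0\sim-4K_X$ on $X$ containing all limits of lines. The key point, and the main obstacle, is to show that every line on $X$ is such a limit, so that $D\subseteq\operatorname{Supp}D_0$. I would argue this by deformation theory: a line $\ell\subset X$ with normal sheaf of degree $-1$ has $\chi(N_\ell)=1$, so the relative Hilbert scheme has dimension at least $1+\dim\Delta$ at $[\ell]$. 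If some component of lines on $X$ did not deform, its dimension would be at least $2$, contradicting the one-dimensionality from the first step. Granting this, $D\leqslant D_0$, so $16a=(-K_X)^2\cdot D\leqslant(-K_X)^2\cdot D_0=64$, which gives $a\leqslant4$.
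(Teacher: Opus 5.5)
\paragraph{Verdict.} Your overall strategy is the paper's own. Its proof notes that $D$ is $G$-invariant, so $D\sim a(-K_X)$, and then takes $a\leqslant 4$ from the degeneration computation of \cite[Lemma 2.6.9]{CheltsovPinardinProkhorov} combined with Lemma~\ref{lemma:lines-g9}. However, the step you single out as the main obstacle is not closed by your argument, and the gaps are described below.

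\paragraph{Planes.} Step~1 is wrong as stated. A two-dimensional family of lines need not cover $X$: it can fill a plane. Suppose $X$ contains a plane $\Pi$. Then its lines form a two-dimensional family lying entirely in the central fibre. Your lower bound $\dim\geqslant 2$ for $\mathrm{Hilb}(\mathcal{X}/\Delta)$ is already satisfied inside that fibre, so your contradiction argument says nothing about these lines. Nothing then forces $\Pi\subset\operatorname{Supp}D_0$, and your argument no longer gives $D\subseteq\operatorname{Supp}D_0$. You must first show that $X$ contains no planes. The paper takes this from \cite{Prokhorov-G-Fanos}, as in the genus-$7$ subsection, and asserts it for genus $9$ right after this lemma.

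\paragraph{Lines through singular points.} The count $\chi(N_\ell)=1$, and the resulting bound on $\mathrm{Hilb}(\mathcal{X}/\Delta)$ at $[\ell]$, are only valid when $\mathcal{X}\to\Delta$ is smooth along $\ell$. A one-dimensional family of lines all passing through one singular point of $X$ (a cone over a curve) is therefore not covered, and for such lines your argument gives nothing. The standard fix is Koll\'ar's lower bound for $\mathrm{Hom}(\PP^1,\mathcal{X})$ on the total space $\mathcal{X}$, which is a local complete intersection since it has hypersurface singularities. It gives $\dim_{[\ell]}\mathrm{Hom}(\PP^1,\mathcal{X})\geqslant -K_{\mathcal{X}}\cdot\ell+4=5$. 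So the lines in $\mathcal{X}$ near $\ell$ form a family of dimension at least $2$. Once planes are excluded, lines on $X$ form a one-dimensional family, so this family must dominate $\Delta$.

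\paragraph{Integrality of $a$.} This needs no input from the family of lines. Since $D\sim_{\QQ}a(-K_X)$, the divisor $D$ is $\QQ$-Cartier. It is then Cartier, because $X$ has terminal Gorenstein singularities (Kawamata). Hence $a\in\mathbb{Z}$, as $\Pic(X)=\mathbb{Z}[-K_X]$. In particular $D$ is Cartier, contrary to your remark that it may not be.

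\paragraph{Conclusion.} With these repairs, the comparison $16a=(-K_X)^2\cdot D\leqslant(-K_X)^2\cdot D_0=64$ finishes the proof. It only uses flatness of the closure of the relative surface of lines, not $D_0\sim-4K_X$.
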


\begin{proof}
    It is clear that $D\sim a(-K_X)$ for some positive integer $a$, as $D$ is $G$-invariant. The fact that $a\leqslant4$ comes from the same computation as in \cite[Lemma 2.6.9]{CheltsovPinardinProkhorov} using Lemma \ref{lemma:lines-g9}.
\end{proof}

 Thus it follows that $X$ is not $\QQ$-factorial. Let $f:\widetilde{X}\to X$ be a $\QQ$-factorialization. Then $\widetilde{X}$ is an almost Fano $3$-fold containing no planes, with $-K_{\widetilde{X}}\sim f^*(-K_X)$ and $\rho(\widetilde{X})=\mathrm{r}(X)>1$. Hence we are in the same situation as in diagram \eqref{equation:MMP}, whose notation we adopt from now on.

\begin{lemma}
    \label{lemma:720-764-g-9-faithful}
    The action of $G$ on $\operatorname{Cl}(X)$ is faithful.
\end{lemma}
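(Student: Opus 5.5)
We follow the proof of Lemma~\ref{lemma:A6-ext-g-7-faithful}, now with $(-K_X)^3=16$. Since $A$ is the unique non-trivial proper normal subgroup of $G$, the action of $G$ on $\operatorname{Cl}(X)$ is faithful unless $A$ acts trivially. So we assume, for a contradiction, that $A$ acts trivially on $\operatorname{Cl}(X)$. Then $A$ acts trivially on $\operatorname{Cl}(\widetilde{X})\simeq\operatorname{Cl}(X)$, and the whole diagram \eqref{equation:MMP} is $A$-equivariant.

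\textbf{Excluding a del Pezzo fibration.} If $\eta$ is a del Pezzo fibration, then $A$ acts trivially on $Z=\PP^1$. Hence a general fiber $F$ of $\eta\circ h$ is an $A$-invariant smooth del Pezzo surface. By Theorem~\ref{theorem:A6-dP} we get $F\simeq\PP^2$. This should contradict $\rho^A$-considerations or the degree of $F$ against $-K_{\widetilde{X}}$, as in the genus~$7$ case. Concretely, $(-K_{\widetilde{X}})^2\cdot F$ equals $(-K_F)^2=9$, while $F$ maps to an $A$-invariant surface of $X$.

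\textbf{Showing $n=0$.} Suppose $n>0$. Each $h_i$ is the $A$-equivariant blow-up of an $A$-invariant irreducible curve $C_i$, and $A$ acts faithfully on it by Theorem~\ref{theorem:A6-non-fixed}. If $C_i$ is singular, Lemma~\ref{lemma:A6-singular-curves} gives $\mathrm{p}_a\geqslant 46$, which pushes $(-K)^3$ above $64$. So each $C_i$ is smooth with $g(C_i)\geqslant10$ by Lemma~\ref{lemma:A6-genus}. The formula of \cite[Section~4.2]{CheltsovPinardinProkhorov} gives
\[
(-K_{\widetilde{X}_n})^3\geqslant 16+\sum_i\bigl(2\deg\widetilde{E}_i+2g(C_i)-2\bigr).
\]
Here we need a lower bound on the degree of an $A$-invariant surface, the analogue of Lemma~\ref{lemma:A6-ext-g-7-div}. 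By Corollary~\ref{corollary:A6-ext-g-7-9}, $|-K_X|$ has a unique $G$-invariant member and at most one $A$-invariant member. Hence any $A$-invariant irreducible surface $E_1$ with $G$-orbit $E_1+E_2\sim a(-K_X)$ has $\deg E_1=8a$ (or $16a$ if $E_1$ is $G$-invariant). This gives $\deg\widetilde{E}_i\geqslant 8$, so each step adds at least $34$. Therefore $n=1$, with $(-K_{\widetilde{X}_1})^3\leqslant 64$ and $g(C_1)\in\{10,\dots,16\}$ constrained.

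We then run the two subcases as before. If $\rho(\widetilde{X}_1)=1$, then $\widetilde{X}_1$ has index $\geqslant2$ and carries an $A$-action. It is $\PP^3$, a quadric, or a del Pezzo threefold. We exclude these using Lemma~\ref{lemma:A6-invariant-curves}, \cite[Theorem~2.20]{cheltsov2014five}, and the genus bounds, since curves of genus $\geqslant10$ and low degree in a $5$-dimensional irreducible representation are impossible. If instead $\widetilde{X}_1\to\PP^2$ is a conic bundle, we use Lemma~\ref{lemma:A6-invariant-curves-P2}. It forces the discriminant to be empty or a sextic. In the empty case $\widetilde{X}_1\simeq\PP^1\times\PP^2$ via \cite{almost-fano-bundles}, which gives an $A$-equivariant map to $\PP^1$, a contradiction.

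\textbf{The conic bundle case $n=0$.} Here $\eta\colon\widetilde{X}\to\PP^2$ is a conic bundle, and we use diagram \eqref{equation:conic-bundles} with $\tau\in G\setminus A$. The relation $H+\tau(H)\sim\lambda(-K_X)$ gives $2(12-d)=16\lambda$, where $d$ is the degree of the $A$-invariant discriminant curve. Lemma~\ref{lemma:A6-invariant-curves-P2} requires $d\in\{0,6\}$ (or $d\geqslant12$), and $16\mid 2(12-d)$ fails for these values. This is an immediate contradiction.

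The main obstacle is the $n=1$ step: bounding the degree of the $A$-invariant exceptional divisors and ruling out every $\rho(\widetilde{X}_1)=1$ target. There we would first try the numerical identity $16=(-K_{\widetilde{X}_1})^3+2K_{\widetilde{X}_1}\cdot C_1+2g(C_1)-2$, combined with the representation-theoretic genus bounds.
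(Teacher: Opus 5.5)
Your route is different from the paper's, and as written it has a real gap at the step you yourself flag. The paper does not rerun the MMP. It takes the unique $G$-invariant member $S\in|-K_X|$ (Corollary~\ref{corollary:A6-ext-g-7-9}), which is a smooth K3 surface by Lemma~\ref{lemma:A6-ext-invariant-surfaces}, and uses the restriction $\operatorname{Cl}(X)\hookrightarrow\Pic(S)$. If $A$ acted trivially on $\operatorname{Cl}(X)$, the image would lie in $\Pic(S)^{A}\simeq\mathbb{Z}$ (Lemma~\ref{lemma:K3-a6}). That contradicts the non-$\QQ$-factoriality proved in Lemma~\ref{lemma:A6-ext-g9-Q-factorial}.

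\textbf{The gap.} Your bound $\deg\widetilde{E}_i\geqslant 8$ is too weak. It leaves open the case $n=1$, $\deg\widetilde{E}_1=8$, $g(C_1)\in\{10,16\}$, $(-K_{\widetilde X_1})^3=30+2g(C_1)\in\{50,62\}$, and you do not close it. The $\rho(\widetilde X_1)=1$ subcase is harmless, since neither value is the degree of $\PP^3$, a quadric or a del Pezzo threefold. Your $\PP^1$-bundle subcase, however, does not transfer from genus $7$. Reaching $\PP^1\times\PP^2$ needs $(-K_{\widetilde X_1})^3=54$. Here $2c_1^2-8c_2+54\in\{50,62\}$ gives $c_1^2-4c_2=4$, because $50$ is impossible mod $4$. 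So the normalized bundle has $c_1=0$, $c_2=-1$, and \cite[Proposition 2.10]{almost-fano-bundles} does not apply. You also drop the sextic-discriminant case, although that one is excluded exactly as in genus $7$.

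\textbf{The fix.} The missing observation is this. $\deg E_1=8$ forces $a=1$, so $E_1+\tau(E_1)$ is a $G$-invariant member of $|-K_X|$ and hence equals $S$. But $S$ is irreducible: Lemma~\ref{lemma:A6-ext-invariant-surfaces} forbids exactly two components, and $16$ components would require a subgroup of index $16$ in $M_{10}$. Therefore every $A$-invariant irreducible surface has degree at least $16$. This settles the remaining cases:
\begin{itemize}
\item $16+2\cdot 16+2\cdot 10-2=66>64$ rules out $n\geqslant 1$ at once.
\item $(-K_F)^2\leqslant 9<16$ rules out the del Pezzo fibration. This also repairs your claim $F\simeq\PP^2$, which only holds on $\widetilde X_n$, not on $\widetilde X$.
\item Your conic-bundle computation $12-d=8\lambda$ then finishes the proof.
\end{itemize}
With this repair your MMP argument is a valid alternative. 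It avoids the Lefschetz-type injectivity $\operatorname{Cl}(X)\hookrightarrow\Pic(S)$ that the paper uses. It is, however, much longer than the paper's proof, which exploits directly that genus $9$ provides an $A$-invariant smooth K3 anticanonical section.
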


\begin{proof}
    Assume that $A$ acts trivially on $\operatorname{Cl}(X)$. Moreover, notice that the unique $G$-invariant divisor $S\in\vert-K_X\vert$ is a smooth K3 surface because of Lemma \ref{lemma:A6-ext-invariant-surfaces}. Now, there is an injective homomorphism of $G$-modules $\operatorname{Cl}(X)\hookrightarrow\Pic(S)$ and since $A$ acts trivially on $\operatorname{Cl}(X)$, we should have that $\operatorname{Cl}(X)\hookrightarrow\Pic(S)^{\mathfrak{A}_6}$. Now, Lemma \ref{lemma:K3-a6} would imply that $\operatorname{Cl}(X)\simeq\mathbb{Z}$ which contradicts the fact that $X$ is not $\QQ$-factorial.
\end{proof}

As a consequence we get from the representation theory of $M_{10}$ (see \cite{lmfdb}) that $\mathrm{r}(X)\geqslant10$. In particular, running the MMP on $X$ one sees that the number of birational contractions will be $n\geqslant7$.

\begin{lemma}
    \label{lemma:720-764-quadrics-cubics-g9}
    The $3$-fold $X$ contains neither quadric nor cubic surfaces (i.e., there is no irreducible surface $S\subset X$ with $(-K_X)^2\cdot S=2$ or $(-K_X)^2\cdot S=3$).
\end{lemma}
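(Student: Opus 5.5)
The plan is to mirror Lemmas~\ref{lemma:A6-ext-quadrics-g7} and~\ref{lemma:A6-ext-cubics-g7}, now with $(-K_X)^3=16$, $X\subset\PP^{10}$ cut out by quadrics, and the bound $a\leqslant4$ from Lemma~\ref{lemma:divisor-of-lines-g9}. Let $S_1$ have degree $e\in\{2,3\}$ and let $S=\sum_{i=1}^m S_i$ be its $G$-orbit. Then $S\sim a(-K_X)$ and
\[
em=16a .
\]
Since $S$ is $G$-invariant and $|-K_X|$ contains a unique $G$-invariant member, which is a smooth K3 surface by Lemma~\ref{lemma:A6-ext-invariant-surfaces} (proof of Lemma~\ref{lemma:720-764-g-9-faithful}), we first dispose of $a=1$. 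In that case $S$ would be reducible, so it must be irreducible, i.e.\ $m=1$, which contradicts $em=16$.

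Hence $a\in\{2,3,4\}$. Quadrics and minimal-degree cubic scrolls are covered by lines, so $S$ lies in the divisor of lines. For a cubic we first show, as in Lemma~\ref{lemma:A6-ext-cubics-g7}, that $\langle S_1\rangle=\PP^4$: if the span were a $\PP^3$, every quadric through $X$ would vanish on it. The admissible orbit lengths of $G$ are $1,2,10,12,20,30,36,40,45,\dots$ (indices of subgroups of $M_{10}$). For $e=2$ we need $m=8a\in\{16,24,32\}$. For $e=3$ we need $3m=16a$, so $3\mid a$ and $m=16$. None of these is an index of a subgroup of $M_{10}$, because subgroups of order $720/16=45$, $30$, $45/2$ do not exist (a subgroup of order $45$ or $30$ in $\mathfrak{A}_6$ or $M_{10}$ would give a permutation action on cosets that is impossible, by the subgroup list in \cite{lmfdb}). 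The proof therefore reduces to the purely group-theoretic step of checking the list of subgroup indices of $M_{10}$ against $m\in\{16,24,32\}$ for quadrics and $m=16$ for cubics.

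If some index unexpectedly survives (I expect $m=16$ might need care, say via a subgroup of order $45$, which does not exist in $M_{10}$ since $\mathfrak{A}_6$ has no element of order $15$ and no group of order $45$ besides the abelian ones, which contain elements of order $15$), the fallback is the lattice argument from the genus $7$ case. Restrict to a general K3 surface $H\in|-K_X|$ and set $C_i=S_i|_H$, so $C_i^2=-2$. The analogue of \eqref{equation:conics} gives
\[
\sum_{j\neq 1} C_1\cdot C_j = ea+2 .
\]
The bound $C_i\cdot C_j\leqslant e$ follows from the absence of planes and quadrics in linear spans. Divisibility by the orbit lengths of the stabilizer $G_1$, together with the Hodge index theorem or the rank bound $\rho(H)\leqslant20$, then gives a contradiction. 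I expect the main obstacle to be only the verification of the orbit-length list. The geometric fallback would be needed only if that check fails.
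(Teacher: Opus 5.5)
Your argument is correct and is essentially the paper's proof. You combine the relation $em=16a$, the bound $a\leqslant4$ from the divisor of lines (after the span argument forcing a cubic to be a minimal-degree scroll), and the fact that $16$ and $24$ are not subgroup indices of $M_{10}$ (it has no element of order $15$, hence no subgroup of order $45$ or $30$); the paper simply applies the index constraint first ($a\geqslant5$ for quadrics, $a\geqslant9$ for cubics) and then contradicts it with the lines bound.

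Your separate treatment of $a=1$ is redundant, since index $8$ is also excluded group-theoretically. The K3 lattice fallback is never needed.
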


\begin{proof}
    Assume that $S_1\subset X$ is an irreducible quadric and consider $S=\sum_{i=1}^mS_i$ its $G$-orbit. Then we have that $S\sim a(-K_X)$ for some positive integer $a$ and
    $$2m=\deg(S)=(-K_X)^2\cdot S=a(-K_X)^3=16a.$$
    Looking at the list of indices of subgroups of $G$, we see that $a\geqslant5$. On the other hand, since $S$ is covered by lines, if we consider $D$ as the divisor spanned by all lines in $X$ we should have that $S\subset D$ and Lemma \ref{lemma:divisor-of-lines-g9} implies that $a\leqslant4$. This contradiction shows that there are no quadric surfaces in $X$.

    Similarly, assume that $S_1$ is a cubic on $X$, i.e., $(-K_X)^2\cdot S_1=3$, and denote by $S=\sum_{i=1}^mS_i$ its $G$-orbit. In this case we obtain that
    $$3m=\deg(S)=(-K_X)^2\cdot S=a(-K_X)^3=16a,$$
    and in particular this implies that $a\geqslant9$.
    Now, consider $M=\langle S_1\rangle$ as the linear span of $S_1$, so $M$ can be either $\PP^3$ or $\PP^4$. In the former case, consider the restriction map 
    $$H^0(\PP^{10},\mathscr{I}_X(2))\longrightarrow H^0(M,\mathscr{I}_{S_1/M}(2))$$
    Since $S_1$ is a cubic in $\PP^3$ we have that $H^0(M,\mathscr{I}_{S_1/M}(2))=0$, and this means that every quadric containing $X$ is zero along $M$. As we know from Lemma \ref{lemma:A6-ext-quadrics} that $X$ is cut out by quadrics this would imply $X=M$, a contradiction. In the latter case, we see that $S_1$ would be a rational normal scroll, as it is a variety of minimal degree. In particular, $S$ is covered by lines and then Lemma \ref{lemma:divisor-of-lines-g9} gives $a\leqslant4$, a contradiction.
\end{proof}

By the previous Lemma we know that the minimal degree $d$ of a surface contracted by the morphisms $h_i$ in the diagram \eqref{equation:MMP} verifies $d\geqslant4$, so by \cite[Lemma 2.4]{Prokhorov-g-12} we obtain
\begin{align*}
    (-K_{\widetilde{X}_n})^3\geqslant (-K_X)^3+(2d-2)n\geqslant 16+7\cdot 6=58
\end{align*}
By the classification of smooth Fano $3$-folds we have that $\rho(\widetilde{X}_n)=3$ is not possible as in such case $(-K_{\widetilde{X}_n})^3\leqslant52$. Now, if $\rho(\widetilde{X}_n)=2$, by the same inequality we obtain $(-K_{\widetilde{X}_n})^3\geqslant64$, so $\rho(\widetilde{X}_n)=1$, but then this inequality leads us to a contradiction. Consequently, we obtain that $g\neq9$.

This argument concludes the proof of Theorem~\ref{theorem:A6-ext-Gorenstein}. 

\section{Proof of the Technical Theorem: non-Gorenstein case}
\label{section:A6-ext-non-Gorenstein}

In this section we prove the following theorem. 

\begin{theorem}
    \label{theorem:A6-ext-non-gorenstein}
    Let $X$ be a rational non-Gorenstein threefold with terminal singularities and let $G\subset\Aut(X)$ be a finite subgroup such that $X$ is a $G\QQ$-Fano threefold. Then $G\not\simeq M_{10}$.
\end{theorem}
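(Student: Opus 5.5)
We argue by contradiction: assume $G\simeq M_{10}$ and let $A\subset G$ be the subgroup isomorphic to $\mathfrak{A}_6$. The plan follows the standard approach for non-Gorenstein $G\QQ$-Fano threefolds developed in \cite{Prokhorov-p-groups,P:JAG:simple,CheltsovPinardinProkhorov}. First we study the basket of non-Gorenstein points, and then we use orbifold Riemann--Roch and the Kawamata--Bogomolov inequality to bound it.

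\textbf{Step 1: the non-Gorenstein locus.} Let $\Sigma\subset X$ be the set of non-Gorenstein points. It is finite, nonempty and $G$-invariant. By Theorem~\ref{theorem:A6-non-fixed}, no point of $\Sigma$ is $A$-fixed. Hence every $A$-orbit in $\Sigma$ has length at least $6$, and every $G$-orbit has length in $\{10,12,20,\dots\}$. The index-$10$ case has stabilizer $\mumu_3^2\rtimes Q_8\simeq\operatorname{PSU}_3(\FF_2)$, and a point with this stabilizer is excluded by Lemma~\ref{lemma:A6-ext-fixed}. So every $G$-orbit in $\Sigma$ has length at least $12$. Next consider a point $P$ of index $r$ whose stabilizer is $G_P$, e.g.\ $\mathfrak{A}_5$ or a group of order $60$ or $36$. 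I would run the index-one-cover analysis from the proof of Lemma~\ref{lemma:A6-ext-fixed} to restrict the pair $(r,G_P)$. The stabilizer acts on the tangent space of the index-one cover, which has dimension at most $4$, through a $\mumu_r$-extension. This should force $G_P$ to admit a faithful low-dimensional projective representation compatible with the eigenspace structure. In particular it should bound $r$ (for instance $r\leqslant 5$ or so) for orbits of length $12$ and $20$.

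\textbf{Step 2: numerical constraints.} Every orbit contributes at least $12$ points to the basket of $X$. We then apply Kawamata's inequality
\[\sum_{P\in\mathcal{B}}\Big(r_P-\frac{1}{r_P}\Big)<24,\]
which comes from $-K_X\cdot c_2(X)>0$. Its left side is at least $12\cdot\frac{3}{2}=18$. Consequently the basket consists of a single $G$-orbit of $12$ or $20$ points. If the orbit has $12$ points, the index is $r=2$ (indices $r\geqslant3$ would give at least $12\cdot\frac{8}{3}=32$). If the orbit has $20$ points, the index is also $r=2$. The case of $20$ points gives exactly $30>24$, so it is excluded. We are left with the basket $12\times\frac12(1,1,1)$, where $-K_X\cdot c_2(X)=24-18=6$. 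Then orbifold Riemann--Roch gives
\[h^0(-K_X)=\tfrac12(-K_X)^3+3-\tfrac{12}{4}=\tfrac12(-K_X)^3,\]
with $(-K_X)^3\in\tfrac12\mathbb{Z}$ and $(-K_X)^3\equiv 12/2\cdot\ldots$ modulo the usual parity. The representation theory of $3.M_{10}$ (Lemma~\ref{lemma:A6-ext-projective-spaces}) forces $h^0(-K_X)$ to be $0$, or a sum of the dimensions $1,9,10,16$. I would also include the Bogomolov-type bound $(-K_X)^3\leqslant 125/2$ used in the cited papers.

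\textbf{Step 3: geometric exclusion.} If $h^0(-K_X)\geqslant2$, we take a $G$-invariant subsystem $\mathcal{M}\subset|-K_X|$ of positive dimension. Corollary~\ref{corollary:A6-ext-K3-surfaces} then gives a K3 surface $S$ with Du Val singularities. This surface must pass through the $\frac12(1,1,1)$ points, since $-K_X$ is not Cartier there. But a general member through a $\frac12(1,1,1)$ point has a non-Du Val singularity, or contradicts Corollary~\ref{corollary:K3-a6}: $S$ would be smooth, yet it contains non-Gorenstein points of $X$. This is a contradiction. So $h^0(-K_X)\leqslant1$, hence $(-K_X)^3\leqslant 2$. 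For such small degree I would use Lemma~\ref{lemma:A6-ext-threefolds-log-pair-mobile} with the mobile system $|-2K_X|$ or $|-3K_X|$, whose dimension is computed by Riemann--Roch. Together with the smallness of $(-K_X)^3$ and the canonicity of $(X,\frac1k\mathcal{M})$, this yields a contradiction via a Noether--Fano type estimate: the self-intersection $\frac{1}{k^2}\mathcal{M}^2\cdot(-K_X)$ is compared with the multiplicities at the $12$ points.

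\textbf{Main obstacle.} The delicate part is Step 1. We must show that no orbit of non-Gorenstein points can have length $12$ with index $r\geqslant3$, and we must pin down the local types precisely enough for the basket inequality to bite. This requires analysing the extension $\mumu_r\to G_P'\to\mathfrak{A}_5$ on a tangent space of dimension at most $4$, as in Lemma~\ref{lemma:A6-ext-fixed}. I would first try to show that $\mathfrak{A}_5$ must act through a $2$-dimensional representation of $2.\mathfrak{A}_5$ on an eigenspace, and then read off the admissible terminal weights.
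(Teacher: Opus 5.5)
Your Steps~1--2 match the paper's Proposition~\ref{proposition:A6-ext-non-Gorenstein}. Lemma~\ref{lemma:A6-ext-fixed} plus Kawamata's inequality already force $\Sigma$ to be a single $G$-orbit of twelve points of type $\frac12(1,1,1)$, so the tangent-space analysis you call the main obstacle is unnecessary. Riemann--Roch then gives $h^0(-K_X)=\frac12(-K_X)^3$.

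The genuine gap is Step~3. Corollary~\ref{corollary:K3-a6} applies only to a surface with a faithful $A$-action, i.e.\ an $A$-invariant member. A general member of a $G$-invariant linear system $\mathcal{M}$ is $A$-invariant only if $A$ acts trivially on $\mathcal{M}$. Moreover, a general member of $|-K_X|$ does not acquire a non-Du Val point at a $\frac12(1,1,1)$ point. Locally it is the image of a hyperplane under $\CC^3\to\CC^3/\mumu_2$, which is an $A_1$ point. So a general $S\in|-K_X|$ is a K3 surface with twelve $A_1$ points, and there is no contradiction. Applied to a pencil on which $A$ acts trivially, your argument proves only Lemma~\ref{lemma:A6-ext-non-Gorenstein-A6-inv}: $|-K_X|$ has at most one $A$-invariant member. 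Combined with the representation theory of $M_{10}$, this gives $h^0(-K_X)\geqslant 9$ as soon as $h^0(-K_X)\geqslant 2$ (Corollary~\ref{corollary:A6-ext-KX-dim-2}). That is the opposite of your conclusion $h^0(-K_X)\leqslant 1$.

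Hence the whole range $(-K_X)^3\geqslant 18$ is left untreated, and this is where the paper's main work lies. The steps are as follows.
\begin{enumerate}
\item Blow up $\Sigma$ to get a weak Fano $Y$. Its anticanonical model $\overline{X}\subset\PP^{g+1}$ is a canonical Gorenstein intersection of quadrics containing the twelve planes $\overline{E}_i$.
\item Prove that $\phi\colon Y\to\overline{X}$ is small, using the log canonical threshold of $\overline{D}=\sum\overline{E}_i$ and Koll\'ar--Shokurov connectedness. Then $\overline{X}$ is terminal Gorenstein, and its smoothing gives $|\Sing(\overline{X})|\leqslant 28$.
\item Choose $X$ with $\dim|-K_X|$ maximal in its $G$-birational class. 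The two-ray game from $Y$, together with the Gorenstein case (Theorem~\ref{theorem:A6-ext-Gorenstein}), shows that $\phi$ is not an isomorphism.
\item At most two planes pass through any point, and each plane contains at least $6$ singular points (an orbit of its stabilizer $\mathfrak{A}_5$). This gives at least $36$ singular points, a contradiction.
\end{enumerate}
Nothing in your proposal replaces this chain, and in particular it never uses the Gorenstein case.

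Your Noether--Fano sketch for the remaining case $(-K_X)^3=2$ is also not carried out. There the paper notes that the unique member $S$ is $G$-invariant and singular along $\Sigma$. By Lemma~\ref{lemma:A6-ext-invariant-surfaces} it therefore has $m\geqslant 10$ components, contradicting $8=(-2K_X)^2\cdot S\geqslant m$.
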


Throughout this section, we assume that $X$ is a non-Gorenstein~rational $G\QQ$-Fano~\mbox{threefold}, where $G$ is a subgroup in $\mathrm{Aut}(X)$ such that $G\simeq M_{10}$, and we seek a contradiction.

\subsection{Non-Gorenstein points and the anticanonical linear system}
\label{subsection:non-gorenstein-anticanonical}

First, we start counting the singular points of $X$ and determining their singularity types.

\begin{proposition}
    \label{proposition:A6-ext-non-Gorenstein}
    Let $\Sigma$ be the set of all non-Gorenstein points of $X$. Then $\vert\Sigma\vert=12$, any point $P\in\Sigma$ is a cyclic quotient singularity of type $\frac{1}{2}(1,1,1)$ and $G$ acts transitively on $\Sigma$.
\end{proposition}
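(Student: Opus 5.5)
The plan follows the standard orbifold Riemann--Roch approach used in \cite{P:JAG:simple,CheltsovPinardinProkhorov}. Write the basket of $X$ as a collection of cyclic quotient points $\frac{1}{r_i}(1,-1,b_i)$. Kawamata's inequality
$$\sum_{P}\Big(r_P-\frac{1}{r_P}\Big)<24$$
(from $-K_X\cdot c_2(X)>0$) bounds both the number of non-Gorenstein points and their indices. The set $\Sigma$ is $G$-invariant and non-empty, since $X$ is not Gorenstein.

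First, by Theorem~\ref{theorem:A6-non-fixed} and Lemma~\ref{lemma:A6-ext-fixed}, no point of $\Sigma$ is fixed by $A$ or by a subgroup of $G$ isomorphic to $\operatorname{PSU}_3(\FF_2)$. So every $G$-orbit in $\Sigma$ has length at least $10$, and the possible orbit lengths are indices of subgroups of $M_{10}$. These are $10$ (excluded by Lemma~\ref{lemma:A6-ext-fixed}, since the stabilizer is $\mumu_3^2\rtimes Q_8$), $12$ (stabilizer of order $60$, i.e. $\mathfrak{A}_5$ is impossible inside $M_{10}$ except via $\mathfrak{A}_5\subset A$; we must check which subgroup of order $60$ occurs), $20$, $30$, $36$, $40$, and so on. Each point of an orbit contributes at least $r-\frac{1}{r}\geqslant\frac32$ to the sum, and $\frac32\cdot 20=30>24$. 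Hence $\Sigma$ consists of a single orbit of length at most $15$, which forces length $12$ with $12(r-\frac1r)<24$. That gives $r=2$, and each point contributes exactly one basket point, so $\Sigma$ has $12$ points of index $2$. The case of a non-cyclic-quotient point of index $2$ (a $cA/2$ or $cAx/2$ point, contributing several basket points of type $\frac12(1,1,1)$) must then be ruled out. Such a point contributes at least $2\cdot\frac32=3$, giving $12\cdot 3=36>24$. So each point is a cyclic quotient $\frac12(1,1,1)$ and $G$ acts transitively.

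The main obstacle is the orbit-length bookkeeping in $M_{10}$. We need to confirm that an orbit of length $12$ is actually realisable and that length $10$ (or length $\leqslant 15$ in general) is impossible. This requires listing the subgroups of $M_{10}$ of index at most $15$ and using Lemma~\ref{lemma:A6-ext-fixed} for the index-$10$ subgroup $\operatorname{PSU}_3(\FF_2)$. If an index-$12$ subgroup does not exist in $M_{10}$ (only $A$ has $\mathfrak{A}_5$ subgroups, giving $A$-orbits of length $6$ which merge into $G$-orbits of length $12$), the stabilizer becomes $\mathfrak{A}_5$ and we use the $A$-orbit of length $6$ analysis. I would also double-check that a singular index-1 cover point does not change the contribution estimate, since that is the step where $cA/2$ points could sneak in.
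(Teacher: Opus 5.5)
Your proposal is correct and is essentially the paper's argument. Orbits of length $1$, $2$ and $10$ are excluded by Theorem~\ref{theorem:A6-non-fixed} and Lemma~\ref{lemma:A6-ext-fixed}; Kawamata's bound $\sum(r-\frac1r)<24$, which is the content of \cite[Corollary 2.4.6]{CheltsovPinardinProkhorov}, then gives $|\Sigma|<16$, and the same bound forces $r=2$ and a single basket point at each $P\in\Sigma$.

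The hesitation in your last paragraph is unnecessary. The stabilizer $\mathfrak{A}_5\subset A$ is itself a self-normalizing subgroup of $M_{10}$ of index $12$. Moreover, all the argument needs is that every subgroup of $M_{10}$ of index less than $16$ has index $1$, $2$, $10$ or $12$; in particular, $M_{10}$ has no subgroup of order $48$.
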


\begin{proof}
    By Lemma \ref{lemma:A6-ext-fixed}, we know that $\vert\Sigma\vert>10$. On the other hand, it follows from \cite[Corollary 2.4.6]{CheltsovPinardinProkhorov} that $\vert\Sigma\vert<16$. Thus, by Theorem \ref{theorem:A6-non-fixed} and looking at the indices of subgroups of $G$ we see that $\Sigma$ consists of a unique $G$-orbit of length $12$ and the stabilizer of a point $P\in\Sigma$ is $G_P\simeq\mathfrak{A}_5$. We also obtain from \cite{Reid:YPG} that the index of each $P\in\Sigma$ is $r=2$. Moreover, every point $P\in\Sigma$ is a cyclic quotient singularity, since the basket of singularities of $P$ contains just one ``virtual point''.
\end{proof}

\begin{corollary}
    \label{corollary:A6-ext-2K-cartier}
    The divisor $2K_X$ is Cartier.
\end{corollary}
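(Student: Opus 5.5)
The plan is to check that $2K_X$ is Cartier locally at every point of $X$, separating Gorenstein from non-Gorenstein points. The key tool is the standard fact that for a terminal threefold singularity $(X,P)$ of Gorenstein index $r$, the divisor $rK_X$ is Cartier near $P$ (in fact the local class group of $K_X$ at $P$ is cyclic of order exactly $r$). So Cartierness of $2K_X$ is local, and it suffices to show that every point of $X$ has Gorenstein index dividing $2$.

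First, at a point $P\notin\Sigma$ the variety is Gorenstein at $P$ by definition of $\Sigma$. Hence $K_X$ is Cartier in a neighbourhood of $P$, and therefore $2K_X$ is Cartier there as well.

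Second, take $P\in\Sigma$. By Proposition~\ref{proposition:A6-ext-non-Gorenstein}, $(X,P)$ is analytically (equivalently, étale locally) a cyclic quotient singularity $\CC^3/\mumu_2$ of type $\frac12(1,1,1)$. Its index-one cover is the smooth germ $\CC^3\to\CC^3/\mumu_2$. The generator $-\mathrm{id}$ acts on the generator $dx\wedge dy\wedge dz$ of $\omega_{\CC^3}$ by the scalar $(-1)^3=-1$. Consequently $(dx\wedge dy\wedge dz)^{\otimes 2}$ is $\mumu_2$-invariant and descends to a nowhere-vanishing local generator of $\OOO_X(2K_X)$ near $P$. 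Thus $2K_X$ is Cartier at $P$.

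Finally, I need to pass from analytic-local to Zariski-local Cartierness. For the reflexive rank-one sheaf $\OOO_X(2K_X)$, being invertible at a point can be checked after completion, because local freeness of a coherent module descends along the faithfully flat map $\OOO_{X,P}\to\widehat{\OOO}_{X,P}$. Combining the two cases, $2K_X$ is Cartier on all of $X$. I expect no real obstacle here; the only point requiring a word of care is this analytic-to-algebraic descent, which is standard.
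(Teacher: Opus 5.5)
Your proof is correct and takes essentially the paper's approach. The paper states this with no separate proof, as an immediate consequence of Proposition~\ref{proposition:A6-ext-non-Gorenstein}: every non-Gorenstein point is of type $\frac{1}{2}(1,1,1)$, so it has index $2$, and $K_X$ is Cartier at all other points. Your computation with the invariant generator $(dx\wedge dy\wedge dz)^{\otimes 2}$ and your remark on descending from the completion simply make explicit what the paper leaves implicit.
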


\begin{proposition}
    \label{proposition:A6-ext-KX-dim}
    One has $\dim\vert-K_X\vert\neq0$.
\end{proposition}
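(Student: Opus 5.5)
We compute $h^0(X,\OOO_X(-K_X))$ with the orbifold Riemann--Roch formula and then use representation theory to rule out the value $1$. By Proposition~\ref{proposition:A6-ext-non-Gorenstein}, the basket of $X$ consists of exactly $12$ points of type $\frac{1}{2}(1,1,1)$. Reid's formula \cite{Reid:YPG}, combined with Kawamata--Viehweg vanishing, then gives
$$
h^0(X,\OOO_X(-K_X))=\frac{1}{2}(-K_X)^3+3-\sum_{P\in\Sigma}\frac{b_P(r_P-b_P)}{2r_P}=\frac{1}{2}(-K_X)^3+3-\frac{12}{4}=\frac{1}{2}(-K_X)^3 .
$$
The same local data enter the formula $-K_X\cdot c_2(X)=24-\sum_P\big(r_P-\frac{1}{r_P}\big)=24-18=6$, which is positive. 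By Corollary~\ref{corollary:A6-ext-2K-cartier}, $(-K_X)^3\in\frac{1}{2}\mathbb{Z}$, so $h^0(X,\OOO_X(-K_X))$ is determined by the degree.

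Suppose, for contradiction, that $\dim|-K_X|=0$, so that $h^0(X,\OOO_X(-K_X))=1$ and $(-K_X)^3=2$. Let $S$ be the unique member of $|-K_X|$; it is $G$-invariant. First I would exclude $S=0$: the case $h^0=0$ occurs only if $(-K_X)^3=0$, which is absurd.

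Next I would apply Lemma~\ref{lemma:A6-ext-invariant-surfaces}. It says that either $S$ is a smooth K3 surface with a faithful $G$-action, or $S$ has at least $10$ components permuted transitively by $G$. The first alternative fails: $S$ is not Cartier at the $\frac{1}{2}(1,1,1)$ points, since $K_X$ is not Cartier there. If $S$ passes through $P\in\Sigma$, the local analysis at a $\frac{1}{2}(1,1,1)$ point shows $S$ is singular, with a non-Du Val or quotient singularity, contradicting smoothness. If instead $S$ avoids $\Sigma$, then $-K_X$ would be Cartier near $\Sigma$, which is absurd because the local class group there is $\mumu_2$, generated by $K_X$. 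So $S$ is reducible, $S=\sum_{i=1}^m S_i$ with $m\geqslant10$, and $G$ acts transitively on the components. Then $(-K_X)^2\cdot S_i=2/m\leqslant 1/5$. Since $2(-K_X)$ is Cartier, $2(-K_X)^2\cdot S_i$ lies in $\frac{1}{2}\mathbb{Z}_{>0}$, or more precisely in $\frac{1}{4}\mathbb{Z}$ after accounting for the non-Cartier points. Combining this lower bound with $(-K_X)^2\cdot S_i\leqslant 1/5$ and with the possible index values $m\in\{10,12,\dots\}$ should force $m\leqslant 8$, a contradiction. For example, $m=10$ gives $(-K_X)^2\cdot S_i=1/5$, which is not in $\frac{1}{4}\mathbb{Z}$.

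The step I expect to be the main obstacle is this divisibility bookkeeping: pinning down exactly which denominators can occur in $(-K_X)^2\cdot S_i$ for a Weil divisor $S_i$ passing through $\frac{1}{2}$-points. If it fails, my fallback is an alternative route through Lemma~\ref{lemma:A6-ext-threefolds-lc}. The pair $(X,S)$ is log canonical, hence $S$ is reduced. A general hyperplane-type argument, or Kawamata subadjunction applied to a component together with the fact that its stabilizer has index at least $10$, would then produce a small-degree surface contradicting the bound $\frac{1}{2}(-K_X)^3=1$.
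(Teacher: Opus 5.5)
Your proposal is correct and is essentially the paper's proof. Riemann--Roch with the twelve $\frac{1}{2}(1,1,1)$ points gives $\dim|-K_X|=\frac{1}{2}(-K_X)^3-1$. The unique invariant member $S$ is singular along $\Sigma$, so Lemma~\ref{lemma:A6-ext-invariant-surfaces} forces $m\geqslant10$ components. The divisibility step you worried about is immediate: $-2K_X$ is ample and Cartier, so $(-2K_X)^2\cdot S_i$ is a positive integer, hence $8=(-2K_X)^2\cdot S=m\,(-2K_X)^2\cdot S_i\geqslant m$, which is the contradiction, and no fallback is needed.
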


\begin{proof}
    It follows from \cite[Corollary 2.4.7]{CheltsovPinardinProkhorov} and Proposition \ref{proposition:A6-ext-non-Gorenstein} that
    \begin{center}
        $\dim\vert-K_X\vert=\dfrac{1}{2}(-K_X)^3-1$, and in particular $\vert-K_X\vert$ is not empty.
    \end{center}
    Suppose $\dim\vert-K_X\vert=0$, so there exists a unique $S\in\vert-K_X\vert$ which is $G$-invariant and $(-K_X)^3=2$. Since $S$ is singular at every non-Gorenstein point, by Lemma \ref{lemma:A6-ext-invariant-surfaces} we have that $S$ is reducible and reduced, and it contains at least $10$ irreducible components. Consider $S^\prime$ an irreducible component of $S$ and let $m$ be the number of irreducible components of $S$. Since $2K_X$ is Cartier, we obtain that
    $$8=(-2K_X)^2\cdot S=m(-2K_X)^2\cdot S^\prime= m((-2K_X)^2\cdot S^\prime)\geqslant m\geqslant10,$$
    which is a contradiction.
 \end{proof}

 The following corollary follows directly from Corollary \ref{corollary:A6-ext-K3-surfaces}.

 \begin{corollary}
    \label{corollary:A6-ext-non-Gorenstein-k3}
     Let $S$ be a general surface in $\vert-K_X\vert$. Then $(X,S)$ has purely log terminal singularities and $S$ is a K3 surface with at most Du Val singularities.  
 \end{corollary}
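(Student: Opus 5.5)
The plan is to deduce the statement from Corollary~\ref{corollary:A6-ext-K3-surfaces}. The only input that corollary needs, besides $X$ being a $G\mathbb{Q}$-Fano threefold, is a non-empty $G$-invariant linear subsystem $\mathcal{M}\subset|-K_X|$ of positive dimension. The natural choice is the complete system $\mathcal{M}=|-K_X|$ itself, which is $G$-invariant because $K_X$ is preserved by every automorphism.

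First I check that $|-K_X|$ is non-empty and of positive dimension. In the proof of Proposition~\ref{proposition:A6-ext-KX-dim}, \cite[Corollary 2.4.7]{CheltsovPinardinProkhorov} combined with Proposition~\ref{proposition:A6-ext-non-Gorenstein} gives
$$\dim|-K_X|=\tfrac12(-K_X)^3-1\geqslant 0,$$
so the system is non-empty. Proposition~\ref{proposition:A6-ext-KX-dim} then excludes dimension $0$, hence $\dim|-K_X|\geqslant1$.

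Next I apply Corollary~\ref{corollary:A6-ext-K3-surfaces} with $\mathcal{M}=|-K_X|$. Its chain of reasoning holds verbatim in the non-Gorenstein setting:
\begin{enumerate}
\item Corollary~\ref{corollary:A6-ext-positive-mobile}, via Lemma~\ref{lemma:A6-ext-threefolds-log-pair-mobile}, makes $\mathcal{M}$ mobile with $(X,\mathcal{M})$ canonical. This step uses only the Fibration Theorem and Lemma~\ref{lemma:A6-ext-index}, not the Gorenstein condition.
\item Bertini's theorem makes a general member $S$ irreducible, since a composite-with-a-pencil situation would again contradict Lemma~\ref{lemma:A6-ext-threefolds-log-pair-mobile}.
\item Koll\'ar's results \cite{KollarPairs} give that $(X,S)$ is plt and that $S$ is normal with klt singularities.
\end{enumerate}

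Finally I identify $S$ as a K3 surface. Adjunction gives $K_S\sim (K_X+S)|_S\sim 0$; this needs care at the twelve $\frac12(1,1,1)$ points, where $S$ need not be Cartier, but since $(X,S)$ is plt the different vanishes in codimension one and the equivalence still holds. A klt surface with trivial canonical class has only Du Val singularities. Kawamata--Viehweg vanishing applied to the exact sequence $0\to\OOO_X\to\OOO_X(S)\to\OOO_S(S)\to0$ gives $h^1(\OOO_S)=0$, so $S$ is a K3 surface with Du Val singularities. The step requiring the most attention is this verification that the adjunction and vanishing arguments go through at the non-Gorenstein points, but plt-ness handles it.
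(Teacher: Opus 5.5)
Your proposal is correct and follows the paper's route. The paper simply says the statement "follows directly from" Corollary~\ref{corollary:A6-ext-K3-surfaces}, applied to $\mathcal{M}=|-K_X|$, which is non-empty and of positive dimension by Proposition~\ref{proposition:A6-ext-KX-dim}. Your extra check at the $\frac12(1,1,1)$ points is fine, with one correction of attribution: the different vanishes because the non-Cartier locus of $S$ is finite, not because the pair is plt; plt-ness is only needed to make $S$ normal.
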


\begin{lemma}
    \label{lemma:A6-ext-non-Gorenstein-A6-inv}
    The linear system $\vert-K_X\vert$ contains at most one $G$-invariant divisor.
\end{lemma}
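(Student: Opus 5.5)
We argue by contradiction, following the pattern of Lemma~\ref{lemma:A6-ext-Gorenstein-G'-inv}. Suppose $S,S'\in|-K_X|$ are two distinct $G$-invariant divisors. They span a pencil $\mathcal{M}\subset|-K_X|$. Every member of $\mathcal{M}$ is $A$-invariant, since $A$ is perfect and so acts trivially on the pencil. Moreover $\mathcal{M}$ is $G$-invariant, because it is spanned by $G$-invariant members. By Corollary~\ref{corollary:A6-ext-positive-mobile}, $\mathcal{M}$ is mobile and $(X,\mathcal{M})$ is canonical.

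The first step is to use the base locus of $\mathcal{M}$ at the non-Gorenstein points. Every member of $|-K_X|$ passes through each point $P\in\Sigma$, because $K_X$ is not Cartier at a $\frac{1}{2}(1,1,1)$ point. Thus $S$ and $S'$ are both singular at the $12$ points of $\Sigma$, and so is the general member of $\mathcal{M}$.

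The second step is to apply Lemma~\ref{lemma:A6-ext-invariant-surfaces} to $S$. Since $S$ is singular along $\Sigma$, it cannot be a smooth K3 surface. Hence $S$ is reducible, with $m\geqslant 10$ irreducible components permuted transitively by $G$. Now $2K_X$ is Cartier by Corollary~\ref{corollary:A6-ext-2K-cartier}, so each component $S_i$ satisfies $(-2K_X)^2\cdot S_i\in\mathbb{Z}_{>0}$. This gives
$$4(-K_X)^3=(-2K_X)^2\cdot S=m\,(-2K_X)^2\cdot S_1\geqslant m .$$
The same holds for $S'$. Combined with the orbit lengths available in $G$ (namely $10,12,20,\dots$), this bounds $m$ from below in terms of $(-K_X)^3$. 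I would pair this with an upper bound on $(-K_X)^3$: either from the Fano classification via the formula $\dim|-K_X|=\frac12(-K_X)^3-1$ together with Bogomolov–Kawamata type bounds for $G\mathbb{Q}$-Fanos with $12$ points of index $2$, or from the representation theory of $H^0(X,\mathcal{O}_X(-K_X))$. Since $\mathcal{M}$ contributes two trivial summands and the other irreducible summands of $M_{10}$ have dimensions $1,9,10,16$, the possible dimensions are restricted.

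The key contradiction should come from restricting to $S'$. Take the curve $Z=S\cap S'$. It is $G$-invariant and passes through $\Sigma$. Using the cycle decomposition of $S$, each $S_i\cap S'$ is a curve with stabilizer of index $m$. I would then compute $(-K_X)\cdot Z=(-K_X)^3$ and argue as in Lemma~\ref{lemma:A6-ext-Gorenstein-G'-inv} that the components of $Z$ are curves of low degree with large stabilizer. Lemmas~\ref{lemma:A6-ext-curves}, \ref{lemma:A6-genus} and Corollary~\ref{corollary:A6-ext-singular-curves} then rule these curves out.

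An alternative, likely cleaner, route avoids the components altogether. The log canonical threshold argument of Lemma~\ref{lemma:A6-ext-threefolds-lc} applies to $\frac12(S+S')$ perturbed within the pencil. Two non-proportional invariant members give an invariant member of $\mathcal{M}$ with multiplicity large along $\Sigma$: impose vanishing to higher order at one orbit using a linear combination. That forces $(X,\lambda D)$ to be non-lc for some $\lambda<1$ with $D\sim_{\mathbb{Q}}-K_X$, which contradicts Lemma~\ref{lemma:A6-ext-threefolds-lc}.

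The main obstacle is obtaining an effective bound on $(-K_X)^3$, or an explicit incompatibility of component counts with the degree. I would first try the representation-theoretic count, using that $\dim|-K_X|+1=\frac12(-K_X)^3$ must be a sum of $M_{10}$-irreducible dimensions containing at least two trivial characters.
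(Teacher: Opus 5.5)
There is a genuine gap: none of your three routes reaches a contradiction.

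\textbf{Why each route fails.} The component count from Lemma~\ref{lemma:A6-ext-invariant-surfaces} gives $m\leqslant 4(-K_X)^3$. Together with $m\geqslant 10$ this is contradictory only if $(-K_X)^3\leqslant 2$, which is exactly how it is used in Proposition~\ref{proposition:A6-ext-KX-dim}. But a pencil in $|-K_X|$ already forces $(-K_X)^3=2\dim|-K_X|+2\geqslant 4$. So no upper bound on the degree can rescue this route. In any case, the representation-theoretic bound you have in mind (Corollary~\ref{corollary:A6-ext-KX-dim-2}) is deduced \emph{from} this lemma.

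The restriction to $S\cap S'$ is never made concrete.

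The lct route fails as stated. A pencil has one free parameter, so you cannot impose high multiplicity at all points of an orbit. Moreover, the non-special members of $\mathcal{M}$ are in general only $A$-invariant. Lemma~\ref{lemma:A6-ext-threefolds-lc} needs $G$-invariance: its proof uses that $M_{10}$ acts on no rational surface, which is false for $\mathfrak{A}_6$.

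\textbf{The missing idea.} Look at a \emph{general} member $S_t$ of the pencil rather than at the special members $S,S'$. You already noted two key facts. First, every member of $\mathcal{M}$ is $A$-invariant. Second, $S_t$ is singular at the points of $\Sigma$: it has an $A_1$ point there, being non-Cartier at a $\frac{1}{2}(1,1,1)$ point. Now argue as follows:
\begin{itemize}
\item By Corollary~\ref{corollary:A6-ext-K3-surfaces}, $S_t$ is an irreducible K3 surface with at most Du Val singularities.
\item $A$ acts faithfully on $S_t$, because $A$ is simple and has no fixed points on $X$ (Theorem~\ref{theorem:A6-non-fixed}).
\item Hence Corollary~\ref{corollary:K3-a6} forces $S_t$ to be smooth, a contradiction.
\end{itemize}
This is the paper's entire proof. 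It makes the detour through reducible invariant members and degree bounds unnecessary.
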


\begin{proof}
    Suppose that $\vert-K_X\vert$ contains a pencil $\mathcal{M}$ of $G$-invariant surfaces, and let $S\in\mathcal{M}$ be a general member. By Corollary \ref{corollary:A6-ext-K3-surfaces}, $S$ is an $A$-invariant K3 surface with at most Du Val singularities, and it is singular since $X$ is non-Gorenstein. This contradicts Corollary \ref{corollary:K3-a6}.
\end{proof}

\begin{corollary}
    \label{corollary:A6-ext-KX-dim-2}
    One has $\dim\vert-K_X\vert\geqslant8$.
\end{corollary}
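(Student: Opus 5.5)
We combine Proposition~\ref{proposition:A6-ext-KX-dim} with Lemma~\ref{lemma:A6-ext-non-Gorenstein-A6-inv} and the representation theory of $G\simeq M_{10}$. Put $V=H^0(X,\OOO_X(-K_X))$. By \cite[Corollary 2.4.7]{CheltsovPinardinProkhorov} and Proposition~\ref{proposition:A6-ext-non-Gorenstein}, $\dim V=\frac{1}{2}(-K_X)^3$. By Proposition~\ref{proposition:A6-ext-KX-dim}, $\dim V\geqslant 2$. Since $K_X$ is $G$-invariant and $\mathrm{Cl}(X)$ is torsion-free (as $X$ is rationally connected), the $G$-action on $\OOO_X(-K_X)$ linearizes canonically. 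Hence $V$ is an honest linear representation of $G$, not just a projective one, and the analysis is not spoiled by the Schur multiplier $\mumu_3$.

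Next decompose $V$ into irreducible $G$-representations. The irreducible character degrees of $G$ are $1,1,9,9,10,10,10,16$. A $G$-invariant divisor in $|-K_X|$ is the same as a one-dimensional subrepresentation of $V$, i.e.\ a copy of one of the two characters of $G/A\simeq\mumu_2$. The key claim is that $V$ cannot be built only from one-dimensional summands. If $\dim V\leqslant 8$, this is automatic, so $V$ is a sum of characters. If $V$ contains two isomorphic characters, they span a pencil of $G$-invariant surfaces, which Lemma~\ref{lemma:A6-ext-non-Gorenstein-A6-inv} forbids. Thus $V$ is either a single character, excluded because $\dim V\geqslant 2$, or the sum of the trivial and the sign character of $G/A$. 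In the latter case $A$ acts trivially on $V$, so $|-K_X|$ is a pencil all of whose members are $A$-invariant.

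This pencil case is the step I expect to be the main obstacle, since Lemma~\ref{lemma:A6-ext-non-Gorenstein-A6-inv} only concerns $G$-invariant members. I would rerun its proof at the level of $A$. By Corollary~\ref{corollary:A6-ext-K3-surfaces}, applied to $\mathcal{M}=|-K_X|$, which is $G$-invariant of positive dimension, a general member $S$ is an irreducible K3 surface with at most Du Val singularities. It is $A$-invariant, so $A$ acts on $S$, and the action is faithful: otherwise $A$ would fix $S$ pointwise, contradicting Theorem~\ref{theorem:A6-non-fixed}. Moreover $S$ passes through the twelve non-Gorenstein points of $\Sigma$. Indeed, $-K_X$ is not Cartier there, so every member of $|-K_X|$ contains them and is singular at them. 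Corollary~\ref{corollary:K3-a6} says such a K3 surface with a faithful $A$-action must be smooth, a contradiction.

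Hence $V$ contains an irreducible summand of dimension at least $9$, so $\dim V\geqslant 9$ and $\dim|-K_X|=\dim V-1\geqslant 8$.
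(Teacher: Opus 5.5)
Your proof is correct and follows the paper's argument. It combines $h^0(X,\OOO_X(-K_X))\geqslant2$, at most one one-dimensional summand by Lemma~\ref{lemma:A6-ext-non-Gorenstein-A6-inv}, and the fact that every non-trivial irreducible representation of $M_{10}$ has dimension at least $9$. Your explicit treatment of $V\simeq\mathbf{1}\oplus\mathrm{sgn}$ makes precise what the paper takes from that lemma, whose proof only uses that the general member of the pencil is $A$-invariant.

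One small correction: the $G$-linearization of $\OOO_X(-K_X)$ comes from the natural action of $G$ on $\omega_X$. Torsion-freeness of $\operatorname{Cl}(X)$ would not by itself remove the obstruction in $H^2(G,\CC^*)\simeq\mumu_3$.
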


\begin{proof}
    The space $H^0(X,\OOO_X(-K_X))$ is a linear representation of the group $G$, and it contains at most one $1$-dimensional subrepresentation by Lemma \ref{lemma:A6-ext-non-Gorenstein-A6-inv}. On the other hand, Proposition \ref{proposition:A6-ext-KX-dim} implies that $h^0(X,\OOO_X(-K_X))\geqslant2$, and then from representation theory of $G$ (\cite{lmfdb}) we obtain that $h^0(X,\OOO_X(-K_X))\geqslant9$.
\end{proof}

Until now, we have observed that the linear system $\vert-K_X\vert$ is mobile, $X$ contains exactly $12$ non-Gorenstein points that form a $G$-orbit and all of them are cyclic quotient singularities of type $\frac{1}{2}(1,1,1)$. The linear system $\vert-K_X\vert$ gives a rational map $\psi:X\dashrightarrow\PP^n$, where
$$n=\dim\vert-K_X\vert=\frac{1}{2}(-K_X)^3-1\geqslant8.$$
Now consider the model $\overline{X}=\overline{\operatorname{im}(\psi)}\subset\PP^n$ and $\pi:Y\to X$ as the blow-up of all non-Gorenstein points of $X$, with $\pi$-exceptional surfaces $E_1,\ldots,E_{12}$. Then $\pi$ is $G$-equivariant with each $E_i\simeq\PP^2$,
$$(-K_Y)^3=(-K_X)^3-6=2n-4,$$
and in the following we study the properties of the anticanonical model $\overline{X}$.

\begin{lemma}
\label{lemma:720-764-base-points}
The base locus of the anticanonical linear system of $X$ is $\operatorname{Bs}\vert-K_X\vert=\Sigma$,
the divisor $-K_Y$ is big and nef, $\psi\circ\pi$ is the~morphism given by $\vert-K_Y\vert$ and the $G$-equivariant commutative diagram
$$
\xymatrix{
&Y\ar[ld]_{\pi}\ar[rd]^{\phi}&\\
X\ar@{-->}[rr]_{\psi}&&\overline{X},}
$$
is such that $\phi$ is crepant birational and $\overline{X}$ is a Fano threefold with canonical Gorenstein singularities such that $-K_{\overline{X}}\sim\OOO_{\mathbb{P}^n}(1)\vert_{\overline{X}}$.
\end{lemma}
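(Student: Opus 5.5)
We follow the approach used for the analogous statements in \cite{CheltsovPinardinProkhorov} and \cite{P:JAG:simple}, with Proposition~\ref{proposition:A6-ext-non-Gorenstein} as the local input at the non-Gorenstein points. First, $\Sigma\subset\operatorname{Bs}\vert-K_X\vert$: near $P\in\Sigma$ of type $\frac{1}{2}(1,1,1)$ the divisor $-K_X$ is not Cartier. Hence every $D\in\vert-K_X\vert$ passes through $P$, and since the local class group is $\mumu_2$ it contains $P$ with multiplicity at least $\frac12$ in the orbifold sense. On $Y$ we have
$$K_Y=\pi^*K_X+\tfrac12\sum E_i,\qquad \pi^*(-K_X)=-K_Y+\tfrac12\sum E_i .$$
Each $D\in\vert-K_X\vert$ satisfies $\pi^*D=\widetilde D+\frac{k}{2}\sum E_i$ with $k$ odd. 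Thus the strict transform of $\vert-K_X\vert$ is contained in $\vert-K_Y\vert$, with equality since both spaces of sections are computed by Riemann--Roch and agree. Here we use $h^0(-K_Y)=\frac12(-K_Y)^3+3=n+1$ for $-K_Y$ nef and big, once that is established, so the argument is a bootstrap on a general member. So $\psi\circ\pi$ is given by $\vert-K_Y\vert$.

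Next we show $\operatorname{Bs}\vert-K_Y\vert=\varnothing$, which gives both $\operatorname{Bs}\vert-K_X\vert=\Sigma$ and nefness of $-K_Y$. Take a general $S\in\vert-K_X\vert$. By Corollary~\ref{corollary:A6-ext-non-Gorenstein-k3}, $S$ is a K3 surface with Du Val singularities, having $A_1$ points at the points of $\Sigma$. Its strict transform $\widetilde S\in\vert-K_Y\vert$ is the partial resolution of these twelve $A_1$ points, hence still a K3 surface with Du Val singularities. By adjunction $-K_Y\vert_{\widetilde S}\sim -K_Y\vert_{\widetilde S}$ and the restriction sequence $H^0(Y,-K_Y)\to H^0(\widetilde S,-K_Y\vert_{\widetilde S})\to H^1(Y,\OOO_Y)=0$ is surjective. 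So it suffices to show that the linear system $L=-K_Y\vert_{\widetilde S}$ on the K3 surface $\widetilde S$ is base point free. By Saint-Donat, the base locus of a nef linear system on a K3 surface is either empty or a fixed curve of the form $L=kE+\Gamma$, with $\vert E\vert$ an elliptic pencil and $\Gamma$ a $(-2)$-curve. This base locus is $\mathrm{Stab}$-invariant for the stabilizer of $\widetilde S$, which is only generic. We therefore argue directly on $Y$ instead.

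If $\operatorname{Bs}\vert-K_Y\vert\neq\varnothing$, the argument of \cite{Shi89} as in Lemma~\ref{lemma:A6-bpf} (which applies to the big nef Gorenstein case via the anticanonical model) forces the base locus to be a smooth rational curve or a single point. Each of these is $G$-invariant, which is impossible: a rational curve would be fixed pointwise by $A$, and a single point would be $A$-fixed, contradicting Theorem~\ref{theorem:A6-non-fixed} applied on $Y$, which is terminal.

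Nefness of $-K_Y$ then follows from base point freeness, and $(-K_Y)^3=2n-4>0$ gives bigness. Finally, $\phi$ is the Stein factorization of the morphism given by $\vert-K_Y\vert$. It is birational because $\dim\overline X=3$ (by bigness) and the degree-two case would give a $G$-equivariant double cover onto a variety of minimal degree, excluded as in Lemmas~\ref{lemma:A6-hyperelliptic} and~\ref{lemma:A6-ext-ample}. Therefore $\overline X$ is the anticanonical model: it has canonical Gorenstein singularities, $\phi$ is crepant, and $-K_{\overline X}\sim\OOO_{\PP^n}(1)\vert_{\overline X}$. The main obstacle is the base-point-freeness step, in particular justifying the Shin-type dichotomy on the weak Fano $Y$ and showing that the strict transform accounts for all of $\vert-K_Y\vert$.
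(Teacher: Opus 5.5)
There is a genuine gap: the argument for nefness and base point freeness is circular. The dichotomy of \cite{Shi89} used in Lemma~\ref{lemma:A6-bpf} concerns Fano threefolds with canonical Gorenstein singularities. On $Y$ it is therefore only available through the anticanonical model $\varphi\colon Y\to\overline{Y}$, and that model exists only once you know $-K_Y$ is nef and big. But you obtain nefness of $-K_Y$ \emph{from} base point freeness, so nothing in the proposal actually establishes it. Your Riemann--Roch ``bootstrap'' has the same defect, and it is unnecessary: $D\mapsto\pi^*D-\frac12\sum E_i$ identifies $|-K_X|$ with $|-K_Y|$ directly. Relatedly, oddness of $k$ does not put strict transforms into $|-K_Y|$. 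For $k=3$ the strict transform is $\sim -K_Y-E_i$. One needs $k=1$ for a general member, and the paper gets this from the canonicity of $(X,|-K_X|)$ in Corollary~\ref{corollary:A6-ext-positive-mobile}.

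The missing idea is to analyse the base locus on the exceptional planes first. Since $k=1$ and $-K_Y|_{E_i}\sim\OOO_{E_i}(1)$, a general member of $|-K_Y|$ cuts $E_i\simeq\PP^2$ in a line. Hence $\Bs|-K_Y|\cap E_i$ is a linear subspace invariant under the stabilizer $G_i\simeq\mathfrak{A}_5$, which has neither fixed points nor invariant lines on $E_i$. So $|-K_Y|$ has no base points on $E_1\cup\dots\cup E_{12}$.

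Nefness then follows without any dichotomy. A curve $C$ with $-K_Y\cdot C<0$ lies in $\Bs|-K_Y|$, hence is disjoint from $\sum E_i$. Then $-K_Y\cdot C=-K_X\cdot\pi_*C>0$ by ampleness of $-K_X$, a contradiction. Only at this point is $Y$ a weak Fano threefold. The paper then applies Lemmas~\ref{lemma:A6-bpf} and~\ref{lemma:A6-ext-ample} to $\overline{Y}$ itself, obtaining base point freeness of $|-K_Y|=\varphi^*|-K_{\overline{Y}}|$ and $\overline{X}\simeq\overline{Y}$.

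Note also that the contradiction must be drawn on $\overline{Y}$, not via Theorem~\ref{theorem:A6-non-fixed} on $Y$. The base locus on $Y$ would be the $\varphi$-preimage of a point or a curve, and this need not itself be a point or a smooth rational curve.
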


\begin{proof}
    Consider $S\in\vert-K_X\vert$ a general member, so by Corollary \ref{corollary:A6-ext-non-Gorenstein-k3} $S$ is an irreducible K3 surface with at most Du Val singularities and let us denote by $\widetilde{S}$ its strict transform on $Y$. Thus 
    $$\widetilde{S}\sim_\QQ \pi^\ast(S)-\sum_{i=1}^{12}m_iE_i,$$
    where each $m_i\in\QQ_{>0}$ is such that $2m_i$ is an integer as the singularities of $X$ are of type $\frac{1}{2}(1,1,1)$. Then we get
    $$
    K_{Y}+\widetilde{S}\sim_{\mathbb{Q}} \pi^*(K_X+S)+\sum_{i=1}^{12}\Big(\frac{1}{2}-m_i\Big)E_i,
    $$
    and Corollary~\ref{corollary:A6-ext-positive-mobile} implies that $m_i=\frac{1}{2}$ for every $i$. In particular, we see from this relation that the linear system $\vert-K_Y\vert$ corresponds to the strict transform of $\vert-K_X\vert$, and moreover $\widetilde{S}$ is smooth along each $\pi$-exceptional surface as
    $$\widetilde{S}\vert_{E_i}\sim_\QQ (-K_Y)\vert_{E_i}\sim_\QQ -\frac{1}{2}E_i\vert_{E_i}\sim_\QQ \OOO_{E_i}(1).$$
    By construction the map $\phi=\psi\circ\pi:Y\dashrightarrow\PP^n$ is given by the linear system $\vert-K_Y\vert$. As the stabilizer $G_i$ of a plane $E_i$ is $G_i\simeq\mathfrak{A}_5$, $E_i$ contains neither $G_i$-invariant points nor lines. The previous computation also shows that $\vert-K_Y\vert$ does not contain any base point inside $E_1\cup\ldots\cup E_{12}$. Then the fact that $(-K_Y)^3=2n-4\geqslant12$ implies that $-K_Y$ is big and nef and the map $\phi$ is regular near each $E_i$.
    
    Thus, we see that $Y$ is a weak Fano threefold with at worst terminal Gorenstein singularities. By the basepoint-free theorem, for $m\gg 0$ the linear system $|-mK_Y|$ gives a morphism $\varphi\colon Y\to \overline{Y}$ such that $\overline{Y}$ is a Fano threefold with at worst canonical Gorenstein singularities and $-K_Y\sim \varphi^*(-K_{\overline{Y}})$. The morphism $\varphi$ is $G$-equivariant and the induced action on $\overline{Y}$ is faithful because $\varphi$ is birational. Now, by Lemma \ref{lemma:A6-bpf} we have that $\vert-K_{\overline{Y}}\vert$ is base point free, so $\vert-K_Y\vert$ also is. Moreover, by Lemma \ref{lemma:A6-ext-ample} $-K_{\overline{Y}}$ is very ample. Then projection formula gives
    $$
    \mathrm{dim}(|-K_Y|)=\mathrm{dim}(|-K_{\overline{Y}}|)=n
    $$
    and then we see that $\phi$ corresponds to the composition 
    $$Y\xrightarrow{\;\;\;\varphi\;\;\;}\overline{Y}\xrightarrow{\vert-K_{\overline{Y}}\vert}\PP^n$$
    and this implies that $\overline{X}\simeq\overline{Y}$. This argument concludes the proof.
\end{proof}

Set
$$g=\mathrm{g}(\overline{X})=\frac{(-K_{\overline{X}})^3+2}{2},$$
so we have that $n=g+1\geqslant8$. Furthermore, Lemma \ref{lemma:A6-ext-quadrics} implies that $\overline{X}$ is an intersection of quadrics in $\PP^n$.

\begin{lemma}
    \label{lemma:A6-ext-g-non-Gorenstein-10-12}
    One has $7\leqslant g\leqslant37$ and $g\notin\{8,10,12\}$.
\end{lemma}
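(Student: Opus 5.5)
The lower bound is immediate from what precedes the statement. By Corollary~\ref{corollary:A6-ext-KX-dim-2} we have $n=\dim\vert-K_X\vert\geqslant8$, and by Lemma~\ref{lemma:720-764-base-points} $n=g+1$, so $g\geqslant7$. Equivalently, $\overline{X}$ is a canonical Gorenstein Fano threefold with a faithful $G$-action whose anticanonical space $H^0(\overline{X},\OOO_{\overline{X}}(-K_{\overline{X}}))$ is a faithful $G$-representation. Since $M_{10}$ has no faithful representations of dimension below $9$ (compare the proof of Lemma~\ref{lemma:A6-ext-quadrics}), this forces $g+2\geqslant9$.

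For the upper bound I would use the standard bound on the anticanonical degree of a canonical Gorenstein Fano threefold. Prokhorov's theorem gives $(-K_{\overline{X}})^3\leqslant72$, so $g\leqslant37$. Equality $g=37$ is attained only by $\PP(1,1,1,3)$ and $\PP(1,1,4,6)$; the statement allows $g=37$, so no further exclusion is needed. Alternatively one can bound $(-K_X)^3$ directly through $(-K_Y)^3=(-K_X)^3-6$, but the anticanonical-model route is cleaner.

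To exclude $g\in\{8,10,12\}$ I would apply Lemma~\ref{lemma:A6-ext-g-not-6-8} to $\overline{X}$, which is a canonical Gorenstein Fano threefold with a $G$-action. That lemma immediately gives $g\neq8$. For $g\in\{10,12\}$ its hypothesis is that $\vert-K_{\overline{X}}\vert$ contains at most one $A$-invariant divisor. The main step is to transfer this from $X$, through the crepant morphism $\phi$ and the identification $\vert-K_Y\vert=\pi^{-1}_*\vert-K_X\vert$ established in Lemma~\ref{lemma:720-764-base-points}. Under these identifications, $A$-invariant members of $\vert-K_{\overline{X}}\vert$ correspond bijectively to $A$-invariant members of $\vert-K_X\vert$.

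So it suffices to show that $\vert-K_X\vert$ has at most one $A$-invariant member, and I expect this to be the main obstacle. I would argue as in Lemma~\ref{lemma:A6-ext-non-Gorenstein-A6-inv}, but with $A$ in place of $G$. Suppose two distinct $A$-invariant members exist. As in the proof of Lemma~\ref{lemma:A6-ext-Gorenstein-G'-inv}, the $1$-dimensional $A$-isotypic part of $H^0$ is $G$-stable and therefore yields two distinct $G$-invariant members. These span a $G$-invariant pencil $\mathcal{M}$ of positive dimension. Corollary~\ref{corollary:A6-ext-K3-surfaces} then shows that a general member $S\in\mathcal{M}$ is an $A$-invariant K3 surface with at most Du Val singularities. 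Such an $S$ passes through the points of $\Sigma$, where $X$ is not Gorenstein, so $S$ is singular there (its local class there is not Cartier). This contradicts Corollary~\ref{corollary:K3-a6}.

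Hence Lemma~\ref{lemma:A6-ext-g-not-6-8} excludes $g=10$ and $g=12$ for $\overline{X}$, completing the proof.
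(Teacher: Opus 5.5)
Your proposal is correct and follows the paper's proof. You get $g\geqslant7$ from $n=g+1\geqslant8$ and $g\leqslant37$ from Prokhorov's bound $(-K_{\overline{X}})^3\leqslant72$. You exclude $g\in\{8,10,12\}$ by applying Lemma~\ref{lemma:A6-ext-g-not-6-8} to $\overline{X}$, together with the uniqueness of an $A$-invariant anticanonical member obtained by the argument of Lemma~\ref{lemma:A6-ext-non-Gorenstein-A6-inv}.

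You also make explicit two steps the paper leaves implicit, and both are right. Two $A$-invariant members yield a $G$-invariant pencil because $H^0(X,\OOO_X(-K_X))^A$ is $G$-stable. The members then transfer through $\pi$ and $\phi$.

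One small correction: ``$S$ is singular at $P\in\Sigma$'' does not follow merely from $S$ being non-Cartier there. For example, a plane through an ordinary double point is smooth but not Cartier. What makes it work is that $P$ is of type $\frac{1}{2}(1,1,1)$: any non-Cartier surface germ through such a point is singular, at best an $A_1$ point.
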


\begin{proof}
    From Lemma \ref{lemma:A6-ext-non-Gorenstein-A6-inv} we know that $\vert-K_X\vert$ contains at most one $A$-invariant divisor. Hence the statement follows from Lemma~\ref{lemma:A6-ext-g-not-6-8} and \cite{Prokhorov-72}.
\end{proof}

\subsection{Terminal Gorenstein model}
\label{subsection:A6-ext-anticanonical-map-not-contracting-divisors}

We keep all the notation used in the previous part of this Section. The aim of this Subsection is to prove the following result about the singularities of the anticanonical model $\overline{X}$.

\begin{proposition}
    \label{proposition:terminal-model}
    The Fano threefold $\overline{X}$ has terminal Gorenstein singularities.
\end{proposition}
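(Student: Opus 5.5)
The plan is to show that the crepant birational morphism $\phi\colon Y\to\overline{X}$ of Lemma~\ref{lemma:720-764-base-points} contracts no divisors. Since $Y$ has terminal Gorenstein singularities and $-K_Y\sim\phi^*(-K_{\overline{X}})$, this gives terminality of $\overline{X}$: a crepant morphism that is small preserves discrepancies. Gorensteinness is already known from Lemma~\ref{lemma:720-764-base-points}. So I will argue by contradiction and assume that $\phi$ contracts a prime divisor $F\subset Y$. Let $\mathcal{F}$ denote the $G$-orbit of $F$; all its components are contracted, because $\phi$ is $G$-equivariant.

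The first step is to rule out that some $E_i$ lies in $\mathcal{F}$. This is immediate: $-K_Y|_{E_i}\sim\OOO_{E_i}(1)$ is ample, as computed in the proof of Lemma~\ref{lemma:720-764-base-points}. Hence every component of $\mathcal{F}$ is the strict transform of a surface $D\subset X$ with $-K_Y^2\cdot\widetilde{D}=0$. Next I would use the centre $Z=\phi(\mathcal{F})$, which is a $G$-invariant union of curves or points in $\overline{X}$.

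If $Z$ contains a point, its $G$-orbit has length at least $10$ by Theorem~\ref{theorem:A6-non-fixed}, Lemma~\ref{lemma:A6-ext-fixed} and the subgroup indices. Each such point is a non-terminal canonical Gorenstein singularity. I would bound the number of such points using the smoothing/defect inequalities for canonical Gorenstein Fano threefolds, in the spirit of \eqref{equation:sing}, together with $7\leqslant g\leqslant37$ from Lemma~\ref{lemma:A6-ext-g-non-Gorenstein-10-12}. The more promising approach is to pass to the surface level. A general $S\in|-K_{\overline{X}}|$ is a K3 surface with Du Val singularities, but it meets $Z$, and the contracted surfaces force $S$ to be singular along $S\cap Z$ or to contain exceptional curves. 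If $Z$ is a curve, the image $\phi(\mathcal{F})$ gives a $G$-invariant curve along which $\overline{X}$ has cDV-type non-terminal singularities. By Lemma~\ref{lemma:A6-ext-curves} and Corollary~\ref{corollary:A6-ext-singular-curves}, a component of $Z$ on which $A$ acts faithfully has genus $46$ or arithmetic genus at least $86$, unless $A$ acts non-faithfully. In that last case Theorem~\ref{theorem:A6-non-fixed} forces the orbit of components to be large.

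The key numerical step is then to bound $\deg Z$ in $\PP^n$ and play it against these genus bounds. Here $\deg Z\le$ something like $(-K_{\overline{X}})^3=2g-2\le 72$ by a Castelnuovo-type or intersection-with-hyperplane argument, applied to the curves $F\cap\phi^{*}H$ on $F$. A curve of genus at least $46$ spanning a faithful representation of dimension at least $9$ needs large degree by Castelnuovo's bound. Comparing this with $\deg Z$ bounded by the quadric-cut structure of Lemma~\ref{lemma:A6-ext-quadrics} and $g\leqslant 37$ should give the contradiction.

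The remaining possibility is that $G$ permutes the components of $Z$, so that $A$ acts on each component through a proper subgroup of index at least $10$. I would handle it as in Lemma~\ref{lemma:A6-ext-Gorenstein-G'-inv}. One restricts to a general hyperplane section, where the exceptional curves give at least $10$ disjoint configurations of $(-2)$-curves on the minimal resolution of $S$. Then one uses $\rho\leqslant 20$ and Lemma~\ref{lemma:K3-a6}, or the Hodge index theorem, to get a contradiction. I expect this case analysis, and especially controlling the degrees of the components of $Z$ uniformly for $g$ up to $37$, to be the main obstacle. There I would first try the lattice bound: the classes $E_1,\dots,E_{12}$ together with the contracted components inject into $\Pic$ of the resolved K3, which forces more than $20$ independent classes.
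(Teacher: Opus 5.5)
\textbf{Verdict: there is a genuine gap.} Your reduction to showing that $\phi$ is small, and your remark that no $E_i$ can be contracted, are correct. But neither remaining case is closed, and for the same reason: you never combine $-K_Y\sim_{\QQ}\pi^*(-K_X)-\frac{1}{2}\sum E_i$ with the configuration of the twelve planes $\overline{E}_i$, which is what controls $\operatorname{Exc}(\phi)$.

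\emph{The point case.} The tools you propose are unavailable here.
\begin{enumerate}
\item The bound \eqref{equation:sing} rests on a $\QQ$-Gorenstein smoothing, which presupposes terminal Gorenstein singularities, i.e.\ what you are trying to prove.
\item A general member of $|-K_{\overline{X}}|$ misses any finite set, so no argument on a general K3 section can see these points.
\item Theorem~\ref{theorem:A6-non-fixed} and Lemma~\ref{lemma:A6-ext-fixed} are about terminal threefolds, so they do not give your orbit bound on $\overline{X}$.
\end{enumerate}
The missing step is short. We have $-K_Y|_F\equiv 0$, whereas $\pi^*(-K_X)|_F$ is big because $\pi(F)$ is a surface. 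Hence $F$ meets some $E_i$, necessarily along a curve. That curve is contracted by $\phi$, contradicting $-K_Y|_{E_i}\sim\OOO_{E_i}(1)$.

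\emph{The curve case.} Your bound $\deg Z\leqslant 2g-2$ is unjustified, and it is too weak for Castelnuovo: if $Z$ spans only a $\PP^8$, degree $72$ allows arithmetic genus up to $325$. Only the Picard-rank bound $\sum\deg\phi(F_j)\leqslant 19$ from a general hyperplane section is useful, and you leave it as a fallback.

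More seriously, you give no mechanism for an orbit of $10$ or $12$ contracted lines, which is the case that actually has to be excluded.
\begin{enumerate}
\item At most $19$ contracted $(-2)$-curves do not violate $\rho\leqslant 20$.
\item You have not shown that adding the curves $E_i\cap H$ yields more than $20$ independent classes. There are relations, e.g.\ from $\phi^*\overline{D}$, and you cannot even compute the intersections of these curves with the contracted ones without knowing how the contracted curves sit relative to the planes.
\item Lemma~\ref{lemma:K3-a6} needs an $A$-invariant K3, not a general section.
\end{enumerate}

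The paper supplies exactly this missing structure.
\begin{enumerate}
\item Ampleness of $\overline{D}=\sum\overline{E}_i\sim_{\QQ}\frac{6}{g-1}(-K_{\overline{X}})$ forces two planes to meet along a line. Since the $E_i$ are disjoint on $Y$, this line is some $\phi(F_j)$.
\item Any $F_j$ meeting a plane does so along a line. Otherwise its orbit gives at least $12$ curves of degree $\geqslant 2$, against the bound $19$.
\item Each such line lies in at least three planes. Otherwise its orbit has at least $30$ lines.
\item Hence $(\overline{X},\overline{D})$ is not log canonical along these lines.
\end{enumerate}
Then, with $\lambda=\mathrm{lct}(\overline{X},\overline{D})<1$, tie-breaking and Koll\'ar--Shokurov connectedness (using that $-(K_{\overline{X}}+B)$ is ample) produce a $G$-invariant minimal log canonical centre lying in every plane. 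It would be a point or a line of $\overline{E}_i$ invariant under its stabilizer $\mathfrak{A}_5$, and no such point or line exists. Once the exceptional curves are known to be lines inside the planes, your genus and Castelnuovo considerations become unnecessary.
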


This Proposition will follow as a direct corollary from the next one, which we focus on proving during the rest of this Subsection. 

\begin{proposition}
    \label{proposition:A6-ext-small}
    The birational morphism $\phi$ is small.
\end{proposition}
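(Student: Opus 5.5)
We argue by contradiction: assume $\phi\colon Y\to\overline{X}$ contracts a divisor, and let $F$ be the $G$-orbit of an irreducible $\phi$-exceptional surface. This is the model approach of \cite{CheltsovPinardinProkhorov} and \cite{Prokhorov-72} for non-Gorenstein $G\QQ$-Fano threefolds.

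\textbf{Step 1: the contracted surfaces are not the planes $E_i$.} Since $\phi$ is crepant, $\phi(F)$ is a $G$-invariant union of curves or points in $\overline{X}$. By Lemma~\ref{lemma:720-764-base-points}, $-K_Y|_{E_i}\sim\OOO_{E_i}(1)$, so each $E_i$ is embedded by $\phi$ as a plane. Hence the $E_i$ are not contracted, and $F$ is the strict transform of a $G$-invariant surface $D\subset X$.

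\textbf{Step 2: $\phi(F)$ is not a finite set.} Suppose $\phi(F)$ is a $G$-orbit of points. These points are non-terminal canonical Gorenstein singularities of $\overline{X}$. By Theorem~\ref{theorem:A6-non-fixed} and the list of subgroups of $G$, the orbit has length at least $10$. Pulling back a general hyperplane through these points to $X$, I would build a $G$-invariant $\QQ$-divisor $\sim_\QQ -K_X$ whose pair with $X$ is not canonical along $D$, or a mobile subsystem of $|-K_X|$ with non-canonical singularities. This contradicts Lemma~\ref{lemma:A6-ext-threefolds-log-pair-mobile} and Corollary~\ref{corollary:A6-ext-positive-mobile}.

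\textbf{Step 3: $\phi(F)$ is not a union of curves.} Now $\phi(F)$ is a $G$-orbit of curves $Z$ along which $\overline{X}$ has canonical, non-terminal singularities, generically of type $cDV$ times a curve. Let $\widetilde{S}$ be a general member of $|-K_Y|$, so $\overline{S}=\phi(\widetilde{S})$ is a general hyperplane section of $\overline{X}$. Then $\overline{S}$ is a K3 surface with Du Val singularities at the points of $\overline{S}\cap Z$, which are not all of type $A_1$-free. The surface $\widetilde{S}$ is smooth near the $E_i$ and is the strict transform of a general $S\in|-K_X|$. It carries a faithful $A$-action, since $A$ acts faithfully on $\overline{X}$ and moves $\overline{S}$ in the pencil. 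However, $\overline{S}$ is general rather than $A$-invariant, so Corollary~\ref{corollary:K3-a6} cannot be applied to it directly. Instead I would count degrees. We have $\deg\phi(F)\le(-K_{\overline{X}})^3=2g-2\le72$, and each irreducible component of $Z$ has stabilizer of index at least $10$, so $Z$ has at least $10$ components. Each component of $F$ is covered by $\phi$-contracted curves $\ell$ with $-K_Y\cdot\ell=0$, and the intersection with the planes gives $E_i\cdot\ell\ge0$. Computing $\widetilde{S}^2\cdot F=(-K_Y)^2\cdot F=0$ against $\pi^*(-K_X)^2\cdot D-\tfrac12\sum E_i^{\,}\cdots$ should give a numerical condition forcing $D$ to meet $\Sigma$ in an orbit structure incompatible with $G_P\simeq\mathfrak{A}_5$ having no fixed lines on $E_i$. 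Alternatively, the curves $Z$ are rational or of low genus. Lemma~\ref{lemma:A6-genus}, Lemma~\ref{lemma:A6-singular-curves} and Lemma~\ref{lemma:A6-ext-curves} then rule out faithful actions on components with small stabilizers, so $A$ would fix points, contradicting Theorem~\ref{theorem:A6-non-fixed}.

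The main obstacle is Step~3. Excluding divisors contracted to curves needs a genuine geometric input, namely control of the genus and degree of the components of $Z$ together with the action of their stabilizers. I would first try the approach of \cite[Section~4]{Prokhorov-72}: study the ruled surfaces $F_j\to Z_j$, use that the rulings have $-K_Y\cdot\ell=0$ and hence meet $\widetilde{S}$ trivially, and derive that the stabilizer of $Z_j$ acts on $Z_j\simeq\PP^1$ with an orbit structure forbidden by the subgroup list of $M_{10}$.
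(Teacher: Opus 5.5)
Your proposal has a genuine gap. Step~1 is fine, but Step~2 is unjustified and Step~3, which carries the real content, is only a list of possibilities. In Step~2 you never say which hypothesis of Lemma~\ref{lemma:A6-ext-threefolds-log-pair-mobile} the hyperplane construction would violate. The $G$-invariant system of hyperplanes through an orbit of $\geqslant10$ points may also simply be empty.

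You are missing an elementary observation that settles Step~2 and supplies the structure Step~3 needs. Since $K_Y=\pi^*K_X+\frac12\sum E_i$, any curve $\ell$ contracted by $\phi$ satisfies $0=\pi^*(-K_X)\cdot\ell-\frac12\sum E_i\cdot\ell$. Since $-K_X$ is ample, $\ell$ must meet some $E_i$. Hence every $\phi$-exceptional surface $F_j$ meets some $E_i$ along a curve, because $Y$ is smooth along $E_i$. Since $\phi|_{E_i}$ is an isomorphism onto a plane, that curve is not contracted. So $\phi(F_j)$ is a curve, and it is a plane curve lying in $\overline{E}_i$.

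In Step~3 your claims are unsupported. Nothing shows that components of $\phi(F)$ have stabilizers of index $\geqslant10$, that they are rational or of low genus, or that $Z_j\simeq\PP^1$. Also, a general $\overline S$ carries no $A$-action at all. The paper's proof uses two ingredients absent from your plan.

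\textbf{Picard-rank bound.} The strict transform $H$ on $Y$ of a general hyperplane section of $\overline X$ is a smooth K3 surface. Each $F_j|_H$ contains at least $\deg\phi(F_j)$ curves contracted by $H\to\phi(H)$, and these are independent together with $H|_H$. Since $\rho(H)\leqslant20$, this gives $\sum_j\deg\phi(F_j)\leqslant19$. Now combine this with the plane-curve property, the transitivity of $G$ on the $12$ planes, and the absence of $G_i\simeq\mathfrak{A}_5$-invariant points and lines in $\overline E_i$. This forces every $\phi(F_j)$ to be a line lying in at least three of the planes. Therefore $(\overline X,\overline D)$, with $\overline D=\sum\overline E_i\sim_\QQ\frac{6}{g-1}(-K_{\overline X})$, is not log canonical along these lines. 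Such lines exist: $\overline D$ is ample, hence connected in codimension one, so some $\overline E_i\cap\overline E_j$ is a line. The $E_i$ are disjoint on $Y$, so this line is the image of an exceptional divisor.

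\textbf{Threshold and connectedness argument.} Take $\lambda=\operatorname{lct}(\overline X,\overline D)<1$. Perturb to a $G$-invariant $B$ with $\operatorname{Nklt}(\overline X,B)$ equal to the $G$-orbit of a minimal center $C$ and with $-(K_{\overline X}+B)$ ample. Koll\'ar--Shokurov connectedness then makes $C$ irreducible and $G$-invariant. Since $\overline X$ is klt, $C\subset\operatorname{Supp}\overline D$, so $C$ lies in one plane and hence, by transitivity, in every plane. Thus $C$ is a point or a line in $\overline E_i$ invariant under $\mathfrak{A}_5$, which is a contradiction. Without these ingredients, or a genuine substitute, Step~3 remains open.
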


Assume that $\phi$ is not small, denote by $F_1,\ldots,F_k$ the exceptional surfaces of $\phi$ and set $\overline{E}_i=\phi(E_i)$ for every $i\in\{1,\ldots,12\}$. We see that $\overline{E}_i$ are planes in $\PP^n$ and the restrictions $E_i\to\overline{E}_i$ are isomorphisms, so for every $i\neq j$ the intersection $\overline{E}_i\cap\overline{E}_j$ will be either empty, or a single point or a single line. First we study the images of $\phi$-exceptional divisors. 

\begin{lemma}
    For every $j\in\{1,\ldots,k\}$, the image $\phi(F_j)$ is a curve.
\end{lemma}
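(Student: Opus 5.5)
We argue by contradiction: suppose some $\phi$-exceptional surface $F_j$ has $\phi(F_j)=P$ a point. Replacing $F_j$ by its $G$-orbit, we obtain a $G$-orbit of points $\{P_1,\ldots,P_s\}\subset\overline{X}$, each of which is the image of a divisor. Since $\overline{X}$ has canonical Gorenstein singularities and $\phi$ is crepant, each $P_l$ is a non-terminal canonical Gorenstein singular point of $\overline{X}$. By Theorem~\ref{theorem:A6-non-fixed} applied to $Y$ (terminal, with a faithful $A$-action), we get $s>1$. In fact $A$ cannot fix a point of $\overline{X}$ whose fiber is a divisor, because that fiber would then be an $A$-invariant connected subset of $Y$. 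So $s\geqslant 6$, and in fact $s\geqslant10$, since the index of the stabilizer in $G$ must be at least $10$.

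First I will bound $s$ from above. The key input is the bound on the number of non-terminal points of a canonical Gorenstein Fano threefold whose anticanonical image is an intersection of quadrics, in terms of $g$. More elementarily, since $\dim T_{\overline{X},P_l}$ is bounded when $P_l$ is a hypersurface-type or small-embedding-dimension point, the stabilizer $G_{P_l}$ acts faithfully on $T_{\overline{X},P_l}$. Moreover, by Lemma~\ref{lemma:A6-ext-fixed} and the list of subgroups of $G$ of small index, $G_{P_l}$ is not one of the large subgroups $\mathfrak{A}_5$, $\operatorname{PSU}_3(\FF_2)$, $\mathfrak{S}_4$. This already forces $s$ to be fairly large.

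Next I will play this against the geometry of the planes $\overline{E}_i$. Each $\overline{E}_i$ is a $G_i\simeq\mathfrak{A}_5$-invariant plane. Its image contains no $G_i$-fixed points, and every $G_i$-orbit on $\overline{E}_i$ has length at least $6$. I will then analyze whether $F_j$ meets some $E_i$. If it does, then $P_l\in\overline{E}_i$, and the $G_i$-orbit of $P_l$ in $\overline{E}_i$ consists of at least $6$ points, each of which has positive-dimensional fiber. This yields many non-terminal points on a single plane, which I expect to contradict $\phi|_{E_i}$ being an isomorphism together with $-K_Y|_{E_i}\sim\OOO_{E_i}(1)$. If $F_j$ is disjoint from all $E_i$, then $F_j$ lies over the Gorenstein locus of $X$. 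Its image in $X$ would be a divisor contracted to a point by $\psi$, which is impossible because $|-K_X|$ restricted to $X\setminus\Sigma$ is the pullback of a base point free linear system with ample restriction away from the exceptional locus. More precisely, $\pi(F_j)\cdot(-K_X)^2>0$ while $\phi$ contracts $F_j$ to a point, so $-K_Y|_{F_j}\equiv0$, contradicting $-K_Y=\pi^*(-K_X)-\frac12\sum E_i$ being ample-ish on $F_j$ away from the $E_i$.

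The main obstacle is this last contradiction in the case where $F_j$ meets the $E_i$. Here $-K_Y|_{F_j}\equiv 0$ must be reconciled with $\pi^*(-K_X)|_{F_j}=\frac12\sum E_i|_{F_j}$, and the left side is nef and big on $\pi(F_j)$. I would try to show that the restriction $\sum E_i|_{F_j}$ is supported on curves that are contracted, whence $(\pi^*(-K_X))^2\cdot F_j=0$, contradicting the ampleness of $-K_X$. The point is that $(-K_Y)\cdot C=0$ for every curve $C\subset F_j$, so the intersection curves $E_i\cap F_j$ would be lines in $E_i$ contracted by $\phi$. But $\phi|_{E_i}$ is an isomorphism onto a plane, which contradicts this.
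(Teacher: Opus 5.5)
Your final argument is the paper's proof: if $F_j$ misses every $E_i$, then $(-K_Y)^2\cdot F_j=(-K_X)^2\cdot\pi(F_j)>0$, which contradicts $\phi(F_j)$ being a point. Otherwise $F_j\cap E_i$ is a curve (not necessarily a line) contracted by $\phi$, which contradicts $\phi|_{E_i}$ being an isomorphism, equivalently the ampleness of $-K_Y|_{E_i}\sim\OOO_{E_i}(1)$.

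The orbit-counting and tangent-space material in your first two paragraphs is not needed and should be dropped. It is also partly unjustified: an $A$-invariant connected fiber gives no contradiction, and Lemma~\ref{lemma:A6-ext-fixed} requires terminal singularities, whereas the $P_l$ would be non-terminal points of $\overline{X}$.
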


\begin{proof}
    Assume that $\phi(F_j)$ is a point, and notice that in such a case it is contained in some $\overline{E}_i$. Indeed, if that is not the case, $\pi(F_j)$ is a surface on $X$ that is not contracted by $\psi$, a contradiction. Thus $F_j$ must intersect one of the surfaces $E_1,\ldots,E_{12}$ on a curve. Since that curve is contracted by $\phi$, it intersects trivially with $-K_Y$, but it also needs to be contracted by $\pi$ so it has to intersect $-K_Y$ positively, a contradiction.
\end{proof}

In the following we set $C_i=\phi(F_i)$ and $d_i=\operatorname{deg}(C_i)$ for every $i\in\{1,\ldots,k\}$. Note that it could be possible to have $\phi(F_i)=\phi(F_j)$ for some $i\neq j$.

\begin{lemma}
    \label{lemma:curves-19}
    One has 
    \begin{align*}
    \sum_{i=1}^{k}d_i\leqslant19.
\end{align*}
\end{lemma}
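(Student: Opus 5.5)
The bound will come from comparing the Euler characteristic of $Y$ with that of a smoothing of $\overline{X}$, in the spirit of \eqref{equation:sing}. Each $\phi$-exceptional divisor $F_i$ is a (possibly non-normal) ruled surface over the curve $C_i$, and the general fiber of $F_i\to C_i$ is a curve contracted by $\phi$, hence $-K_Y$-trivial. So along a general point of $C_i$ the threefold $\overline{X}$ has a cDV singularity whose general hyperplane section is a Du Val surface singularity. Since $\overline{X}$ is cut out by quadrics (Lemma~\ref{lemma:A6-ext-quadrics}) and $-K_{\overline{X}}$ is very ample (Lemma~\ref{lemma:A6-ext-ample}), a general hyperplane section $\overline{S}\in|-K_{\overline{X}}|$ is a K3 surface with Du Val singularities. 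Its singular points include the $\sum_i d_i$ points of $\overline{S}\cap\bigcup_i C_i$, counted with the number of exceptional curves lying over them.

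The key step is to pull back to $Y$. The strict transform $S_Y=\phi^*\overline{S}\in|-K_Y|$ is the minimal resolution of $\overline{S}$ near these points. For each $i$, the surface $S_Y$ contains $d_i$ disjoint $(-2)$-curves coming from the fibers of $F_i\to C_i$. In addition, $S_Y$ meets each $E_j$ in a line, because $S_Y|_{E_j}\sim\OOO_{E_j}(1)$ by the proof of Lemma~\ref{lemma:720-764-base-points}. By Corollary~\ref{corollary:A6-ext-non-Gorenstein-k3}, $S_Y$ is a smooth K3 surface, so $\rho(S_Y)\leqslant 20$.

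Next I would show that the classes of the lines $E_j|_{S_Y}$, the $(-2)$-curves $F_i|_{S_Y}$ (one class per $F_i$, contributing $d_i$ curve components), and the polarization $-K_Y|_{S_Y}$ span a sublattice of $\Pic(S_Y)$ of rank at least $1+\sum_i d_i$. The $(-2)$-curves lying over the distinct points of $\overline{S}\cap\bigcup C_i$ are pairwise disjoint and orthogonal to $-K_Y|_{S_Y}$. Their intersection matrix is therefore negative definite, and together with the ample-on-$\overline{S}$ class they give a lattice of signature $(1,\sum d_i)$. This yields $1+\sum_i d_i\leqslant 20$, i.e. $\sum_i d_i\leqslant19$, which is exactly the claimed bound. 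The lines on the $E_j$ are not needed for this count.

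The main obstacle I expect is to justify that each point of $C_i\cap\overline{S}$ contributes at least one $(-2)$-curve that is independent from all the others, and that coincidences $\phi(F_i)=\phi(F_j)$ do not cause double counting. When $C_i=C_j$ with $i\neq j$, I would verify that over a general point there are then at least two exceptional components, which makes the count only stronger. If $F_i$ is non-normal, with its fibers over general points reducible or multiple, I would still count at least one component per point, so the inequality survives. If this local argument fails, the fallback is the Namikawa-type estimate used in \eqref{equation:sing}: bound the topological defect $\rho(Y)-\rho(\overline{X})$ by $20-\rho$ via the smoothing of $\overline{X}$.
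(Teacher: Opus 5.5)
Your argument is correct and is essentially the paper's proof. You take a general hyperplane section of $\overline{X}$ and its strict transform $H$ on $Y$, which is a smooth K3 surface. On $H$ each $F_i$ cuts at least $d_i$ rational curves contracted by $\phi|_H$, and these curves, together with the pullback of the polarization, span a sublattice of $\Pic(H)$ of rank at least $1+\sum_i d_i\leqslant 20$.

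There are two small corrections. First, smoothness of $H$ follows from Bertini for the base-point-free system $|-K_Y|$ (Lemma~\ref{lemma:720-764-base-points}) on a threefold with isolated singularities, not directly from Corollary~\ref{corollary:A6-ext-non-Gorenstein-k3}, which only gives Du Val singularities. Second, when $C_i=C_j$ for $i\neq j$, the curves coming from $F_i$ and $F_j$ over the same point need not be disjoint. Their independence should instead come from the negative definiteness of the curves contracted by $\phi|_H$ to a Du Val point. The Euler-characteristic framing and the smoothing fallback are not needed.
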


\begin{proof}
    Let $\overline{H}$ be a general hyperplane section of $\overline{X}$ and let $H$ be its strict transform on $Y$. In this setting $\overline{H}$ has Du Val singularities, $H$ is a smooth K3 surface and moreover, each $F_i\vert_H$ is a disjoint union of $d_i$ rational curves on $H$ that are contracted by the morphism $\phi\vert_H:H\to\overline{H}$. As $\operatorname{rk}\Pic(H)\leqslant20$, we get the stated bound.
\end{proof}

Since $X$ is a $G\QQ$-Fano threefold, it follows that $\mathrm{r}(\overline{X})^G=1$. Using that the divisor
$$\overline{D}=\sum_{i=1}^{12}\overline{E}_i$$
is $G$-invariant, we can compute that $\overline{D}\sim_{\QQ}\frac{6}{g-1}(-K_{\overline{X}})$, and the ampleness of $-K_{\overline{X}}$ implies that for some $i,j\in\{1,\ldots,12\}$ with $i\neq j$ the intersection $\overline{E}_i\cap\overline{E}_j$ will be a line. Subsequently, we obtain

\begin{lemma}
    If $E_i\cap F_j\neq\varnothing$, then $E_i\cap F_j$ is a line in $E_i\simeq\PP^2$. 
\end{lemma}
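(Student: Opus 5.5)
The plan is to study the curve $E_i\cap F_j$ inside $E_i\simeq\PP^2$ through the restriction of $-K_Y$ and the isomorphism $E_i\to\overline{E}_i$.

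First I will show that $E_i\cap F_j$ is one-dimensional. Both $E_i$ and $F_j$ are surfaces on the threefold $Y$, and $E_i$ is Cartier near the intersection: $Y$ is smooth along $E_i$, being the blow-up of a $\frac{1}{2}(1,1,1)$ point. Hence a nonempty intersection $E_i\cap F_j$ has pure dimension one. It is not the whole of $E_i$, since $E_i\neq F_j$: indeed $E_i$ is not $\phi$-exceptional, because $E_i\to\overline{E}_i$ is an isomorphism. So $E_i\cap F_j$ is a curve $\Gamma\subset E_i$.

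Next I will show that every component of $\Gamma$ is a line. Every curve contained in $F_j$ that is not a fiber of $F_j\to C_j$ maps onto $C_j$. The restriction $\phi|_{E_i}$ is an isomorphism onto the plane $\overline{E}_i$, so it contracts no curve. Therefore no component of $\Gamma$ is a fiber of $\phi|_{F_j}$, and each component maps isomorphically onto its image, which lies in $C_j\cap\overline{E}_i$. Since $\dim C_j=1$ and $C_j$ is irreducible, every component of $\Gamma$ maps onto $C_j$, and hence $C_j\subset\overline{E}_i$. Injectivity of $\phi|_{E_i}$ then forces $\Gamma_{\mathrm{red}}$ to be irreducible, namely $(\phi|_{E_i})^{-1}(C_j)$.

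To see that this curve is a line, I will use that $F_j$ is covered by the fibers $f$ of $F_j\to C_j$, which satisfy $-K_Y\cdot f=0$. The fibers through the points of $\Gamma$ meet $E_i$, but they are not contained in $E_i$. A generic such fiber meets $E_i$ positively, so $E_i\cdot f>0$. On the other hand $-K_Y=\pi^*(-K_X)-\frac12\sum E_l$, which gives $\pi^*(-K_X)\cdot f=\frac12\sum_l E_l\cdot f$. It is simpler, however, to bound the degree directly. The degree of $\Gamma$ in $E_i\simeq\PP^2$ equals $\deg C_j$ in $\overline{E}_i$, and $\deg C_j=d_j\leqslant 19$ by Lemma~\ref{lemma:curves-19}.

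Now I bring in symmetry. The curve $C_j$ is a curve in the plane $\overline{E}_i$, and its $G_i$-orbit, with $G_i\simeq\mathfrak{A}_5$ acting faithfully on $\overline{E}_i$, consists of $\phi$-images of exceptional curves. The total degree satisfies $\sum d_l\leqslant 19$, and a $G_i$-orbit of curves in $\PP^2$ is either a single invariant curve or has length at least $5$. An $\mathfrak{A}_5$-invariant plane curve has degree $2$, $6$, $10$, $\ldots$, and the orbits of lines have lengths $6$, $10$, $15$, $\ldots$. I will combine this with the count $\sum d_l\leqslant19$ over all the $E_l$. There are $12$ planes, and each curve $C_j$ lies in only a few of them, so a degree-$\geqslant2$ curve appearing in every $E_l$ would exceed the bound. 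I expect this to leave only lines. This counting is the main obstacle. If it fails, I will instead argue that a conic $C_j$ yields $\overline{E}_i\cap F$ sweeping a quadric cone, and rule that out using the fact that $\overline{E}_i\cap\overline{E}_{i'}$ is at most a line.

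In summary, $\Gamma_{\mathrm{red}}$ is a line. Reducedness of the scheme-theoretic intersection is not needed for the statement, so I will treat $E_i\cap F_j$ set-theoretically.
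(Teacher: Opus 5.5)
Your setup matches the paper's. $E_i\cap F_j$ is a curve that maps isomorphically onto $C_j\subset\overline{E}_i$ with the same degree $d$, and the case $d\geqslant2$ has to be ruled out by Lemma~\ref{lemma:curves-19} using the $G$-action on the twelve planes. Your Steps 1--2 (pure dimension one, and irreducibility of $\Gamma_{\mathrm{red}}$ via injectivity of $\phi|_{E_i}$) are correct and spell out details the paper takes for granted.

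However, you leave the decisive count as an expectation, and as phrased it does not close. Knowing only that $C_j$ lies in ``a few'' of the planes is not enough. For example, if a curve of degree $\geqslant2$ could lie in two of the planes, transitivity would only give $6$ such curves in the $G$-orbit. Their total degree would be $\geqslant12$, which is compatible with $\sum d_l\leqslant19$.

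The missing point is that a curve $C_j\subset\overline{E}_i$ of degree $d\geqslant2$ lies in \emph{no} other plane $\overline{E}_l$. It is irreducible and not a line, while $\overline{E}_i\cap\overline{E}_l$ is empty, a point or a line for $l\neq i$ --- the very fact you only invoke in your fallback. Now for each $l$ pick $g\in G$ with $g\overline{E}_i=\overline{E}_l$. The curve $gC_j=\phi(gF_j)$ is one of $C_1,\ldots,C_k$ and lies only in $\overline{E}_l$. So these twelve curves are pairwise distinct, and $\sum d_l\geqslant 12d\geqslant24>19$, contradicting Lemma~\ref{lemma:curves-19}. This is exactly the paper's argument.

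The following parts of your plan are unnecessary:
\begin{itemize}
\item the $\mathfrak{A}_5$-orbit analysis inside a single plane;
\item the bound $d_j\leqslant19$;
\item the intersection numbers with fibers of $F_j\to C_j$;
\item the quadric-cone fallback.
\end{itemize}
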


\begin{proof}
    Suppose that $Z=E_1\cap F_1\neq\varnothing$. Then $Z$ is a curve of degree $d$ in $E_1\simeq\PP^2$, and then $C_1=\phi(Z)$ is a curve of the same degree in $\overline{E}_1\simeq\PP^2$. Note that if $d\neq1$ then $C_1$ is not contained in any other plane among $\overline{E}_2,\ldots,\overline{E}_{12}$, so the $G$-orbit of $C_1$ contains at least $12$ curves of degree $d\geqslant2$, contradicting \ref{lemma:curves-19}. 
\end{proof}

\begin{lemma}
    \label{lemma:A6-ext-non-gor-lc}
    Every line among $C_1,\ldots,C_k$ is contained in at least $3$ planes among $\overline{E}_1\ldots,\overline{E}_{12}$. In particular, the log pair $(\overline{X},\overline{D})$ is not log canonical at the general point of every line $C_1,\ldots,C_k$.
\end{lemma}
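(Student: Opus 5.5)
We first show that a line among $C_1,\ldots,C_k$ lies in at least one plane $\overline{E}_i$, and then use symmetry and the bound of Lemma~\ref{lemma:curves-19} to upgrade this to at least $3$ planes. Let $C=C_j$ be a line and $F=F_j$ the corresponding $\phi$-exceptional surface, so $\phi\vert_F\colon F\to C$ has one-dimensional fibres contracted by $\phi$, hence $-K_Y$-trivial. The surface $\pi(F)$ is not contracted by $\pi$. Since $\psi$ is regular outside $\Sigma$ and $\phi$ does not contract curves away from $\bigcup E_i$ except through $\psi$, the image $\pi(F)$ is swept out by curves $Z$ with $-K_X\cdot Z$ computed via $\pi^*(-K_X)=-K_Y+\frac12\sum E_i$. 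For a fibre $Z\subset F$ not contained in $\bigcup E_i$ we get
$$-K_X\cdot\pi(Z)=\tfrac12\sum_i E_i\cdot Z.$$
This is positive by ampleness of $-K_X$, so $F$ meets some $E_i$. By the previous lemma, $F\cap E_i$ is then a line in $E_i$ mapping isomorphically onto $C$, so $C\subset\overline{E}_i$.

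Next we count the planes through $C$. Let $s$ be the number of planes among $\overline{E}_1,\ldots,\overline{E}_{12}$ containing $C$, and let $t$ be the number of surfaces $F_j$ mapping onto $C$. Consider the $G$-orbit of $C$. The stabilizer of $\overline{E}_i$ is $G_i\simeq\mathfrak{A}_5$, which acts on $\overline{E}_i\simeq\PP^2$ without invariant lines and has no line orbits of length $<6$ (the minimal orbit of lines in $\PP^2$ under $\mathfrak{A}_5$ has length $6$). Hence each $\overline{E}_i$ containing a line of the orbit contains at least $6$ of them. Double counting pairs (line in the orbit, plane containing it) gives
$$|G\cdot C|\cdot s\geqslant 6\cdot\#\{i:\overline{E}_i\ \text{contains a line of the orbit}\}=72,$$
since by transitivity on the $\overline{E}_i$ every plane contains one. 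On the other hand, Lemma~\ref{lemma:curves-19} bounds the number of lines among $C_1,\ldots,C_k$ by $19$. Therefore $|G\cdot C|\leqslant 19$, which forces $s\geqslant 72/19>3$, so $s\geqslant 4\geqslant 3$. (One should check the $\mathfrak{A}_5$ line-orbit bound carefully; if only length $\geqslant 5$ holds, this still gives $s\geqslant 60/19>3$.)

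For the log canonical statement, at a general point $P$ of $C$ at least three components of $\overline{D}$ (planes) pass through $C$. A general hyperplane section $\overline{H}$ through $P$ is a Du Val surface in which $\overline{D}\vert_{\overline{H}}$ has at least three smooth branches, namely lines, through the point $P$. These branches either share tangent directions or meet pairwise transversally in a surface germ that is singular at $P$ (because $C\subset\phi(F)$ is in the singular locus). We then compute on the minimal resolution of the $A_n$ point, or on the threefold via the exceptional divisor over $C$: pulling back three planes through a curve lying in $\operatorname{Sing}(\overline{X})$ gives discrepancy $\leqslant -1$ strictly. The cleanest route is to pass to $Y$, where $\phi^*\overline{D}=\sum E_i+\sum b_jF_j$ with $b_j\geqslant$ the multiplicity contributed by the $s\geqslant3$ planes. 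The coefficient of $F$ is computed from $\phi^*\overline{D}\cdot f=0$ for a fibre $f$ of $F\to C$. Using $E_i\cdot f=1$ for the $s$ planes and $F\cdot f=-2$ on the K3 section, we get $b\geqslant s/2\geqslant 3/2>1$. Hence $(\overline{X},\overline{D})$ is not log canonical along $C$.

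The main obstacle is the first step, namely justifying rigorously that $F$ must meet some $E_i$, together with the orbit-length bound for lines in $E_i$ under $\mathfrak{A}_5$.
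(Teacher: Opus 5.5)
Your proof of the first assertion follows the paper's argument. You double-count incidences between the $G$-orbit of the line and the twelve planes, use that $G_i\simeq\mathfrak{A}_5$ has no invariant line on $\overline{E}_i$, and compare with Lemma~\ref{lemma:curves-19}. The paper uses orbit length $\geqslant 5$ and you use the sharp bound $6$; either way one actually gets $s\geqslant 4$. The paper's proof simply starts from ``assume $C_1\subset\overline{E}_1$''. Your first paragraph correctly supplies this step, via $0=-K_Y\cdot Z=-K_X\cdot\pi(Z)-\frac12\sum_i E_i\cdot Z$ for a fibre $Z$ of $F\to C$, together with the preceding lemma. No such $Z$ lies in an $E_i$, because $\phi\vert_{E_i}$ is an isomorphism.

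For the ``in particular'' you take a different route. The paper disposes of it in one line with \cite[Proposition 16.6]{Utah}. The underlying fact is that, on a general surface section, a reduced boundary with three branches through a point is not log canonical. You instead compute the coefficient $b$ of $F$ in $\phi^*\overline{D}$ from $\phi^*\overline{D}\cdot f=0$ on the K3 section $H$, which exhibits an explicit divisor of discrepancy $-b<-1$.

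As written, this computation silently assumes that $F$ is the only $\phi$-exceptional divisor over $C$. Only then is $F\cdot f=-2$ with no further terms, and only then is every plane through $C$ forced to meet $f$. The paper explicitly allows $\phi(F_i)=\phi(F_j)$ for $i\neq j$. In that case the $s$ planes may be distributed among several $F_j$, and each relation acquires extra terms $b_{j'}F_{j'}\cdot f\geqslant 0$, so $b\geqslant s/2$ no longer follows for any single $F_j$.

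The repair is short. Let $f_1,\ldots,f_m$ be the curves of $H$ over the point $C\cap\overline{H}$; they form a connected ADE tree. Each $b_a>0$, since $C\subset\operatorname{Supp}\overline{D}$. Put $R=\sum_a f_a$. Intersecting $\phi^*\overline{D}$ with $R$ gives $\sum_a b_a(2-\deg a)=\sum_i E_i\cdot R\geqslant s\geqslant 3$, because each plane through $C$ meets $\phi^{-1}(C\cap\overline{H})$. On the other hand, if all $b_a\leqslant 1$, the left-hand side is at most $2$ in type $A$. In types $D$ and $E$ it is at most $3-b_t<3$, where $t$ is the trivalent vertex. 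Alternatively, simply reduce to the surface statement as the paper does.
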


\begin{proof}
    Assume that $C_1$ is contained in $\overline{E}_1$. The stabilizer $G_1$ of the plane $\overline{E}_1$ is $G_1\simeq\mathfrak{A}_5$. Since $\overline{E_1}$ does not have $G_1$-invariant lines or points, the $G_1$-orbit of $C_1$ consists of at least $5$ lines in $\overline{E}_1$. If $C_1$ is contained in at most $2$ planes among $\overline{E_1},\ldots,\overline{E}_{12}$, the $G$-orbit of $C_1$ would have at least $\frac{12\cdot 5}{2}=30$ lines, but this would contradict \ref{lemma:curves-19}. The last assertion is a consequence of \cite[Proposition 16.6]{Utah}.
\end{proof}

Set $\lambda=\operatorname{lct}(\overline{X},\overline{D})$. Since $\overline{X}$ has canonical singularities, it is klt. By Lemma~\ref{lemma:A6-ext-non-gor-lc} we see that $0<\lambda<1$ and $(\overline{X},\lambda\overline{D})$ is log canonical. Let $C$ be a minimal log canonical center of $(\overline{X},\lambda\overline{D})$. Since $\overline{D}$ is reduced and $\lambda<1$ we have that $\dim C\leqslant1$. 

Pick a rational number $\mu>1$ such that $\lambda\mu<1$. Since $\overline{D}$ is ample, \cite[Lemma 2.8]{cheltsov2011exceptional} gives the existence of a $G$-invariant effective $\QQ$-divisor $B$ such that $B\sim_\QQ\lambda\mu\overline{D}$ such that $(\overline{X},B)$ is log canonical and
$$\operatorname{Nklt}(\overline{X},B)=\bigcup_{g\in G}g(C).$$
Furthermore,
$$-(K_{\overline{X}}+B)\sim_\QQ\left(1-\frac{6\lambda\mu}{g-1}\right)(-K_{\overline{X}})$$
is ample, since $g\geqslant7$ and $\lambda\mu<1$. By minimality of $C$ all the translates $g(C)$ for $g\in G$ are disjoint, but then Koll\'ar-Shokurov connectedness implies that $C$ is $G$-invariant. Choose a plane $\overline{E}_i$ containing $C$. Since $G$ acts transitively on the planes $\overline{E}_1,\ldots,\overline{E}_{12}$, $C$ is contained in all such planes, which forces $C$ to be either a line or a point. In any case the stabilizer $G_i\simeq\mathfrak{A}_5$ leaves $C$ invariant inside $\overline{E}_i$, contradicting the fact that $\overline{E}_i$ contains neither $G_i$-fixed points nor $G_i$-invariant lines. This argument finishes the proof of Proposition~\ref{proposition:A6-ext-small}.

\subsection{End of the proof of Technical Theorem}
\label{subsection:non-gorenstein-reduction-A6-ext}

We continue using the notations and assumptions from Subsection~\ref{subsection:A6-ext-anticanonical-map-not-contracting-divisors}. From the previous section we know that $\phi:Y\to\overline{X}$ is a small birational morphism, and then $\overline{X}$ is a terminal Gorenstein Fano threefold. Then, we can consider its $\QQ$-Gorenstein smoothing $V$, i.e., $V$ is a smooth Fano threefold such that
\begin{center}
    $(-K_V)^3=(-K_{\overline{X}})^3=2g-2\geqslant12,\quad \rho(V)=\rho(\overline{X}),\quad$ and $\quad\iota(V)=\iota(\overline{X})=1$.
\end{center}
In particular, from \cite{Namikawa} we obtain the following bound on the number of singular points of $\overline{X}$
\begin{equation}
    \label{equation:28}
    \vert\operatorname{Sing}(\overline{X})\vert\leqslant20+h^{1,2}(V)-\rho(V)\leqslant19+h^{1,2}(V)\leqslant28.
\end{equation}

We now make the following maximality assumption on $X$. Note that, if it were necessary, replacing $X$ by another non-Gorenstein $G\QQ$-Fano threefold $X^\prime$ that is $G$-birational to $X$, we assume that
\begin{equation}
\label{equation:maximal}
\dim(\vert-K_X\vert)\geqslant\dim(\vert-K_{X^\prime}\vert)
\end{equation}
for any non-Gorenstein $G\QQ$-Fano threefold $X^\prime$ that is $G$-birational to $X$. 

\begin{lemma}
    \label{lemma:not-iso}
    The birational morphism $\phi$ is not an isomorphism.
\end{lemma}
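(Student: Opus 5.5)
We argue by contradiction: suppose $\phi\colon Y\to\overline{X}$ is an isomorphism. Then $Y$ itself is a Fano threefold with terminal Gorenstein singularities, $G$ acts faithfully on it, and $-K_Y$ is ample. We want to see that $Y$ is then a $G$-equivariant model to which the Gorenstein analysis of Section~\ref{section:A6-ext-gorenstein} applies, and derive a contradiction from there.

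The first step is to show that $Y$ is a $G\QQ$-Fano threefold. We have
\[
\mathrm{Cl}(Y)^G\otimes\QQ=\QQ\,\pi^*(-K_X)\oplus\QQ\,E,
\]
where $E=E_1+\ldots+E_{12}$ is $G$-invariant. Hence $\mathrm{r}^G(Y)=2$, so $Y$ is not $G\QQ$-Fano, and we cannot apply Theorem~\ref{theorem:A6-ext-Gorenstein} directly. Instead we run the $G$-equivariant two-ray game on $Y$. The $G$-invariant part of $\overline{\mathrm{NE}}(Y)$ is a two-dimensional cone. One ray is spanned by the class of a line in some $E_i$, and its contraction is $\pi$. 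Since $-K_Y$ is ample, the other ray $R$ is $K_Y$-negative and is also contractible in the $G$-equivariant MMP; let $\theta\colon Y\to Y'$ be its contraction.

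We treat the possibilities for $\theta$ in turn.
\begin{enumerate}
\item If $\theta$ is of fiber type, then $Y$ is $G$-birational to a $G$-conic bundle or a $G$-del Pezzo fibration. Since $X$ is rational, this contradicts the \hyperref[theorem:fibration]{Fibration Theorem}.
\item If $\theta$ is divisorial, then $Y'$ is a $G\QQ$-Fano threefold, because $-K_{Y'}=\theta_*(-K_Y)$ is ample for an extremal contraction from a Fano threefold.
\begin{enumerate}
\item If $Y'$ is Gorenstein, it is rational and $G$-birational to $X$, which contradicts Theorem~\ref{theorem:A6-ext-Gorenstein}.
\item If $Y'$ is non-Gorenstein, its non-Gorenstein points form a $G$-orbit of $12$ points of type $\frac12(1,1,1)$ by Proposition~\ref{proposition:A6-ext-non-Gorenstein}. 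Such points arise from $\theta$ contracting a divisor to a point. We compare $\dim|-K_{Y'}|=\frac12(-K_{Y'})^3-1$ with $\dim|-K_X|$. For a contraction of $G$-orbits of divisors to $\frac12(1,1,1)$ points, $(-K_{Y'})^3=(-K_Y)^3+6=(-K_X)^3$ (or more), with strict increase unless $\theta$ is ``symmetric'' to $\pi$. By the maximality assumption \eqref{equation:maximal} we would then have $(-K_{Y'})^3\leqslant(-K_X)^3$, so equality holds and $\theta$ is the blow-up of $12$ points of type $\frac12(1,1,1)$ in $Y'$.
\end{enumerate}
\item If $\theta$ is small, it is a flip. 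A flip is impossible here because $Y$ is Gorenstein and $-K_Y$ is ample, so the flipping curve $\ell$ satisfies $-K_Y\cdot\ell\geqslant1$. This gives a contradiction by the standard fact that terminal Gorenstein threefolds admit no flipping contractions (Cutkosky's classification of extremal contractions on Gorenstein terminal threefolds).
\end{enumerate}

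The remaining and hardest case is 2(b), where we get a \emph{Sarkisov link} $X\leftarrow Y\to Y'$ with both sides non-Gorenstein and $Y$ Fano. Here the ten-dimensional representation theory of $G$ enters. The exceptional divisors $F_1,\ldots,F_{12}$ of $\theta$ meet the $E_i$, and the planes $\phi(E_i)$ and $\phi(F_j)$ together give an invariant configuration of $24$ planes in the anticanonical model $Y\subset\PP^{g+1}$. We would try to show this is impossible. From $F\sim_\QQ\frac{6}{g-1}(-K_Y)+\ldots$ and the intersection numbers $E_i\cdot F_j\cdot(-K_Y)$, we would conclude that $F_j|_{E_i}$ are lines. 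Then, as in Lemma~\ref{lemma:A6-ext-non-gor-lc}, every such line lies in at least three planes, which gives at least $30$ lines in a hyperplane section. This would contradict the Picard bound $\rho\leqslant20$ for the K3 surface $H$, as in Lemma~\ref{lemma:curves-19}. If this counting fails, the fallback is to bound $|\Sing|$ using \eqref{equation:28} and the absence of fixed points.

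I expect the main obstacle to be exactly this last case, namely controlling a potential symmetric link $X\leftarrow Y\to Y'$. Cases 1, 2(a) and 3 follow from results already established in the paper.
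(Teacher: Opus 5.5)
Your two-ray game on $Y$ matches the paper's setup, and so do your exclusions: fiber type by the \hyperref[theorem:fibration]{Fibration Theorem}, small contractions because terminal Gorenstein threefolds admit none, and a Gorenstein target by Section~\ref{section:A6-ext-gorenstein}. But there is a genuine gap. The case that actually carries the lemma is a divisorial $\theta\colon Y\to Y'$ onto a non-Gorenstein $G\QQ$-Fano threefold, and you leave it open.

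Within that case you also skip the subcase where $\theta(F)$ is a curve. Your claim that the non-Gorenstein points of $Y'$ ``arise from $\theta$ contracting a divisor to a point'' is not justified. The paper handles this subcase with the maximality assumption. Since $K_Y=\theta^*K_{Y'}+cF$ with $c>0$, every member of $\theta_*|-K_Y|\subset|-K_{Y'}|$ contains the curve $\theta(F)$. On the other hand, $\operatorname{Bs}|-K_{Y'}|$ is finite (Lemma~\ref{lemma:720-764-base-points} applied to $Y'$). Hence $\dim|-K_X|=\dim|-K_Y|<\dim|-K_{Y'}|$, contradicting \eqref{equation:maximal}.

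Your sketch for the ``symmetric'' case does not work either. Lemmas~\ref{lemma:curves-19} and~\ref{lemma:A6-ext-non-gor-lc} concern images of $\phi$-exceptional divisors, and there are none when $\phi$ is an isomorphism. Moreover, a hyperplane section meets a line in a point, so there are no ``$30$ lines in a hyperplane section'' to play against $\rho(H)\leqslant 20$.

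The missing idea is a short intersection argument; no configuration counting is needed. Write the $G$-invariant $\theta$-exceptional divisor as $F\equiv a\pi^*(-K_X)-bE$. Here $a>0$, because $\pi_*F$ is a non-zero effective divisor. Suppose $F$ were disjoint from every $E_i$. For a line $\ell\subset E_i$ we have $\pi^*(-K_X)\cdot\ell=0$ and $E\cdot\ell=E_i\cdot\ell=-2$, so $0=F\cdot\ell=2b$. Then $F\equiv a\pi^*(-K_X)$ would be nef, which is impossible for a divisor contracted by a birational contraction. Hence $F$ meets some $E_1$.

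Since $Y$ is smooth along $E_1$, the surface $E_1$ is Cartier there, so $F\cap E_1$ is a curve. This curve is contracted by $\theta$, because $\theta(F)$ is finite. But every curve in $E_1\simeq\PP^2$ is numerically a positive multiple of a line in $E_1$, and that class spans the ray contracted by $\pi$. So $\theta$ would contract the $\pi$-ray, contradicting that the two $G$-extremal rays are distinct. This rules out your symmetric link outright.
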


\begin{proof}
    Suppose $\phi$ is an isomorphism, so $Y$ is a terminal Gorenstein Fano threefold. Set $E~=~E_1+~\ldots~+~E_{12}$. Since all the non-Gorenstein points form one $G$-orbit, $E$ is $G$-irreducible and $\rho(Y)^G=2$. One extremal ray of the $G$-invariant Mori cone corresponds to $\pi$, so consider $h:Y\to X^\prime$ as the extremal contraction given by the other $K_Y$-negative extremal ray. By \hyperref[theorem:fibration]{Fibration Theorem} we have that $h$ is birational, and it cannot be small since from \cite[Theorem 4.7]{KM:92} we know that a threefold with terminal Gorenstein singularities does not admit such a contraction.
    In consequence $h$ has to contract a $G$-invariant divisor $F$, and $X^\prime$ is a $G\QQ$-Fano threefold which is non-Gorenstein due to Section~\ref{section:A6-ext-gorenstein}. If $h(F)$ is a curve, then it is contained in the base locus of the strict transform of $\vert-K_Y\vert$ on $X^\prime$, which is contained in $\vert-K_{X^\prime}\vert$, but since $X^\prime$ has only finitely many non-Gorenstein points, we get that 
    $$\dim(\vert-K_X\vert)=\dim(\vert-K_Y\vert)<\dim(\vert-K_{X^\prime}\vert),$$
    which contradicts our maximality assumption~\eqref{equation:maximal}. Then $h(F)$ has to be the $G$-orbit of a point. We claim that $F$ meets some plane $E_i$. Assume that this is not the case and write
    $$F\equiv a\pi^\ast(-K_X)-bE$$
    for some $a,b\in\mathbb{Q}$ with $a>0$. If $\ell_i\subset E_i$ is a line then $F\cdot\ell_i=2b$, and by disjointness $b=0$. Then we obtain that $F\sim a\pi^\ast(-K_X)$, which is impossible as $F$ cannot be nef. Without loss of generality assume that $F\cap E_1\neq\varnothing$. Moreover, this intersection has to be a curve since $Y$ is smooth along $E_1$ and $E_1$ and $F$ do not share any irreducible component. However, $h\vert_{E_1}$ cannot contract a curve as in that case we are contracting the whole plane $E_1$. This contradiction proves the statement.
\end{proof}

\begin{lemma}
\label{lemma:2-planes-through-point-A6-ext}
Let $P$ be a~point in $\overline{X}$. Then at most two planes among $\overline{E}_1,\ldots,\overline{E}_{12}$ contain $P$.
\end{lemma}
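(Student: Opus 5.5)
We argue by contradiction. Suppose $P\in\overline{X}$ lies on at least three of the planes, say $\overline{E}_1,\overline{E}_2,\overline{E}_3$. The idea is to mimic the log canonical threshold argument that ended the proof of Proposition~\ref{proposition:A6-ext-small}. Recall $\overline{D}=\sum_{i=1}^{12}\overline{E}_i\sim_{\QQ}\frac{6}{g-1}(-K_{\overline{X}})$. Proposition~\ref{proposition:terminal-model} says $\overline{X}$ is terminal Gorenstein, so $\overline{X}$ is a hypersurface cDV singularity at $P$ (or smooth there). The planes are Weil divisors, not necessarily Cartier, through $P$. The first step is to show that the reduced divisor $\overline{E}_1+\overline{E}_2+\overline{E}_3$ is not log canonical at $P$. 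I would try to get this from a local computation.

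\begin{itemize}
\item If $P$ is a smooth point of $\overline{X}$, three smooth surfaces through $P$ whose pairwise intersections are a point or a line have $\operatorname{mult}_P\geqslant 3$. Then $(\overline{X},\overline{D})$ is not log canonical at $P$ by the standard multiplicity criterion, \cite[Proposition 16.6]{Utah}.
\item If $P$ is singular, the point $P$ has embedding dimension $4$. Three planes through $P$ in a cDV germ give a similar failure of log canonicity. One can see this by pulling back to a general hyperplane section through $P$, a Du Val surface, where three lines through the point are not log canonical. One can also use inversion of adjunction along one plane $\overline{E}_1\simeq\PP^2$: there the different contains the traces of $\overline{E}_2,\overline{E}_3$, both passing through $P$, plus a contribution from the singularity.
\end{itemize}

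This local non-lc statement is the main obstacle. I would attempt it first via inversion of adjunction on $\overline{E}_1$, using that $\overline{E}_1$ is smooth and the restriction $(K_{\overline{X}}+\overline{E}_1)|_{\overline{E}_1}=K_{\overline{E}_1}+\mathrm{Diff}$.

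Once this is established, set $\lambda=\operatorname{lct}(\overline{X},\overline{D})<1$. Let $C$ be a minimal log canonical center of $(\overline{X},\lambda\overline{D})$. Since $\overline{D}$ is reduced and $\lambda<1$, we have $\dim C\leqslant1$. Choose $\mu>1$ with $\lambda\mu<1$ and apply \cite[Lemma 2.8]{cheltsov2011exceptional}. This gives a $G$-invariant $B\sim_\QQ\lambda\mu\overline{D}$ with $\operatorname{Nklt}(\overline{X},B)$ equal to the $G$-orbit of $C$. The divisor $-(K_{\overline{X}}+B)$ is ample because $g\geqslant7$. The translates of $C$ are disjoint by \cite{Kawamata-1}, so Koll\'ar--Shokurov connectedness forces $C$ to be $G$-invariant.

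To conclude, note that $C$ lies in $\operatorname{Supp}\overline{D}$; away from the planes the pair is klt. So $C$ meets some plane $\overline{E}_i$, and $C$ cannot be a curve transversal to all of them. If $C$ is a point, it is $G$-fixed, contradicting Lemma~\ref{lemma:A6-ext-fixed}, since $\operatorname{PSU}_3(\FF_2)\subset M_{10}$. If $C$ is a curve, it is a $G$-invariant curve meeting or lying in $\overline{E}_i$. By transitivity, $C\cap\overline{E}_i$ is a $G_i$-invariant subset of $\overline{E}_i$ with $G_i\simeq\mathfrak{A}_5$. If $C\subset\overline{E}_i$, then $C$ lies in all twelve planes, hence is a line or point that is $G_i$-invariant, which is impossible. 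Otherwise $C$ is normal and rational by Kawamata subadjunction \cite{Kawamata-2}, so $A$ acts trivially on it by Lemma~\ref{lemma:A6-ext-projective-spaces}. This contradicts Theorem~\ref{theorem:A6-non-fixed}.
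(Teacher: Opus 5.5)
Your proof has a genuine gap: the log canonical machinery you rely on is not available in this setting, and the local non-lc step fails on its own terms.

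\textbf{The pair is not a log pair.} You use the relation $\overline{D}\sim_{\QQ}\frac{6}{g-1}(-K_{\overline{X}})$, but it was derived in the proof of Proposition~\ref{proposition:A6-ext-small} under the hypothesis, later refuted, that $\phi$ contracts divisors. That hypothesis is exactly what forces $\mathrm{r}^G(\overline{X})=1$. At the point of this lemma $\phi$ is small, so $\operatorname{Cl}(\overline{X})=\operatorname{Cl}(Y)$. Hence $\mathrm{r}^G(\overline{X})=2$, and $[\overline{D}]$ and $[-K_{\overline{X}}]$ are independent.

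Worse, $\overline{D}$ fails to be $\QQ$-Cartier at the very point $P$ you want to study:
\begin{itemize}
\item The $E_i$ are disjoint on $Y$ and $\phi$ is small, so $\overline{E}_i\cap\overline{E}_j=\{P\}$.
\item The connected fibre $\phi^{-1}(P)$ meets $E_1,E_2,E_3$, so it contains a curve $C\not\subset E$.
\item From $0=-K_Y\cdot C=\pi^*(-K_X)\cdot C-\tfrac12 E\cdot C$ you get $E\cdot C>0$.
\item If $\overline{D}$ were $\QQ$-Cartier near $P$, smallness would give $\phi^*\overline{D}=E$, hence $E\cdot C=0$, a contradiction.
\end{itemize}
Thus $(\overline{X},\overline{D})$ and $(\overline{X},\overline{E}_1+\overline{E}_2+\overline{E}_3)$ are not log pairs near $P$. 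Consequently $\operatorname{lct}(\overline{X},\overline{D})$, the tie-breaking lemma of \cite{cheltsov2011exceptional}, and Koll\'ar--Shokurov connectedness (including the ampleness of $-(K_{\overline{X}}+B)$) cannot be invoked. The second half of your argument has nothing to stand on.

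\textbf{The local step fails on its own terms.}
\begin{itemize}
\item At a smooth point, $\operatorname{mult}_P=3=\dim\overline{X}$ only gives a non-klt pair, not a non-lc one. The divisor $\{xyz=0\}\subset\mathbb{A}^3$ is snc, hence lc. In any case the smooth case is vacuous, because two planes through a smooth point would meet along a curve.
\item Restricting to a hyperplane section \emph{through} $P$ runs in the wrong direction. Non-lc of the restriction to a special section does not lift; $\{xyz=0\}$ is again a counterexample.
\item Inversion of adjunction on $\overline{E}_1$ detects nothing. The planes $\overline{E}_2,\overline{E}_3$ meet $\overline{E}_1$ only at the point $P$, and $\overline{X}$ has isolated singularities, so the different is zero. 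Moreover, $\overline{E}_1$ itself is not $\QQ$-Cartier at $P$.
\end{itemize}

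\textbf{The missing idea is projective, not birational.} Since $\overline{X}$ is terminal Gorenstein, its embedded tangent space satisfies $T_P\simeq\PP^4$ and contains all three planes. The set $Z=T_P\cap\overline{X}$ is an intersection of quadrics whose components have dimension at most $2$. A general hyperplane $H\subset T_P$ cuts the planes in three lines in $H\simeq\PP^3$. These lines are pairwise skew, because $\overline{E}_i\cap\overline{E}_j=\{P\}$, and they lie in an intersection of quadrics with no two-dimensional component. This is impossible, since three skew lines in $\PP^3$ lie on a unique quadric surface.
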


\begin{proof}
    Assume that three planes among $\overline{E}_1,\ldots,\overline{E}_{12}$ contains $P$ and let us seek a~contradiction. Without loss of generality, we may assume that $P=\overline{E}_1\cap\overline{E}_2\cap \overline{E}_3$. Let $T_P\subset\mathbb{P}^n$ be the~embedded tangent space to $\overline{X}$ at  $P$. Then $T_P\simeq\mathbb{P}^4$, and
    $$
    \overline{E}_1\cup\overline{E}_2\cup\overline{E}_3\subset T_P\cap\overline{X}.
    $$
    Set $Z=T_P\cap\overline{X}$. Since $\overline{X}$ is an intersection of quadrics, $Z$ is an~intersection of quadrics, and dimensions of its irreducible components are at most two. Now, intersecting $Z$ with a~general hyperplane $H\subset T_P$, we obtain three skew lines $H\cap \overline{E}_1$, $H\cap \overline{E}_2$, $H\cap \overline{E}_3$ in $H\simeq\mathbb{P}^3$ (note that $\overline{E}_1\cap\overline{E}_2=\{P\}$ since $\phi$ is small) such that they are contained in the~intersection of quadrics $H\cap Z$, whose irreducible components are at most one-dimensional. But this is impossible, since three skew lines in $\mathbb{P}^3$ are contained in a~unique quadric.
\end{proof}

Now, consider $k=\vert\operatorname{Sing}(\overline{X})\cap\overline{E}_{1}\vert$ and let $G_1\simeq\mathfrak{A}_5$ be the stabilizer of $\overline{E}_1$ in $G$. Note that $k\neq0$ because of Lemma \ref{lemma:not-iso}. Since $G_1$-orbits on $\overline{E}_i$ have a minimum length of $6$, we obtain $k\geqslant6$. By Lemma \ref{lemma:2-planes-through-point-A6-ext} any singular point of $\overline{X}$ is contained in at most two planes among $\overline{E}_1,\ldots,\overline{E}_{12}$, and then the threefold $\overline{X}$ contains at least $\frac{12\cdot k}{2}\geqslant36$ singular points. This contradicts \eqref{equation:28} and concludes the proof of Theorem \ref{theorem:A6-ext-non-gorenstein}. Hence, we conclude the proof of \hyperref[techtheorem]{Technical Theorem}.

\section{Proof of the Characterization Theorem}
\label{section:chartheorem}

In this section we let $G$ be isomorphic to either $\PGL_2(\FF_7)$ or $\PGL_2(\FF_9)$. By \hyperref[theorem:fibration]{Fibration Theorem}, it is enough to consider their actions on $G\QQ$-Fano threefolds. We will prove that every such Fano threefold is $G$-birational to the threefold \eqref{equation:W}. Then Proposition~\ref{proposition:W-actions} and Remark~\ref{remark:PGL27-flag} will give the uniqueness of the corresponding conjugacy classes in $\Cr_3(\CC)$.

The argument for $\PGL_2(\FF_9)$ is compared with Sections~\ref{section:A6-ext-gorenstein} and \ref{section:A6-ext-non-Gorenstein}. For $\PGL_2(\FF_7)$ we use the arguments of \cite[Part 2]{CheltsovPinardinProkhorov}, recording the changes required by the action of the outer involution and by working over $\CC$. The additional anticanonical models of genus $4$ that appear in this case are treated separately below.

\subsection{The group $\PGL_2(\FF_9)$}

Let $G\simeq\PGL_2(\FF_9)$ and $A\simeq\mathfrak{A}_6$ be its socle. Suppose that $X$ is a rational $G\QQ$-Fano threefold that is not $G$-birational to \eqref{equation:W}.

All the arguments concerning actions of $A$ presented in Subsection \ref{subsection:A6} remain available. For the arguments involving the whole group, one needs to distinguish representations on spaces of sections from representations on divisor classes. The former are complex representations, whereas
$$\Pic(X)\otimes\QQ,\qquad \operatorname{Cl}(X)\otimes\QQ$$
are rational representations. Although $G$ has four non-isomorphic complex representations of dimension $8$, none of them is defined over $\QQ$. The minimum dimension of a faithful rational representation of $G$ is $9$. Consequently, if the action on the space of divisor classes is faithful, its rank is at least $10$, since the canonical class spans an invariant line. Thus the rank estimates used in Sections~\ref{section:A6-ext-gorenstein} and \ref{section:A6-ext-non-Gorenstein} are unchanged.

The additional anticanonical representation of dimension $8$ is excluded as follows.

\begin{lemma}
    \label{lemma:PGL29-g-6}
    Let $X$ be a Fano threefold with canonical Gorenstein singularities such that it admits a faithful action of $G$. Then $g\neq6$.
\end{lemma}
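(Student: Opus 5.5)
We argue by contradiction, assuming $g=6$, so that $(-K_X)^3=10$ and $h^0(X,\OOO_X(-K_X))=g+2=8$. The plan is to first determine this $8$-dimensional representation of $G\simeq\PGL_2(\FF_9)$, and then to show that the equations of the anticanonical model cannot be $G$-invariant.

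By Lemma~\ref{lemma:A6-bpf}, $\vert-K_X\vert$ is base point free. The group $A$ has no faithful representations of dimension less than $5$, so Lemma~\ref{lemma:A6-hyperelliptic} applies: either $-K_X$ is very ample, or $X$ is the double quadric of that lemma. The double quadric has $g=3$, so it does not occur here, and $-K_X$ is very ample. Next we look at $V=H^0(X,\OOO_X(-K_X))^\vee$. The kernel of the $G$-action on $\PP(V)$ is trivial, so $V$ comes from a linear representation of $G$ or of a central extension of $G$. Passing to the Schur cover and using the character table \cite{lmfdb}, the possibilities for $V$ as a $G$-representation (up to a twist by the sign character) are very few. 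They are one of the four irreducible $8$-dimensional representations, or a representation that is reducible under $A$, such as $5\oplus1\oplus1\oplus1$ or $5\oplus 3$ coming from $3.A$. We exclude the reducible ones first. An $A$-invariant pencil or net in $\vert-K_X\vert$ is ruled out by Corollary~\ref{corollary:A6-ext-positive-mobile} and Lemma~\ref{lemma:A6-ext-invariant-surfaces}, which work verbatim for $A$. Projective pieces coming from $3.A$ would force a non-faithful $A$-action on $\PP^2$ inside the span, or an invariant plane section, and these are handled by Lemmas~\ref{lemma:A6-invariant-curves-P2} and~\ref{lemma:K3-a6}. This leaves $V$ irreducible of dimension $8$.

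For $g=6$ the anticanonical model is either a quadric section of a cone over a linear section of $\mathrm{Gr}(2,5)$, or, if it is not an intersection of quadrics, a trigonal variety. The trigonal case is excluded exactly as in Lemma~\ref{lemma:A6-ext-quadrics}, since there $A$ would act on a del Pezzo fibration over $\PP^1$, contradicting Theorem~\ref{theorem:A6-dP}. So $X\subset\PP^7$ is cut out by $h^0(\mathscr{I}_X(2))=\frac{(g-2)(g-3)}{2}=6$ quadrics, and $I_2=H^0(\mathscr{I}_X(2))$ is a $6$-dimensional $G$-subrepresentation of $\Sym^2V^\vee$. We compute $\Sym^2V^\vee$ ($36$-dimensional) for each of the four irreducible $8$-dimensional representations, using characters. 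The expectation is that it decomposes as $1\oplus 10\oplus 9\oplus 16$, or similar, with no summand or sum of summands of dimension $6$. This holds because the irreducible representations of $G$ have dimensions $1,1,9,9,10,10,10,16,\ldots$ and the only small one, of dimension $1$, occurs at most twice. It is the same mechanism as the $g=8$ case of Lemma~\ref{lemma:A6-ext-g-not-6-8}. This gives the contradiction, and we conclude $g\neq 6$.

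The main obstacle I expect is the first reduction, namely pinning down $V$. One has to check carefully that no decomposition built from restrictions of $3.A$-representations, or from $5$-dimensional $A$-pieces, glues into a $G$-representation or projective representation of dimension $8$ compatible with $X$. Should a $6$-dimensional invariant subspace of quadrics nevertheless exist for some $8$-dimensional $V$, the fallback is to use the Gushel--Mukai structure. In that case the $6$ quadrics are the Plücker quadrics restricted to $\PP^7$, which forces a $G$-equivariant $5$-dimensional space $W$ with $G\to\PGL(W)$. Lemma~\ref{lemma:3A6-transitive} shows that the outer involution of $\PGL_2(\FF_9)$ acts by duality, so it does not preserve a $5$-dimensional irreducible $A$-representation up to the needed twist and cannot act on $\mathrm{Gr}(2,W)$ compatibly.
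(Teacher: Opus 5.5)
Your final step is the paper's argument. The $6$-dimensional space $I_2=H^0(\PP^7,\mathscr{I}_X(2))$ is a $G$-subrepresentation of $\Sym^2$ of an irreducible $8$-dimensional $V$, so it must consist of characters, and Schur's lemma leaves room for at most one or two of them. However, several supporting steps are wrong as written.

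\textbf{The reduction to $V$ irreducible.} What you flag as the main obstacle is immediate, and your treatment of it does not work. $V=H^0(X,\OOO_X(-K_X))$ is a genuine linear $G$-representation, via the canonical linearization of $\omega_X$. The irreducible degrees of $\PGL_2(\FF_9)$ are $1,1,8,8,8,8,9,9,10,10,10$. So $V$ is one of two things. It may be irreducible, and then also $A$-irreducible, since by Clifford theory its restriction could only split into two $4$-dimensional pieces, and $A$ has none. Otherwise it is a sum of characters, in which case $A$ acts trivially on $\PP^7\supset X$, contradicting faithfulness. Decompositions like $5\oplus1\oplus1\oplus1$, or $5\oplus3$ with a $3.\mathfrak{A}_6$-piece, are not $G$-representations at all. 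The results you invoke to exclude them, Corollary~\ref{corollary:A6-ext-positive-mobile} and Lemma~\ref{lemma:A6-ext-invariant-surfaces}, are not available here. They are proved for terminal $G\QQ$-Fano threefolds with $G\simeq M_{10}$, using $M_{10}$-specific inputs such as Lemma~\ref{lemma:A6-ext-index}, which fails for $\PGL_2(\FF_9)$ because of $W$. The present lemma only assumes canonical Gorenstein singularities and no condition on $\mathrm{r}^G(X)$.

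\textbf{The representation theory in the last step.} The degree list $1,1,9,9,10,10,10,16$ and the guessed decomposition $1\oplus10\oplus9\oplus16$ belong to $M_{10}$, not $\PGL_2(\FF_9)$. The fact you actually need still holds: there are no irreducible representations of dimension $2$ to $7$, so $I_2$ is a sum of six characters. Your bound on how often characters occur in $\Sym^2V^\vee$ is asserted, not proved. It follows from Schur: $(\Sym^2V^\vee)^A\subset(V^\vee\otimes V^\vee)^A\simeq\operatorname{Hom}_A(V,V^\vee)$ has dimension at most $1$, whereas $A$ acts trivially on the $6$-dimensional $I_2$. With these repairs your proof becomes exactly the paper's.

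\textbf{Unnecessary detours.} The trigonal and Gushel--Mukai discussion is not needed. The count $h^0(\mathscr{I}_X(2))=\frac{(g-2)(g-3)}{2}=6$ comes from projective normality and Riemann--Roch, whether or not $X$ is cut out by quadrics. Your fallback also misapplies Lemma~\ref{lemma:3A6-transitive}, which concerns $3$-dimensional representations of $3.\mathfrak{A}_6$, not $5$-dimensional representations of $A$.
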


\begin{proof}
    By Lemmas \ref{lemma:A6-bpf} and \ref{lemma:A6-hyperelliptic}, $-K_X$ is very ample. Suppose that $g=6$. We identify $X$ with its anticanonical image in $\PP^7$. Then $V=H^0(X,\OOO_X(-K_X))$ is an irreducible $8$-dimensional $G$-representation and $V$ is $A$-irreducible. By \cite[Corollary 5.5]{P:JAG:simple} $\dim H^0(\PP^7,\mathscr{I}_X(2))=6$, so $A$ acts trivially on this space. This gives a contradiction, as by Schur's Lemma one has $\dim(\Sym^2V)^A \leqslant1$.
\end{proof}

Therefore, the Gorenstein argument given in Section \ref{section:A6-ext-gorenstein} runs and gives $W$ as the only possible rational model. If $X$ is non-Gorenstein, the same approach and proofs presented in Section~\ref{section:A6-ext-non-Gorenstein} lead us to a numerical contradiction. Thus, we obtain the conclusion of the \hyperref[auxtheorem]{Characterization Theorem} for $\PGL_2(\FF_9)$.

\subsection{Complete intersections of a quadric and a cubic}
\label{subsection:irrationality}

In this section we consider $G\simeq\PGL_2(\FF_7)$ with socle $A\simeq\PSL_2(\FF_7)$. As noted in the Introduction, there exist exactly two such $G$-invariant complete intersections of a quadric and a cubic in $\PP^5$ and it can be verified, e.g. using MAGMA, that they are smooth. Here we prove, following the method of \cite{BeauvilleA7}, that such complete intersections are irrational.

\begin{theorem}
\label{theorem:X-2-3-irrational}
    Let $X\subset\PP^5$ be a smooth $\PGL_2(\FF_7)$-invariant complete intersection of a cubic hypersurface and a quadric hypersurface. Then $X$ is irrational.
\end{theorem}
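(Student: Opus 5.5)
We follow Beauville's method from \cite{BeauvilleA7} and use the Clemens--Griffiths criterion. A smooth complete intersection $X=X_{2,3}\subset\PP^5$ is a Fano threefold of Picard rank $1$, index $1$ and genus $4$, with $h^{1,2}(X)=20$. Its intermediate Jacobian $J(X)$ is therefore a principally polarized abelian variety of dimension $20$. If $X$ were rational, $J(X)$ would be isomorphic, as a ppav, to a product of Jacobians of smooth curves. The plan is to show that the faithful action of $G\simeq\PGL_2(\FF_7)$ on $J(X)$ rules this out.

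\textbf{Step 1: the representation $H^{2,1}(X)$.} First we determine the $G$-representation $H^{1,2}(X)\simeq H^{2,1}(X)^\vee$. By Griffiths' residue description for complete intersections (via the Cayley trick), $H^{2,1}(X)$ is identified with a graded piece of a Jacobian-type ring, built from the defining quadric $q$, the cubic $f$ and the $6$-dimensional representation $U=H^0(\PP^5,\OOO(1))^\vee$. Concretely, we take the generalized Jacobian ring of the pencil $y_0q+y_1f$ on $\PP^5\times\PP^1$, localized in the appropriate bidegree. We then compute its character as a virtual combination of symmetric powers of $U$, twisted by the characters through which $G$ acts on $q$ and $f$. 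A MAGMA check would also serve here. The expected outcome is that $H^{2,1}(X)$ contains no trivial summand and decomposes into $G$-irreducibles of fairly large dimension (from $6,7,8$). In particular, $G$ acts on $J(X)$ without nonzero fixed tangent vectors and with large isotypic pieces.

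\textbf{Step 2: Beauville's argument.} Suppose $J(X)\simeq\prod_i J(C_i)$. The decomposition of a ppav into indecomposable factors is unique, so $G$ permutes the factors. Let $\{J(C_i)\}_{i\in I}$ be a $G$-orbit of factors. Its stabilizer $G_i$ acts on $J(C_i)$ and, by Torelli, on $C_i$, possibly up to $-1$ if $C_i$ is hyperelliptic. Summing $\dim J(C_i)=g(C_i)$ over all factors gives $20$, so $[G:G_i]\cdot g(C_i)\leqslant 20$. The subgroups of $\PGL_2(\FF_7)$ of index at most $20$ are $G$, $A$, $\mathfrak{S}_4$ (two classes, index $14$), $\mumu_7\rtimes\mumu_6$ (index $8$) and $\mumu_7\rtimes\mumu_3\subset A$ (index $16$). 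We then run through the cases. If $G_i\supseteq A$, then $A$ acts faithfully on a curve of genus at most $20$. By Hurwitz-type constraints, as in Lemma~\ref{lemma:A6-genus}, this forces the genus to be $3$ (the Klein quartic), $7$, $8$, $14$, and so on, and we compare the resulting representation of $A$ on $H^0(C,K_C)$ with Step~1. If $G_i$ is a proper subgroup, then $H^{1,0}$ of the product is induced from $G_i$, and we compare the induced characters with the representation found in Step~1. The goal is to show that no combination of these induced representations can equal $H^{1,2}(X)$.

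\textbf{Main obstacle.} The hardest part is Step~1. The cleanest known route is to compute the character of $H^{2,1}$ equivariantly, paying attention to the scalar characters of $G$ on $q$ and $f$, since the outer involution may act on them by a sign. The matching in Step~2 is then a finite character-theoretic check. If it leaves cases that cannot be excluded, such as a single genus-$20$ curve with a $G$-action, we use Lemma-style genus restrictions for $\PGL_2(\FF_7)$, computed from Riemann--Hurwitz exactly as in Lemma~\ref{lemma:A6-genus}, to exclude genus $20$ directly. We would try this first before refining the representation-theoretic comparison.
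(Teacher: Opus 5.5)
Your framework (Clemens--Griffiths, the group permuting the indecomposable factors of $JX$, Torelli, Riemann--Hurwitz) is the paper's. But as written the proposal omits the two steps that carry the proof: the equivariant structure of $T_0(JX)$ appears only as an ``expected outcome'', and the exclusion is left as an unspecified check.

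\textbf{The representation.} Knowing only that there is no trivial summand and that the pieces have dimensions among $6,7,8$ is not enough, because the counting in Step~2 depends on exactly which constituents occur. What you need is
$$H^{1,2}(X)\simeq V_6^{\oplus 2}\oplus V_8$$
as a module over $A=\PSL_2(\FF_7)$. No Cayley trick is needed for this.
\begin{itemize}
\item Since $\Omega^2_X\simeq T_X\otimes\omega_X$ and $\omega_X\simeq\OOO_X(-1)$, twist the normal bundle sequence by $\OOO_X(-1)$ and use the Euler sequence.
\item Together with the vanishing of $H^i(X,\OOO_X(-1))$ for $i<3$, of $H^1(X,\OOO_X)$ and of $H^0(X,\Omega^2_X)$, this gives an exact sequence $0\to U\to H^0(\OOO_X(1))\oplus H^0(\OOO_X(2))\to H^{2,1}(X)\to 0$, with $u\mapsto(\partial_u q,\partial_u f)$.
\item Restricting to the perfect group $A$ removes the sign characters you were worried about.
\item One has $U|_A\simeq V_6$, since the only other possibility, $V_3\oplus V_3^\vee$, carries no invariant cubic.
\end{itemize}
Hence $H^{2,1}(X)|_A\simeq \Sym^2V_6/\CC q\simeq V_6^{\oplus 2}\oplus V_8$, which is self-dual. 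This is how the paper obtains it.

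\textbf{The exclusion.} This has to be global, not factor by factor. The summand $V_8$ on its own is the tangent representation of an $A$-stable product of Jacobians. For example, take $E^8$ with $j(E)=0$, permuted by $A$ with stabilizer $\mumu_7\rtimes\mumu_3$ acting through $\mumu_3$; this works because $\operatorname{Ind}_{\mumu_7\rtimes\mumu_3}^{A}\omega\simeq V_8$ for a character $\omega$ non-trivial on $\mumu_3$. The argument that closes the proof runs as follows.
\begin{enumerate}
\item Each $A$-orbit of factors spans a submodule of dimension $d\in\{6,8,12,14,20\}$.
\item Since $20$ is not a sum of $8$'s and $14$'s, some orbit has $d\in\{6,12,20\}$.
\item The number $r$ of (isomorphic) factors in that orbit divides $d$. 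It is also the index of a subgroup of $A$ and at most $20$, so $r\in\{1,7,8,14\}$. Hence $r=1$.
\item That single Jacobian is $A$-stable with non-trivial, hence faithful, action.
\item Torelli together with perfectness of $A$ gives a faithful $A$-action on a smooth curve of genus $6$, $12$ or $20$. This is forbidden by $g-1=168(h-1)+42a_2+56a_3+63a_4+72a_7$.
\end{enumerate}

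Two corrections to your Step~2. First, $7$ and $14$ are not admissible genera for $\PSL_2(\FF_7)$; the same formula excludes them. Second, the extra $\pm1$ in Torelli occurs for non-hyperelliptic curves, not hyperelliptic ones; for the perfect group $A$ it is irrelevant anyway.
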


\begin{proof}
The group $A$ has unique irreducible faithful representations $V_d$ of dimension $d\in\{6,8\}$, and two irreducible $3$-dimensional ones denoted by $V_3$ and $V_3^\vee$. The space of sections $V=H^0(X,\OOO_X(-K_X))$ is $G$-irreducible of dimension $6$, and its restriction to $A$ is either $V_6$ or $V_3\oplus V_3^\vee$. The latter possibility is excluded as
$$(\Sym^3(V_3\oplus V_3^\vee))^A=0,$$
so $V\vert_A\simeq V_6$. Let $JX$ be the intermediate Jacobian of $X$. By definition of $X$ and the Euler exact sequence we get that the sequence
$$
0\to V\to H^0(X,\mathscr{O}_X(1))\oplus H^0(X,\mathscr{O}_X(2))\to T_0(JX)\to0
$$
is exact, and then 
$$T_0(JX)\simeq\Sym^2 V_6/\CC\simeq V_6^{\oplus2}\oplus V_8$$
as $A$-modules. In particular $\dim JX=20$.

Assume that $X$ is rational. By the Clemens-Griffiths criterion \cite[Corollary 3.26]{CG72}, $JX$ is a product of Jacobians as a principally polarized abelian variety, i.e., there is an isomorphism 
$$JX\simeq \operatorname{Jac}(C_1)^{a_1}\times\cdots\operatorname{Jac}(C_m)^{a_m}$$
that induces an isomorphism of $A$-modules
$$
T_0(JX) \simeq T_0(\operatorname{Jac}(C_1)^{a_1})\oplus\cdots\oplus T_0(\operatorname{Jac}(C_m)^{a_m}),
$$
and the group $A$ permutes its indecomposable factors. For each orbit of factors, their product determines an $A$-submodule of dimension $d\in\{6,8,12,14,20\}$, and since $20$ is not a sum of integers in $\{8,14\}$, at least one orbit has $d\in\{6,12,20\}$. If it contains $r$ factors, then $r\mid d$. As the only proper subgroup indices of $A$ at most $20$ are $7,8,14$, we get $r=1$. Then an individual Jacobian of genus $6,12$ or $20$ is $A$-stable so it has a faithful $A$-action. By the Torelli theorem we get that $A$ acts faithfully on the corresponding curve, but this contradicts \cite[Lemma 2.5.1]{CheltsovPinardinProkhorov}.
\end{proof}

\subsection{Gorenstein models for $\PGL_2(\FF_7)$}
\label{subsection:PGL27-gorenstein}

Throughout this subsection, we set $G=\PGL_2(\FF_7),A=\PSL_2(\FF_7)$ and choose an involution $\tau\in G\setminus A$. We follow the Gorenstein argument of Section~\ref{section:A6-ext-gorenstein}, referring to its proofs or to \cite{CheltsovPinardinProkhorov} whenever they apply with the same hypotheses. We point out the changes required to adapt the argument to this group.

\begin{proposition}
    \label{proposition:PGL27-gorenstein}
    Let $X$ be a rational $G\QQ$-Fano threefold with terminal Gorenstein singularities. Then $X$ is $G$-birational to the threefold \eqref{equation:W}.
\end{proposition}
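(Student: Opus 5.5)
We follow the structure of Section~\ref{section:A6-ext-gorenstein}, with $A=\PSL_2(\FF_7)$ in place of $\mathfrak{A}_6$, and use the results of \cite[Part 2]{CheltsovPinardinProkhorov} for the $A$-geometry. The goal is to reduce to the index two case, where the del Pezzo classification leaves only \eqref{equation:W}.

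\textbf{Step 1: preliminary reductions.} First we check that $|-K_X|$ is base point free. The base locus would be an $A$-invariant $\PP^1$ or point, and \cite{Shi89} together with the absence of fixed points for $A$ excludes both. Next we rule out the hyperelliptic case. There $X$ would be a double cover of a variety of minimal degree, and the $A$-invariant curves in $\PP^2$ and on scrolls excluded in \cite{CheltsovPinardinProkhorov} rule this out. Hence $-K_X$ is very ample.

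\textbf{Step 2: the index.} We then compute $\iota(X)$. Since $\PGL_2(\FF_7)$ does not embed in $\PGL_4(\CC)$ or $\PGL_5(\CC)$ (its projective representations of small dimension are induced from $V_3\oplus V_3^\vee$ or the $6$-dimensional ones), the cases $\iota=4,3$ are excluded. For $\iota=2$ we run through the degrees $d\leqslant 7$ as in Lemma~\ref{lemma:A6-ext-index}:
\begin{itemize}
\item $d=1,2$ are excluded by fixed points and by the absence of $G$-actions on $\PP^3$;
\item $d=3,4,5$ are excluded by representation theory on $H^0(A)$;
\item $d=6$ gives either $(\PP^1)^3$, which is impossible, or a $(1,1)$-divisor in $\PP^2\times\PP^2$. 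By Remark~\ref{remark:PGL27-flag} the latter is exactly \eqref{equation:W}, with its unique $G$-action.
\end{itemize}
So it remains to show that $\iota(X)=1$ is impossible for rational $X$.

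\textbf{Step 3: the case $\iota(X)=1$.} We show that $X$ is an intersection of quadrics via Lemma~\ref{lemma:A6-ext-quadrics}; the argument there uses only that $A$ acts on no cubic surface fibre, which holds by \cite[Lemma 2.5.5]{CheltsovPinardinProkhorov}. We then treat the case $g=3$, and the non-trivial case $g=4$, i.e.\ complete intersections $X_{2,3}\subset\PP^5$ with $H^0(-K_X)\simeq V_6$. If $X$ is smooth, it is one of $X^{(1)}_{2,3},X^{(2)}_{2,3}$, which are irrational by Theorem~\ref{theorem:X-2-3-irrational}. If $X$ is singular, its singular points form $G$-orbits of length at least $8$, and we bound their number by \eqref{equation:sing} using $h^{1,2}=20$. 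Then we pass to a $\QQ$-factorialization and run the MMP \eqref{equation:MMP}, exactly as in the $g=7$ analysis.

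For $\rho(X)>1$ we copy Lemma~\ref{lemma:A6-ext-pic}, with $\tau$ swapping the two extremal rays; the $(2,2)$-divisor case is excluded by the Remark in the Introduction. For $\rho(X)=1$, the genera $g\in\{5,\dots,12\}$ are handled by decomposing $\Sym^2 H^0(-K_X)$ against the required number $\frac{(g-2)(g-3)}{2}$ of quadrics, as in Lemma~\ref{lemma:A6-ext-g-not-6-8}. For non-$\QQ$-factorial $X$, we argue as follows:
\begin{itemize}
\item if $A$ acts trivially on $\operatorname{Cl}(X)$, we get an $A$-equivariant conic bundle or a del Pezzo fibration;
\item otherwise $\mathrm{r}(X)\geqslant 7$, and we exclude low degree surfaces. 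The resulting lower bound on $(-K_{\widetilde{X}_n})^3$ then contradicts the classification.
\end{itemize}

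\textbf{Main obstacle.} The hardest step is the singular $g=4$ case, together with the conic bundle outcome of the MMP for $A=\PSL_2(\FF_7)$. Unlike $\mathfrak{A}_6$, this group has invariant curves of degree $4$ in $\PP^2$ (the Klein quartic) and admits rational models. There I would try to show that the conic bundle has discriminant equal to the Klein quartic, or is a $\PP^1$-bundle. Using the outer involution $\tau$, which interchanges $V_3$ and $V_3^\vee$, I would then identify the total space with \eqref{equation:W} via its two projections.
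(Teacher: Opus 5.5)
\textbf{Verdict: there is a genuine gap.} Your skeleton is the paper's: base point freeness and very ampleness, the index-two reduction ending at \eqref{equation:W} via Remark~\ref{remark:PGL27-flag}, $g=4$ via Theorem~\ref{theorem:X-2-3-irrational}, $\rho(X)=1$ by adapting Lemma~\ref{lemma:A6-ext-pic}, then an MMP on a $\QQ$-factorialization. But the index-one part, where the real work lies, is not argued, and two of its claims fail.

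\emph{Representation theory does not remove $g\in\{5,\ldots,12\}$.} For $g=6$ with $H^0(X,-K_X)\vert_A\simeq V_8$ one has $\Sym^2V_8\simeq\mathbf{1}\oplus V_6^{\oplus2}\oplus V_7\oplus V_8^{\oplus2}$. So a $6$-dimensional $G$-stable space of quadrics is perfectly possible. The paper needs geometric input at this point:
\begin{itemize}
\item the K3 argument of Lemma~\ref{lemma:A6-ext-Gorenstein-G'-inv}, showing $|-K_X|$ has at most one $G$-invariant member; together with representation theory this removes $g=7,8,9$;
\item for $g=12$, that $X$ is smooth or has one node \cite{Prokhorov-g-12}, and then a faithful $G$-action on the Hilbert scheme of conics $\simeq\PP^2$ \cite{KPS:Hilb};
\item for $\QQ$-factorial $g=10$, the bound of at most two singular points \cite{Prokhorov-G-Fanos}, which forces $A$-fixed singular points;
\item for $\QQ$-factorial $g=6$, \cite[Lemma~2.6.16]{CheltsovPinardinProkhorov}.
\end{itemize}

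\emph{For $g=6$ with $G$ faithful on $\operatorname{Cl}(X)$, the lower bound on $(-K_{\widetilde X_n})^3$ does not contradict the classification.} Surfaces have degree divisible by $5$, so you only get $64\geqslant 10+8n$ with $n\geqslant 6$. This is consistent: it just forces $n=6$ and every $\widetilde E_i$ of anticanonical degree $5$. The missing idea is to pass to $\Gamma=\mumu_7\rtimes\mumu_3\subset A$, for which $\operatorname{rk}\operatorname{Cl}(X)^\Gamma=1$. The $\Gamma$-orbit of $f(\widetilde E_1)$ then has $N\mid 21$ components and class $aH$, so $5N=10a$, which is impossible since $N$ is odd.

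\emph{Your ``main obstacle'' points the wrong way.}
\begin{itemize}
\item Singular $g=4$ models do not occur. The two $G$-invariant $(2,3)$ complete intersections are the only ones and both are smooth, so Theorem~\ref{theorem:X-2-3-irrational} closes $g=4$ completely.
\item No index-one case leads to \eqref{equation:W}. $W$ appears only in the index-two step, and every case with $\iota(X)=1$ has to be shown contradictory.
\item For the $A$-equivariant conic bundle $\widetilde X\to\PP^2$ left when $g=10$, take $H\subset X$ the image of the preimage of a general line. The relation $H+\tau(H)\sim\lambda(-K_X)$ gives $2(12-d)=18\lambda$, so the discriminant has degree $d=3$. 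This is impossible because there is no $A$-invariant plane cubic \cite[Remark~2.4.20]{CheltsovPinardinProkhorov}. A Klein-quartic discriminant ($d=4$) or a $\PP^1$-bundle ($d=0$) is excluded by the same numerics.
\item Even if you did land on $W$, that MMP is only $A$-equivariant. You would get an $A$-birational map and would still have to show that $\tau$ acts biregularly.
\end{itemize}

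\emph{Smaller points.}
\begin{itemize}
\item In Step~1, ``absence of fixed points for $A$'' is false: $\PSL_2(\FF_7)$ fixes a point of $\PP(V_3\oplus\mathbf{1})$. Use instead that $G$ has no faithful representation of dimension $<6$, or Lemma~\ref{lemma:42-fixed-point}.
\item Step~2 omits $d=7$.
\item Step~2 also omits the possibly singular $(1,1)$-divisor, whose singular point would be $G$-fixed; you need this before invoking Remark~\ref{remark:PGL27-flag}.
\item For three $(1,1)$-divisors in $\PP^3\times\PP^3$, the six-lines argument for $\mathfrak{A}_6$ does not transfer. The paper instead produces an $A$-orbit of length $8$.
\end{itemize}
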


We first establish a useful lemma and then a main reduction step needed to prove this statement.

\begin{lemma}
\label{lemma:42-fixed-point}
Let $X$ be a threefold with terminal singularities that admits a faithful action of the group $\Gamma=\mathrm{F}_7$. Then $\Gamma$ does not fix points in $X$.
\end{lemma}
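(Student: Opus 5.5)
I will follow the proof of Lemma~\ref{lemma:A6-ext-fixed}, with $\Gamma=\mathrm{F}_7\simeq N\rtimes\mumu_6$, where $N=\mumu_7$ is the minimal normal subgroup, $Z(\Gamma)=1$ and $\Gamma^{\mathrm{ab}}\simeq\mumu_6$. The faithful irreducible representations of $\Gamma$ have dimension $6$, and every representation of dimension less than $6$ is trivial on $N$.

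Suppose $P\in X$ is $\Gamma$-fixed. Since $\dim T_{X,P}\leqslant4<6$, the action on the tangent space is not faithful. So $P$ cannot be Gorenstein: a smooth point or a cDV point would give a faithful action of $\Gamma$ on $T_{X,P}$, by linearization. Let $r$ be the index of $(X,P)$ and pass to the index-one cover $(X',P')$. This gives an extension $1\to\mumu_r\to\Gamma'\to\Gamma\to1$, with $\Gamma'$ acting faithfully on $T_{X',P'}$, whose dimension is $3$ or $4$. As before, conjugation gives $\Phi\colon\Gamma\to\Aut(\mumu_r)$, which factors through $\Gamma^{\mathrm{ab}}\simeq\mumu_6$.

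I will reuse the key observation from the earlier proof. Suppose $T_{X',P'}$ splits into $\Gamma'$-invariant $\mumu_r$-eigenspaces of dimension at most $2$. On each summand, $N$ maps trivially to $\PGL_2(\CC)$ or $\PGL_1$: it is normal, and a cyclic group of order $7$ in $\PGL_2(\CC)$ normalized by $\mumu_6$ acting faithfully is impossible, since its normalizer there is dihedral. Then $p^{-1}(N)$ acts by scalars on every summand, hence is central in $\Gamma'$, so $N\subset Z(\Gamma)$, a contradiction.

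It remains to check that in each case of Reid's classification the eigenspace decomposition has $\Gamma'$-invariant pieces of dimension at most $2$.
\begin{itemize}
\item For $r=2$, I use the argument with $\Sym^2(T_{-1})$, which has dimension $6$. Here I must check that $N$ still acts trivially on it. It has dimension $6$, so it could be the faithful representation, and this is a genuine new point. Instead I will argue directly that $T_{-1}$ is a $3$-dimensional representation of $\Gamma'$. Its image in $\PGL_3(\CC)$ is $\Gamma$, but $\mathrm{F}_7$ does embed in $\PGL_3(\CC)$ (as a subgroup of $\PSL_2(\FF_7)$). So this case needs care. I will use that the kernel of $\Gamma'\to\PGL(T_{-1})$ is the scalars $\mumu_2$ together with the terminality constraint that $T_1$ has dimension at most $1$. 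The plan is to show that $\Gamma'$ acts on $T_{-1}$ through a linear $\mathrm{F}_7$ or $2\times\mathrm{F}_7$ (the Schur multiplier of $\mathrm{F}_7$ is trivial). Then the subgroup of order $7$ has eigenvalues $\zeta,\zeta^2,\zeta^4$ on $T_{-1}$, and I will compare with the requirement that the index-two quotient is terminal of type $\frac12(1,1,1)$ or $cA/2$-type. I expect this to be the main obstacle, possibly requiring the explicit hypersurface equation in the $4$-dimensional case.
\item For $r\geqslant3$, the permutation action on the eigenspaces factors through $\operatorname{Im}\Phi\subset\mumu_6$. Having $z$ with eigenvalues $\xi_r^{\pm a}$ and $\xi_r$ distinct, together with the fixed-eigenspace trick, forces the extension to be central. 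Here the image of $\Phi$ is no longer a $2$-group, so an element of order $3$ could cyclically permute three eigenspaces. I would exclude this by noting that the eigenvalues $\xi_r^{a},\xi_r^{-a},\xi_r$ must then satisfy $k^3\equiv1$ with the set $\{1,a,-a\}$ stable under multiplication by $k$. I would check that this is incompatible with the terminal lemma, or else yields a fixed eigenvalue. After that the diagonalizable case gives a contradiction with $\Gamma'$ being non-abelian, exactly as before.
\end{itemize}
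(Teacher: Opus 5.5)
\textbf{There is a genuine gap, in exactly the case you flag as the ``main obstacle''.} Your $r=2$ discussion rests on the claim that $\mathrm{F}_7\simeq\mumu_7\rtimes\mumu_6$ embeds in $\PGL_3(\CC)$ as a subgroup of $\PSL_2(\FF_7)$. This is false.
\begin{itemize}
\item The Borel subgroup of $\PSL_2(\FF_7)$ is $\mumu_7\rtimes\mumu_3$, of order $21$. Your eigenvalues $\zeta,\zeta^2,\zeta^4$ come from that group. $\PSL_2(\FF_7)$ has no subgroup of order $42$.
\item $\mathrm{F}_7$ does not embed in $\PGL_3(\CC)$ at all. All its Sylow subgroups are cyclic, so its Schur multiplier is trivial and every projective representation lifts to a linear one. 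Its irreducible representations have dimensions $1$ and $6$, so any $3$-dimensional linear representation has abelian image.
\item Equivalently, by Clifford theory: a group with a normal $\mumu_7$ on which it acts through all of $\Aut(\mumu_7)$ has every representation nontrivial on that $\mumu_7$ of dimension at least $6$.
\end{itemize}
So the configuration you plan to analyse (eigenvalues $\zeta,\zeta^2,\zeta^4$ on $T_{-1}$, then explicit hypersurface equations) cannot occur, and as written the $r=2$ case is left unproved. Your list of possible $\Gamma'$ also omits the non-split central extension $\mumu_7\rtimes\mumu_{12}$. It exists because $H^2(\mathrm{F}_7,\mumu_2)\simeq\operatorname{Ext}(\mumu_6,\mumu_2)\neq0$, even though the Schur multiplier is trivial.

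\textbf{The repair for $r=2$ is immediate.} Since $\Gamma$ does not embed in $\PGL(T_{-1})$ and $N$ is the unique minimal normal subgroup, $N$ lies in the kernel of $\Gamma\to\PGL(T_{-1})$. Hence $p^{-1}(N)$ acts by scalars on both $\Gamma'$-invariant summands $T_{-1}$ and $T_1$, and your key observation gives $N\subset Z(\Gamma)$, a contradiction.

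\textbf{Your $r\geqslant3$ bullet is also only a plan.} Close it as follows, in both the $3$- and $4$-dimensional cases. When $a\not\equiv\pm1\pmod r$, the pair $\{a,-a\}$ is the unique pair of exponents summing to $0$ mod $r$. Multiplication by the unit $k$ preserves the exponent multiset, so it preserves this pair and therefore fixes the exponent $1$, forcing $k\equiv1$. Then every eigenspace is $\Gamma'$-invariant and $\Gamma'$ is diagonalizable, hence abelian, a contradiction.

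With these fixes, your route is a valid, self-contained alternative: a full Reid-classification case analysis imitating Lemma~\ref{lemma:A6-ext-fixed}.

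\textbf{How the paper does it.} It avoids the case analysis by applying \cite[Lemma 2.5.11]{CheltsovPinardinProkhorov} to the subgroup $\Gamma\cap A\simeq\mumu_7\rtimes\mumu_3$. This reduces at once to $P$ being smooth or of type $\frac12(1,1,1)$. After that it only needs that $\Gamma$, $\Gamma\times\mumu_2$ and $\mumu_7\rtimes\mumu_{12}$ have no faithful $3$-dimensional representations.
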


\begin{proof}
Suppose that $\Gamma$ fixes a point $P\in X$. Observe that $\Gamma\cap A$ is a subgroup $\mumu_7\rtimes \mumu_3$. Hence, it follows from \cite[Lemma 2.5.11]{CheltsovPinardinProkhorov} that either $X$ is smooth at $P$, or $X$ has a~cyclic quotient singularity of type $\frac{1}{2}(1,1,1)$ at $P$. If $X$ is smooth at $P$, then $\Gamma$ acts faithfully on the Zariski tangent space of $X$ at $P$, which is impossible, since $\Gamma$ does not have faithful $3$-dimensional representations \cite{GroupNames}. Thus, $X$ has a~cyclic quotient singularity of type $\frac{1}{2}(1,1,1)$ at $P$. 

We can regard $X$ as a small neighborhood of $P$. Let $\pi\colon \widetilde{X}\to X$ be a~canonical index-$1$ cover, let $\widetilde{P}\in\widetilde{X}$ be the preimage of the point $P$, and let $\widetilde{\Gamma}$ be the~natural lift of the group $\Gamma$ in $\mathrm{Aut}(\widetilde{X})$. Then $\widetilde{X}$ is smooth at $\widetilde{P}$, and $X=\widetilde{X}/\mumu_2$, where  the~action of $\mumu_2$ on $\widetilde{X}$ is free outside of $\widetilde{P}$. Thus, the group $\widetilde{\Gamma}$ is a central extension of $\Gamma$ by $\mumu_2$, so it follows from \cite{GroupNames} that either $\widetilde{\Gamma}\simeq\Gamma\times\mumu_2$ or $\widetilde{\Gamma}\simeq\mumu_7\rtimes\mumu_{12}$. As before, we see that the group $\widetilde{\Gamma}$ acts faithfully on the Zariski tangent space of $\widetilde{X}$ at $\widetilde{P}$, which is impossible, since $\widetilde{\Gamma}$ does not have faithful $3$-dimensional representations \cite{GroupNames}.
\end{proof}

\begin{lemma}
\label{lemma:PGL27-gorenstein}
    Suppose that $X$ satisfies the assumptions of Proposition \ref{proposition:PGL27-gorenstein} and $X$ is not $G$-birational to $W$. Then 
    \begin{center}
        $\Pic(X)=\mathbb{Z}[-K_X]\quad$ and $\quad g(X)\in\{6,10\}$.
    \end{center}
    Moreover, $-K_X$ is very ample, $X$ is an intersection of quadrics, $\vert-K_X\vert$ contains no $A$-invariant divisor and $X$ is not $\QQ$-factorial.
\end{lemma}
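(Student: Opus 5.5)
We follow the structure of Section~\ref{section:A6-ext-gorenstein}, replacing the $\mathfrak{A}_6$-inputs by their $\PSL_2(\FF_7)$-analogues from \cite[Part 2]{CheltsovPinardinProkhorov}. The order is: very ampleness and index, then the intersection of quadrics, then $\rho(X)=1$, then the genus restriction, and finally non-$\QQ$-factoriality.

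\emph{Very ampleness and index.} The anticanonical system is base point free by the argument of Lemma~\ref{lemma:A6-bpf}. A base curve $\PP^1$ or a base point with $\dim T_{X,P}\leqslant 4$ would have to be $G$-fixed. This is excluded by Lemma~\ref{lemma:42-fixed-point} applied to $\mathrm{F}_7\subset G$, and because $A$ has no faithful representation of dimension $\leqslant 2$ acting on $\PP^1$. If $-K_X$ is not very ample, we run the hyperelliptic analysis of Lemma~\ref{lemma:A6-hyperelliptic}. The base $Y$ of the double cover is a variety of minimal degree with a faithful $G$-action, and we exclude scrolls, the Veronese cone and $\PP^3$. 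For the double quadric we use the $\PSL_2(\FF_7)$-invariant theory on $\PP^4$, where no faithful $5$-dimensional representation exists. The index is $1$ as in Lemma~\ref{lemma:A6-ext-index}. Here $\PP^3$ and $Q\subset\PP^4$ are excluded because $G$ has no faithful projective representation of dimension $4$ or $5$; indeed the $4$-dimensional ones of $2.A$ are not normalized by $\tau$. Del Pezzo threefolds of degree $d\leqslant 5$ are excluded by representation dimensions and fixed points. For $d=6$, Remark~\ref{remark:PGL27-flag} forces $X\simeq W$, contrary to our assumption. The case $d=7$ gives a fixed point. The intersection-of-quadrics statement follows as in Lemma~\ref{lemma:A6-ext-quadrics}: a trigonal $X$ fibres in cubic surfaces over $\PP^1$ with $A$ acting faithfully on the fibres, and this contradicts \cite[Lemma 2.5.5]{CheltsovPinardinProkhorov}.

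\emph{Picard rank.} We repeat Lemma~\ref{lemma:A6-ext-pic}, using the smoothing $V$ and the list of \cite{P:GFano2}. The point to watch is case (1.2.1), namely $(2,2)$-divisors in $\PP^2\times\PP^2$ and double covers of $W$. By the Remark following the Introduction, the unique $A$-invariant $(2,2)$-divisor does not carry a $\PGL_2(\FF_7)$-action. The double covers of $W$ are handled by the same invariant-theory check as in Lemma~\ref{lemma:A6-ext-pic}: a twisted diagonal action has only the $(1,1)$-invariant squared, which is excluded. The other cases fall as before, using fibration arguments and $A$-fixed points on quadric cones. This gives $\Pic(X)=\mathbb{Z}[-K_X]$ and $g\in\{7,\dots,10,12\}\cup\{2,\dots,6\}$. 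Small genera are removed as follows. $g\leqslant 3$ cannot occur, since $X$ is an intersection of quadrics with $g\geqslant 5$. For $g=4$ the $G$-invariant $(2,3)$ complete intersections are the irrational $X_{2,3}^{(i)}$ of Theorem~\ref{theorem:X-2-3-irrational}, and this rules them out because $X$ is rational. For $g=5$ the net of quadrics would need a $3$-dimensional subrepresentation of $\Sym^2$ of a $7$-dimensional representation; we check that this is impossible or yields a nonrational model. Genera $7,8,9,12$ are excluded by comparing $h^0(\mathscr{I}_X(2))=(g-2)(g-3)/2$ with the decomposition of $\Sym^2 H^0(-K_X)$, in the manner of Lemma~\ref{lemma:A6-ext-g-not-6-8}. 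We also use that $G$ has irreducible representations of dimensions $1,1,6,6,7,7,8,8$. This leaves $g\in\{6,10\}$.

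\emph{No invariant anticanonical divisor and non-$\QQ$-factoriality.} For $g=6$ the $8$-dimensional space is irreducible. For $g=10$ the $12$-dimensional space contains an $A$-invariant line only if it decomposes with trivial summands. This we exclude as in Lemma~\ref{lemma:A6-ext-Gorenstein-G'-inv}: an invariant K3 surface with $\Pic^A=\mathbb{Z}$ would contain an $A$-curve of genus $g$ not allowed by \cite[Lemma 2.5.1]{CheltsovPinardinProkhorov}. Finally, smooth $X$ is excluded by the automorphism results of \cite{KPS:Hilb}. For a $\QQ$-factorial singular $X$, the bound on singular points from \cite{Prokhorov-G-Fanos} is smaller than every $A$-orbit length ($\geqslant 7$, and $\geqslant 8$ by Lemma~\ref{lemma:42-fixed-point}), so this case is excluded too. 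The main obstacle is the exclusion of the genera $7,8,9,12$ and the double-cover cases, since these rest on explicit representation decompositions.
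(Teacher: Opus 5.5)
The step that fails is the genus restriction. Comparing $h^0(\PP^{g+1},\mathscr{I}_X(2))=(g-2)(g-3)/2$ with the decomposition of $\Sym^2H^0(X,\OOO_X(-K_X))$ worked for $M_{10}$ at $g=8$ only because its faithful representations have dimension at least $9$. For $\PGL_2(\FF_7)$ the irreducible representations have dimensions $1,1,6,6,6,7,7,8,8$; there are three of dimension $6$, not two, and one of them restricts to $3\oplus3^\vee$ on $A$. Moreover, $H^0(X,\OOO_X(-K_X))$ may contain trivial summands, and then subrepresentations of the required dimension do exist. On $A$ one has $\Sym^2 8\simeq\mathbf{1}\oplus6\oplus6\oplus7\oplus8\oplus8$. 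So for $g=8$ and $H^0(X,\OOO_X(-K_X))\simeq8\oplus\mathbf{1}\oplus\mathbf{1}$, the summand $7\oplus8$ already has the required dimension $15$. For $g=12$ and $H^0(X,\OOO_X(-K_X))\simeq8\oplus6$, $\Sym^2 8$ together with the $8\oplus\mathbf{1}$ inside $\Sym^2 6$ gives a $45$-dimensional $G$-subrepresentation. So the count by itself excludes none of $7,8,9,12$.

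The paper relies on two inputs your plan does not supply.
\begin{enumerate}
\item For $g\in\{7,8,9\}$ it first runs the invariant-divisor argument: every $G$-invariant member of $|-K_X|$ is a smooth K3 surface, and two $A$-invariant members give a pencil whose restriction produces a forbidden curve. This limits the trivial $A$-summands of $H^0(X,\OOO_X(-K_X))$, and only afterwards is representation theory applied. You use this argument only at $g=10$.
\item For $g=12$ it uses \cite[Theorem~1.3]{Prokhorov-g-12}: $X$ is smooth or has a single node, and a node would be $G$-fixed. In the smooth case $G$ acts faithfully on the Hilbert scheme of conics $\simeq\PP^2$ by \cite[Lemma~4.3.4]{KPS:Hilb}, which is impossible. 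Nothing in your proposal replaces this.
\end{enumerate}

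Several other steps transplant $\mathfrak{A}_6$-arguments that depend on $\mathfrak{A}_6$ having no $3$-dimensional representation. These are repairable, but as written they do not hold.
\begin{enumerate}
\item Your claim that at $g=6$ the $8$-dimensional space $H^0(X,\OOO_X(-K_X))$ is irreducible is unjustified. Both $7\oplus\mathbf{1}$ and $6\oplus\mathbf{1}\oplus\mathbf{1}$ are faithful, and with a single invariant member there is no pencil to work with. So the absence of $A$-invariant members at $g=6$ needs its own argument; the paper refers to \cite[Lemma~2.6.14]{CheltsovPinardinProkhorov}.
\item At $g=10$, $\PSL_2(\FF_7)$ does act on curves of genus $10$ (signature $(2,4,7)$). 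The contradiction must therefore come from the $G$-action on the restricted curve, not from $A$.
\item Proving very ampleness before $\rho(X)=1$ forces you through the scroll and Veronese-cone cases of Lemma~\ref{lemma:A6-hyperelliptic}. Those proofs use that an $A$-invariant ruling plane is pointwise fixed, and that the $6$-dimensional space of the Veronese surface splits off a one-dimensional summand. Both fail for $\PSL_2(\FF_7)$.
\item Case (1.2.2) cannot be done ``as before'', because Lemma~\ref{lemma:A6-invariant-curves} is specific to $2.\mathfrak{A}_6$.
\item The degree-$5$ del Pezzo case is not excluded by dimension, since $G$ has $7$-dimensional representations.
\end{enumerate}
The paper avoids most of this by proving $\rho(X)=1$ first, using the unique $A$-orbit of length $8$ in case (1.2.2). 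It then obtains base point freeness and very ampleness from \cite[Theorem~2.2]{Prokhorov-G-Fanos}. The exclusion of $g\in\{7,8,9,12\}$, however, remains a genuine hole in your argument.
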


\begin{proof}
    Neither $G$ nor its subgroup $\mathrm{F}_7$ has a faithful $4$-dimensional representation. The tangent space arguments used in Section~\ref{section:A6-ext-gorenstein} therefore exclude the existence of $G$-fixed point and $G$-orbits of length $8$ on $X$. It also shows that $G$ does not embed into $\PGL_4(\CC)$ (see \cite{lmfdb}).

    The index reduction of Section~\ref{section:A6-ext-gorenstein} now applies using \cite[Lemma 2.5.19, Remark 2.5.17]{CheltsovPinardinProkhorov}: apart from $W$, the index-two cases have either an intrinsic fixed point or an equivariant double cover or birational contraction to $\PP^3$.

    To prove $\rho(X)=1$, we can follow the argument in the proof of Lemma \ref{lemma:A6-ext-pic}. Most of the proof remains the same, but cases with $\rho(X)=2$ are slightly different. The quadric case is impossible by \cite[Remark 2.5.17]{CheltsovPinardinProkhorov}. In case (1.2.1) of \cite{P:GFano2}, the argument given in the proof of \cite[Corollary 2.6.6]{CheltsovPinardinProkhorov} leaves only the case of a smooth $A$-invariant (2,2)-divisor in $\PP^2\times\PP^2$, which is irrational \cite{AB92}. For the intersection of three (1,1)-divisors in $\PP^3\times\PP^3$, the projection argument produces  a unique $A$-orbit of length $8$ on $X$, again impossible. The remaining case is $X\simeq W$, excluded by assumption. Thus $\rho(X)=1$. 

    Put $H=-K_X$ and $V=H^0(X,\OOO_X(H))$. By \cite[Theorem 2.2]{Prokhorov-G-Fanos}, $\vert H\vert$ is base point free, and the anticanonical morphism $\phi:X\to Y$ is finite of degree at most two. By the classification in \cite[Lemma 2.5.24]{CheltsovPinardinProkhorov}, the induced $G$-action on $Y$ is faithful. Consequently, $V$ is faithful and $g+2=\dim V\geqslant6$. Again by \cite[Theorem 2.2]{Prokhorov-G-Fanos} we get that $\phi$ is an embedding, and its image is an intersection of quadrics in case that $g\geqslant5$. Theorem \ref{theorem:X-2-3-irrational} excludes the case $g=4$.

    If $g=5$, on the one hand we would have that $\dim(\Sym^2V)^A \leqslant2$, but on the other hand by \cite[Corollary 5.5]{P:JAG:simple} there should be a $3$-dimensional $A$-invariant subspace of quadrics. Thus $g\neq5$. We next exclude $g=12$. Indeed, if $g=12$ by \cite[Theorem 1.3]{Prokhorov-g-12} either $X$ is smooth or $\Sing(X)$ consists of a unique ordinary double point. The latter case is not possible as we cannot have a $G$-fixed point, so $X$ has to be smooth. Then by \cite[Lemma 4.3.4]{KPS:Hilb} $G$ acts faithfully on the Hilbert scheme of conics of $X$, but this scheme is isomorphic to $\PP^2$, a contradiction. Thus $g\neq12$.

    Hence $6\leqslant g\leqslant10$. We next apply the invariant-divisor argument used in Section~\ref{section:A6-ext-gorenstein}. Combining the arguments used in the proof of Lemma \ref{lemma:A6-ext-Gorenstein-G'-inv} together with the adjunction argument used in the proof of Lemma \ref{lemma:A6-ext-invariant-surfaces} or \cite[Lemma 2.5.37]{CheltsovPinardinProkhorov}, one proves that every $G$-invariant member of $\vert H\vert$ is a smooth K3 surface with a faithful action of $G$. Subsequently, one can prove that the linear system $\vert H\vert$ can contain at most one $G$-invariant member, and then by representation theory of $G$ one has that $g\notin\{7,8,9\}$. We get that the only possibilities are $g\in\{6,10\}$.

    To finish the proof, assume that $X$ is $\QQ$-factorial. If $g=10$, it follows from \cite[Theorem 1.3]{Prokhorov-G-Fanos} that $X$ has at most two singular points, so each singular point is $A$-fixed. As this contradicts Lemma~\ref{lemma:42-fixed-point}, we obtain that $g=6$. However, in this case the same proof of \cite[Lemma 2.6.16]{CheltsovPinardinProkhorov} applies to see that this case is also not possible. Thus, $X$ is not $\QQ$-factorial.
\end{proof}

In the following we consider $f:\widetilde{X}\to X$ a $\QQ$-factorialization of $X$ and we run the MMP, obtaining the same diagram \eqref{equation:MMP}. We keep the same notation.

Assume first that $g=10$. The orbit-sum argument given in Lemma \ref{lemma:A6-ext-g-7-div} implies that the degree of every irreducible surface in $\widetilde{X}$ is divisible by $3$. The proofs of \cite[Lemmas 2.6.9 and 2.6.11-2.6.13]{CheltsovPinardinProkhorov} exclude the possibilities of divisorial extremal contractions and del Pezzo fibrations, leaving an $A$-equivariant conic bundle $h:\widetilde{X}\to\PP^2$. Then, we can construct the same diagram \eqref{equation:conic-bundles}, and repeating the same computation as there one sees that the discriminant curve of $h$ has degree $d=3$. This result contradicts \cite[Remark 2.4.20]{CheltsovPinardinProkhorov}, then one concludes that the case $g=10$ is impossible.

Let $g=6$. Then the same argument as in the proof of \cite[Lemma 2.6.14]{CheltsovPinardinProkhorov} shows that $\vert H\vert$ contains no $A$-invariant member. On the other hand, one can note that a similar argument to Lemma \ref{lemma:A6-ext-g-7-div} shows that the degree of every irreducible surface in $\widetilde{X}$ is divisible by $5$; moreover, an $A$-invariant surface has degree at least $10$. If $A$ acted trivially on $\operatorname{Cl}(X)$, the diagram \eqref{equation:MMP} would be $A$-equivariant, and then the proof of \cite[Lemma 2.6.22]{CheltsovPinardinProkhorov} or Lemma \ref{lemma:A6-ext-g-7-faithful} using these degree bounds would give a contradiction. Thus $G$ acts faithfully on $\operatorname{Cl}(X)$ and $\rho(\widetilde{X})\geqslant7$. Now the proof of \cite[Lemmas 2.6.23-2.6.24]{CheltsovPinardinProkhorov} shows that $Z$ is a point and that we have
 $$64\geqslant(-K_{\widetilde{X}_n})^{3}\geqslant10+\sum_{i=1}^n(2(-K_{\widetilde{X}})^2\cdot\widetilde{E}_i-2)\geqslant10+8n.$$
Therefore $n=6,\rho(\widetilde{X})=7$ and $(-K_{\widetilde{X}})^2\cdot\widetilde{E}_i=5$ for every $i=1,\ldots,6$. Now if we consider the subgroup $\Gamma\simeq\mumu_7\rtimes\mumu_3\subset A$, the character restriction in \cite[Lemma 2.6.25]{CheltsovPinardinProkhorov} gives $\operatorname{rk}\operatorname{Cl}(X)^\Gamma=1$. Then the reduced $\Gamma$-orbit sum of $f(\widetilde{E}_1)$ has $N$ irreducible components and class $aH$ for some $a\in\mathbb{Z}_{>0}$. Taking degrees one gets
\begin{center}
    $5N=10a,\quad$ but $\quad N\mid21$.
\end{center}
This contradiction completes the proof of Proposition \ref{proposition:PGL27-gorenstein}.

\subsection{Non-Gorenstein models for $\PGL_2(\FF_7)$}
\label{subsection:PGL27-non-gorenstein}

We keep the notation $G=\PGL_2(\FF_7), A=\PSL_2(\FF_7)$, and we choose an involution $\tau\in G\setminus A$. In this section we prove the following Proposition, which finishes the proof of \hyperref[auxtheorem]{Characterization Theorem}.

\begin{proposition}
    Let $X$ be a rational non-Gorenstein $G\QQ$-Fano threefold for a finite group $G\subset\Aut(X)$ such that $G\simeq\PGL_2(\FF_7)$. Then $X$ is $G$-birational to the threefold \eqref{equation:W}.
\end{proposition}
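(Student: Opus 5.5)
The plan is to transplant the argument of Section~\ref{section:A6-ext-non-Gorenstein} to $G=\PGL_2(\FF_7)$. The goal is either a contradiction or a reduction to the Gorenstein case, which Proposition~\ref{proposition:PGL27-gorenstein} then settles. Along the way, the inputs for $\mathfrak{A}_6$ are replaced by their $\PSL_2(\FF_7)$ analogues from \cite[Part 2]{CheltsovPinardinProkhorov}, and the outer involution $\tau$ is tracked throughout.

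\textbf{Step 1: the non-Gorenstein points.} Let $\Sigma$ be the set of non-Gorenstein points. By \cite[Corollary 2.4.6]{CheltsovPinardinProkhorov} we have $|\Sigma|<16$. By Lemma~\ref{lemma:42-fixed-point} and the fact that $A$ has no fixed points, the stabilizer of a point of $\Sigma$ can be neither $G$, $A$ nor $\mathrm{F}_7$. The subgroup indices of $G$ below $16$ are $1,2,8,14$. Index $8$ corresponds to $\mathrm{F}_7$ and is excluded, so $\Sigma$ is a single $G$-orbit of length $14$ with stabilizer $\mathfrak{S}_4$. I expect \cite{Reid:YPG} and the basket formula to force every point of $\Sigma$ to be of type $\frac12(1,1,1)$.

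\textbf{Step 2: the anticanonical system and the model $\overline{X}$.} Next I compute $\dim|-K_X|=\frac12(-K_X)^3-\frac{|\Sigma|}{4}\cdot\ldots$ via \cite[Corollary 2.4.7]{CheltsovPinardinProkhorov} and show it is positive. As in Proposition~\ref{proposition:A6-ext-KX-dim}, the surface case would give at least $14$ components of a reducible invariant member, which is impossible by degree. A general member of $|-K_X|$ is a Du Val K3 surface. Blowing up $\Sigma$ gives $\pi\colon Y\to X$ with $14$ exceptional planes. The stabilizer $\mathfrak{S}_4$ acts on each plane through its irreducible $3$-dimensional representation, so it fixes no point and preserves no line there. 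The proof of Lemma~\ref{lemma:720-764-base-points} then shows that $-K_Y$ is nef and big, with crepant anticanonical morphism $\phi\colon Y\to\overline{X}$ onto a canonical Gorenstein Fano threefold that is an intersection of quadrics.

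\textbf{Step 3: smallness of $\phi$.} The plane configuration argument of Proposition~\ref{proposition:A6-ext-small} should carry over: lines in some $\overline{E}_i$ have $\mathfrak{S}_4$-orbits of length at least $3$, and the bound $\sum d_i\le 19$ together with the lct and connectedness argument forces a $G$-invariant center inside the planes, a contradiction. Hence $\overline{X}$ is terminal Gorenstein.

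\textbf{Step 4: finishing.} I impose the maximality assumption \eqref{equation:maximal}. If $\phi$ is an isomorphism, the second extremal ray of $Y$ gives a $G$-birational contraction $Y\to X'$. Either $X'$ is Gorenstein, and then Proposition~\ref{proposition:PGL27-gorenstein} applies and $X$ is $G$-birational to $W$, or we contradict maximality as in Lemma~\ref{lemma:not-iso}. If $\phi$ is not an isomorphism, Lemma~\ref{lemma:2-planes-through-point-A6-ext} still holds. Each plane contains at least $4$ singular points, since the minimal $\mathfrak{S}_4$-orbit in $\PP^2$ has length $3$ or $4$; I still need to check this carefully. This gives at least $14\cdot 4/2=28$ singular points on $\overline{X}$, to be compared with the Namikawa bound \eqref{equation:28}.

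\textbf{Expected main obstacle.} The count in Step 4 is only borderline. It requires sharper control of $h^{1,2}(V)$ from the genus, and possibly excluding the orbit of length $3$ separately by a tangent-space argument. It may also be that $\overline{X}$, or the flop-contraction route, genuinely leads to $W$ rather than to a contradiction, and that branch must then be traced back carefully.
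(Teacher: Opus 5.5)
There is a genuine gap, and it is where you suspected: the point count in Step~4 cannot give a contradiction. The minimal orbit of the stabilizer $\mathfrak{S}_4$ on $\overline{E}_i\simeq\PP^2$ has length $3$ (the coordinate points for its $3$-dimensional irreducible representation). So you only get $|\Sing(\overline{X})|\geqslant 14\cdot 3/2=21$. Moreover, $g=6$ (with $h^{1,2}(V)=10$) is still allowed for $\PGL_2(\FF_7)$, so the smoothing bound here is $29$, not $28$, and even $28$ points would be consistent with it. In the paper this count only cuts the genus down to $g\in\{6,10,11,12\}$.

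The missing idea is the $G$-equivariant two-ray game from $Y$, which has $\mathrm{r}(Y)^G=2$. It gives a link $X\leftarrow Y\dashrightarrow Y'\to X'$, where $Y\dashrightarrow Y'$ is a composition of flops and $\pi'$ is divisorial onto a non-Gorenstein $G\QQ$-Fano threefold; Gorenstein targets are excluded by Proposition~\ref{proposition:PGL27-gorenstein}. Maximality \eqref{equation:maximal} gives $h^0(-K_{X'})=h^0(-K_X)$ and identifies $\pi'$ as the blow-up of the $14$ points of type $\frac12(1,1,1)$ on $X'$. Write the strict transform of the $\pi'$-exceptional divisor as $E'_Y\sim_{\QQ} aH-bE$ with $H=-K_Y$. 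The flop-invariant numbers $H^3=2g-2$, $H^2\cdot E=H^2\cdot E'_Y=14$ and $H\cdot E^2=H\cdot (E'_Y)^2=-28$ force $b=1$ and $a=\frac{14}{g-1}$. Then $\phi_*(E+E'_Y)\sim_{\QQ}\frac{14}{g-1}(-K_{\overline{X}})$ is Cartier, and restricting to a plane gives $\frac{14}{g-1}\in\mathbb{Z}$, which fails for every remaining $g$.

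Two earlier steps are also incorrect as stated.

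In Step~1 you use that $A=\PSL_2(\FF_7)$ fixes no point. That is the $\mathfrak{A}_6$ statement (Theorem~\ref{theorem:A6-non-fixed}), and it fails for $A$: since $A\subset\SL_3(\CC)$, it can fix smooth points and also $\frac12(1,1,1)$ points, through $A\times\mumu_2$ acting on $\CC^3$. Only $G$ and $\mathrm{F}_7$ are excluded by Lemma~\ref{lemma:42-fixed-point}. So a priori $\Sigma$ can be a union of $G$-orbits of length $2$ with $A$-invariant planes. The paper removes this case only after showing that $\phi$ is not an isomorphism, using two facts:
an $A$-invariant plane is disjoint from $\operatorname{Exc}(\phi)$, since otherwise it and its $\tau$-translate would contain at least $42$ singular points; and
every $\phi$-contracted curve $C$ meets $\sum E_i$, because $0=-K_Y\cdot C=\pi^*(-K_X)\cdot C-\frac12\sum E_i\cdot C$.

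In Step~2, with $|\Sigma|=14$ Riemann--Roch gives $h^0(X,-K_X)=\frac12(-K_X)^3-\frac12$. This vanishes when $(-K_X)^3=1$, so $|-K_X|$ may be empty, and your analogue of Proposition~\ref{proposition:A6-ext-KX-dim} only treats $\dim|-K_X|=0$. The paper rules out the empty case separately. There $h^0(X,-2K_X)=4$, so $A$ acts trivially on this space, and a bicanonical argument on a general member of $|-2K_X|$, as in \cite[Section 7.2]{CheltsovPinardinProkhorov}, gives a contradiction.
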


We follow the construction of Section \ref{section:A6-ext-non-Gorenstein}, recording the changes required for $G$ and referring to the corresponding arguments in \cite[Section 7]{CheltsovPinardinProkhorov}. Suppose such a threefold exists for $G\simeq\PGL_2(\FF_7)$ and is not $G$-birational to the threefold \eqref{equation:W}. By Proposition \ref{proposition:PGL27-gorenstein} every rational $G\QQ$-Fano threefold in its $G$-equivariant birational class is non-Gorenstein.

Denote by $\Sigma$ the set of non-Gorenstein points of $X$. The basket computation in Section \ref{section:A6-ext-non-Gorenstein}, using the stabilizers in $G$ and the fact that its subgroup $\mathrm{F}_7:=\mumu_7\rtimes\mumu_6$ does not fix points, gives that $N=\vert\Sigma\vert\leqslant14$, every point in $\Sigma$ is of type $\frac{1}{2}(1,1,1)$ and $\Sigma$ is either a union of $G$-orbits of length $2$ or one orbit of length $14$; cf. \cite[Proposition 2.7.2, Corollary 2.7.5]{CheltsovPinardinProkhorov}. 

To prove that $\vert-K_X\vert\neq\varnothing$, the same argument as in \cite[Section 7.2]{CheltsovPinardinProkhorov} applies. Indeed, if $\vert-K_X\vert=\varnothing$, Riemann-Roch gives
\begin{center}
    $N=14,\quad (-K_X)^3=1,\quad h^0(X,-2K_X)=4$.
\end{center}
Thus $A$ acts trivially on $H^0(X,-2K_X)$, and for a general member $S\in\vert-2K_X\vert$ the restriction map makes $H^0(S,2K_S)$ an $A$-equivariant quotient of this space. The proofs of \cite[Lemmas 2.7.7--2.7.8]{CheltsovPinardinProkhorov} show that $S$ has Du Val singularities. Then the same bicanonical argument applies on its minimal resolution and produces a contradiction.

The positive-dimensionality, mobility and K3-section arguments in Section \ref{section:A6-ext-non-Gorenstein} now apply, with \cite[Corollary 2.5.10]{CheltsovPinardinProkhorov} replacing the corresponding statement for $\mathfrak{A}_6$; compare \cite[Proposition 2.7.11, Corollaries 2.7.12--2.7.16]{CheltsovPinardinProkhorov}. Applying the argument in Lemma \ref{lemma:A6-ext-non-Gorenstein-A6-inv} to a $G$-invariant pencil in $\vert-K_X\vert$ we see that $\vert-K_X\vert$ contains at most one $A$-invariant member. Consequently, $n=\dim\vert-K_X\vert\geqslant5$. The anticanonical construction in Section \ref{section:A6-ext-non-Gorenstein}, as in \cite[Lemma 2.7.19]{CheltsovPinardinProkhorov}, gives
\begin{center}
    $\pi:Y\longrightarrow X,\qquad$ $\phi:Y\longrightarrow\overline{X},\qquad-K_Y=\phi^\ast(-K_{\overline{X}})$,
\end{center}
where $\phi$ is birational, $\pi$ blows up $\Sigma$, $\overline{X}$ is a canonical Gorenstein Fano threefold anticanonically embedded in $\PP^n$, and $E_i\simeq\PP^2$, $\phi\vert_{E_i}:E_i\overset{\sim}{\rightarrow}\overline{E}_i$ identifies each exceptional divisor with a plane. The representation arguments used in Subsection \ref{subsection:PGL27-gorenstein}, together with the genus-four exclusion of Subsection \ref{subsection:irrationality}, show that $\overline{X}$ is cut out by quadrics and $g:=g(\overline{X})\in\{6,10,11,\ldots,37\}$.

\begin{lemma}
    \label{lemma:PGL27-small}
    The morphism $\phi$ is small.
\end{lemma}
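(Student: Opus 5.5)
The plan is to follow the proof of Proposition~\ref{proposition:A6-ext-small} line by line, replacing $\mathfrak{A}_6$-specific inputs by their $\PSL_2(\FF_7)$ analogues.

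Assume $\phi$ is not small. Let $F_1,\ldots,F_k$ be the $\phi$-exceptional surfaces and $\overline{E}_i=\phi(E_i)\simeq\PP^2$ the planes. We already know that $|\Sigma|=N\in\{2,4,\ldots,14\}$ with $\Sigma$ a union of orbits.

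The first step is to show that each $\phi(F_j)$ is a curve. The argument is verbatim: a point image would force a curve in $E_i\cap F_j$ that is $\phi$-contracted, hence $-K_Y$-trivial, while its intersection with $-K_Y$ must be positive since $-K_Y|_{E_i}\sim\OOO_{E_i}(1)$.

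The second step is the bound $\sum d_i\leqslant 19$, where $d_i=\deg C_i$ and $C_i=\phi(F_i)$. This holds exactly as in Lemma~\ref{lemma:curves-19}, using a general hyperplane section, which is a K3 surface with $\rho\leqslant 20$.

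Next, $\overline{D}=\sum\overline{E}_i$ is $G$-invariant, so $\overline{D}\sim_\QQ \frac{N}{2(g-1)}(-K_{\overline{X}})$. Ampleness forces some pair of planes to meet along a line, which gives a curve $C_j$.

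The third step is to show that every $C_j$ meeting some $E_i$ is a line inside $\overline{E}_i$, and that each such line lies in at least $3$ planes. This is where the case split on $\Sigma$ happens, and it is the main obstacle. The stabilizer $G_i$ of $\overline{E}_i$ now depends on the orbit type.

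If $N=14$, then $G_i\simeq\mathrm{F}_7$ up to the relevant index. Its action on $\overline{E}_i$ comes from a $3$-dimensional representation of $\mumu_7\rtimes\mumu_3$, which has no invariant points or lines. Minimal $\mumu_7\rtimes\mumu_3$-orbits of curves in $\PP^2$ have length at least $7$ for lines, and conics are also excluded. So a curve in only $\leqslant 2$ planes gives at least $14\cdot 7/2=49>19$ lines, a contradiction. Likewise, a curve of degree $\geqslant2$ lying in a single plane gives at least $14$ curves of degree $\geqslant 2$, again exceeding $19$.

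If $\Sigma$ is a union of orbits of length $2$, the stabilizer of each point is $A$. Then each plane is $A$-invariant and carries a faithful $A$-action by Lemma~\ref{lemma:42-fixed-point}. I would first try to use Klein's description: $A$-invariant curves in $\PP^2$ have degree $\geqslant4$ and no $A$-orbits of lines of length less than $21$. Hence any $C_j\subset\overline{E}_i$ produces at least $21$ lines or a curve of degree $\geqslant4$ in each of the $N\geqslant2$ planes. This contradicts the bound of $19$ unless curves are shared by many planes. Sharing is impossible here, since two $A$-invariant planes meet in an $A$-invariant linear space, which would be a point or line fixed by $A$.

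So this case is excluded outright. The pair-of-planes argument also shows directly that the ampleness conclusion already yields a contradiction when $N=2$.

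The final step, in the case $N=14$, is the log canonical threshold argument. Lemma~\ref{lemma:A6-ext-non-gor-lc} (via \cite[Proposition 16.6]{Utah}) shows $(\overline{X},\overline{D})$ is not lc along the lines, so $\lambda=\operatorname{lct}<1$. Perturbing by \cite[Lemma 2.8]{cheltsov2011exceptional}, I check that
\[
-(K_{\overline{X}}+B)\sim_\QQ\Big(1-\tfrac{7\lambda\mu}{g-1}\Big)(-K_{\overline{X}})
\]
is ample, using $g\geqslant 10$, or $g=6$ with $7\lambda\mu<5$ after choosing $\mu$ close to $1$. The $g=6$ case needs care; if $\lambda$ is too close to $1$, I would instead use the planes-through-a-point argument of Lemma~\ref{lemma:2-planes-through-point-A6-ext}. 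Koll\'ar--Shokurov connectedness then makes the minimal centre $C$ $G$-invariant and contained in all $14$ planes. So $C$ is a point or a line in $\overline{E}_i$ invariant under $G_i\supset\mumu_7\rtimes\mumu_3$, which is a contradiction.
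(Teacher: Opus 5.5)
Your outline is the paper's own route: image curves, the bound $\sum d_i\leqslant 19$, reduction to a single orbit of $14$ points with every $\phi(F_j)$ a line, ampleness of $\overline{D}\sim_{\QQ}\frac{7}{g-1}(-K_{\overline{X}})$, lines lying in at least three planes, and the lct/connectedness argument. The genuine gap is your justification of the reduction to $N=14$.

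Suppose $\Sigma$ is a union of orbits of length $2$. Then the planes are $A$-invariant and pairwise disjoint, so each $\phi(F_j)$ lies in a single plane, and its $A$-orbit is an $A$-invariant plane curve. That curve can be the Klein quartic (or the invariant sextic). In that case a $G$-orbit of exceptional divisors contributes only $8$ (or $12$) to $\sum d_i$, which is compatible with the bound $19$. Nothing forces exceptional curves into every plane, and even if it did, $4N\leqslant 19$ for $N\in\{2,4\}$.

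Your disconnectedness argument needs $\overline{D}$ to be ample, i.e.\ $\mathrm{r}(\overline{X})^G=1$. In the non-small case this holds when $\Sigma$ is one $G$-orbit, so it does dispose of $N=2$. But for $m\geqslant 2$ orbits one has $\mathrm{r}(Y)^G=1+m$, and $\phi$ may contract fewer than $m$ orbits of divisors. For the same reason you cannot write $\overline{D}\sim_{\QQ}\frac{N}{2(g-1)}(-K_{\overline{X}})$ before knowing that $\Sigma$ is a single orbit.

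So an extra argument is needed. One that works is the following.
\begin{itemize}
\item The $G$-orbit $F^{(1)}$ of an exceptional divisor meeting $E_1$ is disjoint from every plane other than $E_1$ and $E_2=\tau(E_1)$.
\item Hence $F^{(1)}\sim_{\QQ}a\pi^*(-K_X)-b(E_1+E_2)$ with $a>0$.
\item The condition $(-K_Y)^2\cdot F^{(1)}=0$ gives $b=\frac{a}{2}(-K_X)^3$.
\item Then $\frac1a\pi_*F^{(1)}\sim_{\QQ}-K_X$ is $G$-invariant and has discrepancy $\frac12-\frac12(-K_X)^3<-1$ along $E_1$.
\end{itemize}
This contradicts the analogue of Lemma~\ref{lemma:A6-ext-threefolds-lc} for $G$. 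Its proof goes through: $G$ fixes no point, $A$-fixed points are isolated, and $G$ acts faithfully on no rational curve or surface.

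Two further steps need correcting.

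First, for $N=14$ the stabilizer of a plane has order $336/14=24$, so it is $\mathfrak{S}_4$. It is not $\mathrm{F}_7$, nor does it contain $\mumu_7\rtimes\mumu_3$. It preserves a triangle of lines in $\overline{E}_i$; your bound of $7$ lines fails even for $\mumu_7\rtimes\mumu_3$, which preserves the coordinate triangle. The correct count is $14\cdot 3/2=21>19$, which still suffices. The final contradiction must then use that $\mathfrak{S}_4$ has no fixed point and no invariant line on $\overline{E}_i$.

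Second, you rightly notice that for $g=6$ one has $\frac{7}{g-1}>1$, so $-(K_{\overline{X}}+B)$ is ample only if $\lambda\mu<\frac57$; the paper's sketch does not address this. However, your fallback is unavailable. The proof of Lemma~\ref{lemma:2-planes-through-point-A6-ext} uses that the planes meet pairwise only at $P$, i.e.\ that $\phi$ is small.

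Instead, bound $\lambda$ directly. Over a general point of an exceptional line, at least three planes meet the fibre of $\phi$. This forces some $\phi$-exceptional divisor to appear in $\phi^*\overline{D}$ with coefficient at least $\frac32$. Since $\phi$ is crepant, $\lambda\leqslant\frac23$, and $\frac75\cdot\frac23<1$.
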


\begin{proof}
    We follow the arguments of Section~\ref{section:A6-ext-non-Gorenstein}. Suppose that $\phi$ is not small, and let $F_1,\ldots,F_k$ be its exceptional divisors. First, one observes that any exceptional divisor $F_j$ is mapped to a curve under $\phi$, and these curves satisfy
    $$\sum_{i=1}^k \deg\phi(F_i)\leqslant19.$$
    Then this inequality is used to prove that $X$ has exactly $N=14$ non-Gorenstein points corresponding to $\frac{1}{2}(1,1,1)$ quotient singularities which form a unique $G$-orbit, and every $\phi(F_i)$ has to be a line. As $\mathrm{r}(\overline{X})^G=1$ we compute that
    $$\overline{D}:=\sum_{i=1}^{14}\overline{E}_i\sim_{\QQ}\frac{7}{g-1}(-K_{\overline{X}}),$$
    so in particular $\overline{D}$ is ample and then connected in codimension one. Hence, there are some $i,j\in\{1,\ldots,14\}$ such that  $\overline{E}_i\cap\overline{E}_j$ is a line. Moreover, one can observe that these lines correspond to one of the lines $\phi(F_1),\ldots,\phi(F_{k})$. Then, the same statement of Lemma~\ref{lemma:A6-ext-non-gor-lc} holds and $(\overline{X},\overline{D})$ is not log canonical at the general point of each of these lines. Using this fact, a contradiction can be obtained using the argument at the end of Subsection~\ref{subsection:A6-ext-anticanonical-map-not-contracting-divisors}.
\end{proof}

Thus, $\overline{X}$ has terminal Gorenstein singularities. In particular, it follows from Lemma~\ref{lemma:A6-ext-index} that $\iota(\overline{X})=1$. Then, considering a $\QQ$-Gorenstein smoothing $V$ of $\overline{X}$ as in the beginning of Section~\ref{section:A6-ext-gorenstein}, from the classification of smooth Fano threefolds we obtain the bound 
$$\vert\Sing(\overline{X})\vert\leqslant20+h^{1,2}(V)-\rho(V)\leqslant29.$$
The same argument as in the proof of Lemma \ref{lemma:2-planes-through-point-A6-ext} shows that at most two planes among $\overline{E}_1,\ldots,\overline{E}_N$ pass through any point. 

Choose $X$ with $\dim\vert-K_X\vert$ maximal in its equivariant birational class, as in \eqref{equation:maximal}. The proof of \cite[Lemma 2.7.39]{CheltsovPinardinProkhorov} implies that $\phi$ is not an isomorphism: the other extremal contraction after blowing up one $G$-orbit cannot be of fiber type, and a Gorenstein $G\QQ$-Fano target is excluded by Subsection \ref{subsection:PGL27-gorenstein}. An $A$-invariant plane $E_i$ is disjoint from $\operatorname{Exc}(\phi)$ by the proof of \cite[Lemma 2.7.40]{CheltsovPinardinProkhorov}: otherwise it and its disjoint $\tau$-transform would contain at least $42$ singular points. Every curve $C$ contracted by $\phi$ meets $\sum E_i$, by
$$0=-K_Y\cdot C=\pi^\ast(-K_X)\cdot C-\frac{1}{2}\sum E_i\cdot C,$$
and then we see that $\Sigma$ cannot consist entirely of orbits of length $2$. Therefore, $N=14$ and $\Sigma$ is a $G$-orbit. 

Since $\phi$ is not an isomorphism and $G$ permutes the planes transitively, every $\overline{E}_i$ meets $\operatorname{\Sing}(\overline{X})$ and its stabilizer $\mathfrak{S}_4$ has no orbits of length one or two on it. The counting argument of Section \ref{section:A6-ext-non-Gorenstein} therefore gives $\vert\operatorname{Sing}(\overline{X})\vert\geqslant14\cdot 3/2=21$. Comparing with the smoothing bound as in \cite[Corollary 2.7.43]{CheltsovPinardinProkhorov} yields $g\in\{6,10,11,12\}$. 

Since $\mathrm{r}(Y)^G=2$, the equivariant two-ray game of \cite[Section 7.6]{CheltsovPinardinProkhorov} gives a link
$$
\xymatrix{
X & & \ar[ll]_\pi \ar@{-->}[rr] Y & & Y^\prime \ar[rr]^{\pi^\prime} & & X^\prime 
},
$$
where $Y\dashrightarrow Y^\prime$ is a composition of flops and $\pi^\prime$ is a divisorial contraction to a non-Gorenstein rational $G\QQ$-Fano threefold $X^\prime$. By maximality we have
$$h^0(X^\prime,-K_{X^\prime})=h^0(Y^\prime,-K_{Y^\prime})=h^0(Y,-K_{Y})=h^0(X,-K_{X}),$$
and the arguments in the proof of \cite[Lemma 2.7.44]{CheltsovPinardinProkhorov} identify $\pi^\prime$ as the blow-up of non-Gorenstein points of $X^\prime$. Since $X^\prime$ also satisfies assumption \eqref{equation:maximal}, the previous arguments apply to $X^\prime$ so this threefold has exactly $14$ non-Gorenstein points forming a unique $G$-orbit. 

Put $H=-K_Y$, $E=\sum_{i=1}^{14} E_i$, and write the strict transform on $Y$ of the $\pi^\prime$-exceptional divisor as $E_Y^\prime\sim_\QQ aH-bE$, with $a>0$. Intersections with $H$ are preserved under flops, so
\begin{center}
    $H^3=2g-2,\quad$ $H^2\cdot E=14,\quad$ $H\cdot E^2=-28$,
\end{center}
and then we obtain
\begin{center}
    $14=(2g-2)a-14b,\qquad$ $-28=(2g-2)a^2-28ab-28b^2$. 
\end{center}
Hence $b=1$ and $a=\frac{14}{g-1}$. Pushing forward to $\overline{X}$ we get
$$D:=\phi_\ast(E+E_Y^\prime)\sim_\QQ \frac{14}{g-1}(-K_{\overline{X}}).$$
Then $D$ is Cartier, and restrincting to a plane $E_i$ gives $\frac{14}{g-1}\in\mathbb{Z}$, which is not true for any possible value of $g$.

This contradiction proves that $X$ is $G$-birational to $W$ and therefore we conclude the proof of the \hyperref[auxtheorem]{Characterization Theorem}.

\bibliography{cremona}
\bibliographystyle{alpha}
\nocite{*}

\end{document}